\newcommand\numberthis{\addtocounter{equation}{1}\tag{\theequation}}
\theoremstyle{definition}
\newtheorem{theorem}{Theorem}[section]
\newtheorem{definition}[theorem]{Definition}
\newtheorem{corollary}[theorem]{Corollary}
\newtheorem{lemma}[theorem]{Lemma}
\newtheorem{remark}[theorem]{Remark}
\newtheorem{proposition}[theorem]{Proposition}
\newcommand{\I}{\mathrm{i}}
\newcommand{\D}{\mathrm{d}}
\title{S-duality and the universal isometries of instanton corrected q-map spaces}
\date{\small Department of Mathematics\\
University of Hamburg\\
Bundesstraße 55, D-20146 Hamburg, Germany}
\author{Vicente Cort\'es and Iv\'an Tulli}
\numberwithin{equation}{section}
\begin{document}

\maketitle
\begin{abstract}
Given a conical affine special K\"{a}hler (CASK) manifold together with a compatible mutually local variation of BPS structures, one can construct a quaternionic-K\"{a}hler (QK) manifold. We call the resulting QK manifold an instanton corrected c-map space. Our main aim is to study the isometries of a subclass of instanton corrected c-map spaces associated to  projective special real (PSR) manifolds with a compatible mutually local variation of BPS structures. We call the latter subclass instanton corrected q-map spaces. In the setting of Calabi-Yau compactifications of type IIB string theory,  instanton corrected q-map spaces are related to the hypermultiplet moduli space metric with perturbative corrections, together with worldsheet, D(-1) and D1 instanton corrections. In the physics literature, it has been shown that the hypermultiplet metric with such corrections must have an $\mathrm{SL}(2,\mathbb{Z})$ acting by isometries, related to S-duality. We give a mathematical treatment of this result, specifying under which conditions instanton corrected q-map spaces carry an action by isometries by $\mathrm{SL}(2,\mathbb{Z})$ or some of its subgroups. We further study the universal isometries of instanton corrected q-map spaces, and compare them to the universal isometries of tree-level q-map spaces. Finally, we give an explicit example of a non-trivial instanton corrected q-map space with full $\mathrm{SL}(2,\mathbb{Z})$ acting by isometries and admitting a quotient of finite volume by a discrete group of isometries. \\
\end{abstract}

\tableofcontents
\section{Introduction}

The supergravity c-map assigns to a projective special K\"{a}hler (PSK) manifold $(\overline{M},g_{\overline{M}})$ of complex dimension $n$, a quaternionic-K\"{a}hler (QK) manifold $(\overline{N},g_{\overline{N}})$ of real dimension $4n+4$. In the context of Calabi-Yau compactifications of type IIA/B string theory on a Calabi-Yau threefold $X$,  the c-map takes the vector multiplet moduli space $\mathcal{M}_{\text{VM}}^{\text{IIA/B}}(X)$ to the hypermultiplet moduli space  $\mathcal{M}_{\text{HM}}^{\text{IIB/A}}(X)$ with its string-tree-level metric $g_{\text{FS}}$, also known as the Ferrara-Sabharwal metric \cite{TypeIIgeometry,FSmetric}. Such a construction receives quantum corrections in the string coupling $g_s$ of several kinds:

\begin{itemize}
    \item Perturbative corrections: these produce the so-called 1-loop corrected Ferrara-Sabharwal metric on $\mathcal{M}_{\text{HM}}^{\text{IIA/B}}(X)$ \cite{RLSV,1loopcmap}. In a purely mathematical setting, this construction can be understood as a way of assigning to a PSK manifold $(\overline{M},g_{\overline{N}})$ a 1-parameter family of QK manifolds $\{(\overline{N}_{c_{\ell}},g_{\overline{N},c_{\ell}})\}_{c_{\ell}\in \mathbb{R}}$ \cite{QKPSK}, where $c_{\ell}\in \mathbb{R}$ corresponds to the 1-loop correction.
    \item Non-perturbative quantum corrections: these are divided in D-instanton and NS5-instanton corrections. These have been extensively studied in the physics literature via twistor methods, see for example the reviews \cite{HMreview1,HMreview2} and the references therein. The inclusion of all D-instanton corrections is understood in the physics literature \cite{WCHKQK,APSV}, while the NS5-instanton corrections are still work in progress (see for example \cite{NS5}).
\end{itemize}

When restricted to the simpler setting of mutually local D-instanton corrections, a fairly explicit local formula for the QK metric was given in the physics literature via twistor methods in \cite{HMmetric}\footnote{Here by ``fairly explicit" we mean that the expression is explicit except for a certain function $\mathcal{R}$, which is implicitly defined in terms of the field variables.}. On the other hand, in \cite{CT} a mathematical treatment (based on the geometric approach of \cite{QKPSK}) of a class of QK metrics related to the above mutually local case was given. Namely, if $(\overline{M},g_{\overline{M}})$ is a PSK manifold and  $(M,g_M,\omega_M,\nabla,\xi)$ the associated conical affine special K\"{a}hler (CASK) manifold, then one can complement this data with a mutually local variation of BPS structures $(M,\Gamma,Z,\Omega)$ to construct a new QK metric $(\overline{N},g_{\overline{N}})$ (we suppress the choice of $1$-loop parameter $c_{\ell}$ from the notation). The general notion of variation of BPS structure can be found in \cite{VarBPS} (see also Definition \ref{defvarBPS} below for the specific case to be used throughout this paper). Here $(M,\Gamma,Z,\Omega)$ is assumed to satisfy certain compatibility conditions with the CASK structure $(M,g_M,\omega_M,\nabla,\xi)$ (see Section \ref{instreview} below), and encodes the analog of ``mutually local D-instanton" corrections when compared to the string theory setting.\\

On the other hand, type IIB string theory carries an $\mathrm{SL}(2,\mathbb{Z})$-symmetry called S-duality. In the physics literature, it has been shown that S-duality descends to an action by isometries on $\mathcal{M}_{\text{HM}}^{\text{IIB}}(X)$ when one includes the appropriate amount of quantum corrections for the metric. For example:

\begin{itemize}
    \item When one drops all corrections in the string coupling $g_s$ and takes a large volume limit, one obtains the classical QK metric on $\mathcal{M}_{\text{HM}}^{\text{IIB}}(X)$. This metric has been shown to have an $\mathrm{SL}(2,\mathbb{R})$ acting by isometries \cite{BGHL}, which comes from the $\mathrm{SL}(2,\mathbb{Z})$ S-duality action. Furthermore, it has been shown in \cite{BGHL} that the $\mathrm{SL}(2,\mathbb{R})$ (and even $\mathrm{SL}(2,\mathbb{Z})\subset \mathrm{SL}(2,\mathbb{R}))$ is broken when one includes world-sheet instanton corrections and/or the 1-loop correction in $g_s$. 
    \item On the other hand, it has been shown in \cite{QMS, Sduality} that when one includes world-sheet instanton corrections, the 1-loop correction, and D(-1) and D1 instanton corrections, one recovers again an $\mathrm{SL}(2,\mathbb{Z})$ action by isometries coming from the S-duality action. As a consequence of their result, it also follows that only including perturbative world-sheet instanton corrections, the 1-loop correction in $g_s$ and D(-1) instanton corrections also preserves the isometric S-duality action. For some other extensions of this result in the physics literature see for example \cite{Sduality,NS5,D31,D32}. 
    \item The QK metric of $\mathcal{M}_{\text{HM}}^{\text{IIB}}(X)$ is also expected to retain the $\mathrm{SL}(2,\mathbb{Z})$ S-duality action by isometries when one includes all quantum corrections, but such a metric has not been constructed or understood (as far as the authors know) in the physics literature. 
\end{itemize}
The classical metric obtained in the case when one drops all corrections in $g_s$ and take a large volume limit lies in a subset of c-map metrics called q-map metrics \cite{cmp/1104251224}. Mathematically, the q-map assign to a $n-1\geq 0$ dimensional projective special real (PSR) manifold a $4n+4$ dimensional QK manifold \cite{CHM,CDJL}. A purely differential geometric proof that q-map metrics carry an $\mathrm{SL}(2,\mathbb{R})$ of isometries was given in \cite{CTSduality}, while more traditional supergravity arguments can be found for example in \cite{sugra1,sugra2}. On the other hand,  including world-sheet instanton corrections and the 1-loop correction takes the q-map metric to the class of 1-loop corrected c-map metrics; while including  D(-1) and D1 instanton corrections takes the 1-loop corrected c-map metric to the class of mutually local instanton corrected QK metrics studied in \cite{HMmetric, CT}.\\

Our main objective in this paper is to do a mathematical treatment of the S-duality results in \cite{QMS,Sduality}, and to study the universal isometries of instanton corrected q-map spaces, i.e.\ those isometries that are independent of the initial PSR manifold and the form of the quantum corrections that we restrict ourselves to. Namely, among the class of instanton corrected c-map metrics $(\overline{N},g_{\overline{N}})$ constructed in \cite{CT} we restrict to a subclass, which we call instanton corrected q-map metrics, and show under which conditions they carry an $\mathrm{SL}(2,\mathbb{Z})$-action (or an action by some of its subgroups) by isometries.  This $\mathrm{SL}(2,\mathbb{Z})$-action is furthermore related to the S-duality symmetry in the string theory setting. Furthermore, we study how the universal group of isometries of q-map spaces described in \cite{CTSduality} gets modified for instanton corrected q-map spaces (see Section \ref{sumsec} below for more details). 

\subsection{Summary of main results}\label{sumsec}
The main differences and new results compared to the works in the physics literature \cite{QMS,Sduality}, from which we take a lot of inspiration from, are the following:

\begin{itemize}
    \item  If the S-duality $\mathrm{SL}(2,\mathbb{Z})$ action defined in  \eqref{sl2can} restricts to an action on the domain of definition of the instanton corrected q-map space, then the twistor space argument of \cite{QMS,Sduality} follows and the action is also by isometries. However, verifying that the domain of definition actually carries such an action seems non-trivial. In Theorem \ref{collectiontheorem} (explained in more detail below) we collect results regarding this point. In particular we find that, even in a case where we do not have an $\mathrm{SL}(2,\mathbb{Z})$-invariant domain of definition, one can always find either an $S\in \mathrm{SL}(2,\mathbb{Z})$ or $T\in \mathrm{SL}(2,\mathbb{Z})$-invariant one (where $T$ and $S$ are the usual generators of $\mathrm{SL}(2,\mathbb{Z})$ given in \eqref{sl2gen}). Furthermore, in Section \ref{examplesec} we give an explicit non-trivial (albeit simple) example where one can find such an $\mathrm{SL}(2,\mathbb{Z})$-invariant neighborhood of definition.   
    \item Assuming that the domain of definition of the instanton corrected q-map space carries an action by the S-duality $\mathrm{SL}(2,\mathbb{Z})$, we show that this action must be by isometries by following an argument similar but slightly different from \cite{QMS}. Using their twistor description of the relevant QK manifolds, it is shown in \cite{QMS} that certain ``type IIA" Darboux coordinates for the contact structure of the twistor space can be Poisson resummed, and then it is shown that the later can be related via a gauge transformation to certain ``type IIB" Darboux coordinates. The type IIB coordinates then make transparent that a certain lift of the $\mathrm{SL}(2,\mathbb{Z})$ action to the twistor space acts by twistor space automorphisms, and hence that $\mathrm{SL}(2,\mathbb{Z})$ acts by isometries on the QK metric. In our work, we find it simpler to do a direct Poisson resummation of the contact structure corresponding to the QK metric constructed in \cite{CT}, and then use the resulting expression to show that the ``type IIB" coordinates from \cite{QMS} are Darboux coordinates for the contact structure.
    \item We further study certain universal isometry groups of instanton corrected q-map spaces, and compare with with what happens in the case where no quantum corrections are considered (see Section \ref{universalisosec} and Theorem \ref{collectiontheorem}). In particular, while S-duality is a universal isometry for tree-level q-map spaces, we are not able to guarantee that the the same is true for instanton corrected q-map spaces (see Remark \ref{introremark}). Furthermore, in the example of Section \ref{examplesec} we use the $\mathrm{SL}(2,\mathbb{Z})$ action by isometries together with the universal isometry group to show that our example admits a quotient of finite volume by a discrete group of isometries. 
\end{itemize}

In the following, we explain in more detail the setting and the aforementioned results. As previously mentioned, our main results concerns a class of QK metrics called instanton corrected q-map spaces, defined in Section \ref{settingsec}. Roughly speaking, these are QK manifolds associated to a CASK manifold described by a holomorphic prepotential $(M,\mathfrak{F})$, together with a variation of mutually local BPS structures $(M,\Gamma,Z,\Omega)$ (see \cite{VarBPS} for the general definition of variation of BPS structures) of the following form:

\begin{itemize}
    \item The holomorphic prepotential $\mathfrak{F}:M\subset \mathbb{C}^{n+1}\to \mathbb{C}$ has the form

\begin{equation}
    \mathfrak{F}(Z^i)=-\frac{1}{6}k_{abc}\frac{Z^aZ^bZ^c}{Z^0}+\chi\frac{(Z^0)^2\zeta(3)}{2(2\pi \I )^3}-\frac{(Z^0)^2}{(2\pi \I)^3}\sum_{\hat{\gamma}=q_a\gamma^a \in \Lambda^+}n_{\hat{\gamma}}\mathrm{Li}_3(e^{2\pi \I q_aZ^a/Z^0}),
\end{equation}
where $i=0,...,n$; $a,b,c=1,...,n$; $k_{abc}\in \mathbb{R}$ are symmetric in the indices, $\chi\in \mathbb{Z}$, $n_{\hat{\gamma}}\in \mathbb{Z}$, $\mathrm{Li}_n(x)$ denote the n-th polylogarithms, $\zeta(x)$ is the Riemann zeta function, and $\Lambda^{+}:=\text{span}_{\mathbb{Z}_{\geq 0}}\{\gamma^a\}_{a=1}^n-\{0\}$ is a commutative semigroup freely generated by $n$ elements $\{\gamma^a\}_{a=1}^n$.\\

This choice of $\mathfrak{F}$ is motivated by Calabi-Yau compactifications of string theory. Namely, if $X$ denotes a Calabi-Yau threefold, and if $k_{abc}$ are taken to be the triple intersection numbers of $X$, $\chi=\chi(X)$ the Euler characteristic, and for $\hat{\gamma}\in H_2^{+}(X,\mathbb{Z})$ we have that $n_{\hat{\gamma}}=n_{\hat{\gamma}}^{(0)}$ are the genus zero Gopakumar-Vafa invariants, then $\mathfrak{F}$ denotes the prepotential specifying the PSK geometry of $\mathcal{M}_{\text{VM}}^{\text{IIA}}(X)$ with all worldsheet corrections.  Applying the 1-loop corrected c-map, one obtains from $\mathcal{M}_{\text{VM}}^{\text{IIA}}(X)$ the $1$-loop corrected metric on $\mathcal{M}_{\text{HM}}^{\text{IIB}}(X)$.
\item The charge lattice $\Gamma$ and the central charge $Z$ of $(M,\Gamma,Z,\Omega)$ are canonically determined by $\mathfrak{F}$ (see Section \ref{QKinstdomainsec}), while the BPS indices $\Omega(\gamma)$ are also determined by $\mathfrak{F}$ as follows: with respect to a canonical Darboux basis $(\widetilde{\gamma}_i,\gamma^i)_{i=0}^n$ of $\Gamma$ with respect to its symplectic pairing, we have 
\begin{equation}\label{BPSindintro}
       \begin{cases}
     \Omega(q_0\gamma^0)=-\chi, \quad q_0\in \mathbb{Z}-\{0\}\\
     \Omega(q_0\gamma^0\pm q_a\gamma^a)=\Omega(\pm q_a\gamma^a)=n_{q_a\gamma^a} \quad \text{for $q_a\gamma^a \in \Lambda^+$, $q_0\in \mathbb{Z}$}\\
    \Omega(\gamma)=0 \quad \text{else}.\\
        \end{cases}
\end{equation}
The prescription \eqref{BPSindintro} has previously appeared in the physics literature (see for example \cite[Equation 4.5]{HMreview2}), and is the data required to add the D(-1) and D1 instanton corrections to  $\mathcal{M}_{\text{HM}}^{\text{IIB}}(X)$ in an ``S-duality compatible"-way. Furthermore, in the case of a non-compact Calabi-Yau threefold $X$ without compact divisors the same structure for the BPS indices is expected. Indeed, in \cite{Joyce:2008pc} the appropriate $\Omega(\gamma)$ determining generalized Donaldson-Thomas invariants are constructed. It is then shown that $\Omega(q_0\gamma^0)=-\chi(X)$ \cite[Section 6.3]{Joyce:2008pc}, and conjectured that $\Omega(q_0\gamma^0\pm \hat{\gamma})=n_{\hat{\gamma}}^{(0)}$\cite[Conjecture 6.20]{Joyce:2008pc}.
\end{itemize}

To the above data $(M,\mathfrak{F})$ and $(M,\Gamma,Z,\Omega)$ we can apply the construction of \cite{CT} and obtain a QK manifold $(\overline{N},g_{\overline{N}})$, which we call an instanton corrected q-map space. $(\overline{N},g_{\overline{N}})$ depends on a choice of projective special real (PSR) manifold $(\mathcal{H},g_{\mathcal{H}})$ (determining the first term in $\mathfrak{F}$), the choice of $\chi\in \mathbb{Z}$ and $n_{\hat{\gamma}} \in \mathbb{Z}$, and the choice of 1-loop parameter $c_{\ell}\in \mathbb{R}$ (see Section \ref{settingsec}). Our main results concern the isometries of a lift $(\widetilde{N},g_{\overline{N}})\to (\overline{N},g_{\overline{N}})$ on which we have no periodic directions (see Definition \ref{liftdef} for a more precise statement). In order to state the main results, we consider the following subgroups of the Heisenberg group $\text{Heis}_{2n+3}(\mathbb{R})$ (endowed with standard global coordinates $(\eta^i,\widetilde{\eta}_i,\kappa )$, $i=0,\ldots, n$):

\begin{equation}
    \begin{split}
        H_{2n+2}&:=\{ (\eta^i,\widetilde{\eta}_i,\kappa)\in \text{Heis}_{2n+3}(\mathbb{R}) \; |\quad \; \eta^0=0 \}\\
        H_{2n+2,D}&:=\{ (\eta^i,\widetilde{\eta}_i,\kappa)\in H_{2n+2} \; |\quad \; \text{$\eta^a \in \mathbb{Z}$ for $a=1,...,n$}\}\,.
    \end{split}
    \end{equation}
The following theorem collects our main results:\\

\textbf{Theorem \ref{collectiontheorem}:} Consider an instanton corrected q-map space $(\widetilde{N},g_{\overline{N}})$ of dimension $4n+4$ as defined in Section \ref{settingsec} (in particular $\widetilde{N}$ here is the maximal domain of definition). Furthermore, let $T,S\in \mathrm{SL}(2,\mathbb{Z})$ be as in \eqref{sl2gen}, where $\mathrm{SL}(2,\mathbb{Z})$ acts on the ambient manifold $\overline{\mathcal{N}}_{\text{IIB}}^{\text{cl}}\supset \overline{\mathcal{N}}_{\text{IIB}}\cong \overline{\mathcal{N}}_{\text{IIA}}\supset \widetilde{N}$ as described in \eqref{sl2can}. Then:

\begin{itemize}
\item $(\widetilde{N},g_{\overline{N}})$ has a group acting by isometries of the form

\begin{equation}\label{breakisointroduction}
    \langle T \rangle \ltimes (\mathbb{Z}^n \ltimes H_{2n+2,D})
\end{equation}
where $\langle T \rangle \cong \mathbb{Z}$ denotes the subgroup of $\mathrm{SL}(2,\mathbb{Z})$ generated by $T$.
\item Assume that we take the one-loop parameter to be $c_{\ell}=\frac{\chi}{192\pi}$. Then we can always find a non-empty open subset $\widetilde{N}_S\subset \widetilde{N}$ where $(\widetilde{N}_S,g_{\overline{N}})$ has a group acting by isometries of the form 
 \begin{equation}
      \langle S \rangle \ltimes (\mathbb{Z}^n \ltimes H_{2n+2,D}),
 \end{equation}
 where $\langle S \rangle \cong \mathbb{Z}/4\mathbb{Z}$ is the subgroup generated by $S$. Furthermore, if $\widetilde{N}_{\mathrm{SL}(2,\mathbb{Z})}\subset \widetilde{N}$  
 is an open subset, which is $\mathrm{SL}(2,\mathbb{Z})$-invariant under the S-duality action \eqref{sl2can}, then $\mathrm{SL}(2,\mathbb{Z})$ acts by isometries on $(\widetilde{N}_{\mathrm{SL}(2,\mathbb{Z})},g_{\overline{N}})$. In particular, if $\widetilde{N}$ is already invariant under $\mathrm{SL}(2,\mathbb{Z})$ then \eqref{breakisointroduction} can be enhanced to
\begin{equation}
    \mathrm{SL}(2,\mathbb{Z})\ltimes (\mathbb{Z}^n\ltimes H_{2n+2,D})\,.
\end{equation}
\item Finally, if $n_{\hat{\gamma}}=0$ for all $\hat{\gamma}\in \Lambda^{+}$, then in the previous statements we can replace $\mathbb{Z}^n$ and $H_{2n+2,D}$ by $\mathbb{R}^n$ and $H_{2n+2}$.
If furthermore we take  $\chi=c_{\ell}=0$ and $n_{\hat{\gamma}}=0$ for all $\hat{\gamma}\in \Lambda^{+}$, then we return to the tree-level q-map space case, where there is a connected $3n+6$ dimensional Lie group $G$ acting by isometries on $(\widetilde{N},g_{\overline{N}})$, see \cite[Theorem 3.17]{CTSduality}. The group $G$ in particular contains the S-duality action by $\mathrm{SL}(2,\mathbb{R})$, an action by $\mathbb{R}^{n}\ltimes H_{2n+2}$, and a dilation action by $\mathbb{R}_{>0}$. 
\end{itemize}
\begin{remark}\label{introremark}\leavevmode
\begin{itemize}
\item Note that from Theorem \ref{collectiontheorem} one finds that 
    $\langle T \rangle \ltimes (\mathbb{Z}^n \ltimes H_{2n+2,D})$ is a universal group of isometries, in the sense that it is always an isometry group for any instanton corrected q-map space (provided one takes the maximal domain of definition $\widetilde{N}$ of the metric $g_{\overline{N}}$). On the other hand, even in the case of $c_{\ell}=\frac{\chi}{192\pi}$, Theorem \ref{collectiontheorem} does not guarantee in general that $\mathrm{SL}(2,\mathbb{Z})$ is an isometry group for $(\widetilde{N},g_{\overline{N}})$, but rather one must first check that $\widetilde{N}$ (or an open subset) carries an action by S-duality. In particular, Theorem \ref{collectiontheorem} does not let us conclude that $\mathrm{SL}(2,\mathbb{Z})$ is a universal group of isometries. This should be contrasted to the tree-level q-map space case, where $\mathrm{SL}(2,\mathbb{R})$ is known to always act by isometries.
\item We remark that the action of $S\in \mathrm{SL}(2,\mathbb{Z})$ is perhaps the most interesting and non-trivial within $\mathrm{SL}(2,\mathbb{Z})$, and corresponds to interchanging weak coupling and strong coupling in the type IIB string theory setting. On the other hand, the action by $T\in \mathrm{SL}(2,\mathbb{Z})$ generates the discrete Heisenberg isometry that is missing from $H_{2n+2,D}$. 
\end{itemize}
\end{remark}

In Section \ref{examplesec} we give an explicit example where we can achieve full $\mathrm{SL}(2,\mathbb{Z})$ acting by isometries. More precisely, we consider the case where $\mathfrak{F}$ is given simply by
\begin{equation}\label{exint}
    \mathfrak{F}=-\frac{1}{6}\frac{(Z^1)^3}{Z^0}+\chi \frac{(Z^0)^2 \zeta(3)}{2(2\pi \mathrm{i})^3}, \quad \chi>0\,,
\end{equation}
with variation of BPS structures having BPS indices of the form 
     \begin{equation}\label{exint2}
        \Omega(\gamma)=\begin{cases}
        \Omega(q_0\gamma^0)=-\chi, \quad q_0\neq 0 \\
    \Omega(\gamma)=0 \quad \text{else}.\\
        \end{cases}
    \end{equation}
From this data one obtains an 8-dimensional instanton corrected q-map space. It satisfies the following:\\

\textbf{Corollary \ref{QKex} and \ref{excor2}}: let $\widetilde{N}$ be defined by \eqref{tildeNex} and take $c_{\ell}=\frac{\chi}{192\pi}$. Then the instanton corrected q-map metric $g_{\overline{N}}$ associated to \eqref{exint} and \eqref{exint2} is defined and positive definite on $\widetilde{N}$. Furthermore, it carries an effective action by isometries by a group of the form $\mathrm{SL}(2,\mathbb{Z})\ltimes(\mathbb{R}\ltimes H_4)$.\\

\textbf{Theorem \ref{finitevol}:} Let $(\widetilde{N},g_{\overline{N}})$ be as in Corollary \ref{QKex}. Then: 
\begin{itemize}
    \item There is a free and properly discontinuous action by isometries of a discrete group of the form $\mathrm{SL}(2,\mathbb{Z})'\ltimes \Lambda$, where $\Lambda \subset \mathbb{R}\ltimes H_4$ is a lattice,  $\mathrm{SL}(2,\mathbb{Z})' \subset \mathrm{SL}(2,\mathbb{Z})$ is a finite index subgroup and the QK manifold $(\widetilde{N}/(\mathrm{SL}(2,\mathbb{Z})'\ltimes \Lambda),g_{\overline{N}})$ has finite volume. 
    \item Furthermore, there is a submanifold with boundary $\hat{N}\subset \widetilde{N}$ where $\mathrm{SL}(2,\mathbb{Z})'\ltimes \Lambda$ acts and the quotient $(\hat{N}/(\mathrm{SL}(2,\mathbb{Z})'\ltimes \Lambda),g_{\overline{N}})$ gives a complete QK manifold with boundary\footnote{Recall that a Riemannian manifold with boundary is complete if it is complete as a metric space with the induced distance function.} and of finite volume. The manifold with boundary $\hat{N}$ is of the form $\hat{N}=\widetilde{N}'\cup \partial \hat{N}$, where $\widetilde{N}'$ is defined in the second point of Remark \ref{incompleterem}.
\end{itemize} 

We note that the example $(\widetilde{N},g_{\overline{N}})$ of Corollary \ref{QKex} is incomplete (see Remark \ref{incompleterem}). We do not know if the metric and the $\mathrm{SL}(2,\mathbb{Z})$-action can be extended to a complete manifold (without boundary). 
At the end of Remark \ref{finalremark} we comment about some expectations of a related example associated to the resolved conifold.\\

Finally, we make a short comment related to the swampland program in physics. Among the geometric properties expected from the moduli space $\mathcal{M}$ of a low energy effective theory consistent with quantum gravity, are that $\mathcal{M}$ should be non-compact, have finite volume, and be geodesically complete (see \cite{OV} and \cite[Section 4.7]{Cecottiswamp}). In particular, applied to type IIA/B string theory, they imply that $\mathcal{M}_{\text{HM}}^{\text{IIA/B}}(X)$ must be a non-compact complete QK manifold of finite volume (after including all quantum corrections). On the other hand, the example from Theorem \ref{finitevol} produces a non-compact QK manifold of finite volume, with ``partial completeness" in the sense that it has a complete end, and a boundary where the metric is geodesically incomplete. It would be interesting to see if a suitable extension of the example of Theorem \ref{finitevol} would produce a QK manifold with the required geometric properties expected by the swampland conjectures.

\subsection{Organization of topics}

The topics are organized as follows:

\begin{itemize}
    \item In Section \ref{instreview} we review the construction of instanton corrected c-map metrics from \cite{CT}, and also discuss their twistor description. In particular, the instanton corrected c-map spaces from \cite{CT} are in the image of the HK/QK correspondence, and we want to recall a description of the QK twistor space in terms of the HK data, as done in \cite[Section 4.3]{CTSduality}.
    \item In Section \ref{Sdualitysec} we start by specifying the class of instanton corrected q-map spaces within the class of instanton corrected c-map metrics. Following closely the work in the physics literature of \cite{QMS,Sduality}, we study when an instanton corrected q-map space carries an $\mathrm{SL}(2,\mathbb{Z})$-action by isometries, or at least an action by some of its the subgroups, like $\langle S \rangle \subset \mathrm{SL}(2,\mathbb{Z})$. 
    \item In Section \ref{universalisosec} we study certain universal isometries of instanton corrected q-map spaces and how they are related to the $\mathrm{SL}(2,\mathbb{Z})$ S-duality symmetry. We collect the main results from Section \ref{Sdualitysec} and Section \ref{universalisosec} in Theorem \ref{collectiontheorem}. 
    \item In Section \ref{examplesec}, we give an explicit example of an instanton corrected q-map space where the full $S$-duality acting by isometries is realized. Furthermore, we show that it admits a quotient of finite volume by a discrete group of isometries. 
    \item Finally, in Appendix \ref{Besselappendix} we collect some useful integral identities involving Bessel functions, and in Appendix \ref{appendixtypeiiadc} we include a rather long computation that is not really needed for the main points of the paper, but we include for completeness. 
\end{itemize}

\textbf{Acknowledgements:} this work was supported by the Deutsche Forschungsgemeinschaft (German Research Foundation) under Germany’s Excellence Strategy -- EXC 2121 ``Quantum Universe'' -- 390833306. As with our previous related joint works \cite{CT,CTSduality}, the idea for this work originated from discussions within our Swampland seminar, which is part of the Cluster of Excellence Quantum Universe. The authors would like to thank Murad Alim, J\"{o}rg Teschner and Timo Weigand for their contributions in the aforementioned discussions.
\section{Instanton corrected QK metrics and their twistor description}\label{instreview}

The main aims of this section are the following:

\begin{itemize}
    \item On one hand, we want to recall the main results of \cite{CT}, concerning the construction of QK metrics associated to certain CASK manifolds with mutually local variation of BPS structures. In the setting of Calabi-Yau compactifications of string theory, these metrics are related to the type IIA hypermultiplet metric with mutually local D-instanton corrections, studied in the physics literature in \cite{HMmetric}. 
    \item On the other hand, we want to recall certain general facts about the twistor space of QK metrics in the image of the HK/QK correspondence. This part will be mostly based on \cite[Section 4]{CTSduality}. In particular, the QK metrics from the previous point lie in the image of the HK/QK correspondence, and we want to write down an explicit  expression for the holomorphic contact structure of its twistor space in terms of the HK data (see \eqref{contactlocal} and \eqref{fthetaexpres}). These formulas will be used throughout the rest of this work, and in particular in Section \ref{Sdualitysec}, where we study the isometries of instanton corrected q-map spaces.
    \item Finally, we write down certain ``type IIA" Darboux coordinates for the holomorphic contact structure, see \eqref{Darbouxcoordsinst}. These have been previously written down in the physics literature \cite{WCHKQK}, under slightly different conventions. Using the explicit formula for the contact structure obtained in the previous point, we will give in the Appendix \ref{appendixtypeiiadc} a direct proof of the fact that they are Darboux coordinates. This particular result is not needed for Section \ref{Sdualitysec}, where certain "type IIB" Darboux coordinates are found, but we include it for completeness.  
\end{itemize}
\subsection{QK metrics associated to CASK manifolds with mutually local variations of BPS structures}\label{QKCASKrecap}

We briefly recall the main ingredients in the construction of \cite{CT}. 

\begin{definition}
An integral conical affine special K\"{a}hler (CASK) manifold is a tuple  $(M,g_M,\omega_M,\nabla,\xi,\Gamma)$ where:

\begin{itemize}
    \item $(M,g_M,\omega_M,\nabla)$ is an affine special K\"ahler (ASK) manifold. Namely, $(M,g_M,\omega_M)$ is pseudo-K\"{a}hler, with the complex structure $J$ determined by the metric $g_M$ and the K\"ahler form $\omega_M$ by $g_M(J-,-)=\omega_M(-,-)$; $\nabla$ is a torsion-free flat connection with $\nabla \omega_M=0$; and if $d_{\nabla}:\Omega^k(M,TM)\to \Omega^{k+1}(M,TM)$ denotes the extension of $\nabla:\Omega^0(M,TM)\to \Omega^{1}(M,TM)$ to higher degree forms, then $d_{\nabla}J=0$, where we think of $J$ as an element of $\Omega^1(M,TM)$.
    \item $\Gamma\subset TM$ is a sub-bundle of $\nabla$-flat lattices with $\Gamma\otimes_{\mathbb{Z}}\mathbb{R}=TM$. Around any $p\in M$, we can find a local trivialization $(\widetilde{\gamma}_i,\gamma^i)$ of $\Gamma$ of Darboux frames with respect to $\omega_M$. We denote $\langle -,- \rangle:=\omega_M(-,-)|_{\Gamma\times \Gamma}$, and our conventions are that $\langle \widetilde{\gamma}_i,\gamma^j\rangle=\delta_i^j$.
    \item $\xi$ is a vector field on $M$ such that $\nabla \xi=D\xi=\text{Id}_{TM}$, where $D$ denotes the Levi-Civita connection of $g_M$, and $\text{Id}_{TM}$ is the identity endomorphism of $TM$. Furthermore, we assume that $g_M$ is positive definite on $\mathcal{D}:=\text{span}\{\xi,J\xi\}$ and negative definite on $\mathcal{D}^{\perp}$.
\end{itemize}
\end{definition}

On the other hand, the data corresponding to the mutually local instanton corrections in the string theory setting was specified in terms of the notion of a mutually local variation of BPS structures (see for example \cite{VarBPS} for the more general notion of a variation of BPS structures).

\begin{definition}\label{defvarBPS}
A variation of mutually-local BPS structures over the complex manifold $M'$ is a tuple $(M',\Gamma',Z,\Omega)$ where 

\begin{itemize}
    \item $\Gamma' \to M'$ is a local system of lattices with a skew-pairing $\langle - , - \rangle:\Gamma' \times \Gamma' \to \mathbb{Z}$.
    \item $Z$ is a holomorphic section of $(\Gamma')^*\otimes \mathbb{C}\to M'$, where $(\Gamma')^*$ denotes the dual local system of $\Gamma'$. If $\gamma$ is a local section of $\Gamma'$, then we denote by $Z_{\gamma}:=Z(\gamma)$ the corresponding local holomorphic function on $M'$.
    \item $\Omega: \Gamma' -\{0\}\to \mathbb{Z}$ is a function of sets satisfying $\Omega(\gamma)=\Omega(-\gamma)$ and the following properties
    \begin{itemize}
        \item Mutual-locality: if we define $\text{Supp}(\Omega):=\{\gamma \in \Gamma' -\{0\}\;\; | \;\; \Omega(\gamma)\neq 0\}$, then $\gamma_1,\gamma_2\in \Gamma'_p\cap \text{Supp}(\Omega)$ implies that $\langle \gamma_1,\gamma_2\rangle=0$.
        \item Support property: for any compact set $K\subset  M'$ and a choice of covariantly constant norm $|\cdot |$ on $\Gamma'|_{K}\otimes_{\mathbb{Z}}\mathbb{R}$, there is a constant $C>0$ such that for all $\gamma \in \Gamma'|_{K}\cap \text{Supp}(\Omega)$
        \begin{equation}
        |Z_{\gamma}|>C|\gamma|\,.
        \end{equation}
        \item Convergence property: for any $R>0$, the series
        \begin{equation}\label{convpropBPS}
            \sum_{\gamma\in \Gamma'|_p}\Omega(\gamma)e^{-R|Z_{\gamma}|}
        \end{equation}
        converges normally over compact subsets of $M'$.
        \item The numbers $\Omega(\gamma)$, called BPS indices, are monodromy invariant. Namely if $\gamma$ has monodromy $\gamma \to A\gamma$ around a loop, then $\Omega(\gamma)=\Omega(A\gamma)$.
    \end{itemize} 
\end{itemize}
\end{definition}

Given an integral CASK manifold $(M,g_M,\omega_M,\nabla,\xi,\Gamma)$, we will only consider mutually local variations of BPS structures $(M',\Gamma',Z,\Omega)$ where $(M,\Gamma)=(M',\Gamma')$, $\langle -, - \rangle=\omega_M(-,-)|_{\Gamma\times \Gamma}$,  and where $Z$ is the canonical central charge associated to the integral CASK manifold \cite[Proposition 2.15]{CT}. The later is determined as follows: if $\xi^{1,0}=\frac{1}{2}(\xi-\I J\xi)$, then

\begin{equation}
    Z:=2\omega_M(\xi^{1,0},-)|_{\Gamma}\,.
\end{equation}
In particular, given a local Darboux frame $(\widetilde{\gamma}_i,\gamma^i)$ of $\Gamma$, the locally defined functions $\{Z_{\widetilde{\gamma}_i}\}_{i=0}^{n}$ and $\{Z_{\gamma^i}\}_{i=0}^{n}$ give conjugate systems of holomorphic special coordinates for the CASK geometry, where $n+1=\text{dim}_{\mathbb{C}}(M)$.

\subsubsection{Associated instanton corrected HK manifold}\label{instHKsec}

To the data $(M,g_M,\omega_M,\nabla,\xi,\Gamma)$ and $(M,\Gamma,Z,\Omega)$ one can associate an ``instanton-corrected" hyperk\"{a}hler (HK) geometry \cite[Section 3]{CT}. This HK manifold can be thought as a deformation of the canonical HK manifold associated to $(M,g_M,\omega_M,\nabla,\xi,\Gamma)$ via the rigid c-map (also known as the associated ``semi-flat" HK manifold) \cite{TypeIIgeometry,C,SK,ACD}. In the physics literature, see \cite{GMN}, such instanton corrected HK metrics were studied in the context of $S^1$-compactifications of 4d $\mathcal{N}=2$ SUSY gauge theories. There the description of the HK geometry is in terms of its associated twistor space, which in turn in described in terms of a twistor family of holomorphic Darboux coordinates satisfying ``TBA"-like integral equations.\\ 

In order to describe the instanton corrected HK manifold, we first let $N:=T^*M/\Gamma^*$. This can be canonically identified with

\begin{equation}
    N\cong \{\zeta: \Gamma \to \mathbb{R}/ \mathbb{Z} \; | \; \zeta_{\gamma+\gamma'}=\zeta_{\gamma}+\zeta_{\gamma'}\}\,.
\end{equation}
In particular, slightly abusing notation and denoting by $\zeta$ the evaluation map on $N$, and given a local Darboux frame $(\widetilde{\gamma}_i,\gamma^i)$ of $\Gamma$, we obtain local coordinates on $N$ by $(Z_{\gamma^i},\zeta_{\widetilde{\gamma}_i},\zeta_{\gamma^i})$ (or $(Z_{\widetilde{\gamma}_i},\zeta_{\widetilde{\gamma}_i},\zeta_{\gamma^i})$).  Note that $Z_{\gamma^i}$ and $Z_{\widetilde{\gamma_i}}$ are (pull-backs of local) holomorphic functions on the base manifold $M$ while $\zeta_{\widetilde{\gamma}_i}$ and $\zeta_{\gamma^i}$ are ``fiber coordinates" taking values in the circle. \\

In the following, we will also denote by $\langle -, -\rangle$ the pairing on $\Gamma^*$ induced by the isomorphism $\gamma \mapsto \langle \gamma, - \rangle$. With this definition, the dual of a Darboux basis of $\Gamma$ is a Darboux basis of $\Gamma^*$. We will also denote by $\langle -, -\rangle$ the $\mathbb{C}$-linear extension of the pairing to $\Gamma^*\otimes \mathbb{C}$. \\

Finally, if $K_i: \mathbb{R}_{>0}\to \mathbb{R}$ denotes the $i$-th modified Bessel function of the second kind, and $\gamma$ is a local section of $\Gamma$ with $\gamma \in \text{Supp}(\Omega)$, we define the following local function (resp. 1-form) on $N$:

\begin{equation}\label{VAinst}
    V_{\gamma}^{\text{inst}}:=\frac{1}{2\pi}\sum_{n>0}e^{2\pi\I n\zeta_{\gamma}}K_0(2\pi n|Z_{\gamma}|), \quad A_{\gamma}^{\text{inst}}:=-\frac{1}{4\pi}\sum_{n>0}e^{2\pi\I n\zeta_{\gamma}}|Z_{\gamma}|K_1(2\pi n|Z_{\gamma}|)\Big( \frac{\mathrm{d}Z_{\gamma}}{Z_{\gamma}}-\frac{\mathrm{d}\overline{Z}_{\gamma}}{\overline{Z}_{\gamma}}\Big)\,.
\end{equation}
Due to the convergence property and support property of variations of BPS structures, these expressions  are well-defined local smooth functions (resp. $1$-forms) on $N$ (see \cite[Lemma 3.9]{CT}).\\

Finally, we will need the following compatibility condition between the data $(M,g_M,\omega_M,\nabla,\xi,\Gamma)$ and $(M,\Gamma,Z,\Omega)$:

\begin{definition}\label{compdef}
Let $\pi:N\to M$ be the canonical projection. We will say that $(M,g_M,\omega_M,\nabla,\xi,\Gamma)$ and $(M,\Gamma,Z,\Omega)$ are compatible if the tensor 

\begin{equation}\label{non-deg}
    T:=\pi^*g_{M}- \sum_{\gamma}\Omega(\gamma)V_{\gamma}^{\text{inst}}\pi^*|\mathrm{d}Z_{\gamma}|^2
\end{equation}
on $N$ is horizontally non-degenerate.
\end{definition}

We then have the following:

\begin{theorem} \cite[Theorem 3.13]{CT}\label{HKinsttheorem} Let $(M,g_M,\omega_M,\nabla,\xi,\Gamma)$ and $(M,\Gamma,Z,\Omega)$ be as before. Furthermore, let  $\omega_i\in \Omega^2(N)$ for $i=1,2,3$ be defined by
\begin{equation} \label{holsym}
    \omega_1+\mathrm{i}\omega_2:=-2\pi\left(\langle \mathrm{d}Z\wedge \mathrm{d}\zeta \rangle + \sum_{\gamma}\Omega(\gamma)\left(\mathrm{d}Z_{\gamma}\wedge A_{\gamma}^{\text{inst}}  +\I V_{\gamma}^{\text{inst}}\mathrm{d}\zeta_{\gamma}\wedge \mathrm{d}Z_{\gamma}\right)\right)
\end{equation}
\begin{equation}\label{invKF}
    \omega_{3}:=2\pi\left(\frac{1}{4}\langle \mathrm{d}Z\wedge \mathrm{d}\overline{Z}\rangle-\frac{1}{2} \langle \mathrm{d}\zeta\wedge \mathrm{d}\zeta \rangle -\sum_{\gamma}\Omega(\gamma)\left( \frac{\I}{2} V^{\text{inst}}_{\gamma}\mathrm{d}Z_{\gamma}\wedge \mathrm{d}\overline{Z}_{\gamma}+\mathrm{d}\zeta_{\gamma}\wedge A_{\gamma}^{\text{inst}}\right)\right)\,.
\end{equation}
Then the triple of real $2$-forms $(\omega_1,\omega_2, \omega_3)$ corresponds to the K\"{a}hler forms of a pseudo-HK structure\footnote{Our terminology is such that the signature of the metric is not assumed to be constant, in case $N$ has several components.} on $N$ if and only if $(M,g_M,\omega_M,\nabla,\xi,\Gamma)$ and $(M,\Gamma,Z,\Omega)$ are compatible.
\end{theorem}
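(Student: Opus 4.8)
The claim is an ``if and only if'': the triple $(\omega_1,\omega_2,\omega_3)$ given by \eqref{holsym}--\eqref{invKF} forms a pseudo-hyperk\"ahler structure on $N$ precisely when the compatibility tensor $T$ of \eqref{non-deg} is horizontally non-degenerate. The plan is to reduce the hyperk\"ahler condition to its two standard ingredients --- closedness of all three $2$-forms, and the algebraic/compatibility condition that they come from a (possibly indefinite) hyperk\"ahler triple, i.e.\ that the associated almost complex structures $J_1,J_2,J_3$ satisfy the quaternionic relations and are integrable --- and to show that the \emph{closedness} part holds unconditionally, so that the only remaining obstruction is non-degeneracy, which is exactly the content of the compatibility hypothesis.

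First I would check that $\D\omega_1=\D\omega_2=\D\omega_3=0$ directly from the explicit formulas. The semi-flat parts $\langle \D Z\wedge \D\zeta\rangle$, $\langle \D Z\wedge \D\overline Z\rangle$, $\langle \D\zeta\wedge\D\zeta\rangle$ are manifestly closed since $Z$ is holomorphic on $M$ and $\zeta$ is (fiberwise) affine; the instanton corrections are governed by the functions $V_\gamma^{\text{inst}}$ and $1$-forms $A_\gamma^{\text{inst}}$ of \eqref{VAinst}, and the key point is a set of differential identities among them --- essentially that $\D A_\gamma^{\text{inst}}$ and $\D V_\gamma^{\text{inst}}\wedge(\cdots)$ combine correctly, using the Bessel recursions $K_0'=-K_1$, $(xK_1)'=-xK_0$ and the fact that $V_\gamma^{\text{inst}}$ is (pulled back from a) harmonic-type function in the appropriate variables. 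These are presumably already established in \cite{CT} (Lemma 3.9 and its neighborhood), so I would cite them; the mutual-locality condition $\langle\gamma_1,\gamma_2\rangle=0$ on $\text{Supp}(\Omega)$ is what makes the sum over $\gamma$ close without cross-terms. Next, one identifies the candidate metric $g_N$ via $g_N=\omega_1(\cdot,J_1\cdot)$ (or symmetrically over the three), and observes that its horizontal part is exactly the tensor $T$ of \eqref{non-deg}, while the vertical part is the dual, pairing-induced metric on the torus fibers, which is automatically non-degenerate. Hence $g_N$ is non-degenerate (of the stated split signature) if and only if $T$ is horizontally non-degenerate --- giving one direction of the equivalence immediately, and showing that non-degeneracy of $T$ is \emph{necessary}.

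For the converse (non-degeneracy of $T$ $\Rightarrow$ pseudo-HK), assuming $T$ horizontally non-degenerate one gets a non-degenerate $g_N$ and three almost complex structures $J_a$ defined by $\omega_a=g_N(J_a\cdot,\cdot)$; one must verify $J_1J_2=J_3$ etc.\ and integrability. The cleanest route is the Hitchin-type criterion: a triple of closed $2$-forms that is pointwise of ``hyperk\"ahler type'' (i.e.\ $\omega_a\wedge\omega_b=\tfrac13\delta_{ab}(\sum_c\omega_c\wedge\omega_c)$ as top-degree-on-$4$-planes forms, or the equivalent linear-algebra statement on each tangent space) defines a pseudo-HK structure --- closedness plus the algebraic relation forces integrability. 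So I would (i) verify the pointwise algebraic hyperk\"ahler relations among $\omega_1,\omega_2,\omega_3$, which is a fiberwise linear-algebra computation in the frame $(\D Z_{\gamma^i},\D\zeta_{\widetilde\gamma_i},\D\zeta_{\gamma^i})$ and only uses that $T$ is non-degenerate to guarantee the $J_a$ are well-defined, and (ii) invoke closedness from the first step. Alternatively, since these metrics are obtained as deformations of the semi-flat (rigid c-map) HK metric, one can argue by noting the structure equations are a deformation of the known semi-flat ones; but the Hitchin criterion is more self-contained.

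The main obstacle I anticipate is \emph{bookkeeping in the closedness computation}: assembling $\D\omega_1,\D\omega_2,\D\omega_3$ requires carefully differentiating the Bessel-function series term by term (justified by the normal convergence from the convergence and support properties in Definition \ref{defvarBPS}), tracking how $\D A_\gamma^{\text{inst}}$ decomposes into a piece proportional to $\D\zeta_\gamma\wedge(\D Z_\gamma,\D\overline Z_\gamma)$-type terms and a piece that must cancel against $\D V_\gamma^{\text{inst}}$ contributions and against the semi-flat curvature of the connection $\nabla$, and checking the holomorphicity input $\partial_{\overline{Z}_{\gamma^j}} Z_{\gamma^i}=0$ together with the special-K\"ahler relation $\partial_{Z_{\gamma^i}} Z_{\widetilde\gamma_j}=\partial_{Z_{\gamma^j}}Z_{\widetilde\gamma_i}$ (symmetry of the prepotential Hessian). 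All of this is routine in principle but delicate in practice; fortunately it is precisely the content of \cite[Theorem 3.13]{CT}, so in the write-up I would present the equivalence structure and defer the computational core to that reference, spelling out only the reduction ``closedness always, non-degeneracy iff compatibility.''
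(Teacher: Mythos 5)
This theorem is imported verbatim from \cite[Theorem 3.13]{CT}; the present paper gives no proof of it, so the only meaningful comparison is with the argument in that reference, and your outline is essentially the same: closedness of all three forms holds unconditionally (by term-by-term differentiation of the Bessel series, justified by the support and convergence properties), the quaternionic algebra is a fiberwise linear-algebra check, integrability then follows from Hitchin's lemma, and non-degeneracy of the resulting metric is equivalent to horizontal non-degeneracy of $T$. One small correction: the vertical block of $g_N$ is \emph{not} ``automatically non-degenerate'' independently of $T$ --- as the formula $g_N=2\pi\bigl(T_{i\overline{j}}\,\mathrm{d}Z^i\mathrm{d}\overline{Z}^j+(W_i+W_i^{\text{inst}})T^{i\overline{j}}(\overline{W}_j+\overline{W}_j^{\text{inst}})\bigr)$ quoted in Section \ref{examplesec} shows, the fiber part is built from the \emph{inverse} matrix $T^{i\overline{j}}$, so the metric cannot even be written down where $T$ degenerates; this only strengthens your equivalence and does not affect the logic.
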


\begin{definition}
We denote the resulting instanton corrected HK manifold from the previous theorem by $(N,g_N,\omega_1,\omega_2,\omega_3)$.
\end{definition}
\begin{remark}\label{hkremark}

\begin{itemize}
    \item Compared to \cite[Section 3]{CT}, we have rescaled the above 2-forms $\omega_i$ by a factor of $2\pi$, and rescaled $\zeta$ by $2\pi$ (i.e. $\zeta:\Gamma \to \mathbb{R}/\mathbb{Z}$ instead of $\zeta:\Gamma \to \mathbb{R}/2\pi\mathbb{Z}$). Furthermore, we have changed by a sign the convention of how the BPS indices $\Omega$ enter into the formulas \eqref{holsym}, \eqref{invKF} (i.e.\ the above formulas would correspond in \cite{CT} to the HK metric associated to the mutually local variation of BPS structures $(M,\Gamma,Z,-\Omega)$). We do this change of conventions in order to simplify the formulas taken from \cite[Section 4]{CT} below and also to be able to compare more easily with the physics literature in Section \ref{Sdualitysec} below.
    \item In the expressions \eqref{holsym} and \eqref{invKF} we are combining the wedge $\wedge$ with the pairing $\langle -, - \rangle$ on $\Gamma^*\otimes \mathbb{C}$. For example, with respect to a Darboux frame $(\widetilde{\gamma}_i,\gamma^i)$ of $\Gamma$, we have $\langle \mathrm{d}Z\wedge \D\overline{Z}\rangle=\D Z_{\widetilde{\gamma}_i}\wedge \D \overline{Z}_{\gamma^i}  -\D Z_{\gamma^i}\wedge \D \overline{Z}_{\widetilde{\gamma}_i}$ and $\langle \mathrm{d}\zeta \wedge \mathrm{d}\zeta \rangle=\mathrm{d}\zeta_{\widetilde{\gamma}_i}\wedge \mathrm{d}\zeta_{\gamma^i}-\mathrm{d}\zeta_{\gamma^i}\wedge\mathrm{d}\zeta_{\widetilde{\gamma}_i}=2\mathrm{d}\zeta_{\widetilde{\gamma}_i}\wedge \mathrm{d}\zeta_{\gamma^i}$. Furthermore, the expressions in \eqref{holsym} and \eqref{invKF}  are actually global and well-defined due to the monodromy invariance of $\Omega(\gamma)$, and the support and convergence property of the variations of BPS structures. 
    \item $(N,g_N,\omega_1,\omega_2,\omega_3)$ carries an infinitesimal rotating circle action \cite[Proposition 3.20]{CT}. Namely, there is a vector field $V$ on $N$ such that
\begin{equation}\label{rotvec}
    \mathcal{L}_V(\omega_1+\I\omega_2)=2\I(\omega_1+\I\omega_2), \quad \mathcal{L}_V\omega_3=0\,.
\end{equation}
Note that due to the factor $2$ in (\ref{rotvec}) the vector field $V$ is twice the vector field denoted $V$ in \cite{CT}.
    \item Under the mild assumption on the flow of $\xi$ that it generates a free-action on $M$ of the (multiplicative) monoid $\mathbb{R}_{\geq 1}$, we can guarantee that $g_N$ has signature $(4,4n)$ where $n+1=\text{dim}_{\mathbb{C}}(M)$ \cite[Proposition 3.21]{CT}.
    \item If one sets $\Omega(\gamma)=0$ for all $\gamma \in \Gamma$, then $(N,g_N,\omega_1,\omega_2,\omega_3)$ reduces to the semi-flat HK manifold obtained via the rigid c-map.
\end{itemize}
\end{remark}

\subsubsection{Associated instanton corrected QK manifold via HK/QK correspondence}\label{HK/QKsec}

The instanton corrected QK manifold $(\overline{N},g_{\overline{N}})$ associated to the data of $(M,g_M,\omega_M,\nabla,\xi,\Gamma)$ and $(M,\Gamma,Z,\Omega)$ was constructed in \cite{CT} by applying the HK/QK correspondence to $(N,g_N,\omega_1,\omega_2,\omega_3)$. In order to do this, one needs the following additional data:

\begin{itemize}
    \item A hyperholomorphic principal $S^1$-bundle $(\pi_N:P\to N,\eta)$, where $\pi_N:P\to N$ is constructed via \cite[Proposition 4.2]{CT}, and $\eta$ is a connection on $P$ having curvature
    \begin{equation}
        \D \eta=\pi_N^*(\omega_3 -\frac{1}{2}\D\iota_V g_N)\,.
    \end{equation}
    The connection $\eta$ is given by \cite[Corollary 4.5]{CT}
    \begin{equation}\label{etadef}
    \eta:=\Theta +\pi^*_N\Big(\frac{\pi \I}{2}\pi^*_M( \overline{\partial} r^2-\partial r^2) - \sum_{\gamma}2\pi \Omega(\gamma)\eta_{\gamma}^{\text{inst}}-\frac{1}{2}\iota_Vg_N\Big)
\end{equation}
where $r^2:=g_M(\xi,\xi)$; $V$ is the rotating vector field satisfying \eqref{rotvec}; $\Theta$ is another connection on $P$ having curvature $-\pi\langle \D \zeta \wedge \D \zeta \rangle=2\pi \mathrm{d}\zeta_{\gamma^i}\wedge \mathrm{d}\zeta_{\widetilde{\gamma}_i}$,
and 
\begin{equation}\label{etagammainst}
    \eta_{\gamma}^{\text{inst}}:=\frac{\I}{8\pi^2}\sum_{n>0}\frac{e^{2\pi \I n\zeta_{\gamma}}}{n}|Z_{\gamma}|K_1(2\pi n|Z_{\gamma}|)\Big(\frac{\D Z_{\gamma}}{Z_{\gamma}}-\frac{\D \overline{Z}_{\gamma}}{\overline{Z}_{\gamma}}\Big)\,.
\end{equation}
If $\sigma$ denotes a local coordinate for the $S^1$-fiber, then one can write

\begin{equation}\label{Thetacon}
    \Theta=\pi\left(\D\sigma -\pi_N^*\langle \zeta,\D\zeta \rangle\right)\,.
\end{equation}
\item We need to furthermore specify a Hamiltonian for $\omega_3$ with respect to the rotating vector field $V$ satisfying \eqref{rotvec}, which in this case is given by \cite[Lemma 4.7]{CT}

\begin{equation}\label{fdef}
    f=2\pi (r^2- 8c_{\ell} - \sum_{\gamma}\Omega(\gamma)\iota_V\eta_{\gamma}^{\text{inst}}),\quad c_{\ell} \in \mathbb{R},
\end{equation}
together with the lift $V^P$ of $V$ to $P$ given by

\begin{equation}
    V^P:=\widetilde{V}+f_3\partial_{\sigma}, \quad f_3:=f-\frac{1}{2}g_{N}(V,V)
\end{equation}
where $\widetilde{V}$ denotes the horizontal lift with respect to $\eta$ and $\partial_{\sigma}$ is the vertical vector field of $P$ generating the $S^1$-action.
\item Finally, we consider the open subset $N'\subset N$ given by
\begin{equation}\label{n'def}
        N'=\{p\in N \quad | \quad f(p)\neq 0, \quad f_3(p)\neq 0, \quad g_N(V_p,V_p)\neq 0\},
\end{equation}
and the $1$-forms on $P$ given by
\begin{equation}\label{thetadef}
    \theta_0^P=-\frac{1}{2}\pi_N^*\mathrm{d}f, \quad \theta_3^P:=\eta +\frac{1}{2}\pi_N^*\iota_Vg_N, \quad 
    \theta_1^P:=\frac{1}{2}\pi_N^*\iota_V\omega_2,\quad 
    \theta_2^P:=-\frac{1}{2}\pi_N^*\iota_V\omega_1\,, 
\end{equation}
\end{itemize}

We then have
\begin{theorem}\cite[Theorem 4.10]{CT}
Let $(M,g_M,\omega_M,\nabla,\xi,\Gamma)$ be an integral CASK manifold and $(M,\Gamma,Z,\Omega)$ a compatible mutually local variation of BPS structures. Furthermore, let $(N,g_N,\omega_1,\omega_2,\omega_3)$, $(\pi_N:P\to N,\eta)$, $f$, $f_3$, $\theta_i^P$, $V^P$ and $N'$ the associated data defined in the previous points. Given any submanifold $\overline{N}\subset P|_{N'}$ transverse to $V^P$, the symmetric $2$-tensor
\begin{equation}
    g_{\overline{N}}:=-\frac{1}{f}\left(\frac{2}{f_3}\eta^2+\pi^*_Ng_N-\frac{2}{f}\sum_{i=0}^3(\theta_i^P)^2\right)\Bigg|_{\overline{N}}
\end{equation}
defines a pseudo-QK metric on $\overline{N}$. Furthermore, if $(N,g_N,\omega_1,\omega_2,\omega_3)$ has signature $(4,4n)$, then $g_{\overline{N}}$ is positive definite on $\overline{N}_{+}=\overline{N}\cap\{f>0,f_3<0\}$.
\end{theorem}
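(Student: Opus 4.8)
The plan is to read the tensor $g_{\overline N}$ off as the output of the HK/QK correspondence applied to the instanton corrected hyperk\"ahler data built in Section~\ref{instreview}, to verify that this data meets the hypotheses of the correspondence, and then to deduce the signature statement by a pointwise linear-algebra computation.

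\textbf{Step 1 (set-up).} I would first recall the HK/QK correspondence in the form needed here (in the formulation recalled in \cite[Section~4]{CTSduality}, going back to the work of Haydys, Hitchin and Alekseevsky--Cort\'es--Dyckmanns--Mohaupt): from a pseudo-hyperk\"ahler manifold $(N,g_N,\omega_1,\omega_2,\omega_3)$ with a Killing field $V$ that is \emph{rotating} in the sense of \eqref{rotvec}, a Hamiltonian $f$ for $\omega_3$ with respect to $V$ (so that $\iota_V\omega_3=-\D f$), and a hyperholomorphic principal $S^1$-bundle $(\pi_N:P\to N,\eta)$ with $\D\eta=\pi_N^*(\omega_3-\tfrac12\D\iota_Vg_N)$, one forms $f_3=f-\tfrac12 g_N(V,V)$, the vector field $V^P=\widetilde V+f_3\partial_\sigma$ on $P$, the open set $N'$ of \eqref{n'def} where $f$, $f_3$ and $g_N(V,V)$ are nowhere zero, and the one-forms $\theta_i^P$ of \eqref{thetadef}; then on any submanifold $\overline N\subset P|_{N'}$ transverse to $V^P$, the explicit combination displayed in the statement is a non-degenerate pseudo-QK metric, whose signature on a region where $f$, $f_3$ have fixed signs is governed by the signature of $g_N$. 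The proof then consists of supplying the correct ingredients.

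\textbf{Step 2 (verifying the hypotheses).} Each ingredient is already available: Theorem~\ref{HKinsttheorem} provides the pseudo-HK structure $(N,g_N,\omega_1,\omega_2,\omega_3)$ (using compatibility of the integral CASK data with the variation of BPS structures); \cite[Proposition~3.20]{CT}, recalled in \eqref{rotvec}, gives the rotating field $V$; \cite[Lemma~4.7]{CT} shows that $f$ of \eqref{fdef} is a Hamiltonian for $\omega_3$ with respect to $V$; and \cite[Proposition~4.2, Corollary~4.5]{CT} construct the hyperholomorphic bundle $(\pi_N:P\to N,\eta)$ with connection \eqref{etadef} and the required curvature --- the point that $\omega_3-\tfrac12\D\iota_Vg_N$ is of type $(1,1)$ for all three complex structures (so that a hyperholomorphic connection exists) being the general consequence of $V$ being rotating. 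The set $N'$ of \eqref{n'def} is exactly the non-degeneracy locus that the correspondence requires, and transversality of $\overline N$ to $V^P$ is assumed. Hence the general theorem applies and $g_{\overline N}$ is pseudo-QK. The only delicate point is convention tracking: the factor $2$ in \eqref{rotvec} (flagged in Remark~\ref{hkremark} as a change from \cite{CT}), together with the rescalings of $\zeta$ and of the $\omega_i$, must be carried consistently into the normalizations of $\eta$, $f$ and $\theta_i^P$ so that the curvature and Hamiltonian relations hold on the nose.

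\textbf{Step 3 (signature).} Assume $g_N$ has signature $(4,4n)$ and restrict attention to $\overline N_+=\overline N\cap\{f>0,f_3<0\}$, where automatically $g_N(V,V)=2(f-f_3)>0$. Fix a point and split $T_pN=\mathcal Q\oplus\mathcal Q^{\perp}$ with $\mathcal Q=\mathrm{span}\{V,I_1V,I_2V,I_3V\}$; since the $I_j$ are isometries and $g_N(V,V)>0$, these four vectors are $g_N$-orthogonal and $g_N|_{\mathcal Q}$ is positive definite, whence $g_N|_{\mathcal Q^{\perp}}$ is negative definite of signature $(0,4n)$. Using $\iota_V\omega_j=\pm g_N(I_jV,\cdot)$ and $\D f=\mp\iota_V\omega_3$, the one-forms $\theta_0^P,\dots,\theta_3^P$ are, up to the overall conventions, the $g_N$-duals of $V,I_1V,I_2V,I_3V$, so that $-\tfrac2f\sum_i(\theta_i^P)^2$ rescales precisely the $\mathcal Q$-block while $\tfrac{2}{f_3}\eta^2$ accounts for the bundle direction; diagonalizing the resulting quadratic form on the $\mathcal Q$-plus-fibre block (dimension $5$, reduced to $4$ after imposing transversality to $V^P$) shows it is positive definite if and only if $f>0$ and $f_3<0$, while on $\mathcal Q^{\perp}$ the overall factor $-\tfrac1f<0$ converts the negative-definite $\pi_N^*g_N$ into a positive-definite form. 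Therefore $g_{\overline N}$ is positive definite on $\overline N_+$. I would either reproduce this short pointwise computation or refer to the analogous one in \cite{CT}.

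\textbf{Main obstacle.} The genuinely substantive input --- here imported from \cite[Proposition~4.2, Corollary~4.5]{CT} --- is the existence of the hyperholomorphic connection $\eta$ with curvature $\pi_N^*(\omega_3-\tfrac12\D\iota_Vg_N)$, i.e.\ that this instanton-corrected closed two-form is $(1,1)$ with respect to all three complex structures and represents an integral class; this is where the infinite sum $\sum_\gamma\Omega(\gamma)\eta_\gamma^{\text{inst}}$ and the support and convergence properties of the variation of BPS structures do the real work. Granting that, the remaining difficulty is entirely bookkeeping: keeping the sign and normalization conventions consistent throughout, and carrying out the pointwise signature computation of Step~3.
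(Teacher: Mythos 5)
Your proposal is correct and follows exactly the route of the proof this statement actually has: the paper itself does not prove it but imports it verbatim from \cite[Theorem 4.10]{CT}, where it is obtained precisely by feeding the instanton-corrected data (Theorem \ref{HKinsttheorem}, the rotating field \eqref{rotvec}, the Hamiltonian \eqref{fdef} and the hyperholomorphic bundle \eqref{etadef}) into the general HK/QK correspondence of \cite{QKPSK}, with the signature statement coming from the same pointwise splitting $T_pN=\mathcal Q\oplus\mathcal Q^{\perp}$ you describe. The only caveat is that your Step 3 slightly oversimplifies $\theta_3^P=\eta+\tfrac12\pi_N^*\iota_Vg_N$ (it is not purely the $g_N$-dual of $V$), but you already account for this by treating the $\mathcal Q$-plus-fibre block jointly, which is how the cited computation handles it.
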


\begin{remark}
Recall that we can guarantee that $(N,g_N,\omega_1,\omega_2,\omega_3)$ has signature $(4,4n)$ if the flow of $\xi$ generates a free-action on $M$ of the monoid $\mathbb{R}_{\geq 1}$ (see Section \ref{instHKsec}). 
\end{remark}
\subsubsection{The case of a CASK domain}\label{QKinstdomainsec}

We now specialize the previous construction to the case of a CASK domain. This will be the case of interest in the following Section \ref{Sdualitysec}.

\begin{definition}\label{CASKdomaindef}
A CASK domain is a tuple $(M,\mathfrak{F})$ where 
\begin{itemize}
    \item $M\subset \mathbb{C}^{n+1}-\{0\}$ is a $\mathbb{C}^{\times}$-invariant domain. We denote the canonical holomorphic coordinates by $Z^i$, $i=0,1,...,n$. To avoid inessential coordinate changes later on, we 
    will assume for simplicity that $Z^0$ does not vanish on $M$.
    \item $\mathfrak{F}:M\to \mathbb{C}$ is a holomorphic function, homogeneous of degree $2$ with respect to the natural $\mathbb{C}^{\times}$-action on $M$.
    \item The matrix 
    \begin{equation}
        \text{Im}\left(\tau_{ij}\right), \quad \tau_{ij}:=\frac{\partial^2 \mathfrak{F}}{\partial Z^i\partial Z^j}
    \end{equation}
    has signature $(n,1)$, and $\text{Im}(\tau_{ij})Z^i\overline{Z}^j<0$.
\end{itemize}
\end{definition}

A CASK domain $(M,\mathfrak{F})$ induces in the usual way a CASK manifold $(M,g_M,\omega_M,\nabla,\xi)$ \cite{ACD}. With our conventions on the signature of the CASK manifold, if  $Z_i:=\frac{\partial \mathfrak{F}}{\partial Z^i}$, then $\{Z^i\}$ and $\{-Z_i\}$ are a global system of conjugate conical special holomorphic coordinates. If $x^i=\text{Re}(Z^i)$ and $y_i:=\text{Re}(Z_i)$, then $\nabla$ is defined such that $\D x^i$ and $\D y_i$ are flat. Furthermore

\begin{equation}\label{CASKconvention}
    g_M=-\text{Im}(\tau_{ij})\D Z^i \D\overline{Z}^j, \quad \omega_M=-\frac{\mathrm{i}}{2}\text{Im}(\tau_{ij})\D Z^i\wedge \D\overline{Z}^j=\D x^i\wedge \D y_i, \quad \xi=Z^i\partial_{Z^i} +\overline{Z}^i\partial_{\overline{Z}^i}\,.
\end{equation}
Given a CASK domain $(M,\mathfrak{F})$ we can induce a canonical integral structure on the CASK manifold by defining $\Gamma \to M$ to be $\Gamma=\text{span}_{\mathbb{Z}}\{\partial_{x^i},\partial_{y_i}\}$. In the following, we will assume that:
\begin{itemize}
    \item $(M,g_M,\omega_M,\nabla,\xi,\Gamma)$ is an integral CASK manifold induced by a CASK domain $(M,\mathfrak{F})$ with the canonical integral structure. In this case, we will sometimes use the notation $(\partial_{x^i},\partial_{y_i})=(\widetilde{\gamma}_i,\gamma^i)$.
    \item Given a mutually local variation of BPS structures $(M,\Gamma,Z,\Omega)$ with $(M,\Gamma)$ as in the previous point, we assume that $Z$ is the canonical central charge, and that $\text{Supp}(\Omega)\subset \text{span}_{\mathbb{Z}}\{\partial_{y_i}\}$. In particular, the canonical central charge satisfies in this case
    \begin{equation}
        Z=Z_{\widetilde{\gamma}_i}\mathrm{d}x^i+Z_{\gamma^i}\mathrm{d}y_i=-Z_i\D x^i+Z^i\D y_i\,.
    \end{equation}
\end{itemize}

In order to construct the associated QK metric, we need to choose $\overline{N}\subset P|_{N'}$ transverse to $V^P$.  Denoting by $\pi_M:=\pi\circ \pi_N$ the composition of the projections $\pi_N:P\to N$, and $\pi:N\to M$, we have that $\pi_M(p)=(Z^0,...,Z^n)$. We define $\overline{N}$ by

\begin{equation}\label{QKCASKdomain}
    \overline{N}:=\{p \in P|_{N'} \quad| \quad  \text{Arg}(Z^0)=0,\quad \text{ where $\pi_M(p)=(Z^0,...,Z^n)$} \}\,.
\end{equation}
\begin{definition}\label{QKCASKcoords}
Throughout the paper, we will use coordinates $(\rho,z^a,\zeta^i,\widetilde{\zeta}_i,\sigma)$ on  \eqref{QKCASKdomain} defined as follows:

\begin{itemize}
    \item $16\pi\rho:=2\pi r^2 -16\pi c_{\ell}$ where $r^2=g_M(\xi,\xi) = -\text{Im}(\tau_{ij}) Z^i \overline{Z}^j$ and $c_{\ell}\in \mathbb{R}$. Note that $r^2$ is a K\"ahler potential for the affine special K\"ahler metric $g_M$.
    \item $z^a:=Z^a/Z^0$ for $a=1,...,n$. These are global holomorphic coordinates on the induced projective special K\"{a}hler (PSK) manifold $(\overline{M},g_{\overline{M}})$ induced by the CASK domain. In particular, we have a projection $\pi_{\overline{M}}: M\to \overline{M}$.
    \item $(\zeta^i,\widetilde{\zeta}_i)$ are given by $\zeta^i:=-\zeta_{\partial_{y_i}}=-\zeta_{\gamma^i}$ and $\widetilde{\zeta}_i:=\zeta_{\partial_{x^i}}=\zeta_{\widetilde{\gamma}_i}$, where the latter are the evaluation map on $N$ contracted with $\partial_{x^i}$ and $\partial_{y_i}$.
    \item $\sigma$ is a local coordinate for the $S^1$-fiber of $\pi_N:P\to N$ satisfying \eqref{Thetacon}.
\end{itemize}    
\end{definition}

\begin{remark}
In the string theory setting, the coordinates $(\rho,z^a,\zeta^i,\widetilde{\zeta}_i,\sigma)$ can be identified with certain fields from the type IIA hypermultiplet. Namely,  $\rho$ is the 4d-dilaton, $z^a$ are coordinates for the complex moduli of the Calabi-Yau, $(\zeta^i,\widetilde{\zeta}_i)$
are the RR-axions, and $\sigma$ is the NS-axion. Furthermore, the constant $c_{\ell}$ appearing in the definition of $\rho$ is identified with the $1$-loop correction to the tree-level Ferrara-Sabharwal metric. 
\end{remark}

In \cite[Theorem 5.4]{CT} an explicit expression of the resulting QK metric $(\overline{N},g_{\overline{N}})$ in these coordinates is given (with slightly different conventions). In order to write down the formula we introduce the following notation:

\begin{itemize}
    \item We denote by $\widetilde{Z}_{\gamma}:=Z_{\gamma}/Z^0$ the normalized central charge. In particular, we have $\widetilde{Z}_{\gamma^i}=z^i$ with $z^0=1$. If furthermore we let $\mathcal{K}=-\log(-2\text{Im}(\tau_{ij})z^i\overline{z}^j)$, then $\mathcal{K}$ is a global K\"{a}hler potential for the projective special K\"{a}hler manifold $(\overline{M},g_{\overline{M}})$ induced by the CASK domain. Note that     
    \begin{equation} \label{r2:eq}r^2 = |Z^0|^2\frac{e^{-\mathcal{K}}}{2}.\end{equation}
    \item We denote $N_{ij}=-2\text{Im}(\tau_{ij})$. If $\gamma \in \text{Supp}(\Omega)$ we write $\gamma=q_i(\gamma)\gamma^i$ (recall that we assume that $\text{Supp}(\Omega)\subset \text{span}_{\mathbb{Z}}\{\gamma^i\}$), and define
    \begin{equation}
        N_{ij}^{\text{inst}}:=-2\sum_{\gamma}\Omega(\gamma)V^{\text{inst}}_{\gamma}q_i(\gamma)q_j(\gamma)\,.
    \end{equation}
    \item We let
    \begin{equation}\label{instW}
    W_i:=\D\zeta_{\widetilde{\gamma}_i}+\tau_{ij}\D\zeta_{\gamma^j}, \;\;\;\;\;\; W_i^{\text{inst}}:=-\sum_{\gamma}\Omega(\gamma)q_i(\gamma)( A_{\gamma}^{\text{inst}}-\I V_{\gamma}^{\text{inst}}\D\zeta_{\gamma}),
\end{equation}
and 
\begin{equation}\label{etainstdef}
    \eta^{\text{inst}}:=-\sum_{\gamma}\Omega(\gamma) \eta_{\gamma}^{\text{inst}}-\frac{1}{2}\iota_V\left(\frac{g_N}{2\pi}- \pi^*_Mg_{M}\right) \,.
\end{equation}
\item We split $f=16\pi\rho+f^{\text{inst}}=16\pi(\rho+\rho^{\text{inst}})$ in \eqref{fdef} using that $16\pi \rho=2\pi r^2-16\pi c_{\ell}$, and where $f^{\text{inst}}=16\pi \rho^{\text{inst}}$ contains the terms with the BPS indices $\Omega(\gamma)$, namely
    
    \begin{equation}
        f^{\text{inst}}=-2\pi\sum_{\gamma}\Omega(\gamma)\iota_V\eta_{\gamma}^{\text{inst}}\,.
    \end{equation}
    Finally, we denote $ f_3^{\text{inst}}=16\pi \rho_3^{\text{inst}}=2\pi \iota_V \eta^{\text{inst}}$. We remark that in the case where $\Omega(\gamma)=0$ for all $\gamma$ we have $f^{\text{inst}}=f_3^{\text{inst}}=0$, and similarly for all the other quantities with an $^\text{inst}$ superscript. 
\end{itemize}

We then have

\begin{theorem}\cite[Theorem 5.4, Proposition 5.6]{CT}\label{QKCASKdomainformulas} Let $(M,\mathfrak{F})$ and $(M,\Gamma,Z,\Omega)$ be as before.  By possibly restricting $M$, we assume that $M$ is the maximal open subset where $(M,g_M,\omega_M,\nabla,\xi,\Gamma)$ and $(M,\Gamma,Z,\Omega)$ are compatible. Furthermore, let $\overline{N}$ be as \eqref{QKCASKdomain}. Then in the coordinates $(\rho,z^a,\zeta^i,\widetilde{\zeta}_i,\sigma)$ from Definition \ref{QKCASKcoords} the instanton corrected QK metric $(\overline{N},g_{\overline{N}})$ associated to $(M,g_M,\omega_M,\nabla,\xi,\Gamma)$ and $(M,\Gamma,Z,\Omega)$ has the form:

\begin{equation}\label{coordQKmetric2}
    \begin{split}
        g_{\overline{N}}=& \frac{\rho+c_{\ell}}{\rho+\rho^{\text{inst}}}\Big(g_{\overline{M}}+2e^{\mathcal{K}}\sum_{\gamma}\Omega(\gamma)V_{\gamma}^{\text{inst}}\Big|\mathrm{d}\widetilde{Z}_{\gamma} + \widetilde{Z}_{\gamma}\Big(\frac{\D\rho}{2(\rho+c_{\ell})}+\frac{\D\mathcal{K}}{2}\Big)\Big|^2\Big)\\
        &+\frac{1}{2(\rho+\rho^{\text{inst}})^2}\Big(\frac{\rho +2c_{\ell} -\rho^{\text{inst}}}{2(\rho+c_{\ell})}\D\rho^2+2\D\rho \D \rho^{\text{inst}}|_{\overline{N}}+(\D \rho^{\text{inst}})^2|_{\overline{N}}\Big)\\
        &+\frac{\rho+c_{\ell}+\rho_-^{\text{inst}}}{64(\rho+\rho^{\text{inst}})^2(\rho+2c_{\ell}-\rho_3^{\text{inst}})}\Big(\D \sigma -\langle \zeta,\D\zeta \rangle -4 c_{\ell}\D^c\mathcal{K}+\eta_+^{\text{inst}}|_{\overline{N}}+\frac{\rho_+^{\text{inst}}-c_{\ell}}{\rho+c_{\ell}+\rho_-^{\text{inst}}}\eta_-^{\text{inst}}|_{\overline{N}}\Big)^2\\
        &-\frac{1}{4(\rho+\rho^{\text{inst}})}(W_i+W_i^{\text{inst}}|_{\overline{N}})(N+N^{\text{inst}})^{ij}(\overline{W}_j+\overline{W}_j^{\text{inst}}|_{\overline{N}}) \\
        &+\frac{(\rho+c_{\ell})e^{\mathcal{K}}}{2(\rho+\rho^{\text{inst}})^2}\Big|z^i(W_i+W_i^{\text{inst}}|_{\overline{N}})- \frac{\mathrm{i}}{2}\sum_{\gamma}\Omega(\gamma) A_{\gamma}^{\text{inst}}(V)\Big(\D\widetilde{Z}_{\gamma} + \widetilde{Z}_{\gamma}\Big(\frac{\D\rho}{2(\rho+c_{\ell})}+\frac{\D \mathcal{K}}{2}\Big)\Big)\Big|^2\\
        &
        +\frac{\rho+c_{\ell}+\rho_-^{\text{inst}}}{\rho+\rho^{\text{inst}}}\Big(\frac{\D^c\mathcal{K}}{2}+\frac{1}{8(\rho+c_{\ell}+\rho_-^{\text{inst}})}\eta_-^{\text{inst}}|_{\overline{N}}\Big)^2-\frac{\rho+c_{\ell}}{\rho+\rho^{\text{inst}}}\Big(\frac{\D^c\mathcal{K}}{2}\Big)^2
    \end{split}
\end{equation}
where $\mathrm{d}^c:=\mathrm{i}(\overline{\partial}-\partial)$, $\eta^{\text{inst}}_{\pm}$ are given by
\begin{equation}
    \eta_{\pm}^{\text{inst}}:=\Big( \eta^{\text{inst}}-4\rho_3^{\text{inst}}\widetilde{\eta}\Big)\pm \Big( \sum_{\gamma}\Omega(\gamma)\eta_{\gamma}^{\text{inst}}-4\rho^{\text{inst}}\widetilde{\eta}\Big)\,, \quad \widetilde{\eta}:=\mathrm{d}^c \log(r)\,,
\end{equation}
and 
\begin{equation}
    \rho_{\pm}^{\text{inst}}:=(\rho^{\text{inst}}\pm \rho_3^{\text{inst}})/2 \,.
\end{equation}
Furthermore, the open subset $\overline{N}_{+}=\overline{N}\cap\{f>0,f_3<0\}$ is non-empty, and $g_{\overline{N}}$ is positive-definite on $\overline{N}_{+}$.
\end{theorem}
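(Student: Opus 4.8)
The backbone is the master formula of the HK/QK correspondence recalled above (Theorem \cite[Theorem 4.10]{CT}): on any slice $\overline{N}\subset P|_{N'}$ transverse to $V^P$,
\[
g_{\overline{N}}=-\frac{1}{f}\left(\frac{2}{f_3}\eta^2+\pi_N^*g_N-\frac{2}{f}\sum_{i=0}^3(\theta_i^P)^2\right)\bigg|_{\overline{N}},
\]
so the statement is, in principle, a matter of substituting and simplifying. The plan is to make each ingredient explicit in the coordinates $(\rho,z^a,\zeta^i,\widetilde{\zeta}_i,\sigma)$ of Definition \ref{QKCASKcoords}. First I would write out the CASK-domain data from \eqref{CASKconvention}: the metric $g_M$, the form $\omega_M$ and the Euler field $\xi$ in the special coordinates $Z^i$, the canonical central charge $Z=-Z_i\,\D x^i+Z^i\,\D y_i$, and $r^2=-\mathrm{Im}(\tau_{ij})Z^i\overline{Z}^j$, which by \eqref{r2:eq} equals $|Z^0|^2e^{-\mathcal{K}}/2$ and, by $16\pi\rho=2\pi r^2-16\pi c_\ell$, equals $8(\rho+c_\ell)$. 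On the slice $\overline{N}=\{\mathrm{Arg}(Z^0)=0\}$ this pins down $Z^0$ as a positive real function of $(\rho,z,\overline{z})$, whence $\D Z^0/Z^0=\tfrac12\big(\D\rho/(\rho+c_\ell)+\D\mathcal{K}\big)$ and $\D Z_\gamma=Z^0\big(\D\widetilde{Z}_\gamma+\widetilde{Z}_\gamma(\tfrac{\D\rho}{2(\rho+c_\ell)}+\tfrac{\D\mathcal{K}}{2})\big)$; this is the source of the combination recurring throughout \eqref{coordQKmetric2}, and it exhibits $\rho,z^a$ and the $\zeta$'s as genuine coordinates on $\overline{N}$.

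Next comes the hyperk\"ahler side. The two-forms $\omega_1,\omega_2,\omega_3$ are given explicitly by \eqref{holsym}--\eqref{invKF}, from which $g_N$ is recovered through the hyperk\"ahler relations. I would then use \cite[Proposition 3.20]{CT} to pin down the rotating vector field $V$ of Remark \ref{hkremark}; the only facts actually needed are $\iota_V\,\D Z_\gamma=2\I Z_\gamma$ and $\iota_V\,\D\zeta=0$, which already force the weight $2\I$ in \eqref{rotvec}. The quantities $f$, $f_3=f-\tfrac12 g_N(V,V)$, the connection $\eta$ of \eqref{etadef}--\eqref{etagammainst}, and the one-forms $\theta_i^P=-\tfrac12\pi_N^*\D f,\ \eta+\tfrac12\pi_N^*\iota_V g_N,\ \tfrac12\pi_N^*\iota_V\omega_2,\ -\tfrac12\pi_N^*\iota_V\omega_1$ are then computed by contracting $V$ into \eqref{holsym}--\eqref{invKF} and \eqref{etagammainst}. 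Here the key simplification is that $\eta_\gamma^{\mathrm{inst}}$ has one-form part $\D Z_\gamma/Z_\gamma-\D\overline{Z}_\gamma/\overline{Z}_\gamma$, so $\iota_V\eta_\gamma^{\mathrm{inst}}$ collapses to a single Bessel sum; this is precisely how $f$ splits as $16\pi(\rho+\rho^{\mathrm{inst}})$ and how $N^{\mathrm{inst}}_{ij}$, $W_i^{\mathrm{inst}}$, $\eta^{\mathrm{inst}}$, $\eta_\pm^{\mathrm{inst}}$ and $\rho_\pm^{\mathrm{inst}}$ arise.

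With every piece in these coordinates I would substitute into the master formula, restrict to $\overline{N}$, and regroup by type: the $\eta^2/f_3$ summand produces the squared line containing $\D\sigma-\langle\zeta,\D\zeta\rangle-4c_\ell\,\D^c\mathcal{K}+\eta_+^{\mathrm{inst}}+\ldots$; the horizontal part of $\pi_N^*g_N$ together with the compensating $(\theta_i^P)^2$-terms collapses, using the special-geometry identities for $\tau_{ij}$ (relating $N_{ij}$, its inverse, and the contractions $z^iN_{ij}$ to $\mathcal{K}$ and $g_{\overline{M}}$), into the $g_{\overline{M}}$-line with the $|{\cdot}|^2$-correction and the $W_i$-line; and the leftover $\D^c\mathcal{K}$ contributions reassemble into the final two lines of \eqref{coordQKmetric2}. \emph{The main obstacle is exactly this bookkeeping}: tracking many cross terms and recognising the natural instanton groupings, with the tree-level limit $\Omega\equiv 0$ (which must reproduce the $1$-loop corrected Ferrara--Sabharwal metric) as the principal consistency check. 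Finally, positive-definiteness is not re-derived from \eqref{coordQKmetric2}: by the general HK/QK statement $g_{\overline{N}}$ is positive definite on $\overline{N}_+=\overline{N}\cap\{f>0,f_3<0\}$ once $g_N$ has signature $(4,4n)$, which holds under the mild monoid hypothesis on the flow of $\xi$ (Remark \ref{hkremark}, \cite[Proposition 3.21]{CT}); and $\overline{N}_+\neq\emptyset$ because, letting $\rho\to+\infty$ along the $\xi$-direction with $z,\zeta,\sigma$ fixed, all instanton quantities (built from the exponentially decaying Bessel functions $K_0,K_1$) stay bounded, so $f\sim 16\pi\rho>0$ and $f_3=f-\tfrac12 g_N(V,V)<0$ as in the tree-level case.
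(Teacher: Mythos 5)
Your outline is correct and follows essentially the same route as the source: the paper does not reprove this statement but imports it from \cite[Theorem 5.4, Proposition 5.6]{CT} (modulo the convention changes listed in the remark after the theorem), and the derivation there is exactly the substitution of the CASK/BPS data into the HK/QK master formula $g_{\overline{N}}=-\tfrac{1}{f}\big(\tfrac{2}{f_3}\eta^2+\pi_N^*g_N-\tfrac{2}{f}\sum_i(\theta_i^P)^2\big)\big|_{\overline{N}}$, using $\D Z^0/Z^0=\tfrac12\big(\tfrac{\D\rho}{\rho+c_\ell}+\D\mathcal{K}\big)$ on the slice and the regrouping into the $\cdot^{\text{inst}}$ quantities, with positive definiteness on $\overline{N}_+$ inherited from the general HK/QK statement and nonemptiness from the exponential suppression of the Bessel sums as $\rho\to\infty$. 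The only caveat is that your sketch defers the entire simplification (``bookkeeping'') step, which is where the actual content of the coordinate formula lies, but the strategy and the consistency checks you name (the $\Omega\equiv 0$ limit reproducing the one-loop Ferrara--Sabharwal metric) are the right ones.
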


\begin{remark}

\begin{itemize}
    \item In Theorem~\ref{QKCASKdomainformulas} we have relaxed a bit the possible restriction of $M$ compared to \cite[Theorem 5.4]{CT}. In \cite{CT} we assume that we can restrict to an $M$ invariant under the action of the monoid $\mathbb{R}_{\geq 1}\times S^1$ to make the CASK structure compatible with the BPS structure. This ensures that no matter the point of $\overline{M}$ the metric is defined for $\rho>K$ for some sufficiently big uniform $K$. Our weakened assumption makes it so that the constant $K$ might depend on the point $z^a\in \overline{M}$. 
    \item When $\Omega(\gamma)=0$ for all $\gamma \in \Gamma$ the expression reduces to the 1-loop corrected Ferrara-Sabharwal metric:
    \begin{equation}
    \begin{split}
       g_{\overline{N}}=& \frac{\rho+c_{\ell}}{\rho}g_{\overline{M}}+\frac{\rho +2c_{\ell}}{4\rho^2(\rho+c_{\ell})}\D\rho^2+\frac{\rho+c_{\ell}}{64\rho^2(\rho+2c_{\ell})}\Big(\D \sigma -\langle \zeta,\D\zeta \rangle -4 c_{\ell}\D^c\mathcal{K}\Big)^2\\
        &-\frac{1}{4\rho}\left(N^{ij}-\frac{2(\rho+c_{\ell})e^{\mathcal{K}}}{\rho }z^i\overline{z}^j\right)W_i\overline{W}_j\,. 
    \end{split}
\end{equation}
In particular $(\overline{N},g_{\overline{N}})$ can be thought as a deformation of the $1$-loop corrected metric.
    \item Since the instanton corrections of the HK geometry are exponentially suppressed as $|Z_{\gamma}|\to \infty$ for $\gamma \in \text{Supp}(\Omega)$, it is easy to check that the possibly restricted $M$ from above satisfying the required conditions is never empty. Furthermore, on $\overline{N}$ the instanton corrections of the QK geometry are exponentially suppressed as $\rho \to \infty$ (due to the relation $|Z_{\gamma}|=|Z^0||\widetilde{Z}_{\gamma}|=4\sqrt{\rho+c_{\ell}}e^{\mathcal{K}/2}|\widetilde{Z}_{\gamma}|$), so we can ensure that $(\rho,z^a,\zeta^i,\widetilde{\zeta}_i,\sigma)\in \overline{N}_{+}$ by taking $\rho$ sufficiently big.
    \item The function $f=16\pi\rho+f^{\text{inst}}$ can be thought  in the string theory setting as the D-instanton corrected 4d dilaton (up to different conventions in the normalization) \cite{AMNP}. 
    \item When comparing \eqref{coordQKmetric2} to \cite[Equation 5.5]{CT}, we note that here we are using different conventions for the normalization of $g_{N}$; the rotating vector field $V$; the functions $N_{ij}$, $N_{ij}^{\text{inst}}$,  $e^{-\mathcal{K}}$, $\eta_{\pm}^{\text{inst}}$; the signature of $\text{Im}(\tau_{ij})$; and the coordinates $(\rho,z^a,\zeta^i,\widetilde{\zeta}_i,\sigma)$. Compared to \cite{CT}, $g_{N}$ is scaled by $2\pi$; 
$V$, $N_{ij}$, $N_{ij}^{\text{inst}}$ and $e^{-\mathcal{K}}$ are scaled by $2$; $\eta_{\pm}^{\text{inst}}$ are scaled by $\pi^{-1}$; and the signature of $\text{Im}(\tau_{ij})$ is opposite. Furthermore, the coordinates and $1$-loop constant of \cite{CT} are related to the ones in Definition \ref{QKCASKcoords} by performing the scaling
\begin{equation}\label{scaling}
    \rho \to 16\pi\rho,\quad c_{\ell}\to 16\pi c_{\ell}, \quad \sigma \to \pi \sigma, \quad \zeta^i\to -2\pi\zeta^i, \quad \widetilde{\zeta}_i \to 2\pi \widetilde{\zeta}_i\,.
\end{equation}
 Finally, as mentioned in Remark \ref{hkremark}, the sign with which the $\Omega(\gamma)$ enter the formula in \eqref{coordQKmetric2} is opposite to \cite[Equation 5.5]{CT}. This changes of convention will make the formulas from subsequent section look more simple and more easily comparable to the physics literature. 
\end{itemize}
\end{remark}

In what follows, it will be useful to consider the following lift of $(\overline{N},g_{\overline{N}})$:

\begin{definition}\label{liftdef}
We will denote by $(\widetilde{N},g_{\overline{N}})$ the QK manifold obtained by lifting the QK metric $(\overline{N},g_{\overline{N}})$ obtained in Theorem \ref{QKCASKdomainformulas} to the open subset $\widetilde{N}\subset \mathbb{R}_{>0}\times \overline{M}\times \mathbb{R}^{2n+2}\times \mathbb{R}$ obtained by considering $(\zeta^i,\widetilde{\zeta}_i,\sigma)\in \mathbb{R}^{2n+2}\times \mathbb{R}$ as (non-periodic) global coordinates of $\mathbb{R}^{2n+2}\times \mathbb{R}$. We will call such a space an instanton corrected c-map space.
\end{definition}

\subsection{Twistor space description and Darboux coordinates}\label{twisdescsec}

Let $(\overline{N},g_{\overline{N}},Q)$ denote a QK manifold, where $Q\to \overline{N}$ denotes the associated quaternionic structure. Namely, a parallel subbundle $Q\subset \text{End}(T\overline{N})$ admiting local trivializations $(J_1,J_2,J_3)$ by skew endomorphisms satisfying the quaternion relations. The structure of a QK manifold $(\overline{N},g_{\overline{N}},Q)$ can be encoded in a holomorphic object, known as the twistor space $(\mathcal{Z},\mathcal{I},\lambda,\tau)$ \cite{Salamon1982,LeBrun1989}. Here $\mathcal{Z}\to \overline{N}$ is a sphere subbundle of $Q$ defined by

\begin{equation}
    \mathcal{Z}_p:=\{J\in Q_p \; |\; J^{2}=-1\}, \quad p \in \overline{N};
\end{equation}
$\mathcal{I}$ is a canonical holomorphic structure on $\mathcal{Z}$; $\lambda \in \Omega^1(\mathcal{Z},\mathcal{L})$ defines a holomorphic contact structure on $\mathcal{Z}$, where $\mathcal{L}\to \mathcal{Z}$ is a certain holomorphic line bundle; and $\tau$ is a real structure on $\mathcal{Z}$ (i.e.\ an antiholomorphic involution).\\

In what follows, we consider $(\overline{N},g_{\overline{N}})$ built in the previous Section \ref{QKinstdomainsec}, and its lift  $(\widetilde{N},g_{\overline{N}})$. We start by recalling the following from the discussion in \cite[Section 4]{CTSduality}:

\begin{proposition} \label{localcontprop} Let $(\widetilde{N},g_{\overline{N}})$ be the lift of an instanton corrected QK manifold associated to a CASK domain $(M,\mathfrak{F})$ and mutually local variation of BPS structures $(M,\Gamma,Z,\Omega)$. Then $\mathcal{Z}\cong \widetilde{N}\times \mathbb{C}P^1$ (non-holomorphically), and there is a holomorphic coordinate $t$ on the $\mathbb{C}P^1$ factor and a holomorphic section $s$ of the holomorphic line bundle $\mathcal{L}\to \mathcal{Z}$ vanishing at $t=0,\infty$, such that

\begin{equation}\label{contactlocal}
    \lambda=\left(f\frac{\D t}{t}+t^{-1}\theta_{+}^P|_{\overline{N}}-2\mathrm{i} \theta_3^P|_{\overline{N}} +t \theta_{-}^{P}|_{\overline{N}}\right)\cdot s
\end{equation}
where $\theta_{\pm}^P:=\theta_1^P\pm \mathrm{i}\theta_2^P$, and $\theta^P_i$ for $i=1,2,3$ are defined as in \eqref{thetadef}, and $f$ is defined as in \eqref{fdef}.

\end{proposition}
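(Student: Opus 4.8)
The plan is to obtain \eqref{contactlocal} by transporting the known general description of the twistor space of a QK manifold in the image of the HK/QK correspondence to our explicit setting, following \cite[Section 4]{CTSduality}. Recall that the HK/QK correspondence produces $(\overline N, g_{\overline N})$ from the hyperk\"ahler data $(N,g_N,\omega_1,\omega_2,\omega_3)$ together with the hyperholomorphic $S^1$-bundle $(\pi_N\colon P\to N,\eta)$, the Hamiltonian $f$, and the lifted rotating vector field $V^P$, as in Section \ref{HK/QKsec}. The twistor space $\mathcal Z$ of $(\overline N,g_{\overline N})$ can be built directly from this data: one takes $P|_{N'}\times \mathbb{C}P^1$, and $\mathcal{Z}$ is the quotient by the flow of $V^P$ lifted suitably to the $\mathbb{C}P^1$ factor (acting by rotation). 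Concretely, since $\overline N\subset P|_{N'}$ is a slice transverse to $V^P$, we get a (non-holomorphic) identification $\mathcal Z\cong \overline N\times \mathbb{C}P^1$, and after lifting to $\widetilde N$ we get $\mathcal Z\cong \widetilde N\times \mathbb{C}P^1$. The coordinate $t$ is the standard affine coordinate on $\mathbb{C}P^1$, normalised so that $t=0,\infty$ are the two fixed points of the rotating action (equivalently, the zeros of the section $s$ of $\mathcal{L}$).

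The key step is then to write the contact form. On $P\times \mathbb{C}P^1$ the HK/QK construction produces a canonical $1$-form, built from the connection $\eta$ on $P$ and the three ``moment'' $1$-forms obtained by contracting the K\"ahler forms $\omega_i$ with the rotating vector field $V$ — precisely the $\theta_i^P$ of \eqref{thetadef}, together with $\theta_0^P=-\tfrac12\pi_N^*\D f$. The standard expression (see e.g.\ \cite{Salamon1982, LeBrun1989} and the HK/QK literature) for the holomorphic contact structure in the holomorphic frame adapted to the twistor fibration is, up to the line-bundle twist by $s$,
\begin{equation*}
\lambda = \left( f\,\frac{\D t}{t} + t^{-1}(\theta_1^P + \mathrm{i}\theta_2^P)|_{\overline N} - 2\mathrm{i}\,\theta_3^P|_{\overline N} + t\,(\theta_1^P - \mathrm{i}\theta_2^P)|_{\overline N}\right)\cdot s,
\end{equation*}
which is \eqref{contactlocal} once one sets $\theta^P_{\pm}=\theta_1^P\pm\mathrm{i}\theta_2^P$. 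So the work is: (i) recall the precise HK/QK twistor recipe in the normalisation used in \cite{CT,CTSduality}; (ii) check that the $S^1$-action on the $\mathbb{C}P^1$ factor combined with the flow of $V^P$ has $t=0,\infty$ as fixed loci and that the quotient carries the claimed complex structure $\mathcal I$, so that $t$ descends to a genuine holomorphic coordinate on the fibres; (iii) verify that the coefficient of $\D t/t$ is exactly $f$ (this is where the choice of Hamiltonian $f$ for $\omega_3$ enters, and why $f$ rather than $f_3$ appears); and (iv) identify the line bundle $\mathcal L$ and the section $s$ vanishing at $t=0,\infty$, so that $\lambda$ is genuinely $\mathcal L$-valued and holomorphic rather than just a local $1$-form.

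The main obstacle I expect is step (ii)–(iv): being careful about the twist by $s$ and about holomorphicity. The bracketed expression is a priori only a $1$-form on $P|_{N'}\times\mathbb{C}P^1$ (or its restriction to $\overline N\times \mathbb{C}P^1$); to claim it defines the holomorphic contact structure one must check it is of type $(1,0)$ for $\mathcal I$, that multiplication by the section $s$ (which vanishes to first order at $t=0$ and $t=\infty$) produces a globally well-defined holomorphic section of $T^*\mathcal Z\otimes\mathcal L$, and that it is compatible with the real structure $\tau$. All of this is essentially bookkeeping already carried out in \cite[Section 4]{CTSduality} for exactly this class of QK manifolds; so the proof is really a matter of invoking that discussion and specialising the formulas to the instanton-corrected CASK-domain data of Section \ref{QKinstdomainsec}, with the $\theta_i^P$ as in \eqref{thetadef} and $f$ as in \eqref{fdef}. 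I would therefore structure the proof as a short reduction: quote the general twistorial output of the HK/QK correspondence from \cite{CTSduality}, observe that the instanton corrected QK manifolds of Theorem \ref{QKCASKdomainformulas} are of that form, and read off \eqref{contactlocal}, pausing only to pin down the normalisation of $t$ and the vanishing of $s$ at $t=0,\infty$.
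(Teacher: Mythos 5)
Your proposal is correct and follows essentially the same route as the paper: the paper's proof simply invokes the description of the twistor space of a QK manifold in the image of the HK/QK correspondence from \cite[Section 4.3]{CTSduality}, notes that the global chart on $\widetilde{N}$ gives the smooth trivialization $\mathcal{Z}\cong\widetilde{N}\times\mathbb{C}P^1$, and reads off \eqref{contactlocal} from \cite[Section 4.3.1]{CTSduality}. The bookkeeping you flag in steps (ii)--(iv) is exactly what is deferred to that reference, so your reduction is the intended argument.
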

\begin{proof}
This follows from the discussion in \cite[Section 4.3]{CTSduality}, where the twistor space of a QK manifold $(\overline{N},g_{\overline{N}})$ obtained via HK/QK correspondence is described in terms of the ``HK data" given by $(N,g_{N},\omega_1,\omega_2,\omega_3)$, $(\pi_N:P\to N,\eta)$, $f$, and the associated HK cone. In particular, whenever the QK manifold admits a global chart of coordinates, which is the case of $(\widetilde{N},g_{\overline{N}})$ obtained in Section \ref{QKinstdomainsec}, it follows that $\mathcal{Z}\cong \widetilde{N}\times \mathbb{C}P^1$ non-holomorphically.  The formula \eqref{contactlocal} follows from \cite[Section 4.3.1]{CTSduality}. 
The lift to $\widetilde{N}\rightarrow \overline{N}$ is understood.
\end{proof}

In what follows, we will be concerned in describing Darboux coordinates for the contact structure $\lambda$ expressed as \eqref{contactlocal}, in the case where $(\widetilde{N},g_{\overline{N}})$ is the instanton corrected QK metric obtained in Section \ref{QKinstdomainsec}. For this it will be important to have an explicit expression for $f$, $\theta_{+}^P|_{\overline{N}}$ and $\theta_3^P|_{\overline{N}}$ ($\theta_-^P|_{\overline{N}}$ can be obtained from $\theta_-^P=\overline{\theta_+^P}$).

\begin{lemma}\label{thetalemma1}
Consider the CASK domain $(M,\mathfrak{F})$ and mutually local variation of BPS structures $(M,\Gamma,Z,\Omega)$ as in Section \ref{QKinstdomainsec}. Then $f$, $\theta_{+}^P|_{\overline{N}}$ and $\theta_3^P|_{\overline{N}}$ from \eqref{contactlocal} have the following formulas with respect to the coordinates $(\rho,z^a,\zeta^i,\widetilde{\zeta}_i,\sigma)$:

\begin{align*}
        f&=16\pi\rho + \frac{2R}{\pi}\sum_{\gamma}\Omega(\gamma)\sum_{n>0}\frac{e^{-2\pi\mathrm{i}n\zeta_{\gamma}}}{n}|\widetilde{Z}_{\gamma}|K_1(4\pi Rn|\widetilde{Z}_{\gamma}|)\\
        \theta^{P}_{+}|_{\overline{N}}&=-4\pi R\langle \widetilde{Z},\D\zeta \rangle +2\mathrm{i}R\sum_{\gamma}\Omega(\gamma)\widetilde{Z}_{\gamma}\sum_{n>0}e^{-2\pi\mathrm{i}n\zeta_{\gamma}}K_0(4\pi Rn|\widetilde{Z}_{\gamma}|)\D\zeta_{\gamma}\\
    &\quad \quad +2R^2\sum_{\gamma}\Omega(\gamma)\widetilde{Z}_{\gamma}\sum_{n>0}e^{-2\pi \mathrm{i}n\zeta_{\gamma}}|\widetilde{Z}_{\gamma}|K_1(4\pi R n|\widetilde{Z}_{\gamma}|)\left(\frac{\D\widetilde{Z}_{\gamma}}{\widetilde{Z}_{\gamma}}+\frac{\D\overline{\widetilde{Z}}_{\gamma}}{\overline{\widetilde{Z}}_{\gamma}}+\frac{\D\rho}{(\rho+c_{\ell})}+\D\mathcal{K}\right)\\
    \theta_3^P|_{\overline{N}}&=\pi\D\sigma -\pi \langle\zeta, \D\zeta \rangle-4\pi(\rho+c_{\ell})\D^c\mathcal{K}-\frac{\mathrm{i}R}{2\pi}\sum_{\gamma}\Omega(\gamma)\sum_{n>0}\frac{e^{-2\pi \mathrm{i}n\zeta_{\gamma}}}{n}|\widetilde{Z}_{\gamma}|K_1(4\pi Rn|\widetilde{Z}_{\gamma}|)\left(\frac{\D\widetilde{Z}_{\gamma}}{\widetilde{Z}_{\gamma}}-\frac{\D\overline{\widetilde{Z}}_{\gamma}}{\overline{\widetilde{Z}}_{\gamma}}\right) \numberthis \label{fthetaexpres}
    \end{align*}
where $R:=2\sqrt{\rho+c_{\ell}}e^{\mathcal{K}/2}$, $\widetilde{Z}_{\gamma}=q_0+q_az^a$ and  $\zeta_{\gamma}=-q_i\zeta^i$ for $\gamma=q_i\gamma^i$, and in the last formula we have used $\mathrm{d}^c=\mathrm{i}(\overline{\partial}-\partial)$.
\end{lemma}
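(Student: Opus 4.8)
The plan is to unwind the definitions in \eqref{thetadef} and substitute the explicit formulas for the HK data established in Section~\ref{QKinstdomainsec}, then simplify everything to the coordinates $(\rho,z^a,\zeta^i,\widetilde\zeta_i,\sigma)$. Recall that $\theta_0^P=-\tfrac12\pi_N^*\D f$, $\theta_3^P=\eta+\tfrac12\pi_N^*\iota_Vg_N$, $\theta_1^P=\tfrac12\pi_N^*\iota_V\omega_2$, $\theta_2^P=-\tfrac12\pi_N^*\iota_V\omega_1$, so $\theta_+^P=\theta_1^P+\I\theta_2^P=-\tfrac12\pi_N^*\iota_V(\omega_1+\I\omega_2)$. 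The key inputs are: the formula \eqref{holsym} for $\omega_1+\I\omega_2$, the formula \eqref{etadef}--\eqref{etagammainst} for $\eta$, and the rotating vector field $V$, whose action on the relevant objects is governed by \eqref{rotvec}. The first step is therefore to compute $\iota_V$ applied to $\langle\D Z\wedge\D\zeta\rangle$ and to each instanton term $\D Z_\gamma\wedge A_\gamma^{\mathrm{inst}}+\I V_\gamma^{\mathrm{inst}}\D\zeta_\gamma\wedge\D Z_\gamma$, using that $V$ is (the appropriate rescaling of) the rotating vector field, so that $\iota_V\D Z_\gamma$ and $\iota_V\D\zeta_\gamma$ are determined by the known expressions for $V$ in the CASK-domain coordinates.

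Next I would pass to the submanifold $\overline N$ defined by $\mathrm{Arg}(Z^0)=0$ and rewrite everything in terms of normalized central charges $\widetilde Z_\gamma=Z_\gamma/Z^0$. Here the crucial bookkeeping is the relation \eqref{r2:eq}, i.e.\ $r^2=|Z^0|^2e^{-\mathcal K}/2$, together with $16\pi\rho=2\pi r^2-16\pi c_\ell$, which gives $|Z^0|^2 = (16\pi(\rho+c_\ell))\cdot\frac{e^{\mathcal K}}{\pi}$ up to constants, hence $|Z_\gamma|=|Z^0||\widetilde Z_\gamma| = 4\sqrt{\rho+c_\ell}\,e^{\mathcal K/2}|\widetilde Z_\gamma| = R|\widetilde Z_\gamma|$ with $R:=2\sqrt{\rho+c_\ell}\,e^{\mathcal K/2}$ as in the statement; this is what turns $K_\nu(2\pi n|Z_\gamma|)$ into $K_\nu(4\pi Rn|\widetilde Z_\gamma|)$ after absorbing the factor of $2$ from the rescaled conventions (Remark~\ref{hkremark}). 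I would then substitute the series definitions \eqref{VAinst}, \eqref{etagammainst} of $V_\gamma^{\mathrm{inst}}$, $A_\gamma^{\mathrm{inst}}$, $\eta_\gamma^{\mathrm{inst}}$, use $\zeta_\gamma=-q_i\zeta^i$ and $\widetilde Z_\gamma=q_0+q_az^a$, and collect terms. For $f$ this is essentially immediate from \eqref{fdef} once $\iota_V\eta_\gamma^{\mathrm{inst}}$ is computed (again using that $V$ acts on $Z_\gamma$ by scaling and annihilates $\zeta_\gamma$ up to the known rotation); for $\theta_3^P$ one must additionally expand $\eta$ via \eqref{etadef}, identify the $\Theta$-part as $\pi(\D\sigma-\langle\zeta,\D\zeta\rangle)$ via \eqref{Thetacon}, recognize the $\tfrac{\pi\I}{2}\pi_M^*(\overline\partial r^2-\partial r^2)$ term as producing the $-4\pi(\rho+c_\ell)\D^c\mathcal K$ contribution (using $r^2=|Z^0|^2e^{-\mathcal K}/2$ and $\mathrm{Arg}(Z^0)=0$), and see that the $\iota_Vg_N$ pieces cancel against $\tfrac12\pi_N^*\iota_Vg_N$ in $\theta_3^P=\eta+\tfrac12\pi_N^*\iota_Vg_N$ except for the instanton remainder.

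The main obstacle I expect is the careful tracking of the two global rescalings (the $2\pi$ in the HK forms and $\zeta$, and the factor $2$ in $V$, listed in Remark~\ref{hkremark}) together with the scaling \eqref{scaling} relating the present coordinates to those of \cite{CT}, so that all numerical prefactors ($16\pi$, $4\pi R$, $2\I R$, $2R^2$, $-\tfrac{\I R}{2\pi}$, etc.) come out exactly right; a sign or factor slip anywhere propagates. A secondary subtlety is verifying that on $\overline N$ the derivative $\D Z^0$ contributes correctly: writing $Z^0=|Z^0|$ on $\overline N$, one has $\D Z^0/Z^0 = \D|Z^0|/|Z^0| = \tfrac12\D\log(|Z^0|^2) = \tfrac12\D\log(\rho+c_\ell)+\tfrac12\D\mathcal K$ (up to constants), which is exactly the combination $\tfrac{\D\rho}{2(\rho+c_\ell)}+\tfrac{\D\mathcal K}{2}$ appearing doubled inside the $\theta_+^P$ formula; one must check this against the holomorphic/antiholomorphic split so that the $\D\rho/(\rho+c_\ell)+\D\mathcal K$ term (real part) appears in $\theta_+^P$ while the imaginary combination $\D^c(\cdots)$ appears in $\theta_3^P$. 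Once these conventions are pinned down, the computation is a lengthy but routine substitution, and I would present it by computing $f$, then $\theta_3^P$, then $\theta_+^P$ in that order, deferring the most tedious Bessel/series manipulations to the reader or to Appendix~\ref{Besselappendix}.
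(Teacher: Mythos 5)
Your plan follows essentially the same route as the paper's own proof: direct substitution of \eqref{thetadef}, \eqref{holsym}, \eqref{etadef}--\eqref{Thetacon} and \eqref{fdef}, contraction with the explicit rotating vector field $V=2\I Z^i\partial_{Z^i}-2\I\overline{Z}^i\partial_{\overline{Z}^i}$, restriction to $\overline{N}$ using $|Z^0|=2R$ (note $4\sqrt{\rho+c_{\ell}}e^{\mathcal{K}/2}=2R$, not $R$ --- this is exactly where the argument $4\pi Rn|\widetilde{Z}_{\gamma}|$ of the Bessel functions comes from, with no extra convention factor needed), and the identity $\D^c r^2|_{\overline{N}}=-r^2\D^c\mathcal{K}$. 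The only ingredient you omit is the relabeling $\gamma\to-\gamma$ using $\Omega(\gamma)=\Omega(-\gamma)$, which is what converts the $e^{+2\pi\I n\zeta_{\gamma}}$ appearing in \eqref{VAinst} and \eqref{etagammainst} into the $e^{-2\pi\I n\zeta_{\gamma}}$ of \eqref{fthetaexpres}.
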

\begin{proof}
To obtain the above identities we will use that 
\begin{equation}\label{z0rrel}
    |Z^0|^2=16(\rho+c_{\ell})e^{\mathcal{K}}=4R^2\,,
\end{equation}
together with $\Omega(\gamma)=\Omega(-\gamma)$  and the fact that the rotating vector field is given globally in the case of a CASK domain by 
\begin{equation}\label{vcaskd}
    V=2\mathrm{i}Z^i\partial_{Z^i}-2\mathrm{i}\overline{Z}^i\partial_{\overline{Z}^i}\,.
\end{equation}
To obtain the formula for $f$ we just use \eqref{fdef} together with \eqref{z0rrel} and a relabeling of the sum variable $\gamma \to -\gamma$. To obtain the formula for $\theta_{+}^P|_{\overline{N}}$ we use the formulas \eqref{holsym} for $\omega_1+\mathrm{i}\omega_2$, the definitions for $\theta_1^P$ and $\theta_2^P$ in \eqref{thetadef}, a relabeling $\gamma \to -\gamma$ of the sums over $\gamma$, the CASK relation $Z_{\widetilde{\gamma}_i}=-Z_i=-\tau_{ij}Z^j=-\tau_{ij}Z_{\gamma^j}$, and the fact that

\begin{equation}\label{restrictionrel}
    \left(\frac{\D Z_{\gamma}}{Z_{\gamma}}-\frac{\D\overline{Z}_{\gamma}}{\overline{Z}_{\gamma}}\right)\Bigg|_{\overline{N}}=\left(\frac{\D\widetilde{Z}_{\gamma}}{\widetilde{Z}_{\gamma}}-\frac{\D\overline{\widetilde{Z}}_{\gamma}}{\overline{\widetilde{Z}}_{\gamma}}\right), \quad \D Z_{\gamma}|_{\overline{N}}=|Z^0|\widetilde{Z}_{\gamma}\left(\frac{\D\widetilde{Z}_{\gamma}}{\widetilde{Z}_{\gamma}} +\frac{\D \rho}{2(\rho+c_{\ell})}+\frac{\D\mathcal{K}}{2}\right)\,.
\end{equation}
Finally, the equality for $\theta_3^P|_{\overline{N}}$ follows from the formulas \eqref{etadef} for $\eta$, \eqref{thetadef} for $\theta_3^P$, the first equation from \eqref{restrictionrel}, and again a relabeling $\gamma \to -\gamma$ in the sum over $\gamma$. We also use the fact that

\begin{equation}
    \D^c r^2|_{\overline{N}}=-r^2\D^c \mathcal{K}\,,
\end{equation}
which follows from the second relation in \eqref{restrictionrel}.
\end{proof}

\begin{remark}
    We remark that if we denote $F_i:=\partial_{Z^i}\mathfrak{F}/Z^0=Z_i/Z^0$, then in the expression \eqref{fthetaexpres} we have
    \begin{equation}
        \langle \widetilde{Z},\mathrm{d}\zeta\rangle= \widetilde{Z}_{\widetilde{\gamma}_i}\D\zeta_{\gamma^i}-\widetilde{Z}_{\gamma^i}\D\zeta_{\widetilde{\gamma}_i}=(-F_i)\D(-\zeta^i) - z^i\D\widetilde{\zeta}_i=F_i\D\zeta^i-z^i\mathrm{d}\widetilde\zeta_i\,.
    \end{equation}
    Similarly, we have
    \begin{equation}
        \langle \zeta,\mathrm{d}\zeta\rangle=\zeta_{\widetilde{\gamma}_i}\D\zeta_{\gamma^i}-\zeta_{\gamma^i}\D\zeta_{\widetilde{\gamma}_i}=-\widetilde{\zeta}_i\mathrm{d}\zeta^i+\zeta^i\mathrm{d}\widetilde{\zeta}_i\,.
    \end{equation}
\end{remark}

\subsubsection{Darboux coordinates for c-map spaces associated to CASK domains} 

In this section we focus on the easier case of a c-map space associated to a CASK domain. The following coordinates have been been previously obtained in the physics literature in \cite{NPV} in the $c_{\ell}=0$ case, and in the 1-loop corrected case in \cite{linearpertQK} via slightly different methods. 

\begin{proposition}\label{typeIIA1loop} Consider the QK manifold $(\widetilde{N},g_{\overline{N}})$ obtained from a CASK domain $(M,\mathfrak{F})$ via the c-map (i.e.\ by taking $\Omega(\gamma)=0$ for all $\gamma$ in our previous constructions). If $F_i:=Z_i/Z^0=\partial_{Z^i} \mathfrak{F}/Z^0$, $t$ denotes the twistor fiber coordinate from  Proposition \ref{localcontprop} and $R=2\sqrt{\rho+c_{\ell}}e^{\mathcal{K}/2}$, then the functions on the twistor space given by

\begin{equation}\label{Darbouxcoords1loop}
    \begin{split}
        \xi^{i}&=\zeta^{i}- \mathrm{i}R(t^{-1}z^i +t\overline{z}^i)\\
        \widetilde{\xi}_i&=\widetilde{\zeta}_{i}- \mathrm{i}R(t^{-1}F_{i}+t\overline{F}_i)\\
        \alpha&= \sigma -\mathrm{i}R(t^{-1}\langle \widetilde{Z},\zeta \rangle +t \langle \overline{\widetilde{Z}}, \zeta \rangle)-8\mathrm{i} c_{\ell}\log(t)\,,
    \end{split}
\end{equation}
define Darboux coordinates for the holomorphic contact structure $\lambda$, in the sense that

\begin{equation}
    \lambda=-2\pi\mathrm{i}(\D\alpha +\widetilde{\xi}_i\D\xi^i-\xi^i\D\widetilde{\xi}_i)\cdot s\,.
\end{equation}

\end{proposition}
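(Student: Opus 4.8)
The plan is to verify the claimed identity by a direct computation: substitute the proposed coordinates \eqref{Darbouxcoords1loop} into the right-hand side $-2\pi\mathrm{i}(\D\alpha + \widetilde\xi_i\D\xi^i - \xi^i\D\widetilde\xi_i)$ and match it term by term against the explicit formula for $\lambda$ coming from Proposition \ref{localcontprop} and Lemma \ref{thetalemma1} with all $\Omega(\gamma)=0$. In the no-instanton case, \eqref{contactlocal} reduces to $\lambda = \bigl(16\pi\rho\,\tfrac{\D t}{t} + t^{-1}\theta_+^P|_{\overline N} - 2\mathrm{i}\,\theta_3^P|_{\overline N} + t\,\theta_-^P|_{\overline N}\bigr)\cdot s$, where from \eqref{fthetaexpres} we have $\theta_+^P|_{\overline N} = -4\pi R\langle\widetilde Z,\D\zeta\rangle = -4\pi R(F_i\D\zeta^i - z^i\D\widetilde\zeta_i)$, its conjugate $\theta_-^P|_{\overline N} = -4\pi R(\overline F_i\D\zeta^i - \overline z^i\D\widetilde\zeta_i)$, and $\theta_3^P|_{\overline N} = \pi\D\sigma - \pi\langle\zeta,\D\zeta\rangle - 4\pi(\rho+c_\ell)\D^c\mathcal K$. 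So the target is $\lambda = 2\pi\bigl(8\rho\,\tfrac{\D t}{t} - 2R t^{-1}(F_i\D\zeta^i - z^i\D\widetilde\zeta_i) - 2R t(\overline F_i\D\zeta^i - \overline z^i\D\widetilde\zeta_i) - \mathrm{i}\D\sigma + \mathrm{i}\langle\zeta,\D\zeta\rangle + 4\mathrm{i}(\rho+c_\ell)\D^c\mathcal K\bigr)\cdot s$.

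First I would compute the differentials $\D\xi^i$, $\D\widetilde\xi_i$, $\D\alpha$, treating $t$ as an independent coordinate and $R$, $z^i$, $F_i$, $\mathcal K$ as functions on $\widetilde N$ (pulled back to the twistor space). Then I would assemble $\D\alpha + \widetilde\xi_i\D\xi^i - \xi^i\D\widetilde\xi_i$ and organize the result by powers of $t$: the $t^{-2}$, $t^0$, $t^2$ pieces, the $\D t/t$ piece, and the remaining $t^{-1}$ and $t^1$ pieces. The key algebraic inputs will be the homogeneity/special-geometry relations — chiefly $F_i = \tau_{ij}z^j$ (so that $z^i\D F_i = \tau_{ij}z^i\D z^j = F_j\D z^j$, hence $\D(z^iF_i) = 2F_i\D z^i$ and the combination $\langle\widetilde Z,\D\widetilde Z\rangle = F_i\D z^i - z^i\D F_i$ vanishes), together with $\langle\widetilde Z,\overline{\widetilde Z}\rangle = F_i\overline z^i - z^i\overline F_i = \tfrac{\mathrm i}{2}e^{-\mathcal K}$ up to the normalization fixed by \eqref{r2:eq} and $R = 2\sqrt{\rho+c_\ell}\,e^{\mathcal K/2}$, which controls $\D(R\langle\widetilde Z,\zeta\rangle)$ and the $\D^c\mathcal K$ term. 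I expect the $t^{\pm 2}$ terms to cancel identically because of $\langle\widetilde Z,\D\widetilde Z\rangle = 0$ and its conjugate; the $t^0$ term should collapse to $\mathrm{i}\langle\zeta,\D\zeta\rangle - \mathrm{i}\D\sigma$ plus a $\D^c\mathcal K$ contribution after using the Kähler identity relating $F_i\D\overline z^i - \overline F_i\D z^i$ to $\D^c(e^{-\mathcal K})$; and the $t^{\pm 1}$ terms should reproduce the $\theta_\pm^P$ pieces after recognizing $R(F_i\D\zeta^i - z^i\D\widetilde\zeta_i) = -R\langle\widetilde Z,\D\zeta\rangle$.

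The main obstacle, and the only genuinely delicate point, is the $\D t/t$ coefficient: collecting it requires that $R(t^{-1}z^i+t\overline z^i)$ times the $\zeta$-derivative terms and the logarithmic term $-8\mathrm{i}c_\ell\log t$ in $\alpha$ conspire to produce exactly $16\pi\rho\cdot\tfrac{\D t}{t}$ inside the bracket, i.e. $8\rho\,\D t/t$ after the overall $2\pi$. Tracking this means carefully handling the cross-terms $\widetilde\xi_i\D\xi^i - \xi^i\D\widetilde\xi_i$ where both factors contain $t$-dependence: the $\D t$ component picks up $\mathrm{i}R(z^i\widetilde\zeta_i$-type traces minus their counterparts$)$ which by the special-geometry pairing becomes proportional to $R^2(z^i\overline F_i - \overline z^i F_i + \ldots)$, and one must use $R^2 = 4(\rho+c_\ell)e^{\mathcal K}$ together with $e^{-\mathcal K} = -2\mathrm{Im}(\tau_{ij})z^i\overline z^j$ to see that $R^2\langle\widetilde Z,\overline{\widetilde Z}\rangle$-type combinations contribute a constant multiple of $(\rho+c_\ell)$; adding the $-8\mathrm{i}c_\ell$ from the log term then completes $\rho+c_\ell$ to the required $\rho$ (up to the constant $c_\ell$, which must drop out). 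I would double-check signs and the factor of $R$ versus $2\sqrt{\rho+c_\ell}e^{\mathcal K/2}$ at this step, since that is where an error would most plausibly hide. Everything else is bookkeeping: once the five groups of terms are shown to match, the identity $\lambda = -2\pi\mathrm{i}(\D\alpha + \widetilde\xi_i\D\xi^i - \xi^i\D\widetilde\xi_i)\cdot s$ follows, and since $\D\alpha + \widetilde\xi_i\D\xi^i - \xi^i\D\widetilde\xi_i$ is manifestly in Darboux form, the $(\xi^i,\widetilde\xi_i,\alpha)$ are Darboux coordinates for $\lambda$.
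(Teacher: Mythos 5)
Your proposal is correct and follows essentially the same route as the paper's proof: reduce to matching $-2\pi\mathrm{i}(\D\alpha+\widetilde\xi_i\D\xi^i-\xi^i\D\widetilde\xi_i)$ against $f\frac{\D t}{t}+t^{-1}\theta_+^P-2\mathrm{i}\theta_3^P+t\theta_-^P$ with $\Omega\equiv 0$, organize by powers of $t$, and use $F_i=\tau_{ij}z^j$ together with $R^2=4(\rho+c_\ell)e^{\mathcal K}$ and $e^{-\mathcal K}=-2\mathrm{Im}(\tau_{ij})z^i\overline z^j$ to produce the $8\mathrm{i}(\rho+c_\ell)\frac{\D t}{t}$ and $-4(\rho+c_\ell)\D^c\mathcal K$ terms, with the $\log t$ term in $\alpha$ cancelling the $c_\ell$ in the $\D t/t$ coefficient. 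The only nitpick is the stated value of $\langle\widetilde Z,\overline{\widetilde Z}\rangle$ (it equals $\mathrm{i}e^{-\mathcal K}$ with the paper's conventions, not $\tfrac{\mathrm i}{2}e^{-\mathcal K}$), but you flag this as a normalization to be fixed, and it does not affect the structure of the argument.
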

\begin{proof}
By Proposition \ref{localcontprop}, it is enough to check that 

\begin{equation}
    -2\pi \I(\D\alpha +\widetilde{\xi}_i\D\xi^i-\xi^i\D\widetilde{\xi}_i)=f\frac{\D t}{t}+t^{-1}\theta_{+}^P|_{\overline{N}}-2\mathrm{i} \theta_3^P|_{\overline{N}} +t \theta_{-}^{P}|_{\overline{N}}
\end{equation}
where $f$, $\theta_{+}^P=\overline{\theta_{-}^P}$ and $\theta_3^P$ are obtained by setting $\Omega(\gamma)=0$ for all $\gamma$ in \eqref{fthetaexpres}. That is:

\begin{equation}
    f=16\pi \rho, \quad\quad \theta_3^P|_{\overline{N}}=\pi\D\sigma -\pi\langle\zeta, \D\zeta \rangle-4\pi(\rho+c_{\ell})\D^c\mathcal{K}, \quad\quad
    \theta_{+}^P|_{\overline{N}}=-4\pi R\langle \widetilde{Z},\D\zeta \rangle\,.
\end{equation}

We now compute

\begin{equation}
    \begin{split}
        \widetilde{\xi}_i\D\xi^i-\xi^i\D\widetilde{\xi}_i=&-\langle \zeta, \D\zeta \rangle +\I(t^{-1}\langle\widetilde{Z},\zeta \rangle +t \langle\overline{\widetilde{Z}}, \zeta \rangle)\D R +\I R(-t^{-2}\langle\widetilde{Z},\zeta\rangle \D t+\langle \overline{\widetilde{Z}},\zeta \rangle \D t)+ 8\I(\rho+c_{\ell})\frac{\D t}{t}\\
        & +\I R(t^{-1}\langle \D \widetilde{Z},\zeta \rangle + t \langle \D\overline{\widetilde{Z}},\zeta \rangle) -\I R(t^{-1}\langle \widetilde{Z},\D\zeta \rangle + t \langle \overline{\widetilde{Z}},\D\zeta \rangle)-4(\rho+c_{\ell})\D^c\mathcal{K}\,,
    \end{split}
\end{equation}
where for the terms $8\I (\rho+c_{\ell})\frac{\D t}{t}$ and $-4(\rho+c_{\ell})\D^c\mathcal{K}$ we have used that the CASK relation $F_i=\tau_{ij}z^j$ implies

\begin{equation}
    \begin{split}
    \Big(-8(\rho+c_{\ell})e^{\mathcal{K}}F_i\overline{z}^i+8(\rho+c_{\ell})e^{\mathcal{K}}\overline{F}_iz^i\Big)\frac{\D t}{t}&=    \Big(-8(\rho+c_{\ell})e^{\mathcal{K}}\tau_{ij}z^j\overline{z}^i+8(\rho+c_{\ell})e^{\mathcal{K}}\overline{\tau}_{ij}\overline{z}^jz^i\Big)\frac{\D t}{t}\\
    &=\Big(8\I(\rho+c_{\ell})e^{\mathcal{K}}(-2\text{Im}(\tau_{ij}))z^j\overline{z}^i\Big)\frac{\D t}{t}=8\I(\rho+c_{\ell})\frac{\D t}{t}\\
    \end{split}
\end{equation}
and by using the relation $\D F_i=\tau_{ij}\D z^i$ 
\begin{equation}
    \begin{split}
    -4(\rho+c_{\ell})e^{\mathcal{K}}F_i\D \overline{z}^i&-4(\rho+c_{\ell})e^{\mathcal{K}}\overline{F}_i\D z^i +4(\rho+c_{\ell})e^{\mathcal{K}}\D \overline{F}_iz^i+4(\rho+c_{\ell})e^{\mathcal{K}}\D F_i\overline{z}^i\\
    &=-4(\rho+c_{\ell})e^{\mathcal{K}}(-2\text{Im}(\tau_{ij}))\Big(i\overline{z}^j\D z^i -iz^i\D \overline{z}^j\Big)\\
    &=-4(\rho+c_{\ell})\D ^c\mathcal{K}\,.
    \end{split}
\end{equation}
(Recall that $\mathcal{K}=-\log K$ where $K= -2 \mathrm{Im}(\tau_{ij}) z^i\bar z^j$). 
On the other hand, we find that

\begin{equation}
    \begin{split}
    \D \alpha=&\D \sigma -\I(t^{-1}\langle \widetilde{Z},\zeta \rangle +t \langle \overline{\widetilde{Z}},\zeta \rangle)\D R + \I R(t^{-2}\langle \widetilde{Z},\zeta \rangle - \langle \overline{\widetilde{Z}},\zeta \rangle)\D t\\
    &-\I R(t^{-1}\langle \D \widetilde{Z},\zeta \rangle +t \langle \D \overline{\widetilde{Z}},\zeta \rangle)-\I R(t^{-1}\langle \widetilde{Z},\D \zeta \rangle +t \langle \overline{\widetilde{Z}},\D  \zeta \rangle)-8\I c_{\ell}\frac{\D t}{t},
    \end{split}
\end{equation}
so we conclude that

\begin{equation}
    \begin{split}
         \D \alpha +\widetilde{\xi}_i\D \xi^i-\xi^i\D \widetilde{\xi}_i&=8\I \rho\frac{\D t}{t}+\D \sigma -\langle \zeta, \D \zeta \rangle-2\I R(t^{-1}\langle \widetilde{Z},\D \zeta \rangle + t \langle \overline{\widetilde{Z}},\D \zeta \rangle) -4(\rho+c_{\ell})\D ^c\mathcal{K}\\
    \end{split}
\end{equation}
so that 
\begin{equation}
    \begin{split}
         -2\pi \mathrm{i}\left(\D \alpha +\widetilde{\xi}_i\D \xi^i-\xi^i\D \widetilde{\xi}_i\right)&=f\frac{\D t}{t} -4\pi R(t^{-1}\langle \widetilde{Z},\D \zeta \rangle + t \langle \overline{\widetilde{Z}},\D \zeta \rangle)-2\I \left(\pi\D\sigma -\pi\langle\zeta, \D\zeta \rangle-4\pi(\rho+c_{\ell})\D^c\mathcal{K}\right)\\
         &=f\frac{\D t}{t}+t^{-1}\theta_+^P|_{\overline{N}} -2\mathrm{i}\theta_3^P|_{\overline{N}} +t\theta_-^P|_{\overline{N}}
    \end{split}
\end{equation}
and the result follows.
\end{proof}
\subsubsection{The case with instanton corrections}\label{typeiiacoordssec}
We now consider the case where the BPS indices are not all equal to $0$. We want to write down the modifications of the coordinates \eqref{Darbouxcoords1loop}, such that

\begin{equation}
    \lambda=-2\pi \mathrm{i}\left(\D \alpha + \widetilde{\xi}_i\D \xi^i -\xi^i\D \widetilde{\xi}_i\right)\cdot s=\left(f\frac{\D t}{t}+t^{-1}\theta_+^P|_{\overline{N}} -2\mathrm{i}\theta_3^P|_{\overline{N}} +t\theta_-^P|_{\overline{N}}\right)\cdot s
\end{equation}
where $f$ and $\theta_{\alpha}^{P}$ are as in \eqref{fthetaexpres}.\\

The expressions for the coordinates below have been previously found in the physics literature (using slightly different arguments and conventions), for example in \cite{APSV,WCHKQK}.

\begin{theorem}\label{theorem3}
Consider the instanton corrected QK manifold $(\widetilde{N},g_{\overline{N}})$ associated to a CASK domain and $(M,\Gamma,Z,\Omega)$, as before. Then the functions on the twistor space $\mathcal{Z}$ given by

\begin{equation}\label{Darbouxcoordsinst}
    \begin{split}
        \xi^{i}&=\zeta^{i}- \mathrm{i}R(t^{-1}z^i +t\overline{z}^i)\\
        \widetilde{\xi}_i&=\widetilde{\zeta}_{i}- \mathrm{i}R(t^{-1}F_{i}+t\overline{F}_i)-\frac{1}{8\pi^2 }\sum_{\gamma}\Omega(\gamma)n_i(\gamma) \int_{l_{\gamma}}\frac{\D \zeta}{\zeta}\frac{t+\zeta}{t-\zeta}\log(1-\exp(2\pi\mathrm{i}\xi_{\gamma}(\zeta)))\\
        \alpha&= \sigma -\mathrm{i}R(t^{-1}\langle \widetilde{Z},\zeta \rangle +t \langle \overline{\widetilde{Z}}, \zeta \rangle)-8\mathrm{i} c_{\ell}\log(t)+\frac{1}{4\pi^2\mathrm{i}}\left(t^{-1}\mathcal{W} +t\overline{\mathcal{W}}-\frac{1}{2\pi}\sum_{\gamma}\Omega(\gamma)\int_{l_{\gamma}}\frac{\D \zeta}{\zeta}\frac{t+\zeta}{t-\zeta}\mathrm{L}(\exp{2\pi\mathrm{i}\xi_{\gamma}(\zeta)})\right)\,,
    \end{split}
\end{equation}
where the integration contours are given by $l_{\gamma}:=\mathbb{R}_{<0}\widetilde{Z}_{\gamma}$ oriented from $0$ to $\infty$, $\xi_\gamma(\zeta) = q_i \xi^i(\zeta)$ for $\gamma=q_i\gamma^i$ and where we abbreviate $\xi^i(\zeta)=\xi^i(R,z^i,\zeta^i,t=\zeta)$,

\begin{equation}\label{wdef}
    \mathcal{W}:=R\sum_{\gamma}\Omega(\gamma)\widetilde{Z}_{\gamma}\sum_{n>0}\frac{e^{-2\pi\mathrm{i}n\zeta_{\gamma}}}{n}K_0(4\pi Rn|\widetilde{Z}_{\gamma}|)\,,
\end{equation}
and
\begin{equation}\label{Rogersdilogarithm}
    \mathrm{L}(\exp({2\pi\mathrm{i} \xi_{\gamma}}))=\mathrm{Li}_2(\exp(2\pi\mathrm{i}\xi_{\gamma}))+\pi\mathrm{i}\xi_{\gamma}\log(1-\exp(2\pi\mathrm{i}\xi_{\gamma}));
\end{equation}
define Darboux coordinates for the contact structure $\lambda$ in the sense that

\begin{equation}
    \lambda=-2\pi \I(\D \alpha +\widetilde{\xi}_i\D \xi^i-\xi^i\D \widetilde{\xi}_i)\cdot s\,.
\end{equation}
\end{theorem}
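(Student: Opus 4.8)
The plan is to verify the claimed identity
\[
-2\pi\mathrm{i}(\D\alpha+\widetilde{\xi}_i\D\xi^i-\xi^i\D\widetilde{\xi}_i)=f\frac{\D t}{t}+t^{-1}\theta_+^P|_{\overline{N}}-2\mathrm{i}\theta_3^P|_{\overline{N}}+t\theta_-^P|_{\overline{N}}
\]
directly, using the explicit formulas for $f$, $\theta_\pm^P|_{\overline{N}}$, $\theta_3^P|_{\overline{N}}$ from Lemma \ref{thetalemma1} and Proposition \ref{localcontprop}. The strategy is to split everything into a ``perturbative'' part (the $\Omega$-independent terms), which is already handled by Proposition \ref{typeIIA1loop}, and an ``instanton'' part (the terms carrying $\Omega(\gamma)$), which must be checked to match. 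So I would write $\xi^i=\xi^i_{\mathrm{pert}}$, $\widetilde{\xi}_i=\widetilde{\xi}_i^{\mathrm{pert}}+\widetilde{\xi}_i^{\mathrm{inst}}$, $\alpha=\alpha_{\mathrm{pert}}+\alpha_{\mathrm{inst}}$, expand the bilinear form $\widetilde{\xi}_i\D\xi^i-\xi^i\D\widetilde{\xi}_i$ into pert--pert, pert--inst and inst--inst cross terms, and reduce the problem to showing that the instanton contributions reproduce exactly the $\Omega$-terms in $f\,\D t/t+t^{-1}\theta_+^P-2\mathrm{i}\theta_3^P+t\theta_-^P$. Note that since $\xi^i$ has no instanton correction, there is no inst--inst term, which simplifies matters.

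The heart of the computation is recognizing the contour integrals over $l_\gamma=\mathbb{R}_{<0}\widetilde{Z}_\gamma$. The key analytic input is the integral representation of the Bessel functions $K_0,K_1$ collected in Appendix \ref{Besselappendix}: on the contour $l_\gamma$ one parametrizes $\zeta = -|\widetilde{Z}_\gamma|^{-1}\widetilde{Z}_\gamma\, e^{-s}$ or similar, and the integrals of the form $\int_{l_\gamma}\frac{\D\zeta}{\zeta}\frac{t+\zeta}{t-\zeta}\log(1-e^{2\pi\mathrm{i}\xi_\gamma(\zeta)})$ get expanded via $\log(1-x)=-\sum_{n>0}x^n/n$, with $\xi_\gamma(\zeta)=\zeta_\gamma-\mathrm{i}R(\zeta^{-1}\widetilde{Z}_\gamma+\zeta\overline{\widetilde{Z}}_\gamma)$ on the contour, so that $e^{2\pi\mathrm{i}n\xi_\gamma(\zeta)}$ has a saddle structure producing precisely $e^{-2\pi\mathrm{i}n\zeta_\gamma}$ times Bessel-function kernels. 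I would therefore: (i) differentiate the contour-integral terms in $\widetilde{\xi}_i^{\mathrm{inst}}$ and $\alpha_{\mathrm{inst}}$ in the base and fiber directions, carefully tracking the boundary/residue contributions from the pole at $\zeta=t$ (which after the $-2\pi\mathrm{i}$ gives back terms with the full $\log(1-e^{2\pi\mathrm{i}\xi_\gamma(t)})$ and $\mathrm{L}(e^{2\pi\mathrm{i}\xi_\gamma(t)})$ structure, important for closing the identity off the contour); (ii) in the remaining genuine contour integrals, perform the $\zeta$-integration using the Bessel identities in Appendix \ref{Besselappendix} to convert them into the explicit $\sum_{n>0}$ Bessel sums appearing in $f$, $\theta_+^P|_{\overline{N}}$, $\theta_3^P|_{\overline{N}}$ in \eqref{fthetaexpres}; (iii) collect the cross terms $\widetilde{\xi}_i^{\mathrm{inst}}\D\xi^i - \xi^i\D\widetilde{\xi}_i^{\mathrm{inst}}$, which contribute the $\D\zeta_\gamma$-type terms linear in the Bessel kernels (matching the $K_0$ term in $\theta_+^P$), and check the $\mathcal{W}$-terms in $\alpha_{\mathrm{inst}}$ reproduce the $R^2 K_1$ terms in $\theta_+^P|_{\overline{N}}$ via the relation between $\mathcal{W}$ and its derivatives.

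The bookkeeping of the residue at $\zeta=t$ is the conceptual crux: the combination $\frac{\D\zeta}{\zeta}\frac{t+\zeta}{t-\zeta}$ is engineered so that the $\frac{t+\zeta}{t-\zeta}$ kernel contributes a simple pole at $\zeta=t$ with residue $-2t$, which upon differentiating in $t$ (or in the base, since $\xi_\gamma(\zeta)$ depends on the base) produces local (non-integral) terms; these are exactly what is needed so that $\D\alpha+\widetilde{\xi}_i\D\xi^i-\xi^i\D\widetilde{\xi}_i$ is meromorphic in $t$ with the prescribed pole structure $t^{-1},t^0,t^1$, rather than having an uncontrolled contour-integral remainder. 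One should also use the Rogers-dilogarithm identity $\D\,\mathrm{L}(e^{2\pi\mathrm{i}\xi_\gamma}) = -\pi\mathrm{i}\log(1-e^{2\pi\mathrm{i}\xi_\gamma})\,\D\xi_\gamma + \pi\mathrm{i}\,\xi_\gamma\,\D\log(1-e^{2\pi\mathrm{i}\xi_\gamma})$ (equivalently $\D\mathrm{Li}_2(x) = -\log(1-x)\,\D x/x$) to relate the derivative of the $\alpha_{\mathrm{inst}}$ integrand to the $\widetilde{\xi}_i^{\mathrm{inst}}$ integrand, which is what ties the $\alpha$- and $\widetilde{\xi}$-contributions together into the single form $\theta_3^P|_{\overline{N}}$.

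I expect the main obstacle to be exactly step (i)--(ii): correctly extracting and canceling the pole-at-$\zeta=t$ residue terms against each other across the $\D\alpha$, $\widetilde{\xi}_i\D\xi^i$ and $\xi^i\D\widetilde{\xi}_i$ pieces, and then performing the Bessel-function integrals with the right normalizations so that factors of $2\pi$, $R$, $n$ and signs all line up with \eqref{fthetaexpres}. Since the claim merely asserts these are Darboux coordinates and the perturbative backbone is already established in Proposition \ref{typeIIA1loop}, no positivity or domain issues arise; it is purely a (lengthy) identity verification, which the paper defers to Appendix \ref{appendixtypeiiadc}.
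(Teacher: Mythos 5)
Your overall architecture coincides with the paper's proof in Appendix \ref{appendixtypeiiadc}: split each coordinate into the $\Omega$-independent part already handled by Proposition \ref{typeIIA1loop} plus an instanton part, reduce to matching $-2\pi\I(\D\alpha^{\text{inst}}+\widetilde{\xi}_i^{\text{inst}}\D\xi^i-\xi^i\D\widetilde{\xi}_i^{\text{inst}})$ against the $\Omega$-terms of $f\,\D t/t+t^{-1}\theta_+^P-2\I\theta_3^P+t\theta_-^P$ from \eqref{fthetaexpres}, differentiate the contour integrals, and evaluate them via the Bessel identities \eqref{Bessel1}--\eqref{Bessel2} of Appendix \ref{Besselappendix}. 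Your observations that $\xi^i$ receives no correction (so no inst--inst cross terms arise) and that the differential of the Rogers dilogarithm ties the $\alpha$- and $\widetilde{\xi}$-contributions together are both correct and both used in the paper.

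There is, however, one concrete error in what you call the conceptual crux. You propose to track ``boundary/residue contributions from the pole at $\zeta=t$'' that would produce local terms involving $\log(1-e^{2\pi\I\xi_\gamma(t)})$ and $\mathrm{L}(e^{2\pi\I\xi_\gamma(t)})$, and you attribute the meromorphic $t^{-1},t^0,t^1$ structure of the result to these residues. No such contributions exist: on the open dense locus where the coordinates are defined, $t$ does not lie on any ray $l_\gamma$, so differentiation under the integral sign (in $t$ or in the base directions) produces no residue, and no quantity evaluated at $\zeta=t$ ever appears in the computation. If you inserted such terms they would not cancel and the identity would fail to close. What actually removes the apparent singularity is an algebraic cancellation built into the combination $\D\alpha+\widetilde{\xi}_i\D\xi^i-\xi^i\D\widetilde{\xi}_i$: each singular kernel ends up multiplied by a factor $\xi_\gamma(t)-\xi_\gamma(\zeta)$ or $\D\xi_\gamma(t)-\D\xi_\gamma(\zeta)$ vanishing at $\zeta=t$, e.g.
\begin{equation*}
\frac{t+\zeta}{t-\zeta}\bigl(\xi_\gamma(t)-\xi_\gamma(\zeta)\bigr)
=\Bigl(\frac{1}{\zeta}+\frac{1}{t}\Bigr)\I R\widetilde{Z}_\gamma-(t+\zeta)\,\I R\overline{\widetilde{Z}}_\gamma\,,
\end{equation*}
so the integrands become Laurent polynomials in $t$ of degree $-1$ to $1$ times $\zeta$-integrals that \eqref{Bessel1}--\eqref{Bessel2} convert into the Bessel sums of \eqref{fthetaexpres}; for the $\D t$-component one first integrates by parts in $\zeta$ (boundary terms vanish by exponential decay along $l_\gamma$) and then applies the same cancellation. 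The local non-integral terms in the final answer come from the explicitly non-integral pieces of the coordinates ($\mathcal{W}$, $R(t^{-1}F_i+t\overline{F}_i)$, the $\log t$ term), not from residues. With this correction, the remainder of your plan is exactly the paper's (lengthy) bookkeeping.
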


\begin{proof}
A explicit proof is given in Appendix \ref{appendixtypeiiadc}. This result is not needed for the following sections and is only included for completeness. See also the work in the physics literature \cite{WCHKQK}.
\end{proof}

\begin{remark}
\begin{itemize}
\item The function $\mathrm{Li}_2(x)$ appearing in \eqref{Rogersdilogarithm} is the dilogarithm function, while the function $\mathrm{L}(x)=\mathrm{Li}_2(x)+\frac{\log(x)}{2}\log(1-x)$ is the Rogers dilogarithm. 
\item We remark that along the chosen $l_{\gamma}$ contours, the integrands are exponentially decreasing near $0$ and $\infty$. In particular, one can deform the contour $l_{\gamma}$ within the half plane centered at $l_{\gamma}$ without changing the value of the integral, as long as it does not collide with another ray of the form $l_{\gamma}$ for $\gamma \in \text{Supp}(\Omega)$.

\end{itemize}
\end{remark}

\section{S-duality on instanton corrected q-map spaces}\label{Sdualitysec}

The structure of the section is as follows:

\begin{itemize}
    \item In Section \ref{settingsec} we define instanton corrected q-map spaces $(\overline{N},g_{\overline{N}})$ in terms of the construction of \cite{CT} reviewed in Section \ref{QKCASKrecap},
    applied to certain special pairs of compatible initial data $(M,\mathfrak{F})$ and $(M,\Gamma,Z,\Omega)$. These spaces will be the main objects of study in the rest of the section.
    \item In Section \ref{QMsec} we define the quantum corrected mirror map, previously defined in \cite{QMS}, and relating the type IIA variables $(\rho,z^a,\zeta^i,\widetilde{\zeta}_i,\sigma)$ with the type IIB variables $(\tau_1+\I\tau_2,b^a+\I t^a, c^a,c_a,c_0,\psi)$. In terms of the type IIB variables, we define the corresponding $\mathrm{SL}(2,\mathbb{Z})$-action.
    \item In Section \ref{IIBDCsec} we show that the holomorphic contact structure of the twistor space associated to $(\overline{N},g_{\overline{N}})$ admits Darboux coordinates of the form studied in \cite{QMS}. We show this directly by Poisson resumming the contact form \eqref{contactlocal}, which in turn is expressed in terms of the Bessel function expressions \eqref{fthetaexpres} specialized to the particular form of the BPS indices below \eqref{varBPS}. 
    \item In Section \ref{Sdsec} we give conditions that guarantee that either the S-duality $\mathrm{SL}(2,\mathbb{Z})$-transformations or the subgroup $\langle S \rangle \subset \mathrm{SL}(2,\mathbb{Z})$ acts by isometries on an instanton corrected q-map space. 
     
\end{itemize}

\subsection{Setting}\label{settingsec}
Let us first recall the notion of a PSK manifold in the image of the r-map, and the associated CASK domain. 

\begin{definition}
A projective special real (PSR) manifold is a Riemannian manifold $(\mathcal{H},g_{\mathcal{H}})$ where $\mathcal{H}\subset \mathbb{R}^n$ is a hypersurface, for which there is a cubic polynomial $h:\mathbb{R}^n\to \mathbb{R}$ such that $\mathcal{H}\subset \{h(t)=1\}$ and $g_{\mathcal{H}}=-\partial^2h|_{T\mathcal{H}\times T\mathcal{H}}$.
\end{definition}
If we denote the canonical coordinates of $\mathbb{R}^n$ by $t^a$, then we can write
\begin{equation}\label{cubicpsr}
    h(t)=\frac{1}{6}k_{abc}t^at^bt^c,
\end{equation}
with $k_{abc}\in \mathbb{R}$ symmetric in the indices. Now let $U:=\mathbb{R}_{>0}\cdot \mathcal{H}\subset \mathbb{R}^{n}-\{0\}$ and $\overline{M}^{\text{cl}}:=\mathbb{R}^n+\I U\subset \mathbb{C}^n$ with canonical holomorphic coordinates $z^a:=b^a+\I t^a$. On $\overline{M}^{\text{cl}}$ we have a PSK metric by defining

\begin{equation}\label{8h:eq}
    g_{\overline{M}^{\text{cl}}}:=\frac{\partial^2\mathcal{K}}{\partial z^a \partial \overline{z}^b}\mathrm{d}z^a\mathrm{d}\overline{z}^b\, \quad \; \mathcal{K}=-\log(8h(t))\,.
\end{equation}
The PSK manifold $(\overline{M}^{\text{cl}},g_{\overline{M}^{\text{cl}}})$ is associated to the CASK domain $(M^{\text{cl}},\mathfrak{F}^{\text{cl}})$ of signature $(n,1)$ given by 

\begin{equation}
    M^{\text{cl}}:=\{(Z^0,...,Z^n)=Z^0\cdot(1,z) \in \mathbb{C}^{n+1}\; | \; Z^0\in \mathbb{C}^{\times}, z\in \overline{M}^{\text{cl}}\},\, \quad \mathfrak{F}^{\text{cl}}:=-\frac{1}{6}k_{abc}\frac{Z^aZ^bZ^c}{Z^0}\,.
\end{equation}
Notice in particular the relation $Z^a/Z^0=z^a=b^a+\I t^a$, which we use repeatedly below.

\begin{definition}\label{rmapqmapdef}
The construction given above that associates the PSK manifold $(\overline{M}^{\text{cl}}, g_{\overline{M}^{\text{cl}}})$ to the PSR manifold $(\mathcal{H},g_{\mathcal{H}})$ is called the r-map. Furthermore, the QK metric obtained by applying the c-map (resp. tree-level c-map) to a PSK manifold in the image of the r-map is called a q-map space (resp. tree-level q-map space).
\end{definition}

We now want to consider a tuple $(M,\mathfrak{F})$ of the following special type:
\begin{itemize}
    \item We start with a PSR manifold $(\mathcal{H},g_{\mathcal{H}})$ and consider the associated CASK domain $(M^{\text{cl}},\mathfrak{F}^{\text{cl}})$. We further let $\Lambda^{+}:=\text{span}_{\mathbb{Z}_{\geq 0}}\{\gamma^a\}_{a=1}^{n}-\{0\}$ be a commutative semi-group generated by $\{\gamma^a\}_{a=1}^{n}$, where $n=\text{dim}(\mathcal{H})+1$. We want to consider a new CASK domain with a holomorphic prepotential of the form
    \begin{equation}\label{prepotential}
        \mathfrak{F}(Z^i)=\mathfrak{F}^{\text{cl}}(Z^i)+\mathfrak{F}^{\text{w.s.}}(Z^i)
    \end{equation}
    where $\mathfrak{F}^{\text{cl}}$ is as before, and
    \begin{equation}\label{Fws:eq}
        \mathfrak{F}^{\text{w.s.}}:=\chi\frac{(Z^0)^2\zeta(3)}{2(2\pi \I )^3}-\frac{(Z^0)^2}{(2\pi \I)^3}\sum_{\hat{\gamma}=q_a\gamma^a \in \Lambda^+}n_{\hat{\gamma}}\mathrm{Li}_3(e^{2\pi \I q_aZ^a/Z^0})\,\,.
    \end{equation}
    In the above expression  $\chi \in \mathbb{Z}$, $n_{\hat{\gamma}}\in \mathbb{Z}$, $\mathrm{Li}_n(z)$ denotes the n-th polylogarithm, and $\zeta(x)$ denotes the Riemann zeta function.
    \item We assume that the $n_{\hat{\gamma}}$ are such that the $\mathbb{C}^{\times}$-invariant subset of $\mathbb{C}^{n+1}$ defined by
    \begin{equation}\label{defM}
        M^q:=M^{\text{cl}}\cap\{(Z^0,...,Z^n)\in \mathbb{C}^{n+1} \; | \;  q_at^a>0 \; \text{for all $\hat{\gamma}=q_a\gamma^a\in \Lambda^+$ with $n_{\hat{\gamma}}\neq 0$}\}\;
    \end{equation}
    is open\footnote{This condition is automatic if the commutative semigroup generated by $\{\hat{\gamma}\in \Lambda^+ \; | \; n_{\hat{\gamma}}\neq 0\}$ is finitely generated.}. We further assume that the growth of the $n_{\hat{\gamma}}$ with $\hat{\gamma}$ is such that for any $\epsilon>0$ the series
    \begin{equation}\label{convpropn}
        \sum_{\hat{\gamma}=q_a\gamma^a\in \Lambda^+}|n_{\hat{\gamma}}|e^{-\epsilon q_at^a}
    \end{equation} has compact normal convergence on $M^q$, so that $\mathfrak{F}$ defines a holomorphic function on $M^q$.
     In particular, the condition $q_at^a>0$ ensures that $\mathrm{Li}_3(e^{2\pi \I q_aZ^a/Z^0})$ is well defined and can be expressed as
    
    \begin{equation}
        \mathrm{Li}_3(e^{2\pi \I q_aZ^a/Z^0})=\sum_{k>0}\frac{e^{2\pi \I kq_aZ^a/Z^0}}{k^3}\,.
    \end{equation}
    
    \item We denote by $M\subset M^q$ the maximal open subset of $M^q$ where $\text{Im}(\tau_{ij})=\text{Im}(\partial_i\partial_j\mathfrak{F})$ has signature $(n,1)$ and $\text{Im}(\tau_{ij})Z^i\overline{Z}^j<0$.  These conditions are $\mathbb{C}^{\times}$-invariant, so that $(M,\mathfrak{F})$ defines a CASK domain on each connected component of $M$. 
    \item To such a tuple $(M,\mathfrak{F})$ we associate the canonical lattice $\Gamma \to M$, together with the canonical central charge $Z$ (recall Section \ref{QKinstdomainsec}). Namely, if $Z_i:=\partial_{Z^i}\mathfrak{F}$ and $x^i=\text{Re}(Z^i)$, $y_i=\text{Re}(Z_i)$, then  $(\partial_{x^i},\partial_{y_i})=(\widetilde{\gamma}_i,\gamma^i)$ defines a global Darboux frame for $\Gamma\to M$. For the canonical central charge we then have 
    \begin{equation}\label{Z:eq}
        Z_{\gamma^i}=Z^i, \quad Z_{\widetilde{\gamma}_i}=-Z_i=-\frac{\partial \mathfrak{F}}{\partial Z^i}\,.
    \end{equation}
    We will identify the semigroups $\Lambda^+\cong \text{span}_{\mathbb{Z}_{\geq 0}}\{\partial_{y_a}\}_{a=1}^n-\{0\}$.
\end{itemize}
\begin{remark}\label{Mrem}
\begin{itemize}
\item We remark that given any $(Z^0,...,Z^n)=Z^0\cdot(1,b^a+\I t^a)\in M^q$, all points of the form $Z^0\cdot(1,b^a+\lambda\I t^a)$ with $\lambda>0$ must also be in $M^q$. In particular, for $\lambda>0$ sufficiently big we have $\text{Im}(\tau_{ij})\sim \text{Im}(\partial_i\partial_j\mathfrak{F}^{\text{cl}})$ due to the exponential decay of the terms with polylogarithms. Since $\text{Im}(\partial_i\partial_j\mathfrak{F}^{\text{cl}})$ has signature $(n,1)$ and $\text{Im}(\partial_i\partial_j\mathfrak{F}^{\text{cl}})Z^i\overline{Z}^j<0$, it follows that at the points $Z^0\cdot(1,b^a+\lambda\I t^a)$ for $\lambda>0$ sufficiently big we have that $\text{Im}(\tau_{ij})$ has signature $(n,1)$ and $\text{Im}(\tau_{ij})Z^i\overline{Z}^j<0$, so that the required $M$ is never empty, provided $M^q$ is not empty. 

\item We would also like to comment on the particular form of the prepotential \eqref{prepotential}. In the setting of Calabi-Yau compactifications of type IIB string theory, the term $\mathfrak{F}^{\text{cl}}$ has $k_{abc}\in \mathbb{Z}$ equal to the triple interesection numbers of the Calabi-Yau threefold $X$, and corresponds to the holomorphic prepotential determining (via the c-map) the classical QK geometry of the hypermultiplet moduli space. On the other hand, $\mathfrak{F}^{\text{w.s.}}$
correspond to world-sheet instanton corrections. In such a setting, $\chi$ coincides with the Euler characteristic $\chi(X)$ of $X$, and the numbers $n_{\hat{\gamma}}$ are the so-called genus zero Gopakumar-Vafa invariants, and usually denoted by $n_{\hat{\gamma}}^{(0)}$. Since we are in a setting independent of string theory, we will use the simpler notation of $n_{\hat{\gamma}}$ for the coefficients appearing in \eqref{prepotential}. Furthermore, it will be useful to define $n_{0}:=-\frac{\chi}{2}$. Then, using that $\mathrm{Li}_3(1)=\zeta(3)$ we can write
\begin{equation}\label{Fws}
    \mathfrak{F}^{\text{w.s.}}=-\frac{(Z^0)^2}{(2\pi \I)^3}\sum_{\hat{\gamma} \in \Lambda^+\cup\{0\}}n_{\hat{\gamma}}\mathrm{Li}_3(e^{2\pi \I q_aZ^a/Z^0})\,.
\end{equation}
\item In the physics literature sometimes an  additional term of the form

\begin{equation}
    \frac{1}{2}A_{ij}Z^iZ^j, \quad A_{ij}=A_{ji}\in \mathbb{R}
\end{equation}
is added to $\mathfrak{F}$ in \eqref{prepotential}. While the addition of such a term does not alter the PSK metric $g_{\overline{M}}$ or the CASK metric $g_{M}$ (since $A_{ij}$ is real), the inclusion of such a term turns out to be important for mirror symmetry between D-branes of type IIA and type IIB string theory (see for example the review \cite[Chapter V]{HMreview1}, and the references therein). However, below we will focus on a case analogous to including D(-1) and D1 instanton corrections on type IIB, and we can safely ignore the inclusion of such a term. 
\end{itemize}
\end{remark}

Finally, after possibly restricting $M$, we assume that $M$ is the maximal open subset such that $(M,\Gamma,Z,\Omega)$ with $\Gamma$ and $Z$ as before and with

    \begin{equation}\label{varBPS}
        \begin{cases}
     \Omega(q_0\gamma^0)=-\chi=2n_0, \quad q_0\in \mathbb{Z}-\{0\}\\
     \Omega(q_0\gamma^0\pm q_a\gamma^a)=\Omega(\pm q_a\gamma^a)=n_{q_a\gamma^a} \quad \text{for $q_a\gamma^a \in \Lambda^+$, $q_0\in \mathbb{Z}$}\\
    \Omega(\gamma)=0 \quad \text{else},\\
        \end{cases}
    \end{equation}
is a mutually local variation of BPS structures compatible (in the sense of Definition \ref{compdef}) with the CASK manifold $(M,g_M,\omega_M,\nabla,\xi)$ associated to $(M,\mathfrak{F})$. To check that $(M,\Gamma,Z,\Omega)$ defines a mutually local variation of BPS structures, one only needs to check the support property, since the convergence property follows easily from \eqref{convpropn}, while the mutual locality and invariance under monodromy is obvious.\\

We remark that the BPS indices in \eqref{varBPS} are determined by the holomorphic prepotential $\mathfrak{F}$, see (\ref{Fws:eq}). Such a prescription of BPS indices has previously appeared in the physics literature in \cite{QMS,Sduality} or more explicitly in \cite[Equation 4.5]{HMreview2}. See also \cite{Joyce:2008pc} (in particular, \cite[Section 6.3, Conjecture 6.20]{Joyce:2008pc}), where such a BPS spectrum is conjectured for a non-compact Calabi-Yau 3-folds without compact divisors).\\

In the following, we will consider the associated instanton corrected QK manifold $(\overline{N},g_{\overline{N}})$ associated to $(M,\mathfrak{F})$ and $(M,\Gamma,Z,\Omega)$ as in Theorem \ref{QKCASKdomainformulas}. It will be convenient to lift the QK metric  $(\overline{N},g_{\overline{N}})$ to $(\widetilde{N},g_{\overline{N}})$ as in Definition \ref{liftdef}, where the $(\zeta^i,\widetilde{\zeta}_i,\sigma)$ directions are no longer periodic. In particular, we recall that we have the following:

\begin{itemize}
    \item $\widetilde{N}\subset \mathbb{R}_{>0}\times \overline{M}\times \mathbb{R}^{2n+2}\times \mathbb{R}$ is an open subset, where the splitting is such that $(\rho,z^a,\zeta^i,\widetilde{\zeta}_i,\sigma)\in \mathbb{R}_{>0}\times \mathbb{C}^n\times \mathbb{R}^{2n+2}\times \mathbb{R}$, where $\overline{M}$ is the associated PSK manifold.
    \item The twistor space $\mathcal{Z}$ of $(\widetilde{N},g_{\overline{N}})$ smoothly decomposes as $\mathcal{Z}=\widetilde{N}\times \mathbb{C}P^1$. In particular, the expression \eqref{contactlocal} for the contact structure holds for the global coordinates $(\rho,z^a,\zeta^i,\widetilde{\zeta}_i,\sigma,t)$ of $\mathcal{Z}$. 
    Here we are slightly abusing notation by considering $t$, the identity map on $\mathbb{C}P^1$, as a global coordinate.)
\end{itemize}

\begin{definition}
If $(M,\mathfrak{F})$ and $(M,\Gamma,Z,\Omega)$ are given as before, we will call the resulting QK manifold $(\overline{N},g_{\overline{N}})$ (or its lift $(\widetilde{N},g_{\overline{N}})$) an instanton corrected q-map space.
\end{definition}

The reason for the calling $(\overline{N},g_{\overline{N}})$ (or $(\widetilde{N},g_{\overline{N}})$) an instanton corrected q-map space is the following:

\begin{itemize}
    \item By setting the one-loop parameter to $c_{\ell}=0$, and $n_{\hat{\gamma}}=0$ for all $\hat{\gamma}\in \Lambda^+\cup\{0\}$ (and hence, also $\Omega(\gamma)=0$ for all $\gamma$), one recovers a q-map space. That is the QK metric obtained by applying the c-map to a PSK manifold in the image of the r-map:
    \begin{equation}\label{QKtree}
    g_{\overline{N}}=g_{\overline{M}}+\frac{\D\rho^2}{4\rho^2}-\frac{1}{4\rho}(N^{ij}-2e^{\mathcal{K}}z^i\overline{z}^j)W_i\overline{W}_j + \frac{1}{64\rho^2}(\D\sigma + \widetilde{\zeta}_{i}\D\zeta^{i}-\zeta^{i}\D\widetilde{\zeta}_{i})^2\,.
\end{equation}
where $g_{\overline{M}}$, $N_{ij}$, $\mathcal{K}$ and $W_i$ are constructed in terms of $\mathfrak{F}^{\text{cl}}$ (see Section \ref{QKinstdomainsec} for the definition of the above terms).
\item In the setting of Calabi-Yau compactification of type IIB string theory, the terms due to the BPS structure $(M,\Gamma,Z,\Omega)$ are thought as D(-1), D1 instanton corrections, and those due to $\mathfrak{F}^{\text{w.s.}}$ are thought as world-sheet instantons corrections. Hence, the QK metric obtained by the above $(M,\mathfrak{F})$ and $(M,\Gamma,Z,\Omega)$ can be thought as a q-map with the inclusion of the analog of the above corrections. 
\end{itemize} 
\subsection{The quantum corrected mirror map and the S-duality action}\label{QMsec}

In the sections that follow we will rescale the twistor fiber coordinate $t\to -\I t$. The contact structure on the twistor space is then expressed by (compare with \eqref{contactlocal})
    
    \begin{equation}\label{contstr}
    \lambda=\left(f\frac{\D t}{t}+t^{-1}\I \theta_+^P|_{\overline{N}} -2\I \theta_3^P|_{\overline{N}} -t\I \theta_-^P|_{\overline{N}}\right)\cdot s\,.
\end{equation}

In order to define the S-duality action, we consider the following diffeomorphism, first defined (under slightly different conventions) in \cite{QMS}:

\begin{definition} Let $\overline{M}:=\{z \in \mathbb{C}^n \; | \; (1,z)\in M\}$ where $M$ was given in the previous section, and
consider the manifold $\overline{\mathcal{N}}_{\text{IIA}}:=\mathbb{R}_{>-c_{\ell}}\times \overline{M}\times \mathbb{R}^{2n+2}\times \mathbb{R}$ with global coordinates $(\rho,z^a,\zeta^i,\widetilde{\zeta}_i,\sigma)$. We will call such coordinates type IIA coordinates. On the other hand, if $H\subset \mathbb{C}$ denotes the upper half-plane, we define $\overline{\mathcal{N}}_{\text{IIB}}:=H\times \overline{M}\times \mathbb{R}^{2n}\times \mathbb{R}^2$ with global coordinates $(\tau_1+\mathrm{i}\tau_2,b^a+\I t^a,c^a,c_a,c_0,\psi)$. We call the latter type IIB coordinates. The type IIB coordinates are related to the type IIA coordinates via the diffeomorphism (see Remark \ref{diffeoremark} below) $\mathcal{M}: \overline{\mathcal{N}}_{\text{IIB}}\to \overline{\mathcal{N}}_{\text{IIA}}$ defined by
    \begin{equation}\label{MM}
    \begin{split}
     z^a&=b^a+\I t^a,  \;\;\;\;\; \rho=\frac{\tau_2^2}{16}e^{-\mathcal{K}}-c_{\ell}, \;\;\;\;\;\;\; \zeta^0=\tau_1, \;\;\;\;\; \zeta^a=-(c^a-\tau_1b^a),\\
     \widetilde{\zeta}_a&=c_a +\frac{k_{abc}}{2}b^b(c^c-\tau_1b^c)+\widetilde{\zeta}_{a}^{\text{inst}}, \;\;\;\;\;\;  \widetilde{\zeta}_0=c_0-\frac{k_{abc}}{6}b^ab^b(c^c-\tau_1b^c)+\widetilde{\zeta}_{0}^{\text{inst}}\\
    \sigma&= -2(\psi + \frac{1}{2}\tau_1c_0) + c_a(c^a-\tau_1b^a) -\frac{k_{abc}}{6}b^ac^b(c^c-\tau_1b^c)+\sigma^{\text{inst}}\,,
    \end{split}
    \end{equation}
    where  $\mathcal{K}=-\log(-2\text{Im}(\tau_{ij})z^i\overline{z}^j)$ with $z^0=1$,  $\tau_{ij}=\frac{\partial^2 \mathfrak{F}}{\partial Z^i\partial Z^j}$, $c_{\ell}\in \mathbb{R}$ is the 1-loop parameter, and 
    \begin{equation}\label{typeiiacoordcor}
        \begin{split}
        \widetilde{\zeta}^{\text{inst}}_{a}&:=\frac{1}{8\pi^2}\sum_{\hat{\gamma}\in \Lambda^+}n_{\hat{\gamma}}q_a\sum_{\substack{m\in \mathbb{Z}-\{0\}\\ n\in \mathbb{Z}}}\frac{m\tau_1+ n}{m|m\tau+ n|^2}e^{-S_{\hat{\gamma},m,n}}\\
        \widetilde{\zeta}^{\text{inst}}_0&:=\frac{\I}{16\pi^3}\sum_{\hat{\gamma}\in \Lambda^+}n_{\hat{\gamma}}\sum_{\substack{m\in \mathbb{Z}-\{0\}\\ n\in \mathbb{Z}}}\left(\frac{(m\tau_1+ n)^2}{|m\tau + n|^3}+ 2\pi q_a\left(t^a+\I b^a\frac{m\tau_1+n}{|m\tau+ n|}\right)\right)\frac{e^{-S_{\hat{\gamma},m,n}}}{m|m\tau+ n|}\\
        \sigma^{\text{inst}}&:=\tau_1\widetilde{\zeta}_0^{\text{inst}}-(c^a-\tau_1b^a)\widetilde{\zeta}_a^{\text{inst}}-\frac{\I\tau_2^2}{8\pi^2}\sum_{\hat{\gamma}\in \Lambda_{+}}n_{\hat{\gamma}}q_at^a\sum_{n\in \mathbb{Z}-\{0\}}\frac{e^{-S_{\hat{\gamma},0,n}}}{n|n|}\\
    &\quad\quad +\frac{\I}{8\pi^3}\sum_{\hat{\gamma}\in \Lambda^{+}}n_{\hat{\gamma}}\sum_{\substack{m\in \mathbb{Z}-\{0\}\\ n\in \mathbb{Z}}}\left(2-\frac{(m\tau_1+n)^2}{|m\tau+ n|^2}\right)\frac{(m\tau_1+ n)e^{-S_{\hat{\gamma},m,n}}}{m^2|m\tau+ n|^2}
        \end{split}
    \end{equation}
    where
    \begin{equation}\label{mnaction}
       S_{\hat{\gamma},m,n}:=2\pi q_a(|m\tau+ n|t^a +\I mc^a+ \I nb^a)\,.
    \end{equation}
We will refer to the diffeomorphism $\mathcal{M}$ as the quantum corrected mirror map.
\end{definition}
\begin{remark}\label{diffeoremark}
\begin{itemize}
\item Note that since $z=(z^a)\in \overline{M}$ and $M\subset M^{q}$, we have $\text{Re}(S_{\hat{\gamma},m,n})>0$ for all $\hat{\gamma}\in \Lambda^{+}$ with $n_{\hat{\gamma}}\neq 0$, so \eqref{convpropn} implies that the sums in \eqref{typeiiacoordcor} have compact normal convergence on $\overline{\mathcal{N}}_{\text{IIB}}$.  Furthermore, we remark that  \eqref{MM} really defines a diffeomorphism. Indeed, since  $-\text{Im}(\tau_{ij})z^i\overline{z}^j>0$ on $\overline{M}$ and $\tau_2>0$, we can always invert the relation involving $\rho$, while $\widetilde{\zeta}_i^{\text{inst}}$ and $\sigma^{\text{inst}}$ only depend on $\tau=\tau_1+\mathrm{i}\tau_2$, $z^a$ and $c^a$, so it is easy to invert all the other relations. Note, in  particular, that the expressions for $\widetilde{\zeta}_i^{\text{inst}}$ and $\sigma^{\text{inst}}$ are real. Furthermore, if we set $c_{\ell}=0$, $n_{\hat{\gamma}}=\chi=0$ for all $\hat{\gamma}\in \Lambda^{+}$, then we recover the classical mirror map.
\item While the expressions \eqref{typeiiacoordcor} look complicated, they satisfy nice transformation properties with respect to the expected isometry groups of the metric. See Section \ref{universalisosec}, Lemma \ref{lemmainstcoord} and Corollary \ref{coordcorollary}.
\end{itemize}
\end{remark}

\begin{definition}\label{defsl2domain} Let $\overline{\mathcal{N}}_{\text{IIB}}^{\text{cl}}:=H\times \overline{M}^{\text{cl}}\times \mathbb{R}^{2n}\times \mathbb{R}^2$ with global type IIB variables $(\tau=\tau_1+\I \tau_2,b^a+\I t^a,c^a,c_a,c_0,\psi)$. We define an $\mathrm{SL}(2,\mathbb{Z})$-action on $\overline{\mathcal{N}}_{\text{IIB}}^{\text{cl}}$ by 

\begin{equation}\label{sl2can}
    \begin{split}
    \tau&\to \frac{a\tau +b}{c\tau +d},\;\;\;\;\; t^a \to |c\tau +d|t^a, \;\;\;\; c_a \to c_a, \\
    \begin{pmatrix}c^a\\ b^a\end{pmatrix}&\to \begin{pmatrix} a & b \\ c & d\\ \end{pmatrix}\begin{pmatrix}c^a \\ b^a\end{pmatrix}, \;\;\;\; \begin{pmatrix}c_0\\ \psi\end{pmatrix}\to \begin{pmatrix} d & -c \\ -b & a\\ \end{pmatrix}\begin{pmatrix}c_0 \\ \psi \end{pmatrix}, \;\;\;\; \begin{pmatrix} a & b \\ c & d\\ \end{pmatrix}\in \mathrm{SL}(2,\mathbb{Z})
    \end{split}
\end{equation}

We call this action the S-duality action. We note that there is the possibility that S-duality does not act on $\overline{\mathcal{N}}_{\text{IIB}}\subset \overline{\mathcal{N}}_{\text{IIB}}^{\text{cl}}$, since it may happen that $\overline{M}$ is not invariant under the scaling of the $t^a$ in \eqref{sl2can}. 
\end{definition}

\begin{remark} Recall from Section \ref{HK/QKsec} that the quaternionic K\"ahler manifold $\overline{N}$ was constructed in \cite{CT} via HK/QK correspondence (by specializing \cite{QKPSK}) as the hypersurface 
of the bundle $P\to N$ which is 
defined by the equation $\mathrm{Arg}\, Z^0 =0$. 
Using the IIB variables we can now simply match $|Z^0|=\tau_2$. 
\end{remark}

\subsection{Type IIB Darboux coordinates}\label{IIBDCsec}
We now want to write a distinguished set of Darboux coordinates which will be useful for studying S-duality or the action by specific elements of $\mathrm{SL}(2,\mathbb{Z})$. The coordinates below have been previously written in \cite{QMS}. Nevertheless, our approach will be different from \cite{QMS} in the sense that we will start from the mathematical construction of the instanton QK metric obtained in \cite{CT}, and then explicitly show that \eqref{typeiibdc} indeed define Darboux coordinates for the contact structure on its twistor space. \\

Recalling \eqref{MM} and \eqref{typeiiacoordcor}, we let 
    \begin{equation}
        \widetilde{\zeta}_i^{\text{cl}}:=\widetilde{\zeta}_i-\widetilde{\zeta}_i^{\text{inst}}, \quad \sigma^{\text{cl}}:=\sigma - \sigma^{\text{inst}}\,,
    \end{equation}
and define 
    \begin{equation}\label{dcclassical}
    \begin{split}
        \xi^{i,\text{cl}}&:=\zeta^{i}+ R(t^{-1}z^i -t\overline{z}^i)\\
        \widetilde{\xi}_i^{\text{cl}}&:=\widetilde{\zeta}_{i}^{\text{cl}}+ R(t^{-1}F^{\text{cl}}_{i}-t\overline{F}^{\text{cl}}_i)\\
        \alpha^{\text{cl}}&:= \sigma^{\text{cl}} +R(t^{-1}\langle \widetilde{Z}^{\text{cl}},\zeta^{\text{cl}} \rangle -t \langle \overline{\widetilde{Z}}^{\text{cl}}, \zeta^{\text{cl}} \rangle)\,,
    \end{split}
\end{equation}
where $\widetilde{Z}^{\text{cl}}:= Z^{\text{cl}}/Z^0$ and $Z^{\text{cl}}$ is defined replacing 
$Z$ and $\mathfrak F$ in equation (\ref{Z:eq}) by $Z^{\text{cl}}$ and $\mathfrak F^{\text{cl}}$, and $F_i^{\text{cl}}:=\partial_{Z^i}\mathfrak{F}^{\text{cl}}/Z^0$.
The expressions \eqref{dcclassical} match the coordinates \eqref{Darbouxcoords1loop} for the case $\mathfrak{F}=\mathfrak{F}^{\text{cl}}$ after the scaling $t \to -\mathrm{i}t$ done in \eqref{contstr} and setting $c_{\ell}=0$. In particular, they define Darboux coordinates for the contact structure of twistor space of the tree-level q-map space defined by $\mathfrak{F}^{\text{cl}}$.
Finally, if $c\in \mathbb{R}-\{0\}$ and $d\in \mathbb{R}$, we denote by $t_{\pm}^{c,d}$ the roots of $t(c\xi^{0,\text{cl}}+d)=0$ in the variable $t$. Using that  $\zeta^0=\tau_1$ and $R=2\sqrt{\rho+c_{\ell}}e^{\mathcal{K}/2}=\tau_2/2$ (recall \eqref{MM}), we find

\begin{equation}\label{troots}
    t_{\pm}^{c,d}=\frac{c\tau_1+d \pm |c\tau +d|}{c\tau_2}\,.
\end{equation}
\begin{theorem}\label{theorem1}
Consider an instanton corrected q-map space  $(\widetilde{N},g_{\overline{N}})$ with $1$-loop parameter $c_{\ell}=\frac{\chi}{192\pi}$. Then the functions $(\xi^i,\widetilde{\xi}_{i},\alpha)$ of the associated twistor space $\mathcal{Z}\cong \widetilde{N}\times \mathbb{C}P^1$ given by 

    \begin{align*}
        \xi^i&=\xi^{i,\text{cl}}\\
        \widetilde{\xi}_{a}&=\widetilde{\xi}_a^{\text{cl}}+\frac{\tau_2}{8\pi^2}\sum_{\hat{\gamma}\in \Lambda^+\cup\{0\}}n_{\hat{\gamma}}q_a\left(\sum_{(m,n)\in \mathbb{Z}^2-\{0\}}\frac{e^{-S_{\hat{\gamma},m,n}}}{|m\tau+ n|^2}\frac{1+t_+^{m,n}t}{t-t_{+}^{m, n}}\right)\\
        \widetilde{\xi}_0&=\widetilde{\xi}_0^{\text{cl}}+\frac{\I \tau_2}{16\pi^3}\sum_{\hat{\gamma}\in \Lambda^+\cup\{0\}}n_{\hat{\gamma}}\sum_{(m,n)\in \mathbb{Z}^2-\{0\}}\left(\frac{1}{m\xi^0+ n}+\frac{m\tau_1+ n}{|m\tau+ n|^2}\right)\frac{1+t_{+}^{m, n}t}{t-t_{+}^{m, n}}\frac{e^{-S_{\hat{\gamma},m,n}}}{|m\tau + n|^2}\\
        &\quad-\frac{\tau_2}{8\pi^2}\sum_{\hat{\gamma}\in \Lambda^{+}\cup \{0\}}n_{\hat{\gamma}} \sum_{(m,n)\in \mathbb{Z}^2-\{0\}}\left(q_ab^a \frac{1+t_{+}^{m, n}t}{t-t_{+}^{m, n}}+\I q_at^a\frac{1-t_{+}^{m, n}t}{t-t_{+}^{m, n}}\right)\frac{e^{-S_{\hat{\gamma},m,n}}}{|m\tau+ n|^2}\\
        \alpha&=-\frac{1}{2}(\alpha^{\text{cl}}-\xi^{i}\widetilde{\xi}_i^{\text{cl}})+\frac{\I \tau_2^2}{32\pi^3}\sum_{\hat{\gamma}\in \Lambda^{+}\cup\{0\}}n_{\hat{\gamma}}\sum_{(m,n)\in \mathbb{Z}^2-\{0\}}\left((m\tau_1+ n)(t^{-1}-t) -2m\tau_2\right)\frac{1+t_{+}^{m, n}t}{t-t_{+}^{m, n}}\frac{e^{-S_{\hat{\gamma},m,n}}}{|m\tau+n|^4}\,, \numberthis \label{typeiibdc}
    \end{align*}
define Darboux coordinates for the contact structure of $\mathcal{Z}$, in the sense that

\begin{equation}\label{contdarbiib}
    \lambda=4\pi \I\left(\D\alpha -\widetilde{\xi}_i\D \xi^i\right)\cdot s\,.
\end{equation}
In the above expressions, for the sums in the case where $m=0$ we set 

\begin{equation}\label{m=0def}
    \frac{1+t_{+}^{0, n}t}{t-t_{+}^{0, n}}:=\begin{cases}
        1/t, \quad n<0\\
        -t, \quad n>0
    \end{cases}, \quad \quad \quad \quad \frac{1-t_{+}^{0, n}t}{t-t_{+}^{0, n}}:=\begin{cases}
        1/t, \quad n<0\\
        t, \quad n>0\,.
    \end{cases}
\end{equation}
\end{theorem}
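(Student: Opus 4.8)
The plan is to establish \eqref{contdarbiib} by starting from the explicit expression for the contact structure $\lambda$ given in \eqref{contactlocal} (after the rescaling $t \to -\I t$, i.e. in the form \eqref{contstr}), together with the Bessel-function formulas for $f$, $\theta_+^P|_{\overline{N}}$, $\theta_3^P|_{\overline{N}}$ from Lemma \ref{thetalemma1} specialized to the BPS indices \eqref{varBPS}. The key technical device is a Poisson resummation: the sums over $n>0$ of Bessel functions $K_0, K_1$ weighted by $e^{-2\pi \I n \zeta_\gamma}$, once we insert the specific spectrum \eqref{varBPS} (which pairs each $\hat\gamma = q_a\gamma^a$ with the lattice of $\gamma^0$-translates $q_0\gamma^0$), become double sums over $(m,n)$ that can be Poisson-resummed in one of the indices. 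The standard identity to use is the one converting $\sum_{n} e^{2\pi \I n x} K_0(2\pi |n| R |w|)$-type series into sums of the form $\sum_m (\text{rational in }\tau)\, e^{-S_{\hat\gamma,m,n}}$; these are exactly the Bessel integral identities collected in Appendix \ref{Besselappendix}. This is how the $D(-1)$/$D1$ instanton contributions get repackaged from the ``type IIA'' side into the manifestly $\mathrm{SL}(2,\mathbb{Z})$-friendly ``type IIB'' form, and it is the reason the one-loop parameter must be fixed to $c_\ell = \frac{\chi}{192\pi}$ — this is the value for which the constant/$\zeta(3)$ pieces (coming from the $\hat\gamma = 0$ term $n_0 = -\chi/2$ and from the $8\I c_\ell \log t$ term in $\alpha^{\text{cl}}$) assemble correctly after resummation.

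\textbf{Main steps, in order.} First I would substitute \eqref{varBPS} into \eqref{fthetaexpres}, separating the $\hat\gamma = 0$ (pure $\gamma^0$) contribution from the $\hat\gamma \in \Lambda^+$ contributions, and rewrite everything in IIB variables via the quantum corrected mirror map \eqref{MM}–\eqref{mnaction}; at this stage $f$, $\theta_\pm^P$, $\theta_3^P$ are explicit Bessel sums in $(\tau, z, c, t)$. Second, I would Poisson resum each Bessel sum using the identities of Appendix \ref{Besselappendix}, turning $\sum_{n>0}$ into $\sum_{(m,n)}$ with summands involving $e^{-S_{\hat\gamma,m,n}}$ and the poles at $t = t_+^{m,n}$; the roots $t_\pm^{c,d}$ from \eqref{troots} are precisely where these poles sit. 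Third, I would compute $\D\alpha - \widetilde\xi_i \D\xi^i$ directly from the definitions \eqref{typeiibdc}, using $\xi^i = \xi^{i,\text{cl}}$, the known fact that $(\xi^{i,\text{cl}}, \widetilde\xi_i^{\text{cl}}, \alpha^{\text{cl}})$ are Darboux coordinates for the tree-level contact form (Proposition \ref{typeIIA1loop} with $\mathfrak{F} = \mathfrak{F}^{\text{cl}}$), so that $\D\alpha^{\text{cl}} - \widetilde\xi_i^{\text{cl}}\D\xi^i$ reproduces the classical piece, and then carefully differentiating the instanton sums. A useful simplification: since $\widetilde{\zeta}_i^{\text{inst}}$ and $\sigma^{\text{inst}}$ depend only on $(\tau, z^a, c^a)$ and not on $\widetilde\zeta_i, \sigma, \psi$, the ``classical'' Darboux statement plus the mirror map handles the bulk; what remains is to match the instanton corrections term by term, grouping by $\hat\gamma$ and by $(m,n)$. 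Fourth, I would verify that the $m=0$ terms, where the quadratic $t(m\xi^0 + n) = t n$ has no root in $t$, reduce to the prescribed limiting expressions \eqref{m=0def} (these come from taking $c \to 0$ limits of $t_\pm^{c,d}$ and keeping the surviving pole at $t = 0$ or the factor $t$), and check consistency of the overall normalization $4\pi\I$ against $-2\pi\I$ in the type IIA convention — the factor of $-2$ and the shift $\alpha = -\tfrac12(\alpha^{\text{cl}} - \xi^i\widetilde\xi_i^{\text{cl}}) + \dots$ is a contact transformation (Darboux coordinates are only defined up to such), so one must exhibit that $\D\alpha - \widetilde\xi_i\D\xi^i$ equals a nonzero multiple of $\D\alpha^{\text{cl}} + \widetilde\xi_i^{\text{cl}}\D\xi^i - \xi^i \D\widetilde\xi_i^{\text{cl}}$ plus exact/rescaling ambiguities.

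\textbf{Anticipated main obstacle.} The hard part will be the bookkeeping in the Poisson resummation and the term-by-term matching of the instanton pieces: one must track how the $K_0$ and $K_1$ sums (which appear in $\mathcal{W}$, in $f^{\text{inst}}$, in $\theta_3^{P,\text{inst}}$ and in $\theta_\pm^{P,\text{inst}}$ with different $t$-dependence — constant, $t^{-1}$, $t$) each resum to rational functions of $t$ with poles at $t_+^{m,n}$, and verify that the combination dictated by $\lambda = (f \tfrac{\D t}{t} + \dots)s$ reorganizes exactly into $\D(\text{the }\alpha\text{ of \eqref{typeiibdc}}) - \widetilde\xi_i \D\xi^i$. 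A subtle point is the treatment of the contour integrals versus the Poisson-resummed pole sums: the ``type IIA'' Darboux coordinates of Theorem \ref{theorem3} are written as integrals over rays $l_\gamma$, and one needs that these integrals, after inserting the BPS spectrum \eqref{varBPS} and resumming, coincide with the closed-form rational-in-$t$ expressions \eqref{typeiibdc}; managing the exchange of $\sum_{n>0}$, the contour integral, and the Poisson sum (justified by the convergence property \eqref{convpropBPS} and \eqref{convpropn}) is where the analytic care is concentrated. The dependence on the precise value $c_\ell = \frac{\chi}{192\pi}$ — arising from $192 = 2 \cdot 96$ and the Bessel/zeta identity $\sum_{n>0} n^{-3}$-type constants — is the one place where a numerical coincidence must be checked rather than being structurally obvious, so I would isolate and verify that constant carefully.
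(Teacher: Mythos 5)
Your proposal follows essentially the same route as the paper: specialize the Bessel-sum expressions \eqref{fthetaexpres} to the spectrum \eqref{varBPS}, Poisson-resum them into rational functions of $t$ with poles at $t_+^{m,n}$ (the paper does this via Lemma \ref{PoissonLemma} and Propositions \ref{typeiibfquantumprop}--\ref{typeiibthetapquantumprop}, rather than the Appendix \ref{Besselappendix} identities, which serve the type IIA coordinates), reduce the classical part to Proposition \ref{typeIIA1loop}, and match the instanton pieces of $4\pi\I(\D\alpha-\widetilde{\xi}_i\D\xi^i)$ component by component, with $c_\ell=\frac{\chi}{192\pi}$ fixed by cancelling the constant $\chi/12$ produced by the resummation. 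The only inessential deviation is your closing suggestion to route through the type IIA contour-integral coordinates of Theorem \ref{theorem3}; the paper deliberately bypasses these and resums the contact form directly, which is also what your main plan describes.
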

\begin{proof}
This will be proven in Section \ref{proofsec} below.
\end{proof}
\begin{remark}
\begin{itemize}
\item The definitions \eqref{m=0def} above for the case $m=0$ (where $t_{\pm}^{m,n}$ is not defined) can be motivated as follows. If $n>0$ then it is easy to check that
\begin{equation}
    \lim_{c\to 0}\frac{1+t_{+}^{c, n}t}{t-t_{+}^{c, n}}=-t\,, \quad \lim_{c\to 0}\frac{1-t_{+}^{c, n}t}{t-t_{+}^{c, n}}=t.
\end{equation}
On the other hand, by using that for $c\neq 0$ we have $t_{+}^{c,d}t_{-}^{c,d}=-1$, one similarly finds for $n<0$
\begin{equation}
    \lim_{c\to 0}\frac{t_-^{c,n}}{t_-^{c,n}}\left(\frac{1+t_{+}^{c, n}t}{t-t_{+}^{c, n}}\right)= \lim_{c\to 0}\frac{t_-^{c,n}-t}{t_-^{c,n}t+1}=\frac{1}{t}\,, \quad\quad \lim_{c\to 0}\frac{t_-^{c,n}}{t_-^{c,n}}\left(\frac{1-t_{+}^{c, n}t}{t-t_{+}^{c, n}}\right)=\lim_{c\to 0}\frac{t_-^{c,n}+t}{t_-^{c,n}t+1}=\frac{1}{t}.
\end{equation}
\item As previously mentioned in the introduction, one can relate the Darboux coordinates \eqref{typeiibdc} to the Darboux coordinates \eqref{Darbouxcoordsinst} by Poisson resumming the later and applying a contact transformation, as done in \cite{QMS}. We will follow a similar approach by Poisson resumming the contact structure directly, and then checking that \eqref{typeiibdc} give Darboux coordinates. 
\end{itemize}
\end{remark}

Using \eqref{convpropn} it is not hard to see that the coordinates are well defined on an open dense subset of $\mathcal{Z}\cong \widetilde{N}\times \mathbb{C}P^1$, namely the points 
\begin{equation}
    \{(\tau,b^a+\I t^a, c^a,c_a,c_0,\psi, t)\in \mathcal{Z} \; | \; t\neq 0, \infty, \;\; t\not\in \{t_{+}^{m,n}(\tau)\}_{m\in \mathbb{Z}-\{0\}, n \in \mathbb{Z}}\}\,,
\end{equation}
where we have used the quantum corrected mirror map \eqref{MM} to put type IIB coordinates on $\widetilde{N}\subset \overline{\mathcal{N}}_{\text{IIA}}$.
In subsequent sections, we will also frequently use the notation 
\begin{equation}
    \widetilde{\xi}_i^{\text{inst}}:=\widetilde{\xi}_i-\widetilde{\xi}_i^{\text{cl}}, \quad \alpha^{\text{inst}}:=\alpha + \frac{1}{2}(\alpha^{\text{cl}}-\xi^{i}\widetilde{\xi}_i^{\text{cl}})\,.
\end{equation}
\begin{remark}
When comparing \eqref{typeiiacoordcor} and \eqref{typeiibdc} with the formulas appearing in \cite{QMS}, we remark that we are using different conventions for the definition of $t_{\pm}^{c,d}$ and $S_{\hat{\gamma},m,n}$. Namely, what we call $S_{\hat{\gamma},m,n}$ in their notation is $S_{\hat{\gamma},-m,-n}$ and $t_{\pm}^{c,d}$ in their notation is $t_{\mp}^{c,d}$. 
\end{remark}

As an immediate corollary of Theorem \ref{theorem1}, we obtain the following

\begin{corollary}\label{theorem1cor}
Consider an instanton corrected q-map space  $(\widetilde{N},g_{\overline{N}})$ with one-loop parameter $c_{\ell}\in \mathbb{R}$. If $(\xi^i,\widetilde{\xi}_i,\alpha)$ are the functions on $\mathcal{Z}$ defined by \eqref{typeiibdc}, then if $\alpha_{c_{\ell}}:=\alpha+4\I \left(c_{\ell}-\frac{\chi}{192\pi}\right)\log(t)$, the functions $(\xi^i,\widetilde{\xi}_{i},\alpha_{c_{\ell}})$ of the associated twistor space are Darboux coordinates for the contact structure $\lambda$ of $\mathcal{Z}$, in the sense that

\begin{equation}
    \lambda=4\pi \I\left(\D\alpha_{c_{\ell}} -\widetilde{\xi}_i\D \xi^i\right)\cdot s\,.
\end{equation}
\end{corollary}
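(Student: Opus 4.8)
The plan is to deduce Corollary \ref{theorem1cor} from Theorem \ref{theorem1} by tracking precisely how the $1$-loop parameter $c_{\ell}$ enters the contact form \eqref{contstr}. The key observation is that the only place $c_{\ell}$ appears in the twistor data beyond the specific value $c_{\ell}=\frac{\chi}{192\pi}$ fixed in Theorem \ref{theorem1} is through the function $f$ from \eqref{fdef}: recalling that $f=16\pi\rho + f^{\text{inst}}$ with $16\pi\rho = 2\pi r^2 - 16\pi c_{\ell}$, and that in the instanton corrected q-map setting the BPS-index dependent term $f^{\text{inst}}$ (see Lemma \ref{thetalemma1}) depends on $R=2\sqrt{\rho+c_{\ell}}e^{\mathcal{K}/2}=\tau_2/2$ and on $z^a$, $\zeta_\gamma$ but not on $c_{\ell}$ separately. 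Likewise, inspection of \eqref{fthetaexpres} shows that $\theta_+^P|_{\overline{N}}$ and $\theta_3^P|_{\overline{N}}$ are functions of $R$, $z^a$, $\mathcal{K}$, $\zeta^i$, $\sigma$ and $\D\rho$, with no bare occurrence of $c_{\ell}$ — crucially, the combination $\rho+c_{\ell}$ (equivalently $R$) and $\D\rho=\D(\rho+c_{\ell})$ are the only ways $\rho$ enters. Hence, changing $c_{\ell}$ while keeping the type IIB coordinates $(\tau,z^a,c^a,c_a,c_0,\psi)$ fixed changes neither $\theta_\pm^P|_{\overline{N}}$ nor $\theta_3^P|_{\overline{N}}$, and changes $f$ only by an additive constant in the fiber directions; more precisely, under $c_{\ell}\to c_{\ell}'$ one has $f\to f - 16\pi(c_{\ell}'-c_{\ell}) + (\text{terms that are the same})$, i.e. $f$ shifts by the constant $-16\pi(c_{\ell}'-c_{\ell})$.

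Concretely I would argue as follows. Fix the value $c_{\ell}^{0}:=\frac{\chi}{192\pi}$ for which Theorem \ref{theorem1} applies, giving $\lambda = 4\pi\I(\D\alpha - \widetilde{\xi}_i\,\D\xi^i)\cdot s$ with $f=f^{(0)}$ the $1$-loop function at $c_{\ell}^{0}$. For a general $c_{\ell}$ the contact form \eqref{contstr} reads
\begin{equation}
\lambda = \left(f^{(c_{\ell})}\frac{\D t}{t} + t^{-1}\I\theta_+^P|_{\overline N} - 2\I\theta_3^P|_{\overline N} - t\,\I\theta_-^P|_{\overline N}\right)\cdot s,
\end{equation}
and by the discussion above $f^{(c_{\ell})} = f^{(0)} - 16\pi\!\left(c_{\ell}-\frac{\chi}{192\pi}\right)$ while the $\theta$-terms are unchanged (all expressed in the type IIB variables via $\mathcal{M}$). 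Therefore
\begin{equation}
\lambda = \left(f^{(0)}\frac{\D t}{t} + t^{-1}\I\theta_+^P|_{\overline N} - 2\I\theta_3^P|_{\overline N} - t\,\I\theta_-^P|_{\overline N}\right)\cdot s \;-\; 16\pi\!\left(c_{\ell}-\frac{\chi}{192\pi}\right)\frac{\D t}{t}\cdot s.
\end{equation}
The first bracket is exactly the contact form at $c_{\ell}^0$, hence equals $4\pi\I(\D\alpha - \widetilde{\xi}_i\,\D\xi^i)\cdot s$ by Theorem \ref{theorem1}. For the second term, note $\frac{\D t}{t}=\D\log t$, so
\begin{equation}
-16\pi\!\left(c_{\ell}-\frac{\chi}{192\pi}\right)\frac{\D t}{t} = 4\pi\I\,\D\!\left(4\I\!\left(c_{\ell}-\frac{\chi}{192\pi}\right)\log t\right) = 4\pi\I\,\D(\alpha_{c_{\ell}} - \alpha),
\end{equation}
using the definition $\alpha_{c_{\ell}}:=\alpha + 4\I(c_{\ell}-\frac{\chi}{192\pi})\log t$. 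Adding the two contributions gives $\lambda = 4\pi\I(\D\alpha_{c_{\ell}} - \widetilde{\xi}_i\,\D\xi^i)\cdot s$, which is the claim, since $\D\xi^i$, $\widetilde{\xi}_i$ are independent of $c_{\ell}$ (the coordinates $\xi^{i,\text{cl}}$ and the instanton sums in \eqref{typeiibdc} involve only $R=\tau_2/2$, $z^a$, $\zeta^i$ and the type IIB data, not $c_{\ell}$).

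The only genuine point requiring care — and the one I would spend the most words on — is verifying the precise $c_{\ell}$-dependence of $f$, $\theta_\pm^P$, $\theta_3^P$ as claimed: that $c_{\ell}$ enters $f$ only through the explicit $-16\pi c_{\ell}$ in $16\pi\rho = 2\pi r^2 - 16\pi c_{\ell}$ plus the combination $R$, and enters the $\theta$'s only through $R$ (hence not at all once one passes to the type IIB variable $\tau_2 = 2R$). This is immediate from Lemma \ref{thetalemma1}: scanning the right-hand sides of \eqref{fthetaexpres}, $c_{\ell}$ appears only inside $R=2\sqrt{\rho+c_{\ell}}e^{\mathcal{K}/2}$, inside $\rho+c_{\ell}$, and multiplying $\D^c\mathcal{K}$ (as $(\rho+c_{\ell})$), together with the lone $16\pi\rho$ in $f$; and under the quantum corrected mirror map \eqref{MM} we have $\rho+c_{\ell} = \frac{\tau_2^2}{16}e^{-\mathcal{K}}$, so all these become functions of the type IIB coordinates with the one exception being the $16\pi\rho = \tau_2^2 e^{-\mathcal{K}} - 16\pi c_{\ell}$ term in $f$, producing exactly the constant shift $-16\pi c_{\ell}$. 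No other obstacle arises; the rest is the short computation displayed above. I would also remark that well-definedness of $\alpha_{c_{\ell}}$ on the same open dense subset of $\mathcal{Z}$ is clear since $\log t$ is smooth and single-valued away from $t=0,\infty$ on any simply connected chart, which suffices as the Darboux property is a local statement.
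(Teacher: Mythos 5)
Your proposal is correct and follows essentially the same route as the paper: the authors likewise observe that $c_{\ell}$ enters the contact form \eqref{contstr} only through the additive constant $-16\pi c_{\ell}$ in $f$ (the $\theta$-terms and the remaining part of $f$ depending only on $\rho+c_{\ell}$, i.e.\ on the type IIB variables), write $f=\bigl(f+16\pi(c_{\ell}-\tfrac{\chi}{192\pi})\bigr)-16\pi(c_{\ell}-\tfrac{\chi}{192\pi})$, apply Theorem \ref{theorem1} to the first bracket, and absorb the leftover $\frac{\D t}{t}$ term into the $\log(t)$ shift defining $\alpha_{c_{\ell}}$. Your write-up just makes the bookkeeping of the $c_{\ell}$-dependence more explicit than the paper's two-line argument.
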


\begin{proof}
Notice that since the one loop parameter only enters in the contact structure via $f$ by \eqref{fthetaexpres} and $\rho=2\pi r^2 -c_{\ell}$, by writing $f=(f+16\pi (c_{\ell}-\frac{\chi}{192\pi}))-16\pi (c_{\ell}-\frac{\chi}{192\pi}) $ we have by Theorem \ref{theorem1} that 
\begin{equation}
    4\pi \I\left(\D\alpha -\widetilde{\xi}_i\D \xi^i\right)=\left(f+16\pi \left(c_{\ell}-\frac{\chi}{192\pi}\right)\right)\frac{\D t}{t}+t^{-1}\I \theta_+^P|_{\overline{N}} -2\I \theta_3^P|_{\overline{N}} -t\I \theta_-^P|_{\overline{N}}\,.
\end{equation}
It then follows immediately that 

\begin{equation}
    4\pi \I\left(\D\alpha_{c_{\ell}} -\widetilde{\xi}_i\D \xi^i\right)=f\frac{\D t}{t}+t^{-1}\I \theta_+^P|_{\overline{N}} -2\I \theta_3^P|_{\overline{N}} -t\I \theta_-^P|_{\overline{N}}\,,
\end{equation}
compare (\ref{contstr}).
\end{proof}

\subsubsection{Preliminary lemmas}\label{prellemsec}
In this section we give some preliminary lemmas and expressions that will be useful to prove Theorem \ref{theorem1}.\\

We will divide each of the $1$-forms $\theta_{+}^{P}|_{\overline{N}}$, $\theta_3^P|_{\overline{N}}$ appearing in the contact structure \eqref{contstr} as follows:

\begin{equation}
    \theta_{+}^P|_{\overline{N}}=\theta_{+}^{P, \text{cl}}+\theta_{+}^{P, \text{w.s.}}+\theta_{+}^{P, \text{inst}}, \quad\quad \theta_{3}^P|_{\overline{N}}=\theta_{3}^{P, \text{cl}}+\theta_{3}^{P, \text{w.s.}}+\theta_{3}^{P, \text{inst}},
\end{equation}
where using that $R=2\sqrt{\rho+c_{\ell}}e^{\mathcal{K}/2}=\tau_2/2$, the decompositions $F_i=F_i^{\text{cl}}+F_i^{\text{ws}}$ where $F_i^{\text{cl}}:=\partial_{Z^i}\mathfrak{F}^{\text{cl}}/Z^0$ and $F_i^{\text{w.s.}}:=\partial_{Z^i}\mathfrak{F}^{\text{w.s.}}/Z^0$, $\widetilde{\zeta}_i=\widetilde{\zeta}_i^{\text{cl}}+\widetilde{\zeta}_i^{\text{inst}}$ and $\sigma=\sigma^{\text{cl}}+\sigma^{\text{inst}}$,  and \eqref{fthetaexpres} we have
\begin{equation}\label{theta+split}
    \begin{split}
       \theta_{+}^{P,\text{cl}}&:=-2\pi\tau_2\left(F_i^{\text{cl}}\D  \zeta^i-z^i\D  \widetilde{\zeta}_i^{\text{cl}}\right)\\
       \theta_{+}^{P,\text{w.s.}}&:=-2\pi\tau_2\left(F_i^{\text{w.s.}}\D  \zeta^i-z^i\D  \widetilde{\zeta}_i^{\text{inst}}\right)\\
       \theta_{+}^{P,\text{inst}}&=\I \tau_2\sum_{\gamma}\Omega(\gamma)\widetilde{Z}_{\gamma}\sum_{n>0}e^{-2\pi\I n\zeta_{\gamma}}K_0(4\pi Rn|\widetilde{Z}_{\gamma}|)\D  \zeta_{\gamma}\\
    &\quad+\frac{\tau_2^2}{2}\sum_{\gamma}\Omega(\gamma)\widetilde{Z}_{\gamma}\sum_{n>0}e^{-2\pi \I n\zeta_{\gamma}}|\widetilde{Z}_{\gamma}|K_1(4\pi Rn|\widetilde{Z}_{\gamma}|)\left(\frac{\D  \widetilde{Z}_{\gamma}}{\widetilde{Z}_{\gamma}}+\frac{\D  \overline{\widetilde{Z}}_{\gamma}}{\overline{\widetilde{Z}}_{\gamma}}+\frac{2}{\tau_2}\D  \tau_2\right).\\
    \end{split}
\end{equation}
For $\theta_3^P|_{\overline{N}}$ we use the relation 
\begin{equation}
    \D^c\mathcal{K}=e^{\mathcal{K}}(-2\text{Im}(\tau_{ij}))\Big(\I \overline{z}^j\D z^i -\I z^i\D \overline{z}^j\Big),
\end{equation}
so that 

\begin{equation}
    4\pi(\rho+c_{\ell})\D ^c\mathcal{K}=\pi R^2(-2\text{Im}(\tau_{ij}))\Big(\I\overline{z}^j\D z^i -\I z^i\D \overline{z}^j\Big)=\pi\tau_2^2\mathrm{Im}(\tau_{0a})\D t^a +\pi\tau_2^2\mathrm{Im}(\tau_{ab})(b^a\D t^b-t^a\D b^b),
\end{equation}
and hence we can split $\theta_3^P|_{\overline{N}}$ as follows using \eqref{fthetaexpres}
\begin{equation}\label{theta3split}
    \begin{split}
       \theta_{3}^{P,\text{cl}}&:=\pi\D  \sigma^{\text{cl}} +\pi\left(\widetilde{\zeta}_i^{\text{cl}}\D  \zeta^i-\zeta^i\D  \widetilde{\zeta}_i^{\text{cl}}\right)-\pi\tau_2^2\mathrm{Im}(\tau_{0a}^{\text{cl}})\D  t^a -\pi\tau_2^2\mathrm{Im}(\tau_{ab}^{\text{cl}})(b^a\D  t^b-t^a\D  b^b)\\
       \theta_{3}^{P,\text{w.s.}}&:=\pi\D  \sigma^{\text{inst}} +\pi\left(\widetilde{\zeta}_i^{\text{inst}}\D  \zeta^i-\zeta^i\D  \widetilde{\zeta}_i^{\text{inst}}\right)-\pi\tau_2^2\mathrm{Im}(\tau_{0a}^{\text{w.s.}})\D  t^a -\pi\tau_2^2\mathrm{Im}(\tau_{ab}^{\text{w.s.}})(b^a\D  t^b-t^a\D  b^b)\\
       \theta_3^{P,\text{inst}}&=-\frac{\I\tau_2}{4\pi}\sum_{\gamma}\Omega(\gamma)\sum_{n>0}\frac{e^{-2\pi\I n\zeta_{\gamma}}}{n}|\widetilde{Z}_{\gamma}|K_1(4\pi Rn|\widetilde{Z}_{\gamma}|)\left(\frac{\D  \widetilde{Z}_{\gamma}}{\widetilde{Z}_{\gamma}}-\frac{\D  \overline{\widetilde{Z}}_{\gamma}}{\overline{\widetilde{Z}}_{\gamma}}\right)\,,
    \end{split}
\end{equation}
where $\tau_{ij}^{\text{cl}}=\partial_{Z^i}\partial_{Z^j}\mathfrak{F}^{\text{cl}}$ and $\tau_{ij}^{\text{w.s.}}=\partial_{Z^i}\partial_{Z^j}\mathfrak{F}^{\text{w.s.}}$.\\

Similarly, using that $f=16\pi \rho+f^{\text{inst}}$ and (using the second relation in \eqref{MM})
\begin{equation}\label{rhodef}
 \rho=\frac{\tau_2^2}{16}e^{-\mathcal{K}}-c_{\ell}=  \frac{\tau_2^2}{16}K-c_{\ell}, \quad K=-2\text{Im}(\tau_{ij})z^i\overline{z}^j\,,
\end{equation} 
we can decompose $f$ as

\begin{equation}\label{fsplitting2}
    f=f^{\text{cl}}+f^{\text{w.s.}}-16\pi c_{\ell}+f^{\text{inst}},
\end{equation}
where

\begin{equation}\label{fsplitting}
    \begin{split}
        f^{\text{cl}}&:=-2\pi\tau_2^2\text{Im}(\tau_{ij}^{\text{cl}})z^i\overline{z}^j\\
        f^{\text{w.s.}}&:=-2\pi\tau_2^2\text{Im}(\tau_{ij}^{\text{w.s.}})z^i\overline{z}^j=\frac{\tau_2^2}{2\pi^2}\sum_{\hat{\gamma}\in \Lambda^+\cup\{0\}}n_{\hat{\gamma}}\text{Re}\left(\mathrm{Li}_3(e^{2\pi \I  q_az^a})+2\pi q_at^a\mathrm{Li}_2(e^{2\pi \I  q_az^a})\right)\\
        f^{\text{inst}}&=\frac{\tau_2}{\pi}\sum_{\gamma}\Omega(\gamma)\sum_{n>0}\frac{e^{-2\pi\I  n\zeta_{\gamma}}}{n}|\widetilde{Z}_{\gamma}|K_1(4\pi Rn|\widetilde{Z}_{\gamma}|)\,.\\
    \end{split}
\end{equation}

In order to prove Theorem \ref{theorem1}, we start with the following lemma, which will allow us to rewrite the Bessel function expressions of the instanton terms $f^{\text{inst}}$, $\theta_{\pm}^{P,\text{inst}}$ and $\theta_3^{P,\text{inst}}$ in terms of expressions similar to those appearing in \eqref{typeiibdc}. 

\begin{definition}
Given $\hat{\gamma}\in \Lambda^{+}$ with $n_{\hat{\gamma}}\neq 0$, and $\nu \in \mathbb{N}$, we define 
\begin{equation}\label{Idef}
\begin{split}
    \mathcal{I}_{\hat{\gamma}}^{(\nu)}&:=2\sum_{q_0\in \mathbb{Z}}\sum_{s=\pm 1}\sum_{n=1}^{\infty}\frac{e^{-2\pi\I ns\zeta_{q_0\gamma^0+\hat{\gamma}}}}{(sn)^{\nu}}K_0(4\pi Rn|\widetilde{Z}_{q_0\gamma^0+\hat{\gamma}}|)\\
    \mathcal{I}_0^{(\nu)}&:=2\sum_{q_0\in \mathbb{Z}-\{0\}}\sum_{s=\pm 1}\sum_{n=1}^{\infty}\frac{e^{-2\pi\I ns\zeta_{q_0\gamma^0}}}{(sn)^\nu}K_0(4\pi Rn|\widetilde{Z}_{q_0\gamma^0}|)\,.
\end{split}
\end{equation}
\end{definition}
Notice that in $\mathcal{I}_{0}^{(\nu)}$ we exclude the term $q_0=0$ from the sum, since $K_0(x)$ diverges at $x=0$. Furthermore, since $\hat{\gamma}\in \Lambda^{+}$ with $n_{\hat{\gamma}}\neq 0$ implies that $\pm (q_0\gamma^0+\hat{\gamma})\in \text{Supp}(\Omega)$ for all $q_0\in \mathbb{Z}$, by the support property of variations of BPS structures one finds that $|Z_{q_0\gamma^0+\hat{\gamma}}|\neq 0$, so that $|\widetilde{Z}_{q_0\gamma^0+\hat{\gamma}}|\neq 0$ on $\widetilde{N}$ (or $\overline{N})$. By the exponential decay of $K_0(x)$ as $x\to \infty$, it is easy to check that the convergence of the series $\mathcal{I}_{\hat{\gamma}}^{(\nu)}$ and $\mathcal{I}_{0}^{(\nu)}$ is compact normal on $\widetilde{N}$. In particular, they define smooth functions on $\widetilde{N}$ and we can interchange sums with differentiation. 

\begin{lemma}\label{PoissonLemma} The function $\mathcal{I}_{\hat{\gamma}}^{(\nu)}$ can be expressed as

\begin{equation}\label{Presum1}
    \mathcal{I}_{\hat{\gamma}}^{(\nu)}=\sum_{n\in \mathbb{Z}}\sum_{m\in \mathbb{Z}-\{0\}}\frac{e^{-S_{\hat{\gamma},m,n}}}{m^{\nu}|m\tau + n|},
\end{equation}
where $S_{\hat{\gamma},m,n}$ was defined in \eqref{mnaction}. Furthermore, we have for $\nu \geq 2$ 
\begin{equation}\label{I0der}
    \partial_{\tau_2}\mathcal{I}_{0}^{(\nu)}=\partial_{\tau_2}\mathcal{I}_{\hat{\gamma}}^{(\nu)}|_{\hat{\gamma}=0}+\frac{2}{\tau_2}\sum_{s=\pm 1}\sum_{n>0}\frac {1}{(sn)^{\nu}}, \quad \partial_{\tau_1}\partial_{\tau_2}\mathcal{I}_{0}^{(\nu)}=\partial_{\tau_1}\partial_{\tau_2}\mathcal{I}_{\hat{\gamma}}^{(\nu)}|_{\hat{\gamma}=0}, \quad \partial_{\tau_1}^2\mathcal{I}_{0}^{(\nu)}=\partial_{\tau_1}^2\mathcal{I}_{\hat{\gamma}}^{(\nu)}|_{\hat{\gamma}=0}\,,
\end{equation}
where  $\partial_{\tau_2}\mathcal{I}_{\hat{\gamma}}^{(\nu)}|_{\hat{\gamma}=0}$ (and similarly for the other derivatives) means taking the derivative of \eqref{Presum1} and then evaluating at $\hat{\gamma}=0$.
\end{lemma}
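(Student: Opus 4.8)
The key identity \eqref{Presum1} is a textbook Poisson resummation applied to the sum over $q_0 \in \mathbb{Z}$, using the integral representation of the modified Bessel function $K_0$ (collected in Appendix \ref{Besselappendix}). The plan is as follows. First I would fix $\hat{\gamma} \in \Lambda^+$ with $n_{\hat{\gamma}} \neq 0$ and unwind the definitions: using $\widetilde{Z}_{q_0\gamma^0 + \hat{\gamma}} = q_0 + q_a z^a = q_0 + q_a(b^a + \I t^a)$ and $\zeta_{q_0\gamma^0+\hat{\gamma}} = -q_0\zeta^0 - q_a\zeta^a$, together with the quantum corrected mirror map \eqref{MM} which gives $\zeta^0 = \tau_1$, $\zeta^a = -(c^a - \tau_1 b^a)$, and $R = \tau_2/2$, rewrite $\mathcal{I}_{\hat{\gamma}}^{(\nu)}$ as a double sum over $q_0 \in \mathbb{Z}$ and $n \in \mathbb{Z} - \{0\}$ (absorbing the $s = \pm 1$ sign into letting $n$ range over all nonzero integers) of $|n|^{-\nu} \mathrm{sgn}(n)^{\nu}$ times $e^{2\pi\I n q_0 \tau_1 + \cdots} K_0(2\pi |n| \tau_2 |q_0 + q_a z^a|)$. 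Then I would insert the integral representation $K_0(x) = \frac{1}{2}\int_0^\infty e^{-\frac{x}{2}(u + 1/u)}\frac{\D u}{u}$ (or the Gaussian-type representation best suited to Poisson resummation), perform the sum over $q_0 \in \mathbb{Z}$ via the Poisson summation formula $\sum_{q_0} g(q_0) = \sum_{m} \hat{g}(m)$, and carry out the resulting Gaussian integral. The parameter $m$ produced by Poisson resummation, paired with the original $n$, reassembles into $|m\tau + n|$ and the phase $e^{-2\pi q_a(\I m c^a + \I n b^a)}$, yielding exactly $S_{\hat{\gamma},m,n}$ as in \eqref{mnaction}; the case $m = 0$ of the resummed sum cancels against a divergent piece or reproduces the $\hat{\gamma}$-dependent exponential that must be retained, and I would track carefully that the final answer is the sum over $m \in \mathbb{Z} - \{0\}$, $n \in \mathbb{Z}$ as claimed.

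For the second set of identities \eqref{I0der}, the point is that $\mathcal{I}_0^{(\nu)}$ differs from $\mathcal{I}_{\hat{\gamma}}^{(\nu)}|_{\hat{\gamma}=0}$ only by the omission of the $q_0 = 0$ term in the defining sum \eqref{Idef} (which is legitimate since $K_0$ diverges at the origin and $\widetilde{Z}_0 = 0$). Thus $\mathcal{I}_0^{(\nu)} = \mathcal{I}_{\hat{\gamma}}^{(\nu)}|_{\hat{\gamma}=0} - 2\sum_{s=\pm1}\sum_{n>0}\frac{1}{(sn)^\nu} K_0(0)$ is not literally meaningful, so instead I would work at the level of the Poisson-resummed expression \eqref{Presum1}: evaluating the right-hand side at $\hat{\gamma} = 0$ gives $\sum_{n \in \mathbb{Z}}\sum_{m \in \mathbb{Z} - \{0\}} m^{-\nu}|m\tau+n|^{-1}$, and the genuine $\mathcal{I}_0^{(\nu)}$ is obtained from this by a Poisson resummation in which the $m = 0$ term (the one excluded on the left) corresponds, after undoing the resummation, precisely to the $q_0 = 0$ contribution $\frac{2}{\tau_2}\sum_{s=\pm1}\sum_{n>0}(sn)^{-\nu}$ — here the factor $\tau_2^{-1}$ arises because the $m=0$ Fourier mode integral $\int_{\mathbb{R}} |x \tau + n|^{-1}\,\D x$-type expression is proportional to $1/\tau_2$ after the change of variables. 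Taking $\partial_{\tau_2}$ then produces the stated extra term, while $\partial_{\tau_1}$ annihilates the $\tau_1$-independent discrepancy $\frac{2}{\tau_2}\sum (sn)^{-\nu}$, giving the second and third equations in \eqref{I0der} immediately.

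The main obstacle I expect is bookkeeping rather than conceptual: carefully justifying the interchange of the (conditionally convergent for small $\nu$, but here $\nu \geq 1$ and the Bessel decay rescues absolute convergence) sums and the integral in the Poisson resummation, correctly normalizing the Fourier transform so that the $|m\tau+n|$ factors and the phases in $S_{\hat{\gamma},m,n}$ come out with the conventions fixed in \eqref{mnaction} (noting also the convention mismatch with \cite{QMS} recorded in the remark after Theorem \ref{theorem1}, i.e. that one must be vigilant about signs of $m,n$), and handling the $m=0$ versus $q_0=0$ correspondence without double-counting or dropping the constant term. I would rely on the Bessel identities of Appendix \ref{Besselappendix} to shortcut the Gaussian integral computations, and on the convergence estimates already noted above (compact normal convergence of $\mathcal{I}_{\hat\gamma}^{(\nu)}$ and $\mathcal{I}_0^{(\nu)}$ on $\widetilde N$, which permits termwise differentiation) to legitimize all the manipulations.
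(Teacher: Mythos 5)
Your strategy for \eqref{Presum1} is the same as the paper's: write $\mathcal{I}_{\hat\gamma}^{(\nu)}$ as $\sum_{q_0}f(x+q_0)$ with $x=q_ab^a$, insert an integral representation of $K_0$, and Poisson-resum in $q_0$; whether one then evaluates the Fourier transform via a Gaussian (heat-kernel) representation or, as the paper does, via the $\cosh$ representation followed by a contour deformation and a residue computation at $z=\I t_{+}^{n,-m}$ (which is where the hypothesis $q_at^a>0$ from \eqref{defM} enters) is a matter of technique. One point of confusion to clear up: in the final formula the index forced to be nonzero, $m$, is the \emph{original} Bessel summation index (obtained by merging $s=\pm1$ with $n>0$), while the Poisson dual variable is $n$ and ranges over all of $\mathbb{Z}$ with no singular $n=0$ term to cancel; your worry about ``the case $m=0$ of the resummed sum'' cancelling against a divergent piece is therefore spurious and stems from swapping the roles of the two indices.

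For \eqref{I0der} you correctly identify that the discrepancy between $\mathcal{I}_0^{(\nu)}$ and $\mathcal{I}_{\hat\gamma}^{(\nu)}$ is exactly the $q_0=0$ term of \eqref{Idef}, but the mechanism you propose for producing the constant $\frac{2}{\tau_2}\sum_{s}\sum_{n>0}(sn)^{-\nu}$ --- a Fourier zero-mode integral of the type $\int_{\mathbb{R}}|x\tau+n|^{-1}\,\D x$ --- does not work: that integral diverges, and no second Poisson resummation is involved. The paper's argument is more elementary: differentiate the $q_0=0$ term in $\tau_2$ \emph{before} letting $\hat\gamma\to 0$, which produces $2\pi n|\widetilde Z_{\hat\gamma}|K_1(2\pi n\tau_2|\widetilde Z_{\hat\gamma}|)$, and then use $\lim_{z\to 0}zK_1(2\pi\tau_2 nz)=\frac{1}{2\pi n\tau_2}$ (the first identity of \eqref{Bessellimit}); the interchange of $\lim_{\hat\gamma\to 0}$ with $\partial_{\tau_2}$ applied to the resummed series is justified by dominated convergence, and this is precisely where the hypothesis $\nu\geq 2$ is used --- a point your proposal omits entirely. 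The $\partial_{\tau_1}$ identities then follow as you say, since the $q_0=0$ term is independent of $\tau_1$.
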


\begin{proof}
We will use a similar notation to \cite[Appendix B]{QMS} and follow the same idea of Poisson resummation, but in an easier case than the one presented in \cite[Appendix B]{QMS}. We denote $x=q_ab^a$, $y=q_at^a$ and $\Theta:=q_a(\zeta^a-b^a\zeta^0)=-q_ac^a$, so that we can write

\begin{equation}\label{Poisson1}
    \mathcal{I}_{\hat{\gamma}}^{(\nu)}=\sum_{q_0\in \mathbb{Z}}f(x+q_0,y,\zeta^i,R)
\end{equation}
where
\begin{equation}
    f(x,y,\zeta^i,R):=2\sum_{s=\pm 1}\sum_{n=1}^{\infty}\frac{e^{2\pi\I ns(\Theta+x\zeta^0)}}{(sn)^{\nu}}K_0(4\pi Rn|x+\I y|)\,.
\end{equation}
We have used above that $\zeta_{q_i\gamma^i}=-q_i\zeta^i$ (recall Definition \ref{QKCASKcoords}).\\

Equation \eqref{Poisson1} makes it clear that $\mathcal{I}_{\hat{\gamma}}^{(\nu)}$ is a function that is invariant under integer shifts in the $x$ variable, and the idea is to now compute the Poisson resummation with respect to the $x$-variable. Namely, 

\begin{equation}\label{Prformula}
    \mathcal{I}_{\hat{\gamma}}^{(\nu)}=\sum_{q_0\in \mathbb{Z}}f(x+q_0,y,\zeta^i,R)=\sum_{q_0\in \mathbb{Z}}\hat{f}(q_0,y,\zeta^i,R)e^{2\pi \I q_0x},
\end{equation}
where
\begin{equation}
    \hat{f}(w)=\int_{-\infty}^{\infty}\D x\; f(x)e^{-2\pi \I xw}
\end{equation}
denotes the Fourier transform (from now on, we omit the dependence of $f$ of the variables $y$, $\zeta^i$ and $R$ from the notation).\\

Using the integral representation \eqref{Besselintrep} of $K_0$ we have

\begin{equation}
\begin{split}
    K_0(4\pi Rn|x+\I y|)&=\int_{0}^{\infty}\D t\exp(-4\pi Rn|x+\I y|\cosh(t))=\frac{1}{2}\int_{-\infty}^{\infty}\D t\exp(-4\pi Rn|x+\I y|\cosh(t))\\
    &=\frac{1}{2}\int_{0}^{\infty}\frac{\D z}{z}\exp(-2\pi Rn|x+\I y|(z^{-1}+z))\,.
    \end{split}
\end{equation}
Furthermore, since the integral exponentially
decays at both ends, provided $x\neq 0$, we can deform 
 the integration path by changing $z$ to $z\to z\frac{|x+\I y|}{x+\I y}\text{sign}(x)$, obtaining
\begin{equation}
    K_0(4\pi Rn|x+\I y|)=\frac{1}{2}\int_{0}^{\infty}\frac{\D z}{z}\exp(-2\pi Rn\cdot\text{sign}(x)((x+\I y)z^{-1}+(x-\I y)z))\,.
\end{equation}

In particular, we have 

\begin{align*}
        \hat{f}(w)&=\int_{-\infty}^{\infty}\D x\sum_{s=\pm 1}\sum_{n=1}^{\infty}\frac{e^{2\pi \I ns\Theta}}{(sn)^{\nu}}\int_{0}^{\infty} \frac{\D z}{z}\exp(- 2\pi nR\cdot\text{sign}(x)((x+\I y)z^{-1}+(x-\I y)z))e^{2\pi\I x(- w+ns\zeta^0)}\\
        &=\sum_{s=\pm 1}\sum_{n=1}^{\infty}\frac{e^{ 2\pi\I ns\Theta}}{(sn)^{\nu}}\int_{0}^{\infty} \frac{\D z}{z}\int_{-\infty}^{\infty}\D x\exp(- 2\pi nR\cdot\text{sign}(x)((x+\I y)z^{-1}+(x-\I y)z))e^{2\pi\I x(- w+ns\zeta^0)}\\
        &=\sum_{s=\pm 1}\sum_{n=1}^{\infty}\frac{e^{ 2\pi\I ns\Theta}}{(sn)^{\nu}}\int_{0}^{\infty} \frac{\D z}{z}\int_{0}^{\infty}\D x\;e^{2\pi x(-nRz^{-1}-nRz -\I w+\I ns\zeta^0)+2\pi y(-\I nRz^{-1}+\I nRz)}\\
        &\quad +\sum_{s=\pm 1}\sum_{n=1}^{\infty}\frac{e^{2\pi \I ns\Theta}}{(sn)^{\nu}}\int_{0}^{\infty} \frac{\D z}{z}\int_{0}^{\infty}\D x\;e^{2\pi x(-nRz^{-1}-nRz + \I w -\I ns\zeta^0) + 2\pi y(\I nRz^{-1}-\I nRz)}\\
        &=\frac{1}{2\pi}\sum_{s=\pm 1}\sum_{n=1}^{\infty}\frac{e^{2\pi \I ns\Theta}}{(sn)^{\nu}}\left(\int_{0}^{\infty} \frac{\D z}{z}\frac{e^{2\pi y(-\I nRz^{-1}+\I nRz)}}{nRz^{-1}+nRz +\I w-\I ns\zeta^0}+\int_{0}^{\infty} \frac{\D z}{z}\frac{e^{2\pi y(\I nRz^{-1}-\I nRz)}}{nRz^{-1}+nRz -\I w+\I ns\zeta^0}\right). \numberthis \\
\end{align*}
We can combine the $s=1$ and $s=-1$ terms of the previous sum into an integral over $\mathbb{R}$ and a sum over $n\in \mathbb{Z}-\{0\}$ to obtain

\begin{equation}
    \hat{f}(w)=\frac{1}{2\pi}\sum_{n\in \mathbb{Z}-\{0\}}\frac{e^{2\pi \I n\Theta}}{n^{\nu-1}|n|}\int_{-\infty}^{\infty}\frac{\D z}{z}\frac{e^{-2\pi\I nRy(z^{-1}-z)}}{nR(z^{-1}+z) + \I w-\I n\zeta^0}\,.
\end{equation}

The integrand of the $n$-th term of $\hat{f}(m)$ with $m\in \mathbb{Z}$ has poles at $z=\I t_{\pm}^{n,- m}$ where $t_{\pm}^{c,d}$ was defined before in \eqref{troots} as the roots in $t$ of $t(c\xi^0(t)+d)=0$. Since one of our defining conditions on the manifold $M$ is that $\text{sign}(y)=\text{sign}(q_at^a)>0$ for $n_{\hat{\gamma}}\neq 0$ (see \eqref{defM}), we can compute the previous integral by closing the contour in the upper half plane when $n>0$, and in the lower half plane if $n<0$. Independently of the sign of $n$, only the pole at $\mathrm{i}t_{+}^{n,-m}$ contributes to the $n$-th integral. In particular, we obtain

    \begin{equation}
        \begin{split}
        \hat{f}(m)&= \sum_{n\in \mathbb{Z}-\{0\}}\frac{e^{2\pi \I n\Theta}}{n^{\nu}}\frac{e^{-2\pi nRy(1/t_{+}^{n,- m}+t_{+}^{n,- m})}}{nR(t_{+}^{n,- m}-t_{-}^{n,- m})}= \sum_{n\in \mathbb{Z}-\{0\}}\frac{e^{2\pi \I n\Theta}}{n^{\nu}}\frac{e^{-2\pi y|n\tau- m|}}{|n\tau- m|}\,,
        \end{split}
    \end{equation}
where we used that $t_{+}^{n,-m}t_{-}^{n,-m}=-1$ and the expressions \eqref{troots} of $t_{\pm}^{n,-m}$.\\

After plugging the previous result in \eqref{Prformula}, using $2R=\tau_2$, (\ref{mnaction}) and relabeling the summation indices we then obtain
\begin{equation}\label{resum1}
\begin{split}
    \mathcal{I}_{\hat{\gamma}}^{(\nu)}&= \sum_{\substack{m\in \mathbb{Z}-\{0\}\\ n\in \mathbb{Z}}}\frac{e^{2\pi \I m\Theta-2\pi \I nx}}{m^{\nu}}\frac{e^{-2\pi y|m\tau+ n|}}{|m\tau+ n|}= \sum_{\substack{m\in \mathbb{Z}-\{0\}\\ n\in \mathbb{Z}}}\frac{e^{-S_{\hat{\gamma},m,n}}}{m^{\nu}|m\tau + n|}\,.\\
\end{split}
\end{equation}
 Finally, we use the following identities to prove \eqref{I0der}:

\begin{equation}\label{Bessellimit}
    \lim_{z\to 0}zK_1(2\pi\tau_2 nz)=\frac{1}{2\pi n\tau_2}, \quad \lim_{z\to 0}z^2K_1(2\pi\tau_2 n z)=0, \quad \lim_{z\to 0} z^2K_0(2\pi \tau_2 nz)=0\,.
\end{equation}
  We show the first identity in \eqref{I0der}, with the others following similarly. By applying the first limit in \eqref{Bessellimit}, the dominated convergence theorem (here we use that $\nu\geq 2)$ and $K_0'=-K_1$, we obtain: 
\begin{equation}
    \begin{split}
       \partial_{\tau_2}\mathcal{I}_{0}^{(\nu)}&=\lim_{\zeta_{\hat{\gamma}}\to 0,\widetilde{Z}_{\hat{\gamma}}\to 0}\left(\partial_{\tau_2}\mathcal{I}_{\hat{\gamma}}^{(\nu)}+2\sum_{s=\pm 1}\sum_{n>0}\frac{e^{-2\pi \mathrm{i}ns \zeta_{\hat{\gamma}}}}{(sn)^{\nu}}2\pi n|\widetilde{Z}_{\hat{\gamma}}|K_1(2\pi n\tau_2|\widetilde{Z}_{\hat{\gamma}}|)\right)\\
       &=\left(\partial_{\tau_2}\mathcal{I}_{\hat{\gamma}}^{(\nu)}\right)\Big|_{\hat{\gamma}=0}+\frac{2}{\tau_2}\sum_{s=\pm 1}\sum_{n>0}\frac{1}{(sn)^{\nu}}\,. 
    \end{split}
\end{equation}
\end{proof}

\subsubsection{Poisson resummation of the quantum corrections of the contact structure}

In this section, we seek to rewrite the quantum corrections terms in $f$, $\theta_{+}^P$ and $\theta_3^P$ using the Poisson resummation of Lemma \ref{PoissonLemma}. This will help us show that the coordinates \eqref{typeiibdc} are Darboux coordinates for the contact structure describing the instanton corrected q-map metric associated to the data of Section \ref{settingsec}. We start with the following proposition for $f$:

\begin{proposition}\label{typeiibfquantumprop}
The following holds for $f^{\text{w.s.}}$ and $f^{\text{inst}}$ defined in \eqref{fsplitting}: 

\begin{equation}\label{typeiibfquant}
    f^{\text{w.s.}}+f^{\text{inst}}=\frac{\chi}{12}+\frac{\tau_2^2}{(2\pi)^2}\sum_{\hat{\gamma}\in \Lambda^+\cup\{0\}}n_{\hat{\gamma}}\sum_{(m,n)\in \mathbb{Z}^2-\{0\}}\left(\frac{1}{|m\tau+n|}+2\pi q_at^a\right)\frac{e^{-S_{\hat{\gamma},m,n}}}{|m\tau + n|^2}
\end{equation}

\end{proposition}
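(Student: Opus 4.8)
The plan is to separately Poisson-resum the Bessel-function expression $f^{\text{inst}}$ and the polylogarithm expression $f^{\text{w.s.}}$, and show that when combined they produce the claimed single double-sum over $(m,n)\in\mathbb{Z}^2-\{0\}$ plus the constant $\chi/12$. First I would unpack the definition of $f^{\text{inst}}$ from \eqref{fsplitting} and insert the explicit form \eqref{varBPS} of the BPS indices. Since $\text{Supp}(\Omega)$ consists of the charges $\pm(q_0\gamma^0+\hat\gamma)$ with $\hat\gamma=q_a\gamma^a\in\Lambda^+\cup\{0\}$ (with the $\hat\gamma=0$ contributions weighted by $2n_0=-\chi$ and restricted to $q_0\neq0$), I can organize the sum over $\gamma$ as a sum over $\hat\gamma\in\Lambda^+\cup\{0\}$, then over $q_0\in\mathbb{Z}$, then over the sign $s=\pm1$ and $n>0$. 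Using $|\widetilde Z_\gamma|K_1(\dots)=-\frac{1}{4\pi R n}\partial_{\tau_2}\big(\tau_2 K_0(\dots)\big)$-type manipulations is unnecessary; instead I would directly recognize the resulting series as $\mathcal I^{(1)}_{\hat\gamma}$ (and $\mathcal I^{(1)}_0$ for $\hat\gamma=0$), up to the overall factors $\frac{\tau_2}{\pi}\cdot\frac{1}{n}\cdot|\widetilde Z_\gamma|K_1$ versus the $K_0$ in the definition \eqref{Idef}. The mismatch $K_1$ vs. $K_0$ is handled because in $f^{\text{inst}}$ the Bessel function is $K_1$, not $K_0$; so I would instead introduce the appropriate $\tau_2$-antiderivative, i.e. write $|\widetilde Z_\gamma|K_1(4\pi Rn|\widetilde Z_\gamma|) = -\frac{1}{2\pi n}\,\partial_{\tau_2} K_0(2\pi n\tau_2|\widetilde Z_\gamma|)$ (using $R=\tau_2/2$ and $K_0'=-K_1$), so that $f^{\text{inst}}$ becomes (up to constants) $\partial_{\tau_2}$ applied to a combination of $\mathcal I^{(2)}_{\hat\gamma}$'s, to which Lemma \ref{PoissonLemma} applies.

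Next I would apply \eqref{Presum1} to rewrite each $\mathcal I^{(2)}_{\hat\gamma}$ as $\sum_{m\neq0,n}\frac{e^{-S_{\hat\gamma,m,n}}}{m^2|m\tau+n|}$, and for the $\hat\gamma=0$ piece use the correction terms in \eqref{I0der} to account for the excluded $q_0=0$ term; the leftover pure sum $\frac{2}{\tau_2}\sum_{s,n>0}(sn)^{-2} = \frac{2}{\tau_2}\cdot 2\zeta(2) = \frac{2\pi^2}{3\tau_2}$ after $\partial_{\tau_2}$-antidifferentiation and multiplication by the appropriate prefactor will generate a $\tau_2$-independent constant, which is the source of the $\chi/12$ term (recall $n_0=-\chi/2$). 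Carrying out the $\tau_2$-differentiation of $\mathcal I^{(2)}_{\hat\gamma}$ brings down, from $e^{-2\pi q_at^a|m\tau+n|}$, a factor $-2\pi q_at^a\,\partial_{\tau_2}|m\tau+n| = -2\pi q_at^a\,m^2\tau_2/|m\tau+n|$, producing the $2\pi q_at^a$ term inside the parentheses in \eqref{typeiibfquant}; and differentiating the $\frac{1}{m^2|m\tau+n|}$ prefactor yields the $\frac{1}{|m\tau+n|}$ term. In parallel, I would Poisson-resum $f^{\text{w.s.}}$: from \eqref{fsplitting} it equals $\frac{\tau_2^2}{2\pi^2}\sum_{\hat\gamma}n_{\hat\gamma}\,\text{Re}\big(\mathrm{Li}_3(e^{2\pi\I q_az^a})+2\pi q_at^a\mathrm{Li}_2(e^{2\pi\I q_az^a})\big)$; expanding the polylogs as $\sum_{k>0}k^{-3}e^{2\pi\I kq_az^a}$ etc. and taking the real part gives exactly the $m=0$ contributions of the double sum in \eqref{typeiibfquant} (with $n=k$ or $n=-k$), since $S_{\hat\gamma,0,n}=2\pi q_a(|n|t^a+\I nb^a)$ and $|0\cdot\tau+n|=|n|$. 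This is why the combination $f^{\text{w.s.}}+f^{\text{inst}}$, and not either term alone, produces the clean $(m,n)\in\mathbb{Z}^2-\{0\}$ sum.

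Finally I would assemble the pieces: the $m\neq0$ terms come entirely from the Poisson-resummed $f^{\text{inst}}$, the $m=0$ terms entirely from $f^{\text{w.s.}}$, and the constant $\chi/12$ from the $\mathcal I_0$-correction (using $\sum_{n>0}n^{-2}=\pi^2/6$ and $n_0=-\chi/2$, together with the $\frac{\tau_2^2}{(2\pi)^2}$ overall normalization tracked through the $\partial_{\tau_2}$-antidifferentiation). I expect the main obstacle to be bookkeeping: getting every constant prefactor right through the $K_1\leftrightarrow K_0$ conversion, the $R=\tau_2/2$ substitutions, the $\partial_{\tau_2}$-antidifferentiation (which requires checking that no spurious $\tau_2$-dependent integration ``constant'' appears — this follows from matching the $\tau_2\to\infty$ asymptotics, where all exponentially-suppressed instanton terms vanish and one must see $f^{\text{w.s.}}$ reproduce precisely the $m=0$ sum), and confirming that the $q_0=0$ exclusion in $\Omega(q_0\gamma^0)$ is exactly what converts $\mathcal I^{(2)}_{\hat\gamma}|_{\hat\gamma=0}$ into $\mathcal I^{(2)}_0$ with the stated correction. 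A secondary subtlety is justifying the interchange of the $\partial_{\tau_2}$ with the infinite sums, which is licensed by the compact normal convergence of the $\mathcal I^{(\nu)}$ series noted after \eqref{Idef}.
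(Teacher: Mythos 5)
Your plan is correct and follows essentially the same route as the paper: rewrite $f^{\text{inst}}$ as a constant multiple of $\partial_{\tau_2}$ applied to the $\mathcal{I}^{(2)}_{\hat{\gamma}}$ and $\mathcal{I}^{(2)}_0$ series, Poisson-resum via Lemma \ref{PoissonLemma}, extract $\chi/12$ from the correction term in \eqref{I0der} (using $\sum_{n>0}n^{-2}=\pi^2/6$), and identify the expansion of $f^{\text{w.s.}}$ with the $m=0$ terms of the double sum. The only superfluous element is your worry about a $\tau_2$-dependent integration constant: since $f^{\text{inst}}$ equals the $\tau_2$-derivative of the $\mathcal{I}^{(2)}$ series exactly, term by term, one only ever differentiates the resummed expression and never antidifferentiates, so no such constant can arise.
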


\begin{proof}
On one hand, we have
\begin{equation}
    \begin{split}
        f^{\text{inst}}&=\frac{2R}{\pi}\sum_{\gamma}\Omega(\gamma)\sum_{n>0}\frac{e^{-2\pi\I n\zeta_{\gamma}}}{n}|\widetilde{Z}_{\gamma}|K_1(4\pi Rn|\widetilde{Z}_{\gamma}|)\\
        &=\frac{2R}{\pi}\sum_{\hat{\gamma}\in \Lambda^+}n_{\hat{\gamma}}\sum_{s=\pm 1}\sum_{q_0\in \mathbb{Z}}\sum_{n>0}\frac{e^{-2\pi\I ns\zeta_{q_0\gamma^0+\hat{\gamma}}}}{n}|\widetilde{Z}_{q_0\gamma^0+\hat{\gamma}}|K_1(4\pi Rn|\widetilde{Z}_{q_0\gamma^0+\hat{\gamma}}|)\\
        &\quad \quad-\frac{R}{\pi}\chi\sum_{s=\pm 1}\sum_{q_0\in \mathbb{Z}-\{0\}}\sum_{n>0}\frac{e^{-2\pi\I ns\zeta_{q_0\gamma^0}}}{n}|\widetilde{Z}_{q_0\gamma^0}|K_1(4\pi Rn|\widetilde{Z}_{q_0\gamma^0}|)\\
        &=-\frac{R}{2\pi^2}\sum_{\hat{\gamma}\in \Lambda^+}n_{\hat{\gamma}}\partial_{\tau_2}\mathcal{I}_{\hat{\gamma}}^{(2)}+\frac{R}{(2\pi)^2}\chi \partial_{\tau_2}\mathcal{I}_{0}^{(2)}\,,\\
    \end{split}
\end{equation}
where we have used the particular form of the BPS indices \eqref{varBPS}, the expressions \eqref{Idef} and the fact that $R=\tau_2/2$. Using Lemma \ref{PoissonLemma} we therefore obtain that

\begin{equation}\label{typeiibfinst}
    \begin{split}
        f^{\text{inst}}&=\frac{\tau_2^2}{(2\pi)^2}\sum_{\hat{\gamma}\in \Lambda^+}n_{\hat{\gamma}}\sum_{\substack{m\in \mathbb{Z}-\{0\}\\ n\in \mathbb{Z}}}\left(\frac{1}{|m\tau+ n|}+2\pi q_at^a\right)\frac{e^{-S_{\hat{\gamma},m,n}}}{|m\tau + n|^2}\\
        &\quad\quad -\frac{\tau_2^2}{2(2\pi)^2}\chi \sum_{\substack{m\in \mathbb{Z}-\{0\}\\ n\in \mathbb{Z}}}\frac{1}{|m\tau + n|^3}+\frac{\tau_2\chi}{2(2\pi)^2}\frac{4}{\tau_2}\sum_{n>0}\frac{1}{n^2}\\
        &=\frac{\chi}{12}+\frac{\tau_2^2}{(2\pi)^2}\sum_{\hat{\gamma}\in \Lambda^+\cup\{0\}}n_{\hat{\gamma}}\sum_{\substack{m\in \mathbb{Z}-\{0\}\\ n\in \mathbb{Z}}}\left(\frac{1}{|m\tau+ n|}+2\pi q_at^a\right)\frac{e^{-S_{\hat{\gamma},m,n}}}{|m\tau + n|^2}\,,\\
    \end{split}
\end{equation}
where in the last equality we have used that $\sum_{n>0}\frac{1}{n^2}=\frac{\pi^2}{6}$ and combined the other two sums by using the convention $n_{0}=-\frac{\chi}{2}$ from Section \ref{settingsec}.
On the other hand, it is not hard to check using \eqref{fsplitting} that one can write

\begin{equation}\label{typeiibfws}
    f^{\text{w.s.}}=\frac{\tau_2^2}{(2\pi)^2}\sum_{\hat{\gamma}\in \Lambda^+\cup\{0\}}n_{\hat{\gamma}}\sum_{n\in \mathbb{Z}-\{0\}}\left(\frac{1}{|n|}+2\pi q_at^a\right)\frac{e^{-S_{\hat{\gamma},0,n}}}{ n^2}\\
\end{equation}
By summing \eqref{typeiibfws} and \eqref{typeiibfinst} we therefore obtain \eqref{typeiibfquant}.
\end{proof}
\begin{remark}
    Combining Proposition \ref{typeiibfquantumprop} with \eqref{fsplitting2} we see that 
    \begin{equation}\label{Poissonresumedf}
        f=8\pi \tau_2^2h(t) +16\pi\left(\frac{\chi}{192\pi}-c_{\ell}\right)+ \frac{\tau_2^2}{(2\pi)^2}\sum_{\hat{\gamma}\in \Lambda^+\cup\{0\}}n_{\hat{\gamma}}\sum_{(m,n)\in \mathbb{Z}^2-\{0\}}\left(\frac{1}{|m\tau+n|}+2\pi q_at^a\right)\frac{e^{-S_{\hat{\gamma},m,n}}}{|m\tau + n|^2}\,,
    \end{equation}
    where $f^{\text{cl}}=-2\pi\tau_2^2\text{Im}(\tau_{ij}^{\text{cl}})z^i\overline{z}^j=8\pi\tau_2^2h(t)$ and $h(t)$ is given in \eqref{cubicpsr}. In particular, we see that the quantum corrected $f$ has the same transformation rule as $f^{\text{cl}}$ under $S$-duality if and only if $c_{\ell}=\frac{\chi}{192\pi}$. Namely, for this value of $c_{\ell}$, $f$ transforms under $\begin{pmatrix}
    a & b\\
    c & d\\ \end{pmatrix}\in \mathrm{SL}(2,\mathbb{Z})
    $ as
    \begin{equation}
    f\to \frac{f}{|c\tau+d|}\,.
    \end{equation}
    This suggests that this particular value of the 1-loop constant $c_{\ell}$ could play a special role in studying $S$-duality, and indeed, we will see in Section \ref{Sdsec} that for this value S-duality acts by isometries on the instanton corrected q-map space (provided $S$-duality acts on the domain of definition of the metric). Furthermore, in the setting of Calabi-Yau compactification of type IIB string theory, the quantity $f$ with $c_{\ell}=\frac{\chi}{192}$ is proportional to the 4d dilaton with perturbative, world-sheet instanton, D(-1) and D1 instanton corrections, and the corresponding transformation property has been previously remarked in the physics literature (see for example \cite[Equation 4.1]{QMS} and the paragraphs below the equation).  
\end{remark}

We now continue with the Poisson resummation of the other terms of the contact form. 
\begin{proposition}\label{typeiibtheta3quantumprop}
The 1-form $-2\I (\theta_3^{P, \text{w.s.}}+\theta_3^{P, \text{inst}})$ can be rewritten in terms of the coordinates $(\tau_2,b^a,t^a,\tau_1=\zeta^0,\zeta^a, \widetilde{\zeta}_i,\sigma)$ as follows

\begin{equation}\label{typeiibtheta3quantum}
\begin{split}
    &-2\I (\theta_3^{P, \text{w.s.}}+\theta_3^{P, \text{inst}})\\
    &=\sum_{\hat{\gamma}\in \Lambda^{+}\cup\{0\}}n_{\hat{\gamma}}\sum_{(m,n)\in \mathbb{Z}^{2}-\{0\}}e^{-S_{\hat{\gamma},m,n}}\Bigg[\frac{\I \tau_2^4}{2\pi}\frac{m^2}{|m\tau +n|^4}q_a\mathrm{d}b^a -\frac{\tau_2^2}{2\pi}\frac{m\tau_1+ n}{|m\tau+ n|^3}q_a\mathrm{d}t^a+\frac{\I \tau_2^2}{2\pi }\frac{m(m\tau_1+ n)}{|m\tau+ n|^4}q_a\mathrm{d}\zeta^a
    \\
    &\quad-\left(\frac{\tau_2^3}{\pi^2} \frac{m^2(m\tau_1+ n)}{|m\tau+ n|^6}+\frac{\tau_2}{2\pi}q_at^a\frac{(m\tau_1+ n)^3+2m^2\tau_2^2(m\tau_1+ n)}{|m\tau+ n|^5}\right)\mathrm{d}\tau_2\\
    &\quad+\left(-\frac{\I \tau_2^2}{2\pi}q_ab^a \frac{m(m\tau_1+ n)}{|m\tau+ n|^4}+\frac{\tau_2^4}{2\pi}q_at^a \frac{m^3}{|m\tau+ n|^5}+\frac{\tau_2^2}{2\pi^2}\frac{m(m^2\tau_2^2-(m\tau_1+ n)^2)}{|m\tau+ n|^6}\right)\mathrm{d}\tau_1\Bigg].\\
\end{split}
\end{equation}
\end{proposition}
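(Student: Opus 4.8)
The plan is to compute $-2\mathrm{i}\bigl(\theta_3^{P,\text{w.s.}}+\theta_3^{P,\text{inst}}\bigr)$ directly from the defining formulas in \eqref{theta3split}, Poisson resumming the Bessel part $\theta_3^{P,\text{inst}}$ by the same device used in the proof of Lemma \ref{PoissonLemma}, and evaluating the worldsheet part $\theta_3^{P,\text{w.s.}}$ by elementary differentiation. Throughout one uses $R=\tau_2/2$, cf.\ \eqref{MM}.

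First I would rewrite $\theta_3^{P,\text{inst}}$. Since $\widetilde Z_{q_0\gamma^0}=q_0\in\mathbb{Z}$ is locally constant, the $1$-form $\tfrac{\mathrm{d}\widetilde Z_\gamma}{\widetilde Z_\gamma}-\tfrac{\mathrm{d}\overline{\widetilde Z}_\gamma}{\overline{\widetilde Z}_\gamma}$ kills all pure $\gamma^0$ charges, so only $\gamma=q_0\gamma^0\pm\hat\gamma$ with $\hat\gamma\in\Lambda^+$ (BPS index $n_{\hat\gamma}$) contribute. Writing $\widetilde Z_\gamma=x'+\mathrm{i}y$ and using $\tfrac{d}{du}K_0(u)=-K_1(u)$, one checks the identity $\bigl(\tfrac{\mathrm{d}\widetilde Z_\gamma}{\widetilde Z_\gamma}-\tfrac{\mathrm{d}\overline{\widetilde Z}_\gamma}{\overline{\widetilde Z}_\gamma}\bigr)\,|\widetilde Z_\gamma|\,K_1(2\pi\tau_2 n|\widetilde Z_\gamma|)=-\tfrac{\mathrm{i}}{\pi\tau_2 n}\,\mathrm{d}^c_z K_0(2\pi\tau_2 n|\widetilde Z_\gamma|)$, where $\mathrm{d}^c_z$ denotes the part of $\mathrm{d}^c$ acting on the $z^a$-dependence only (explicitly $\mathrm{d}^c_z h=(\partial_{b^a}h)\,\mathrm{d}t^a-(\partial_{t^a}h)\,\mathrm{d}b^a$). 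Hence $\theta_3^{P,\text{inst}}=-\tfrac{1}{4\pi^2}\sum_{\gamma}\Omega(\gamma)\sum_{n>0}\tfrac{e^{-2\pi\mathrm{i}n\zeta_\gamma}}{n^2}\,\mathrm{d}^c_z K_0(2\pi\tau_2 n|\widetilde Z_\gamma|)$. I would then Poisson resum the sum over the $\gamma^0$-tower for fixed $\hat\gamma$ exactly as in the proof of Lemma \ref{PoissonLemma} --- Schwinger representation of $K_0$, Poisson's summation formula in the integer shift, single residue at $z=\mathrm{i}t_+^{m,n}$ --- carrying $\mathrm{d}^c_z$ through the resummation. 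The key bookkeeping point is that $\mathrm{d}^c_z$ hits the $K_0$-factor but not $e^{-2\pi\mathrm{i}n\zeta_\gamma}$; after resummation this translates into keeping only the contribution to $\partial_{t^a}$, $\partial_{b^a}$ of the resummed summand coming from the $\tau_2|m\tau+n|\,t^a$ and $\mathrm{i}n b^a$ that sit in $S_{\hat\gamma,m,n}$ via the Bessel argument, as opposed to the $\mathrm{i}m c^a$ and the $\tau_1 b^a$-combination coming from $e^{-2\pi\mathrm{i}n\zeta_\gamma}$. Carrying this out produces the $q_a\,\mathrm{d}t^a$, $q_a\,\mathrm{d}b^a$ and $\mathrm{d}\tau_2$ pieces of \eqref{typeiibtheta3quantum}, with coefficients assembled from $(m\tau_1+n)$, $m\tau_2$ and powers of $|m\tau+n|$.

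Next I would evaluate $\theta_3^{P,\text{w.s.}}$, all of whose pieces are elementary. The matrices $\mathrm{Im}(\tau^{\text{w.s.}}_{0a})$ and $\mathrm{Im}(\tau^{\text{w.s.}}_{ab})$ come from differentiating $\mathfrak{F}^{\text{w.s.}}$ twice, yielding single sums over $\hat\gamma\in\Lambda^+$ of $\mathrm{Li}_1$ and $\mathrm{Li}_0$ of $e^{2\pi\mathrm{i}q_a z^a}$, i.e.\ geometric series; expanding these and using $e^{2\pi\mathrm{i}kq_a z^a}=e^{-S_{\hat\gamma,0,-k}}$ for $k>0$, they reorganize into exactly the $m=0$ sector of \eqref{typeiibtheta3quantum} (here one uses the conventions \eqref{m=0def}). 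The forms $\mathrm{d}\widetilde\zeta^{\text{inst}}_i$ and $\mathrm{d}\sigma^{\text{inst}}$ are computed directly from the closed expressions \eqref{typeiiacoordcor}, which are already in $(m,n)$-double-sum form; these supply the $q_a\,\mathrm{d}\zeta^a$ terms and part of the $\mathrm{d}\tau_1$ terms, after trading $\mathrm{d}c^a$ for $\mathrm{d}\zeta^a=-\mathrm{d}c^a+\tau_1\,\mathrm{d}b^a+b^a\,\mathrm{d}\tau_1$. In particular the $q_a q_b$-type cross terms generated by $\zeta^i\,\mathrm{d}\widetilde\zeta^{\text{inst}}_i$ must --- and will --- cancel against those from $-\pi\tau_2^2\mathrm{Im}(\tau^{\text{w.s.}}_{ab})(b^a\,\mathrm{d}t^b-t^a\,\mathrm{d}b^b)$, since the right-hand side of \eqref{typeiibtheta3quantum} carries only single powers of the charges.

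Finally I would add the two contributions, multiply by $-2\mathrm{i}$, and simplify to \eqref{typeiibtheta3quantum}, using identities such as $\partial_{\tau_1}|m\tau+n|=\tfrac{m(m\tau_1+n)}{|m\tau+n|}$, $\partial_{\tau_2}|m\tau+n|=\tfrac{m^2\tau_2}{|m\tau+n|}$, and the observation that any summand odd under $(m,n)\mapsto(-m,-n)$ has vanishing sum over $\mathbb{Z}^2\setminus\{0\}$ --- this is what permits formally extending the sums to $\hat\gamma\in\Lambda^+\cup\{0\}$ (the $\hat\gamma=0$ summand being odd, hence zero) and what removes various spurious cross terms. All convergence issues, and the legitimacy of interchanging $\sum$ with $\mathrm{d}$, with $\partial_{\tau_j}$ and with the contour deformations, are handled exactly as in the proof of Lemma \ref{PoissonLemma}, via the support and convergence properties of $(M,\Gamma,Z,\Omega)$, \eqref{convpropn}, and the exponential decay of $K_0,K_1$. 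I expect the real difficulty to be organizational rather than conceptual: matching, $1$-form by $1$-form, the residue output for $\theta_3^{P,\text{inst}}$ against the differentiated mirror-map corrections $\mathrm{d}\widetilde\zeta^{\text{inst}}_i$, $\mathrm{d}\sigma^{\text{inst}}$, keeping careful track of which occurrences of $b^a$ reside inside $\zeta^a$ and which inside $\widetilde Z_\gamma$, and treating the degenerate $m=0$ sector consistently on both sides.
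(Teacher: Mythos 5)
Your strategy coincides with the paper's own: split the form according to \eqref{theta3split}, observe that the pure $q_0\gamma^0$ charges drop out of $\theta_3^{P,\text{inst}}$ because $\mathrm{d}\widetilde Z_{q_0\gamma^0}=0$, rewrite the surviving Bessel sums as $z$-derivatives of resummable functions (your $\mathrm{d}^c_z K_0$ identity is exactly the paper's $\partial_{z^a}\mathcal{I}^{(2)}_{\hat{\gamma}}\,\mathrm{d}z^a-\partial_{\overline{z}^a}\mathcal{I}^{(2)}_{\hat{\gamma}}\,\mathrm{d}\overline{z}^a$), apply Lemma \ref{PoissonLemma}, and differentiate \eqref{typeiiacoordcor} and \eqref{Fws} for the worldsheet piece. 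However, your rule for evaluating the resummed derivatives is wrong. The statement that ``$\mathrm{d}^c_z$ hits only the $K_0$-factor'' means precisely that $\partial_{b^a},\partial_{t^a}$ are taken in the mixed coordinates $(\tau,b^a,t^a,\zeta^i,\dots)$, in which the phase $e^{-2\pi\mathrm{i}n\zeta_\gamma}$ is genuinely independent of $b^a,t^a$; on the resummed side one must therefore substitute $c^a=\tau_1 b^a-\zeta^a$ into $S_{\hat{\gamma},m,n}$ of \eqref{mnaction} before differentiating, so that
\begin{equation*}
\partial_{b^a}\big|_{\zeta}\,e^{-S_{\hat{\gamma},m,n}}=-2\pi\mathrm{i}\,q_a(m\tau_1+n)\,e^{-S_{\hat{\gamma},m,n}},
\end{equation*}
the $m\tau_1$ coming exactly from the $\mathrm{i}mc^a$ term you propose to discard. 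Your prescription (keep only the $\mathrm{i}nb^a$ and $|m\tau+n|t^a$ pieces) computes $\partial_{b^a}$ at fixed $c^a$ and yields $-2\pi\mathrm{i}q_a n$ instead. That replaces the coefficient $\frac{m\tau_1+n}{m^2|m\tau+n|}$ of $q_a\mathrm{d}t^a$ in \eqref{typeiibtheta3inst} by $\frac{n}{m^2|m\tau+n|}$ and destroys the cancellation against the worldsheet contribution \eqref{typeiibtheta3ws} that produces the final coefficient $-\frac{\tau_2^2}{2\pi}\frac{m\tau_1+n}{|m\tau+n|^3}$; since the worldsheet side is fixed by the explicit formulas, the discrepancy cannot be compensated.

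There is a second, related misattribution of components. Since $\theta_3^{P,\text{inst}}$ is proportional to $\frac{\mathrm{d}\widetilde Z_\gamma}{\widetilde Z_\gamma}-\frac{\mathrm{d}\overline{\widetilde Z}_\gamma}{\overline{\widetilde Z}_\gamma}$ and $\widetilde Z_\gamma$ depends only on $z^a$, the instanton part contributes \emph{only} $\mathrm{d}b^a$ and $\mathrm{d}t^a$ components --- no $\mathrm{d}\tau_2$, $\mathrm{d}\tau_1$ or $\mathrm{d}\zeta^a$ at all, contrary to your claim that the resummation produces the $\mathrm{d}\tau_2$ pieces and part of the $\mathrm{d}\tau_1$ terms. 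All of the $\mathrm{d}\tau_1$, $\mathrm{d}\tau_2$ and $\mathrm{d}\zeta^a$ components of \eqref{typeiibtheta3quantum}, as well as further double-sum contributions to $\mathrm{d}b^a$ and $\mathrm{d}t^a$, must come from $-2\mathrm{i}\theta_3^{P,\text{w.s.}}$ through $\mathrm{d}\sigma^{\text{inst}}$ and $\widetilde{\zeta}_i^{\text{inst}}\mathrm{d}\zeta^i-\zeta^i\mathrm{d}\widetilde{\zeta}_i^{\text{inst}}$, which depend on all of $\tau$, $t^a$, $b^a$, $c^a$; your plan records these as supplying only $\mathrm{d}\zeta^a$ and part of $\mathrm{d}\tau_1$, and assigns the $\mathrm{Im}(\tau^{\text{w.s.}}_{ij})$ terms the whole $m=0$ sector, whereas they account only for $m=0$ single sums in the $\mathrm{d}b^a,\mathrm{d}t^a$ directions. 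The remaining ingredients of your plan (parity in $(m,n)$ for the $\hat{\gamma}=0$ summands, convergence via \eqref{convpropn} and the support property) are sound, but the two bookkeeping errors above would lead the computation to a result different from \eqref{typeiibtheta3quantum}.
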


\begin{proof}
We follow the same idea as in the previous proposition. Namely, notice that using  \eqref{theta3split} and \eqref{Idef}, $\theta_{3}^{P,\text{inst}}$ can be written as
\begin{equation}
    \begin{split}
     \theta_3^{P,\text{inst}}=&-\frac{\I R}{2\pi}\sum_{\gamma}\Omega(\gamma)\sum_{n>0}\frac{e^{-2\pi \I n\zeta_{\gamma}}}{n}|\widetilde{Z}_{\gamma}|K_1(4\pi Rn|\widetilde{Z}_{\gamma}|)\left(\frac{\D \widetilde{Z}_{\gamma}}{\widetilde{Z}_{\gamma}}-\frac{\D \overline{\widetilde{Z}}_{\gamma}}{\overline{\widetilde{Z}}_{\gamma}}\right)\\
     &=-\frac{\I R}{2\pi}\sum_{\hat{\gamma}\in \Lambda^+}n_{\hat{\gamma}}\sum_{s=\pm 1}\sum_{q_0\in \mathbb{Z}}\sum_{n>0}\frac{e^{-2\pi \I ns\zeta_{q_0\gamma^0+\hat{\gamma}}}}{n}|\widetilde{Z}_{q_0\gamma^0+\hat{\gamma}}|K_1(4\pi Rn|\widetilde{Z}_{q_0\gamma^0+\hat{\gamma}}|)\left(\frac{\D \widetilde{Z}_{\hat{\gamma}}}{\widetilde{Z}_{q_0\gamma^0+\hat{\gamma}}}-\frac{\D \overline{\widetilde{Z}_{\hat{\gamma}}}}{\overline{\widetilde{Z}}_{q_0\gamma^0+\hat{\gamma}}}\right)\\
     &=\frac{\I }{8\pi^2}\sum_{\hat{\gamma}\in \Lambda^+}n_{\hat{\gamma}}\partial_{z^a}\mathcal{I}_{\hat{\gamma}}^{(2)}\D z^a-\frac{\I }{8\pi^2}\sum_{\hat{\gamma}\in \Lambda^+}n_{\hat{\gamma}}\partial_{\overline{z}^a}\mathcal{I}_{\hat{\gamma}}^{(2)}\D \overline{z}^a,
    \end{split}
\end{equation}
where in second equality we have used that no $\hat{\gamma}=0$ terms appear due to $\mathrm{d}\widetilde{Z}_{q_0\gamma^0}=\mathrm{d}q_0=0$.
Using Lemma \ref{PoissonLemma} we then obtain the following:

\begin{equation}\label{typeiibtheta3inst}
    \begin{split}
        -2\I &\theta_3^{P,\text{inst}}\\  &=\frac{\I }{2\pi}\sum_{\hat{\gamma}\in \Lambda^+}n_{\hat{\gamma}}\sum_{\substack{m\in \mathbb{Z}-\{0\}\\ n\in \mathbb{Z}}}\frac{e^{-S_{\hat{\gamma},m,n}}}{m^2}q_a\D b^a+\frac{1}{2\pi}\sum_{\hat{\gamma}\in \Lambda^+}n_{\hat{\gamma}}\sum_{\substack{m\in \mathbb{Z}-\{0\}\\ n\in \mathbb{Z}}}e^{-S_{\hat{\gamma},m,n}}\frac{m\tau_1+ n}{m^2|m\tau+ n|}q_a\D t^a\,.\\
    \end{split}
\end{equation}
Observe for the calculation that the partial derivatives $\partial_{z^a}$ and $\partial_{\overline{z}^a}$ are to be taken with respect to the mixed coordinates $(\tau_2,b^a,t^a,\tau_1=\zeta^0,\zeta^a, \widetilde{\zeta}_i,\sigma)$, which are neither the IIA nor the IIB coordinates.
On the other hand, using directly the formulas \eqref{typeiiacoordcor} and \eqref{Fws} for $\sigma^{\text{inst}}$, $\widetilde{\zeta}_i^{\text{inst}}$ and $\mathfrak{F}^{\text{w.s.}}$ one can compute $-2\I \theta_3^{P,\text{w.s.}}$ in \eqref{theta3split}. The $\D \zeta^a$, $\D \tau_1$ and $\D \tau_2$ components of $-2\I \theta_3^{P,\text{w.s.}}$ can be seen to match the corresponding components in the right-hand side of \eqref{typeiibtheta3quantum}. On the other hand, the $\D t^a$ and $\D b^a$ components of $-2\I \theta_3^{P,\text{w.s.}}$ are as follows
\begin{align*}\label{typeiibtheta3ws}
        -2\I \theta_3^{P,\text{w.s.}}|_{\D b^a}&= -\frac{\I }{2\pi}\sum_{\hat{\gamma}\in \Lambda^+\cup\{0\}}n_{\hat{\gamma}}q_a\sum_{\substack{m\in \mathbb{Z}-\{0\}\\ n\in \mathbb{Z}}}\left(2-\frac{(m\tau_1+ n)^2}{|m\tau+n|^2}\right)\frac{e^{-S_{\hat{\gamma},m,n}}(m\tau_1+ n)^2}{m^2|m\tau+ n|^2}\\
        -2\I \theta_3^{P,\text{w.s.}}|_{\D t^a}&= -\frac{\tau_2^2}{2\pi}\sum_{\hat{\gamma}\in \Lambda^+\cup\{0\}}n_{\hat{\gamma}}q_a\sum_{n\in \mathbb{Z}-\{0\}}\frac{e^{-S_{\hat{\gamma},0,n}}}{n|n|}\\
        & \quad-\frac{1}{2\pi}\sum_{\hat{\gamma}\in \Lambda^+\cup\{0\}}n_{\hat{\gamma}}q_a\sum_{\substack{m\in \mathbb{Z}-\{0\}\\ n\in \mathbb{Z}}}\left(2-\frac{(m\tau_1+ n)^2}{|m\tau+ n|^2}\right)\frac{e^{-S_{\hat{\gamma},m,n}}(m\tau_1+ n)}{m^2|m\tau+ n|}\,.\numberthis\\
    \end{align*}
One can then directly check that the $\D b^a$ and $\D t^a$ components of \eqref{typeiibtheta3inst} and \eqref{typeiibtheta3ws} combine into the corresponding components of \eqref{typeiibtheta3quantum}. We therefore obtain the required expression.
\end{proof}

Finally, we compute a similar expression for $\theta_{+}^P$:
\begin{proposition}\label{typeiibthetapquantumprop}
The 1-form $\I (\theta_{+}^{P,\text{w.s.}}+\theta_{+}^{P,\text{inst}})$ can be rewritten in terms of the coordinates $(\tau_2,b^a,t^a,\tau_1=\zeta^0,\zeta^a, \widetilde{\zeta}_i,\sigma)$ as follows

\begin{equation}\label{typeiibthetaplusquant}
\begin{split}
    &\I (\theta_{+}^{P,\text{w.s.}}+\theta_{+}^{P,\text{inst}})\\
    =&\sum_{\hat{\gamma}\in \Lambda^+\cup \{0\}}n_{\hat{\gamma}}\sum_{(m,n)\in \mathbb{Z}^2-\{0\}}e^{-S_{\hat{\gamma},m,n}}\Bigg[\frac{\I \tau_2^3}{4\pi}\frac{m(|m\tau+n|-(m\tau_1+n))}{|m\tau+ n|^4}q_a\mathrm{d}b^a\\
    &-\frac{\tau_2^3}{4\pi } \frac{m}{|m\tau+ n|^3}q_a\mathrm{d}t^a+\frac{\I \tau_2}{4\pi}\frac{(m\tau_1+ n)(|m\tau+ n|-(m\tau_1+ n))}{|m\tau+ n|^4}q_a\mathrm{d}\zeta^a \\
        &+\frac{\tau_2}{4\pi}\frac{(m\tau_1+n)((m\tau_1+n)-|m\tau+n|)}{|m\tau+n|^4}\left(q_at^a\frac{m\tau_1+n}{|m\tau+n|}+\mathrm{i}q_ab^a\right)\mathrm{d}\tau_1\\
        &-\frac{\tau_2}{8\pi^2}\frac{2(m\tau_1+ n)(m^2\tau_2^2-(m\tau_1+ n)^2) +|m\tau+ n|(2(m\tau_1+ n)^2-m^2\tau_2^2)}{|m\tau+ n|^6}\mathrm{d}\tau_1\\
        &+\frac{\tau_2^2}{4\pi}\frac{m(m\tau_1+n)}{|m\tau+n|^5}\left(q_at^a((m\tau_1+n)-|m\tau+n|)+\frac{1}{2\pi}\frac{4(m\tau_1+n)-3|m\tau+n|}{|m\tau+n|}\right)\mathrm{d}\tau_2\Bigg]\;.\\
\end{split}
\end{equation}
\end{proposition}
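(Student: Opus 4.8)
The plan is to follow exactly the same strategy as in the proof of Proposition \ref{typeiibtheta3quantumprop}, treating $\theta_+^P$ in place of $\theta_3^P$. First I would split $\I(\theta_+^{P,\text{w.s.}}+\theta_+^{P,\text{inst}})$ into two pieces as in \eqref{theta+split}: the piece $\theta_+^{P,\text{inst}}$ written in terms of Bessel functions $K_0, K_1$, and the piece $\theta_+^{P,\text{w.s.}}=-2\pi\tau_2(F_i^{\text{w.s.}}\D\zeta^i - z^i\D\widetilde\zeta_i^{\text{inst}})$ coming from the worldsheet prepotential and the correction $\widetilde\zeta_i^{\text{inst}}$. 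For the Bessel part, I would first organize the sum over $\gamma$ using the explicit BPS indices \eqref{varBPS}, peeling off the $\hat\gamma=q_a\gamma^a$ contributions (with multiplicity $n_{\hat\gamma}$ summed over $q_0\in\mathbb Z$) and the pure $q_0\gamma^0$ contributions (with multiplicity $-\chi$). Then I would recognize the resulting $n$-sums of $K_0$ and $|\widetilde Z_\gamma|K_1$ with prefactors $\widetilde Z_\gamma$ as derivatives of the functions $\mathcal I_{\hat\gamma}^{(\nu)}$ from \eqref{Idef}: the $K_0\,\D\zeta_\gamma$ term should produce a $\partial_{\tau_1}$ (or $\partial_{\zeta^i}$) derivative of $\mathcal I^{(1)}$ or $\mathcal I^{(2)}$, and the $|\widetilde Z_\gamma|K_1(\cdots)$ terms times $\big(\tfrac{\D\widetilde Z_\gamma}{\widetilde Z_\gamma}+\tfrac{\D\overline{\widetilde Z}_\gamma}{\overline{\widetilde Z}_\gamma}+\tfrac{2}{\tau_2}\D\tau_2\big)$ should appear as $\partial_{z^a}+\partial_{\overline z^a}$ and $\partial_{\tau_2}$ derivatives. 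The key tool is again the Bessel recursions $K_0'=-K_1$, $(xK_1(x))'=-xK_0(x)$, and the integral representation \eqref{Besselintrep}; after expressing everything via $\mathcal I_{\hat\gamma}^{(\nu)}$, I apply the Poisson resummation formula \eqref{Presum1} of Lemma \ref{PoissonLemma} (together with \eqref{I0der} to handle the $\hat\gamma=0$/$\chi$ pieces and the extra $\tfrac{2}{\tau_2}\sum 1/n^\nu$ boundary terms).

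Next I would compute $\I\theta_+^{P,\text{w.s.}}$ directly and termwise. This requires differentiating the explicit closed-form expressions \eqref{typeiiacoordcor} for $\widetilde\zeta_a^{\text{inst}}$, $\widetilde\zeta_0^{\text{inst}}$ and the formula \eqref{Fws} for $\mathfrak F^{\text{w.s.}}$ (hence $F_i^{\text{w.s.}}=\partial_{Z^i}\mathfrak F^{\text{w.s.}}/Z^0$) with respect to the mixed coordinates $(\tau_2,b^a,t^a,\tau_1=\zeta^0,\zeta^a,\widetilde\zeta_i,\sigma)$. Collecting the $\D\zeta^a$, $\D\tau_1$, $\D\tau_2$, $\D b^a$, $\D t^a$ components of $\I\theta_+^{P,\text{w.s.}}$ and adding them to the corresponding components of the Poisson-resummed $\I\theta_+^{P,\text{inst}}$ should, after elementary but lengthy algebra with the identities $|m\tau+n|^2 = (m\tau_1+n)^2 + m^2\tau_2^2$ and $\partial_{\tau_2}|m\tau+n| = m^2\tau_2/|m\tau+n|$, $\partial_{\tau_1}|m\tau+n| = m(m\tau_1+n)/|m\tau+n|$, reassemble exactly into the right-hand side of \eqref{typeiibthetaplusquant}. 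As in the proof of Proposition \ref{typeiibtheta3quantumprop}, I expect that the $\D\zeta^a$, $\D\tau_1$ and $\D\tau_2$ components match more or less directly from $\theta_+^{P,\text{w.s.}}$ alone (these being the "axionic" and dilaton directions already packaged in \eqref{typeiiacoordcor}), while the $\D b^a$ and $\D t^a$ components only close up after combining the worldsheet contribution with the Poisson-resummed Bessel contribution — this cross-check between the two sources is the heart of the computation.

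The main obstacle will be purely computational bookkeeping: keeping track of which partial derivatives act on $R=\tau_2/2$, on $\widetilde Z_\gamma = q_0 + q_a z^a$, on $\zeta_\gamma = -q_i\zeta^i$, and on the $|m\tau+n|$-type denominators without conflating the three different coordinate systems (IIA, IIB, and the mixed set). There is no conceptual difficulty beyond Lemma \ref{PoissonLemma} and the Bessel identities of Appendix \ref{Besselappendix}; the risk lies entirely in sign errors and in correctly distributing the $\hat\gamma=0$ term (the $\chi$-piece) between the "$+2\pi q_at^a$" and "$1/|m\tau+n|$" structures, exactly as was done for $f$ in Proposition \ref{typeiibfquantumprop} and for $\theta_3^P$ in Proposition \ref{typeiibtheta3quantumprop}. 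I would therefore present the proof compactly: state that $\theta_+^{P,\text{inst}}$ equals an explicit combination of $\partial_{z^a}\mathcal I_{\hat\gamma}^{(\nu)}$, $\partial_{\overline z^a}\mathcal I_{\hat\gamma}^{(\nu)}$, $\partial_{\tau_1}\mathcal I_{\hat\gamma}^{(\nu)}$ and $\partial_{\tau_2}\mathcal I_{\hat\gamma}^{(\nu)}$ (for $\nu=1,2$), invoke Lemma \ref{PoissonLemma}, and then assert that adding the directly-computed $\theta_+^{P,\text{w.s.}}$ yields \eqref{typeiibthetaplusquant} after the indicated elementary simplifications, mirroring the level of detail given in the proof of Proposition \ref{typeiibtheta3quantumprop}.
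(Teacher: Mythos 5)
Your proposal follows essentially the same route as the paper's proof: split via \eqref{theta+split}, rewrite the Bessel sums in $\theta_+^{P,\text{inst}}$ as derivatives of the functions $\mathcal{I}_{\hat\gamma}^{(\nu)}$ from \eqref{Idef} (the paper ends up needing $\nu=2,3$ rather than $\nu=1,2$, with mixed derivatives such as $\partial_{\zeta^0}\partial_{\tau_2}\partial_{\overline z_{\hat\gamma}}\mathcal{I}_{\hat\gamma}^{(3)}$), Poisson resum via Lemma \ref{PoissonLemma} with \eqref{I0der} handling the $\chi$-pieces, compute $\theta_+^{P,\text{w.s.}}$ directly from \eqref{typeiiacoordcor} and \eqref{Fws}, and add. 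The only substantive caveat is that, unlike for $\theta_3^P$, here the instanton part contributes nontrivially to the $\D\zeta^a$, $\D\tau_1$ and $\D\tau_2$ components as well (not just $\D b^a$ and $\D t^a$), but this does not affect the validity of the method.
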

\begin{proof}
As before, we first notice that we can write the terms of $\theta_{+}^{P,\text{inst}}$ as follows. For the first sum in \eqref{theta+split} we have

\begin{equation}
    \begin{split}
        &2R\I\sum_{\gamma}\Omega(\gamma)\widetilde{Z}_{\gamma}\sum_{n>0}e^{-2\pi\I n\zeta_{\gamma}}K_0(4\pi Rn|\widetilde{Z}_{\gamma}|)\D \zeta_{\gamma}\\
        &=2R\I\sum_{\hat{\gamma}\in \Lambda^+}n_{\hat{\gamma}}\sum_{q_0\in \mathbb{Z}}\sum_{s=\pm 1}\sum_{n>0}\widetilde{Z}_{q_0\gamma^0+\hat{\gamma}}e^{-2\pi \I ns\zeta_{q_0\gamma^0+\hat{\gamma}}}K_0(4\pi Rn|\widetilde{Z}_{q_0\gamma^0+\hat{\gamma}}|)\D \zeta_{q_0\gamma^0+\hat{\gamma}}\\
        &\quad -R\I \chi\sum_{q_0\in \mathbb{Z}}\sum_{s=\pm 1}\sum_{n>0}\widetilde{Z}_{q_0\gamma^0}e^{-2\pi \I ns\zeta_{q_0\gamma^0}}K_0(4\pi Rn|\widetilde{Z}_{q_0\gamma^0}|)\D \zeta_{q_0\gamma^0}\\
        &=-\frac{1}{(2\pi)^3}\sum_{\hat{\gamma}\in \Lambda^+}n_{\hat{\gamma}}\D \zeta^0\partial_{\zeta^0}\partial_{\tau_2}\partial_{\overline{z}_{\hat{\gamma}}}\mathcal{I}_{\hat{\gamma}}^{(3)}-\frac{\I }{(2\pi)^2}\sum_{\hat{\gamma}\in \Lambda^+}n_{\hat{\gamma}}\D \zeta^a\partial_{\tau_2}\partial_{\overline{z}^a}\mathcal{I}_{\hat{\gamma}}^{(2)}-\frac{R\I }{2(2\pi)^2}\chi \D \zeta^0\partial_{\zeta^0}^2\mathcal{I}_{0}^{(2)}\,,\\
    \end{split}
\end{equation}
where $\partial_{z_{\hat{\gamma}}}:=\frac{1}{\#q}\frac{1}{q_a}\partial_{z^a}$ with the sum in the index $a$ only for the $a=1,...,n$ such that $q_a\neq0$, and $\#q$ is the number of non-zero $q_a$\footnote{It is natural to denote $z_{\hat{\gamma}}:=q_az^a$ for $\hat{\gamma}\in \Lambda^{+}$. With this notation, the differential operator $\partial_{\widetilde{z}_{\hat{\gamma}}}$ satisfies $\partial_{z_{\hat{\gamma}}}z_{\hat{\gamma}}=1$.}. By Lemma \ref{PoissonLemma}, we then find

    \begin{align*}
     &2R\I \sum_{\gamma}\Omega(\gamma)\widetilde{Z}_{\gamma}\sum_{n>0}e^{-2\pi\I n\zeta_{\gamma}}K_0(2Rn|\widetilde{Z}_{\gamma}|)\D \zeta_{\gamma}\\ &=\sum_{\hat{\gamma}\in \Lambda^+}n_{\hat{\gamma}}\sum_{\substack{m\in \mathbb{Z}-\{0\}\\ n\in \mathbb{Z}}}e^{-S_{\hat{\gamma},m,n}}\Bigg[-\frac{\I \tau_2}{8\pi^2}\left(\frac{1}{|m\tau+ n|^3}\left(1-3\frac{(m\tau_1+ n)^2}{|m\tau+ n|^2}\right)-2\pi\I \frac{q_ab^a(m\tau_1+ n)}{|m\tau+ n|^3}\right)\D \zeta^0\\
     &\quad-\frac{\I \tau_2}{4\pi}\frac{q_at^a}{|m\tau+ n|^4}\left(m^2\tau_2^2-|m\tau+ n|(m\tau_1+ n)-2(m\tau_1+ n)^2\right)\D \zeta^0\\
     &\quad+\frac{\I \tau_2}{2}q_at^aq_b\frac{m\tau_1+ n+|m\tau+ n|}{|m\tau+ n|^2}\left(\I b^b+\frac{(m\tau_1+ n)t^b}{|m\tau+ n|}\right)\D \zeta^0\\
    &\quad+\frac{\tau_2}{4\pi}\frac{q_a}{|m\tau+ n|^2}\left(2\pi q_bt^b(m\tau_1+ n+|m\tau+ n|)+\frac{m\tau_1+ n}{|m\tau + n|}\right)\mathrm{\D }\zeta^a\Bigg]\\
        &\quad +\frac{\I \tau_2}{(4\pi)^2}\chi \sum_{\substack{m\in \mathbb{Z}-\{0\}\\ n\in \mathbb{Z}}}\frac{1}{|m\tau+n|^3}\left(1-3\frac{(m\tau_1+ n)^2}{|m\tau+ n|^2}\right)\D \zeta^0\,. \numberthis \label{typeiibthetaplusinst1}\\
    \end{align*}
Similarly, for the remaining term of $\theta_{+}^{P,\text{inst}}$ in \eqref{theta+split} we have that 

\begin{equation}
\begin{split}
    &2R^2\sum_{\gamma}\Omega(\gamma)\widetilde{Z}_{\gamma}\sum_{n>0}e^{-2\pi \I n\zeta_{\gamma}}|\widetilde{Z}_{\gamma}|K_1(4\pi Rn|\widetilde{Z}_{\gamma}|)\left(\frac{\D \widetilde{Z}_{\gamma}}{\widetilde{Z}_{\gamma}}+\frac{\D \overline{\widetilde{Z}}_{\gamma}}{\overline{\widetilde{Z}}_{\gamma}}+\frac{2}{\tau_2}\D \tau_2\right)\\
    &=2R^2\sum_{\gamma}\Omega(\gamma)\sum_{n>0}e^{-2\pi\I n\zeta_{\gamma}}|\widetilde{Z}_{\gamma}|K_1(4\pi Rn|\widetilde{Z}_{\gamma}|)\D \widetilde{Z}_{\gamma}+2R^2\sum_{\gamma}\Omega(\gamma)\frac{\widetilde{Z}_{\gamma}^2}{|\widetilde{Z}_{\gamma}|}\sum_{n>0}e^{-2\pi\I n\zeta_{\gamma}}K_1(4\pi Rn|\widetilde{Z}_{\gamma}|)\D \overline{\widetilde{Z}}_{\gamma}\\
    &\quad +\tau_2\sum_{\gamma}\Omega(\gamma)\widetilde{Z}_{\gamma}\sum_{n>0}e^{-2\pi\I n\zeta_{\gamma}}|\widetilde{Z}_{\gamma}|K_1(4\pi Rn|\widetilde{Z}_{\gamma}|)\D \tau_2\\
    &=\frac{\I \tau_2^2}{16\pi^2}\sum_{\hat{\gamma}\in \Lambda^+}n_{\hat{\gamma}}\partial_{\zeta^a}\partial_{\tau_2}\mathcal{I}_{\hat{\gamma}}^{(2)}\D z^a-\frac{1}{(2\pi)^3}\sum_{\hat{\gamma}\in \Lambda^+}n_{\hat{\gamma}}\partial_{\overline{z}^a}\partial_{\tau_2}\partial_{\overline{z}_{\hat{\gamma}}}\mathcal{I}_{\hat{\gamma}}^{(3)}\D \overline{z}^a\\
    &\quad -\frac{1}{(2\pi)^3}\sum_{\hat{\gamma}\in \Lambda^+}n_{\hat{\gamma}}\left(\partial_{\tau_2}\partial_{\overline{z}_{\hat{\gamma}}}\partial_{\tau_2}\mathcal{I}_{\hat{\gamma}}^{(3)}-\frac{1}{\tau_2}\partial_{\overline{z}_{\hat{\gamma}}}\partial_{\tau_2}\mathcal{I}_{\hat{\gamma}}^{(3)}\right)\D \tau_2-\frac{\I \tau_2\chi}{(4\pi)^2}\partial_{\zeta^0}\partial_{\tau_2}\mathcal{I}_{0}^{(2)}\D \tau_2\,,\\
\end{split}
\end{equation}
so using again Lemma \ref{PoissonLemma} one finds that

\begin{equation}\label{typeiibthetaplusinst2}
    \begin{split}
     &2R^2\sum_{\gamma}\Omega(\gamma)\widetilde{Z}_{\gamma}\sum_{n>0}e^{-2\pi \I n\zeta_{\gamma}}|\widetilde{Z}_{\gamma}|K_1(4\pi Rn|\widetilde{Z}_{\gamma}|)\left(\frac{\D \widetilde{Z}_{\gamma}}{\widetilde{Z}_{\gamma}}+\frac{\D \overline{\widetilde{Z}}_{\gamma}}{\overline{\widetilde{Z}}_{\gamma}}+\frac{2}{\tau_2}\D \tau_2\right)\\ 
     &=\sum_{\hat{\gamma}\in \Lambda^+}n_{\hat{\gamma}}\sum_{\substack{m\in \mathbb{Z}-\{0\}\\ n\in \mathbb{Z}}}e^{-S_{\hat{\gamma},m,n}}\Bigg[\frac{\tau_2^3}{8\pi}q_a\frac{m}{|m\tau+ n|^2}\left(\frac{1}{|m\tau+ n|}+2\pi q_bt^b\right)\D z^a\\
    &\quad +\frac{\tau_2}{8\pi}q_a\frac{m\tau_1+ n+|m\tau+ n|}{m|m\tau+ n|^2} \left(\frac{|m\tau+ n|-(m\tau_1+ n)}{|m\tau+ n|}-2\pi q_at^a(m\tau_1+ n+|m\tau+ n|)\right)\D \overline{z}^a\\
    &\quad+\frac{\I  \tau_2^2}{4\pi}\left(\frac{2\pi q_at^a}{|m\tau+ n|^2} +\frac{1}{|m\tau+ n|^3}\right)(m\tau_1+ n+|m\tau+ n|)\frac{mq_bt^b}{|m\tau+ n|}\D \tau_2\\
    &\quad+\frac{\I  \tau_2^2}{8\pi^2}m\left(\frac{4\pi q_at^a}{|m\tau+ n|^4} +\frac{3}{|m\tau+ n|^5}\right)(m\tau_1+ n+|m\tau+ n|)\D \tau_2-\frac{\I  \tau_2^2}{8\pi^2}m\left(\frac{4\pi q_bt^b}{|m\tau+ n|^3}+\frac{3}{|m\tau+ n|^4}\right)\D \tau_2\Bigg]\\
    &\quad-\frac{3\I \tau_2^2\chi}{(4\pi)^2}\sum_{\substack{m\in \mathbb{Z}-\{0\}\\ n\in \mathbb{Z}}}\frac{m(m\tau_1+ n)}{|m\tau+ n|^5}\D \tau_2\,.
    \end{split}
\end{equation}

On the other hand, using the definitions of $\sigma^{\text{inst}}$, $\widetilde{\zeta}_i^{\text{inst}}$ and $\mathfrak{F}^{\text{w.s.}}$ in \eqref{typeiiacoordcor} and \eqref{Fws}, one can compute $\theta_{+}^{P,\text{w.s.}}$. Summing the contributions of $\theta_{+}^{P,\text{w.s.}}$ to \eqref{typeiibthetaplusinst1} and \eqref{typeiibthetaplusinst2}, and multiplying the whole result by $\I$, one then obtains \eqref{typeiibthetaplusquant}. 
\end{proof}

\subsubsection{Proof of Theorem \ref{theorem1}}\label{proofsec}

Now we have all the preliminary results needed to prove Theorem \ref{theorem1}:

\begin{proof}

First, we remark that the functions $(\xi^i,\widetilde{\xi}_i^{\text{cl}},\alpha^{\text{cl}})$ satisfy

\begin{equation}\label{dccl}
    -2\pi \I(\D \alpha^{\text{cl}}+\widetilde{\xi}_i^{\text{cl}}\D \xi^i-\xi^i\D \widetilde{\xi}_i^{\text{cl}})=f^{\text{cl}}\frac{\D t}{t}+t^{-1}\I\theta_+^{P,\text{cl}} -2\I\theta_3^{P,\text{cl}} -t\I\theta_-^{P,\text{cl}}\,.
\end{equation}

The proof of \eqref{dccl} is almost the same as Proposition \ref{typeIIA1loop}, but we replace in the proof $t\to -\I t$ (recall that we did this rescaling at the beginning of Section \ref{QMsec}) and drop the $\log(t)$ term of $\alpha$ in \eqref{Darbouxcoords1loop},  while keeping in mind that $f^{\text{cl}}$ differs from $f$ in Proposition \ref{typeIIA1loop} by $16\pi c_\ell$, compare \eqref{fsplitting2}, where $f^{\text{w.s}}$ and $f^{\text{inst}}$ need to be set to zero to get 
the $f$ in Proposition \ref{typeIIA1loop}. We then obtain immediately the following identity

\begin{equation}
    4\pi \I \left(-\frac{1}{2}\D(\alpha^{\text{cl}}-\xi^i\widetilde{\xi}_i^{\text{cl}}) - \widetilde{\xi}_i^{\text{cl}}\D \xi^i \right)=f^{\text{cl}}\frac{\D t}{t}+t^{-1}\I\theta_+^{P,\text{cl}} -2\I\theta_3^{P,\text{cl}} -t\I\theta_-^{P,\text{cl}}\,.
\end{equation}
In particular, to prove Theorem \ref{theorem1} it is enough to show using the decompositions at the beginning of Section \ref{prellemsec} that

\begin{equation}\label{contactsimplication}
    4\pi\I\left(\D \alpha^{\text{inst}} -\widetilde{\xi}_i^{\text{inst}}\D \xi^i\right)=(-16\pi c_{\ell}+f^{\text{w.s.}}+f^{\text{inst}})\frac{\D t}{t}+t^{-1}\I (\theta_+^{P,\text{w.s.}}+\theta_+^{P,\text{inst}}) -2\I (\theta_3^{P,\text{w.s.}}+\theta_3^{P,\text{inst}}) -t\I \overline{(\theta_+^{P,\text{w.s.}}+\theta_+^{P,\text{inst}})},
\end{equation}
where we have defined
\begin{equation}\label{dcdiff}
    \alpha^{\text{inst}}:=\alpha+\frac{1}{2}(\alpha^{\text{cl}}-\xi^{i}\widetilde{\xi}_i^{\text{cl}}), \quad \widetilde{\xi}_i^{\text{inst}}:=\widetilde{\xi}_i-\widetilde{\xi}_i^{\text{cl}}\,.
\end{equation}

In order to do this, one can use the explicit formulas \eqref{typeiibdc} to compute the left-hand side of \eqref{contactsimplication}. In terms of the coordinates $(\tau_2,b^a,t^a,\zeta^i,\widetilde{\zeta}_i,\sigma)$, the left-hand side has  only $\D t$, $\D t^a$, $\D b^a$, $\D \zeta^i$ and $\D \tau_2$ components, since $\alpha^{\text{inst}}$ and $\xi^i$ do not depend on $\widetilde{\zeta}_i$ and $\sigma$. Recall that $R=\tau_2/2$. For the $\mathrm{d}t$ component, one obtains
\begin{equation}\label{dtcompcomputation}
\begin{split}
       4\pi \I&\left(\D \alpha^{\text{inst}}-\widetilde{\xi}_i^{\text{inst}}\D \xi^i\right)\Big|_{\D t}\\
       =&\frac{\tau_2^2}{2(2\pi)^2}\sum_{\hat{\gamma}\in \Lambda^{+}\cup\{0\}}n_{\hat{\gamma}}\sum_{(m,n)\in \mathbb{Z}^2-\{0\}}(m\tau_1+n)(t^{-2}+1)\frac{1+t_{+}^{m, n}t}{t-t_{+}^{m, n}}\frac{e^{-S_{\hat{\gamma},m,n}}}{|m\tau+ n|^4}\\
       &+\frac{\tau_2^2}{2(2\pi)^2}\sum_{\hat{\gamma}\in \Lambda^{+}\cup\{0\}}n_{\hat{\gamma}}\sum_{(m,n)\in \mathbb{Z}^2-\{0\}}\left((m\tau_1+n)(t^{-1}-t) -2m\tau_2\right)\frac{1+(t_{+}^{m,n})^2}{(t-t_{+}^{m, n})^2}\frac{e^{-S_{\hat{\gamma},m,n}}}{|m\tau+ n|^4}\\
       &-\frac{\tau_2^2}{2(2\pi)^2}(t^{-2}+1)\sum_{\hat{\gamma}\in \Lambda^+\cup\{0\}}n_{\hat{\gamma}}\sum_{(m,n)\in \mathbb{Z}^2-\{0\}}\left(\frac{1}{m\xi^0+n}+\frac{m\tau_1+n}{|m\tau+ n|^2}\right)\frac{1+t_{+}^{m, n}t}{t-t_{+}^{m, n}}\frac{e^{-S_{\hat{\gamma},m,n}}}{|m\tau + n|^2}\\
      &+\frac{\tau_2^2}{2\pi}t^{-1}\sum_{\hat{\gamma}\in \Lambda^+\cup\{0\}}n_{\hat{\gamma}}q_at^a\sum_{(m,n)\in \mathbb{Z}^2-\{0\}}\frac{e^{-S_{\hat{\gamma},m,n}}}{|m\tau+ n|^2}\\
      =&\frac{\tau_2^2}{2(2\pi)^2}\sum_{\Lambda^{+}\cup\{0\}}n_{\hat{\gamma}}\sum_{(m,n)\in \mathbb{Z}^2-\{0\}}\left[\left(\frac{(m\tau_1+n)(t^{-1}-t)-2m\tau_2}{|m\tau+n|^2}\right)\frac{1+(t_{+}^{m,n})^2}{(t-t_{+}^{m, n})^2}-\frac{(t^{-2}+1)}{m\xi^0+n}\frac{1+t_{+}^{m, n}t}{t-t_{+}^{m, n}}\right]\frac{e^{-S_{\hat{\gamma},m,n}}}{|m\tau+ n|^2}\\
      &+\frac{\tau_2^2}{2\pi}t^{-1}\sum_{\hat{\gamma}\in \Lambda^+\cup\{0\}}n_{\hat{\gamma}}q_at^a\sum_{(m,n)\in \mathbb{Z}^2-\{0\}}\frac{e^{-S_{\hat{\gamma},m,n}}}{|m\tau+ n|^2}\,,
\end{split}
\end{equation}
where in the first equality we are using in the case $m=0$ equation \eqref{m=0def} and
\begin{equation}\label{m=0def2}
    \frac{1+(t_{+}^{0, n})^2}{(t-t_{+}^{0, n})^2}:=\begin{cases}
        -\frac{\D}{\D t}\left(\frac{1+t_{+}^{0, n}t}{t-t_{+}^{0, n}}\right)=1/t^2, \quad n<0\\
        -\frac{\D}{\D t}\left(\frac{1+t_{+}^{0, n}t}{t-t_{+}^{0, n}}\right)=1, \quad n>0
    \end{cases}, 
\end{equation}  while in last equation we left the last sum the same and grouped the rest of the terms (notice that two sums cancel each other). For the term in square brackets in \eqref{dtcompcomputation}, we use for $m\neq 0$ that $t\cdot (m\xi^0+n)=-mR(t-t_{+}^{m,n})(t-t_{-}^{m,n})$ and $t_+^{m,n}t_{-}^{m,n}=-1$, together with the fact that $t_+^{m,n}+t_{-}^{m,n}=2(m\tau_1+n)/m\tau_2$ and $t_+^{m,n}-t_{-}^{m,n}=2|m\tau+n|/m\tau_2$, see \eqref{troots}. We then obtain the following for the case $m\neq 0$:

\begin{align*}
     &\left(\frac{(m\tau_1+n)(t^{-1}-t)-2m\tau_2}{|m\tau+n|^2}\right)\frac{1+(t_{+}^{m,n})^2}{(t-t_{+}^{m, n})^2}-\frac{(t^{-2}+1)}{m\xi^0+n}\frac{1+t_{+}^{m, n}t}{t-t_{+}^{m, n}}\\
     &=\frac{2t_{+}^{m,n}((t_{+}^{m,n}+t_{-}^{m,n})(t^{-1}-t)-4)}{m\tau_2(t_{+}^{m,n}-t_{-}^{m,n})(t-t_{+}^{m,n})^2}+\frac{2t_{+}^{m,n}(t^{-1}+t)}{m\tau_2(t-t_{+}^{m,n})^2}\\
     &=\frac{4}{m\tau_2 t(t_{+}^{m,n}-t_{-}^{m,n})}=\frac{2}{t|m\tau+n|}\,.\numberthis
 \end{align*}
In the case $m=0$ we use \eqref{m=0def} and \eqref{m=0def2}, so that
\begin{equation}
     \left[\left(\frac{(m\tau_1+n)(t^{-1}-t)-2m\tau_2}{|m\tau+n|^2}\right)\frac{1+(t_{+}^{m,n})^2}{(t-t_{+}^{m, n})^2}-\frac{(t^{-2}+1)}{m\xi^0+n}\frac{1+t_{+}^{m, n}t}{t-t_{+}^{m, n}}\right]\Big|_{m=0}=\frac{2}{t|n|}\,.
\end{equation}
Joining everything together we find 

\begin{equation}
    4\pi \I\left(\D \alpha^{\text{inst}}-\widetilde{\xi}_i^{\text{inst}}\D \xi^i\right)\Big|_{\D t}=t^{-1}\frac{\tau_2^2}{(2\pi)^2}\sum_{\hat{\gamma}\in \Lambda^+\cup\{0\}}n_{\hat{\gamma}}\sum_{(m,n)\in \mathbb{Z}^2-\{0\}}\left(\frac{1}{|m\tau+ n|}+2\pi q_at^a\right)\frac{e^{-S_{\hat{\gamma},m,n}}}{|m\tau + n|^2}\,.
\end{equation}

In particular, we find using Proposition \ref{typeiibfquantumprop} that 

\begin{equation}
       4\pi \I \left(\D \alpha^{\text{inst}}-\widetilde{\xi}_i^{\text{inst}}\D \xi^i\right)\Big|_{\D t}=t^{-1}\left(f^{\text{w.s.}}+f^{\text{inst}}-\frac{\chi}{12}\right)\;,
\end{equation}
matching the required term on the right-hand side of \eqref{contactsimplication} provided that $c_{\ell}=\frac{\chi}{192\pi}$. On the other hand, for the other components $\D t^a$, $\D b^a$, $\D \zeta^i$ and $\D \tau_2$ of the left-hand side of \eqref{contactsimplication}, one needs to show that they decompose into three summands with factors $t^{-1}$, $t^0$ and $t$, which should match the corresponding component of $\I (\theta_+^{P,\text{w.s.}}+\theta_+^{P,\text{inst}})$, $-2\I (\theta_3^{P,\text{w.s.}}+\theta_3^{P,\text{inst}})$, and $-\I \overline{(\theta_+^{P,\text{w.s.}}+\theta_+^{P,\text{inst}})}$ in \eqref{contactsimplication}, respectively. In particular, the summand with the $t$ factor should correspond to the conjugate of the $t^{-1}$ summand, for each component. For example the computation for the $\D t^a$ component gives

\begin{align*}
    &4\pi\I\left(\D \alpha^{\text{inst}}-\widetilde{\xi}_i^{\text{inst}}\D \xi^i\right)\Big|_{\D t^a}\\
    &=\frac{\tau_2^2}{4\pi}\sum_{\hat{\gamma}\in \Lambda^+\cup\{0\}}n_{\hat{\gamma}}q_a\sum_{(m,n)\neq (0,0)} \left(\frac{m\tau_1+ n}{|m\tau+ n|}(t^{-1}-t)-\frac{2m\tau_2}{|m\tau+ n|}+(t^{-1}+t)\right)\frac{1+t_+^{m, n}t}{t-t_+^{m, n}}\frac{e^{-S_{\hat{\gamma},m,n}}}{|m\tau+ n|^2}\\
    &=\frac{\tau_2^2}{4\pi}\sum_{\hat{\gamma}\in \Lambda^{+}\cup\{0\}}n_{\hat{\gamma}}q_a\sum_{(m,n)\neq (0,0)} \left(-t^{-1}\frac{m\tau_2}{|m\tau+ n|}-2\frac{m\tau_1+ n}{|m\tau+ n|} +t\frac{m\tau_2}{|m\tau+ n|}\right)\frac{e^{-S_{\hat{\gamma},m,n}}}{|m\tau+ n|^2}\\ \numberthis
\end{align*}
where for the last equality we used the identities for $t_{\pm}^{m,n}$ used in the computation of the $\D t$ component. Comparing with Proposition \ref{typeiibtheta3quantumprop} and \ref{typeiibthetapquantumprop} one readily sees that the $t^{-1}$ and $t^{0}$ summands match the corresponding $\D t^a$ component. Furthermore, the summand with the $t$-factor is seen to correspond to the conjugate of the summand with the $t^{-1}$ factor, since $\overline{S_{\hat{\gamma},m,n}}=S_{\hat{\gamma},-m,-n}$, so the summand with the $t$-factor also gives the required contribution. \\

By a similar (albeit tedious) computation one can check using Proposition \ref{typeiibtheta3quantumprop} and \ref{typeiibthetapquantumprop} that the remaining components on the left-hand side and right-hand side of  \eqref{contactsimplication} match, showing that \eqref{typeiibdc} indeed define Darboux coordinates for the contact structure associated to instanton corrected q-map spaces. 
\end{proof}

\subsection{S-duality}\label{Sdsec}

Recall that we have the S-duality action on $\overline{\mathcal{N}}_{\text{IIB}}^{\text{cl}}$, defined by \eqref{sl2can}. We start by defining a lift of the S-duality action from $\overline{\mathcal{N}}_{\text{IIB}}^{\text{cl}}$ to $\overline{\mathcal{N}}_{\text{IIB}}^{\text{cl}}\times \mathbb{C}P^1$, following \cite{QMS,Sduality}.

\begin{definition}
Given an element 

\begin{equation}
    A=\begin{pmatrix}
a & b \\
c & d \\
\end{pmatrix}\in \mathrm{SL}(2,\mathbb{Z})
\end{equation} we lift the action of $A$ from $\overline{\mathcal{N}}_{\text{IIB}}^{\text{cl}}$ to  $\overline{\mathcal{N}}_{\text{IIB}}^{\text{cl}}\times \mathbb{C}P^1$ by defining the action on the fiber coordinate $t\in \mathbb{C}P^1$ over $(\tau_1+\I\tau_2,b^a+\I t^a, c^a,c_a,c_0,\psi)\in \overline{\mathcal{N}}_{\text{IIB}}^{\text{cl}}$ by:
\begin{equation}\label{sdualitylift}
     \begin{pmatrix}
a & b\\
c & d\\
\end{pmatrix}\cdot t:=\begin{dcases}
    t \quad \text{if $c=0$ and $a>0$}\\
    -1/t \quad \text{if $c=0$ and $a<0$}\\
    \frac{1+t^{c,d}_{+}t}{t^{c,d}_{+}-t} \quad \text{if $c\neq 0$}
\end{dcases} \,.
\end{equation}
\end{definition}
\begin{remark}\label{liftremark} Since $\mathrm{SL}(2,\mathbb{Z})$ is generated by 
\begin{equation}\label{sl2gen}
    T=\begin{pmatrix}
    1 & 1\\
    0 & 1\\
    \end{pmatrix}, \quad S=\begin{pmatrix}
    0 & -1\\
    1 & 0\\
    \end{pmatrix}
\end{equation}
it is not hard to check that \eqref{sdualitylift} defines a lift of the S-duality action. We note that the transformation on $t$ depends on the base point in the case where $c\neq 0$, since $t_{+}^{c,d}$ depends on $\tau$. We further remark that by using the fact that $t_{+}^{c,d}t_{-}^{c,d}=-1$ the $t$-variable transformation when $c\neq 0$ can be rewritten as
\begin{equation}
     \begin{pmatrix}
a & b\\
c & d\\
\end{pmatrix}\cdot t=\frac{1+t^{c,d}_{+}t}{t^{c,d}_{+}-t}=-\frac{t_{-}^{c,d}-t}{1+t_{-}^{c,d}t}\,.
\end{equation}
\end{remark}
Now let $(\widetilde{N},g_{\overline{N}})$ be an instanton corrected q-map space constructed in Section \ref{settingsec}.  By using the mirror map \eqref{MM}, we can identify $\widetilde{N}\subset \overline{\mathcal{N}}_{\text{IIA}}$ with an open subset of $\mathcal{M}^{-1}(\widetilde{N})\subset \overline{\mathcal{N}}_{\text{IIB}}\subset \overline{\mathcal{N}}_{\text{IIB}}^{\text{cl}}$. It then follows that its twistor space satisfies $\mathcal{Z}\cong \widetilde{N}\times \mathbb{C}P^1\subset \overline{\mathcal{N}}_{\text{IIB}}^{\text{cl}}\times \mathbb{C}P^1$. If $A\in \mathrm{SL}(2,\mathbb{Z})$ leaves $\widetilde{N}\subset \overline{\mathcal{N}}_{\text{IIB}}^{\text{cl}}$ invariant, then we get an induced action of $A$ on $\mathcal{Z}$. Assuming that $A$ acts on $\mathcal{Z}$, we now show the key transformation property of the Darboux coordinates, following again \cite{QMS,Sduality}:
\begin{proposition}\label{twistorsdualityprop} Let $(\widetilde{N},g_{\overline{N}})$ be an instanton corrected q-map space with $1$-loop parameter $c_{\ell}=\frac{\chi}{192\pi}$ such that $\widetilde{N}$ is invariant under 
\begin{equation}
    A=\begin{pmatrix}
a & b \\
c & d \\
\end{pmatrix}\in \mathrm{SL}(2,\mathbb{Z})\,.
\end{equation}
Furthermore consider the Darboux coordinates $(\xi^i,\widetilde{\xi}_i,\widetilde{\alpha})$ of $\mathcal{Z}$, where  $\widetilde{\alpha}:=\alpha -\xi^i\widetilde{\xi}_i$ and $(\xi^i,\widetilde{\xi}_i,\alpha)$ are the Darboux coordinates given by \eqref{typeiibdc}. Then $(\xi^i,\widetilde{\xi}_i,\widetilde{\alpha})$  satisfy the following transformation under the lift of the action of $A$ given in \eqref{sdualitylift}:
\begin{equation}\label{twistorSduality}
    \begin{split}
        \xi^0&\to \frac{a\xi^0+b}{c\xi^0+d}, \quad \xi^a \to \frac{\xi^a}{c\xi^0+d}, \quad \widetilde{\xi}_a \to \widetilde{\xi}_a +\frac{c}{2(c\xi^0+d)}\kappa_{abc}\xi^b\xi^c\\
        \begin{pmatrix}
        \widetilde{\xi}_0 \\
        \widetilde{\alpha}
        \end{pmatrix}&\to \begin{pmatrix}
        d & -c\\
        -b & a \\
        \end{pmatrix}\begin{pmatrix}
        \widetilde{\xi}_0 \\
        \widetilde{\alpha}
        \end{pmatrix} +\frac{1}{6}\kappa_{abc}\xi^a\xi^b\xi^c\begin{pmatrix}
        c^2/(c\xi^0+d)\\
        -[c^2(a\xi^0+b)+2c]/(c\xi^0+d)^2\\
        \end{pmatrix}.
    \end{split}
\end{equation}

\end{proposition}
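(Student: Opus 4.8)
## Proof proposal for Proposition \ref{twistorsdualityprop}

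The plan is to verify the transformation laws \eqref{twistorSduality} directly on the explicit formulas \eqref{typeiibdc}, using the fact that $\mathrm{SL}(2,\mathbb{Z})$ is generated by $T$ and $S$ as in \eqref{sl2gen}. Since \eqref{twistorSduality} is manifestly compatible with composition (the right-hand side is the natural cocycle one expects for a projective action twisted by the cubic term, and one checks the $2$-cocycle identities for the inhomogeneous pieces separately), it suffices to prove the statement for $A=T$ and $A=S$. For $A=T$ the $t$-fiber acts trivially by \eqref{sdualitylift}, and on the base \eqref{sl2can} gives $\tau_1\to\tau_1+1$, $c^a\to c^a+b^a$, $c_0\to c_0$, $\psi\to\psi-b^a c_a+\tfrac{1}{6}k_{abc}b^ab^bb^c$ (reading off from the matrix actions), with $t^a$, $b^a$, $c_a$ fixed; one then checks that $\xi^0=\tau_1+R(t^{-1}-t)+\dots$ shifts by $1$, that $\xi^a$ is invariant, that $S_{\hat\gamma,m,n}$ is invariant under the simultaneous shift $(c^a,b^a)\to(c^a+b^a,b^a)$ together with $n\to n-m$ (a relabelling of the summation index), and that the remaining cubic corrections in $\widetilde\zeta_a^{\text{cl}}$, $\widetilde\zeta_0^{\text{cl}}$, $\sigma^{\text{cl}}$ reproduce exactly the shifts in \eqref{twistorSduality}. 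This is the routine ``$T$ acts by the obvious Heisenberg-type shift'' computation.

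The substantive case is $A=S$, where $\tau\to-1/\tau$, $t^a\to|\tau|t^a$, $b^a\to c^a$, $c^a\to -b^a$, $c_0\to\psi$, $\psi\to -c_0$, $c_a\to c_a$, and the $t$-fiber transforms by $t\to t^{0,1}_+\!\!$-type formula, i.e.\ $t\mapsto (1+t^{1,0}_+ t)/(t^{1,0}_+ - t)$ with $t^{1,0}_+=(\tau_1+|\tau|)/\tau_2$ from \eqref{troots}. First I would record the elementary identities that make everything work: under $S$ one has $|m\tau+n|\to |m\tau+n|/|\tau|$ with the pair $(m,n)$ going to $(-n,m)$, so $S_{\hat\gamma,m,n}$ is invariant after relabelling; $R=\tau_2/2\to R/|\tau|^2$; and one verifies the crucial rational-function identity that under $t\mapsto(1+t^{1,0}_+t)/(t^{1,0}_+-t)$ the combination $(1+t^{m,n}_+t)/(t-t^{m,n}_+)$ transforms into a multiple of $(1+t^{m',n'}_+t)/(t-t^{m',n'}_+)$ for the relabelled indices, plus controlled corrections — this is the heart of why the Penrose-transform kernels in \eqref{typeiibdc} are $S$-covariant. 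With these in hand, one checks term by term: the classical pieces $\xi^{i,\text{cl}}$, $\widetilde\xi_i^{\text{cl}}$, $\alpha^{\text{cl}}$ transform as in the tree-level q-map case (this is exactly \cite[Theorem 3.17 / §3]{CTSduality} applied to $\mathfrak{F}^{\text{cl}}$, reproducing the cubic $\kappa_{abc}$ terms since $k_{abc}$ appears linearly in $\mathfrak{F}^{\text{cl}}$), and the instanton corrections $\widetilde\xi_i^{\text{inst}}$, $\alpha^{\text{inst}}$ transform among themselves by the linear $\mathrm{SL}(2,\mathbb{Z})$ rotation on $(\widetilde\xi_0,\widetilde\alpha)$, because the $m=0$ and $m\neq0$ terms get exchanged and repackaged by the index relabelling, and because the special value $c_\ell=\tfrac{\chi}{192\pi}$ is exactly what makes $f$ (equivalently the $\log t$/constant ambiguity in $\alpha$) transform homogeneously — this was already isolated in the remark after \eqref{Poissonresumedf}.

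The main obstacle will be bookkeeping in the $S$-transformation of $\alpha$ (equivalently $\widetilde\alpha=\alpha-\xi^i\widetilde\xi_i$): the shift of $\alpha$ by $-\tfrac12(\alpha^{\text{cl}}-\xi^i\widetilde\xi_i^{\text{cl}})$ in \eqref{typeiibdc}, the subtraction of $\xi^i\widetilde\xi_i$ in the definition of $\widetilde\alpha$, and the cubic terms hidden inside $\widetilde\zeta_i^{\text{cl}}$ and $\sigma^{\text{cl}}$ must all conspire to produce precisely the inhomogeneous vector $\tfrac16\kappa_{abc}\xi^a\xi^b\xi^c\big(c^2/(c\xi^0+d),\,-[c^2(a\xi^0+b)+2c]/(c\xi^0+d)^2\big)$; getting the $+2c$ term (which comes from the non-cocycle failure of the naive cubic shift and is the reason one must take $\widetilde\alpha$ rather than $\alpha$) right is the delicate point. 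I would handle this by first doing the purely classical computation carefully — which is essentially contained in \cite{CTSduality} and fixes the cubic cocycle — and then arguing that the instanton sums contribute nothing new to the inhomogeneous part, since every instanton term carries a factor $e^{-S_{\hat\gamma,m,n}}$ and is a function only of $(\tau,z^a,c^a)$, on which $S$ acts by the clean relabelling above, so the instanton corrections can only transform by the homogeneous linear $\mathrm{SL}(2,\mathbb{Z})$ action. Finally I would remark that, granting the $T$- and $S$-cases, the general formula \eqref{twistorSduality} follows by induction on word length in $T,S$, checking at each step that the composite of two transformations of the form \eqref{twistorSduality} is again of that form — a finite check of the cocycle identities for the linear part (a genuine $\mathrm{SL}(2,\mathbb{Z})$-representation) and for the cubic inhomogeneous part.
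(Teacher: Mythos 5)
Your proposal is correct in outline and shares the paper's basic strategy of splitting each coordinate into a classical piece (whose transformation law is taken as known, from \cite{APSV} in the paper, from \cite{CTSduality} in your version) plus instanton corrections that must be shown to transform by the purely linear rule $\widetilde{\xi}_a^{\text{inst}}\to\widetilde{\xi}_a^{\text{inst}}$, $(\widetilde{\xi}_0^{\text{inst}},\widetilde{\alpha}^{\text{inst}})\to(d\widetilde{\xi}_0^{\text{inst}}-c\widetilde{\alpha}^{\text{inst}},-b\widetilde{\xi}_0^{\text{inst}}+a\widetilde{\alpha}^{\text{inst}})$, so that the cubic inhomogeneous terms come entirely from the classical part. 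Where you diverge is in the reduction: you propose to verify the statement only for the generators $T$ and $S$ and then propagate it by a cocycle/composition argument, whereas the paper works with an arbitrary $A=\begin{pmatrix}a&b\\c&d\end{pmatrix}$ throughout. The paper can afford to do this because of one trick you should adopt: for the ``crucial rational-function identity'' governing the kernel $\frac{1+t_+^{m,n}t}{t-t_+^{m,n}}$ — which you correctly identify as the heart of the matter but leave as an assertion — the paper embeds a coprime pair $(m,n)$ as the bottom row of a matrix $\begin{pmatrix}p&q\\m&n\end{pmatrix}\in\mathrm{SL}(2,\mathbb{Z})$ and uses the fact that the lift \eqref{sdualitylift} is a group action to conclude $\frac{1+t_+^{m,n}t}{t-t_+^{m,n}}\to\frac{1+t_+^{m',n'}t}{t-t_+^{m',n'}}$ with $(m',n')=A^{\intercal}(m,n)$, for \emph{any} $A$ at once (with the $m=0$ terms handled via the conventions \eqref{m=0def}). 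This makes the general case no harder than the $S$ case and removes the need for your final induction on word length, which — while it would go through, since \eqref{twistorSduality} is indeed closed under composition — is a nontrivial extra verification of the cubic cocycle that the paper never has to perform.

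Two smaller points. First, your reading of the $T$-action on the base is off: from \eqref{sl2can} with $a=d=b=1$, $c=0$ one gets $\psi\to\psi-c_0$ (and $c_0\to c_0$), not the shift by $-b^ac_a+\frac16 k_{abc}b^ab^bb^c$ you wrote; that expression belongs to the $\mathbb{R}^n$-action \eqref{Rnaction}, not to $T$. Second, your heuristic that the instanton sums ``can only transform by the homogeneous linear action'' because they carry $e^{-S_{\hat\gamma,m,n}}$ is not by itself a proof: the mixing $\widetilde{\xi}_0^{\text{inst}}\to d\widetilde{\xi}_0^{\text{inst}}-c\widetilde{\alpha}^{\text{inst}}$ comes from the precise identities $m\xi^0+n\to(m'\xi^0+n')/(c\xi^0+d)$, $\frac{m\tau_1+n}{|m\tau+n|^2}\to\frac{c(m'|\tau|^2+n'\tau_1)+d(m'\tau_1+n')}{|m'\tau+n'|^2}$ and the companion identity for $\frac{1-t_+^{m,n}t}{t-t_+^{m,n}}t^a$ (equations \eqref{transrules3} and \eqref{transrules5} in the paper), and these must be established and then matched term by term against the explicit formula for $\widetilde{\alpha}^{\text{inst}}$. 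That term-by-term matching is the real content of the proof and should not be waved through.
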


\begin{proof}

To show the required transformation rule we follow the same argument of \cite{QMS}, which we include here with more detail. It is straightforward to check that the classical coordinates $(\xi^i,\widetilde{\xi}_i^{\text{cl}},\widetilde{\alpha}^{\text{cl}})$, where $\widetilde{\alpha}^{\text{cl}}:=-\frac{1}{2}(\alpha^{\text{cl}}-\xi^i\widetilde{\xi}_i^{\text{cl}})-\xi^{i}\widetilde{\xi}_i^{\text{cl}}=-\frac{1}{2}(\alpha^{\text{cl}}+\xi^i\widetilde{\xi}_i^{\text{cl}})$ satisfy \eqref{twistorSduality} \cite{APSV}. Hence, it is enough to show that $\widetilde{\xi}_i^{\text{inst}}=\widetilde{\xi}_i-\widetilde{\xi}_i^{\text{cl}}$ and $\widetilde{\alpha}^{\text{inst}}=\widetilde{\alpha}-\widetilde{\alpha}^{\text{cl}}$ transform under under the action of $A\in \mathrm{SL}(2,\mathbb{Z})$ by 

\begin{equation}\label{insttransrules}
    \widetilde{\xi}_a^{\text{inst}}\to \widetilde{\xi}_a^{\text{inst}}, \quad  \begin{pmatrix}
        \widetilde{\xi}_0^{\text{inst}} \\
        \widetilde{\alpha}^{\text{inst}}
        \end{pmatrix}\to \begin{pmatrix}
        d & -c\\
        -b & a \\
        \end{pmatrix}\begin{pmatrix}
        \widetilde{\xi}_0^{\text{inst}} \\
        \widetilde{\alpha}^{\text{inst}}
        \end{pmatrix}.
\end{equation}
Using that \eqref{dcdiff} and \eqref{typeiibdc}, we find that  $\widetilde{\alpha}^{\text{inst}}=\alpha^{\text{inst}}-\xi^i\widetilde{\xi}_i^{\text{inst}}$ and

\begin{equation}
    \begin{split}
        \widetilde{\alpha}^{\text{inst}}&=-\frac{\I\tau_2}{2(2\pi)^3}\sum_{\hat{\gamma}\in \Lambda^+\cup\{0\}}n_{\hat{\gamma}}^{(0)}\sum_{(m,n)\in \mathbb{Z}^2-\{0\}}\left(\frac{\xi^0}{m\xi^0+ n}+\frac{m|\tau|^2+ n\tau_1}{|m\tau+ n|^2}\right)\frac{1+t_{+}^{m, n}t}{t-t_{+}^{m, n}}\frac{e^{-S_{\hat{\gamma},m,n}}}{|m\tau + n|^2}\\
        &+\frac{\tau_2}{2(2\pi)^2}\sum_{\hat{\gamma}\in \Lambda_{+}\cup \{0\}}n_{\hat{\gamma}}^{(0)} \sum_{(m,n)\in \mathbb{Z}^2-\{0\}}\left(q_ac^a \frac{1+t_{+}^{m, n}t}{t-t_{+}^{m, n}}+\I q_at^a\left(\tau_1\frac{1-t_{+}^{m, n}t}{t-t_{+}^{m, n}}-\tau_2\frac{t+t_{+}^{m, n}}{t-t_{+}^{m, n}}\right)\right)\frac{e^{-S_{\hat{\gamma},m,n}}}{|m\tau+ n|^2}\,.
    \end{split}
\end{equation}
where, in addition to \eqref{m=0def}, we have defined
\begin{equation}
    \frac{t+t_{+}^{0,n}}{t-t_{+}^{0,n}}:=\begin{dcases}
        -1 \quad n>0\\
        1 \quad n<0\\
    \end{dcases}\,.
\end{equation}

We start by showing that $\widetilde{\xi}_a^{\text{inst}}$ is invariant under the S-duality $\mathrm{SL}(2,\mathbb{Z})$ action, where

\begin{equation}\label{xiainst}
    \widetilde{\xi}_a^{\text{inst}}=\frac{\tau_2}{8\pi^2}\sum_{\hat{\gamma}\in \Lambda^{+}\cup\{0\}}n_{\hat{\gamma}}q_a\sum_{(m,n)\in \mathbb{Z}^2-\{0\}}\frac{e^{-S_{\hat{\gamma},m,n}}}{|m\tau+ n|^2}\frac{1+t_{+}^{m, n}t}{t-t_{+}^{m,n}}\,.
\end{equation}
Denoting 
\begin{equation}
    \begin{pmatrix}
    m'\\
    n'\\
    \end{pmatrix}=\begin{pmatrix}
    a &  b\\
    c & d\\ 
    \end{pmatrix}^\intercal\begin{pmatrix}
    m\\
    n\\
    \end{pmatrix}, 
\end{equation}
where the $\intercal$ subscript denotes the transpose matrix, we find the following transformation rules under $A\in \mathrm{SL}(2,\mathbb{Z})$:

\begin{equation}\label{transrules1}
    \tau_2 \to \frac{\tau_2}{|c\tau+d|^2}, \quad |m\tau + n|\to \frac{|m'\tau+ n'|}{|c\tau +d|}, \quad S_{\hat{\gamma},m,n}\to S_{\hat{\gamma},m',n'}\;.
\end{equation}
We now discuss the transformation properties of the factor
\begin{equation}
    \frac{1+t_{+}^{m, n}t}{t-t_{+}^{m, n}}\,.
\end{equation}
We first consider the case $m\neq 0$. Because $t_{\pm}^{m,n}=t_{\pm}^{km,kn}$ for any $k>0$, we can assume that $(m,n)$ are coprime, so that there exist $p,q\in \mathbb{Z}$ so that

\begin{equation}
    \begin{pmatrix}
    p & q\\
    m & n\\
    \end{pmatrix}\in \mathrm{SL}(2,\mathbb{Z})\,.
\end{equation}
Since \eqref{sdualitylift} defines an action on $t$, we find that (making the dependence of $t_{+}^{m,n}$ on $\tau$ explicit)
\begin{equation}
    \frac{1+t_{+}^{m', n'}(\tau)t}{t_{+}^{m', n'}(\tau)-t}=\left(\begin{pmatrix}
    p & q\\
    m & n \\
    \end{pmatrix}\cdot \begin{pmatrix}
    a & b\\
    c & d\\
    \end{pmatrix}\right)\cdot t=\begin{pmatrix}
    p & q\\
    m & n \\
    \end{pmatrix}\cdot \left(\begin{pmatrix}
    a & b\\
    c & d\\
    \end{pmatrix}\cdot t\right)=\frac{1+t_{+}^{m,n}\left(\frac{a\tau+b}{c\tau+d}\right)\left(\begin{pmatrix}
    a & b\\
    c & d\\
    \end{pmatrix}\cdot t\right)}{t_{+}^{m,n}\left(\frac{a\tau+b}{c\tau+d}\right)-\begin{pmatrix}
    a & b\\
    c & d\\
    \end{pmatrix}\cdot t}
\end{equation}
so we have the transformation rule

\begin{equation}\label{transrules2}
    \frac{1+t_{+}^{m, n}t}{t-t_{+}^{m, n}}\to \frac{1+t_{+}^{m', n'}t}{t-t_{+}^{m', n'}}\,.
\end{equation}
The same transformation rule \eqref{transrules2} follows when $m=0$ by using the property that $t_{\pm}^{c,d}=t_{\pm}^{kc,kd}$ when $k>0$ and $t_{\pm}^{c,d}=t_{\mp}^{kc,kd}$ when $k<0$, together with \eqref{m=0def}. Hence, from \eqref{transrules1} and \eqref{transrules2} it follows that \eqref{xiainst} is invariant under the $\mathrm{SL}(2,\mathbb{Z})$-action.\\

We now verify the transformation rule of $\widetilde{\xi}_0^{\text{inst}}$ in \eqref{insttransrules}. For this we use that under the $\mathrm{SL}(2,\mathbb{Z})$-action:

\begin{equation}\label{transrules3}
    m\xi^0+n \to \frac{m'\xi^0+ n'}{c\xi^0+d}, \quad \frac{m\tau_1+ n}{|m\tau + n|^2}\to \frac{c(m'|\tau|^2 +n'\tau_1)+d(m'\tau_1+n')}{|m'\tau+n'|^2}\,.
\end{equation}
After a rather lengthy but straightforward computation, one can also check that 
\begin{equation}\label{transrules5}
     \frac{1-t_{+}^{m,n}t}{t-t_{+}^{m, n}}t^a\to (c\tau_1+d)\frac{1-t_{+}^{m', n'}t}{t-t_{+}^{m', n'}}t^a-c\frac{t+t_{+}^{m', n'}}{t-t_{+}^{m', n'}}\tau_2t^a\;.
\end{equation}

We therefore find using \eqref{transrules1}, \eqref{transrules2}, \eqref{transrules3} and \eqref{transrules5}, that the terms of $\widetilde{\xi}_0^{\text{inst}}$ transform as follows

\begin{equation}
\begin{split}
    &\frac{\I \tau_2}{16\pi^3}\sum_{\hat{\gamma}\in \Lambda^+\cup\{0\}}n_{\hat{\gamma}}^{(0)}\sum_{(m,n)\in \mathbb{Z}^2-\{0\}}\left(\frac{1}{m\xi^0+n}+\frac{m\tau_1+ n}{|m\tau+ n|^2}\right)\frac{1+t_{+}^{m, n}t}{t-t_{+}^{m, n}}\frac{e^{-S_{\hat{\gamma},m,n}}}{|m\tau + n|^2}\\
    &\to d\left(\frac{\I \tau_2}{16\pi^3}\sum_{\hat{\gamma}\in \Lambda^+\cup\{0\}}n_{\hat{\gamma}}^{(0)}\sum_{(m,n)\in \mathbb{Z}^2-\{0\}}\left(\frac{1}{m\xi^0+n}+\frac{m\tau_1+ n}{|m\tau+ n|^2}\right)\frac{1+t_{+}^{m, n}t}{t-t_{+}^{m, n}}\frac{e^{-S_{\hat{\gamma},m,n}}}{|m\tau + n|^2}\right)\\
    &\quad\quad\quad\quad+c\left(\frac{\I\tau_2}{16\pi^3}\sum_{\hat{\gamma}\in \Lambda^+\cup\{0\}}n_{\hat{\gamma}}^{(0)}\sum_{(m,n)\in \mathbb{Z}^2-\{0\}}\left(\frac{\xi^0}{m\xi^0+ n}+\frac{m|\tau|^2+ n\tau_1}{|m\tau+ n|^2}\right)\frac{1+t_{+}^{m, n}t}{t-t_{+}^{m, n}}\frac{e^{-S_{\hat{\gamma},m,n}}}{|m\tau + n|^2}\right)\;,\\
\end{split}
\end{equation}
while

\begin{equation}
    \begin{split}
        &-\frac{\tau_2}{8\pi^2}\sum_{\hat{\gamma}\in \Lambda_{+}\cup \{0\}}n_{\hat{\gamma}} \sum_{(m,n)\in \mathbb{Z}^2-\{0\}}\left(q_ab^a \frac{1+t_{+}^{m, n}t}{t-t_{+}^{m, n}}+\I q_at^a\frac{1-t_{+}^{m, n}t}{t-t_{+}^{m, n}}\right)\frac{e^{-S_{\hat{\gamma},m,n}}}{|m\tau+ n|^2}\\
        &\to d\left(-\frac{\tau_2}{8\pi^2}\sum_{\hat{\gamma}\in \Lambda_{+}\cup \{0\}}n_{\hat{\gamma}} \sum_{(m,n)\in \mathbb{Z}^2-\{0\}}\left(q_ab^a \frac{1+t_{+}^{m, n}t}{t-t_{+}^{m, n}}+\I q_at^a\frac{1-t_{+}^{m, n}t}{t-t_{+}^{m, n}}\right)\frac{e^{-S_{\hat{\gamma},m,n}}}{|m\tau+ n|^2}\right)\\
        &\quad\quad+c\left(-\frac{\tau_2}{8\pi^2}\sum_{\hat{\gamma}\in \Lambda_{+}\cup \{0\}}n_{\hat{\gamma}}^{(0)} \sum_{(m,n)\in \mathbb{Z}^2-\{0\}}\left(q_ac^a \frac{1+t_{+}^{m, n}t}{t-t_{+}^{m, n}}+\I q_at^a\left(\tau_1\frac{1-t_{+}^{m, n}t}{t-t_{+}^{m, n}}-\tau_2\frac{t+t_{+}^{m, n}}{t-t_{+}^{m, n}}\right)\right)\frac{e^{-S_{\hat{\gamma},m,n}}}{|m\tau+ n|^2}\right)
    \end{split}
\end{equation}
so overall
\begin{equation}
    \widetilde{\xi}_0^{\text{inst}}\to d\widetilde{\xi}_0^{\text{inst}}-c\widetilde{\alpha}^{\text{inst}}\,.
\end{equation}

Finally, we check the transformation rule for $\widetilde{\alpha}^{\text{inst}}$ in \eqref{insttransrules}. Given that we know the transformation rules of the rest of the variables, it is easy to check that this is equivalent to showing that $\alpha^{\text{inst}}=\widetilde{\alpha}^{\text{inst}}+\xi^i\widetilde{\xi}_i^{\text{inst}}$ transforms by
\begin{equation}
    \alpha^{\text{inst}}\to \frac{\alpha^{\text{inst}}}{c\xi^0+d}\,.
\end{equation}
To check the later, one uses the fact that under the action of $A\in \mathrm{SL}(2,\mathbb{Z})$

\begin{equation}\label{transrules4}
    (m\tau_1+n)(t^{-1}-t)-2m \tau_2 \to \frac{(m'\tau_1+ n')(t^{-1}-t)-2m'\tau_2}{c\xi^0+d}\,.
    \end{equation}
The result then follows immediately from the formula for $\alpha^{\text{inst}}$ obtained via \eqref{dcdiff} and \eqref{typeiibdc}, and the transformations \eqref{transrules1}, \eqref{transrules4}.
\end{proof}
\begin{theorem}\label{theorem2} Let $(\widetilde{N},g_{\overline{N}})$ be an instanton corrected q-map space with $1$-loop parameter $c_{\ell}=\frac{\chi}{192\pi}$. 
If, after possibly restricting $\widetilde{N}$,  we have that $\widetilde{N}$ is invariant under the action of $A\in \mathrm{SL}(2,\mathbb{Z})$ by the S-duality action \eqref{sl2can}, then $A$ also acts by isometries on $(\widetilde{N},g_{\overline{N}})$. In particular, if $\widetilde{N}$ is invariant under the full S-duality action, then $\mathrm{SL}(2,\mathbb{Z})$ acts by isometries on $(\widetilde{N},g_{\overline{N}})$.
\end{theorem}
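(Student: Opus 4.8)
The plan is to deduce the statement from the twistor-space description of Section \ref{twisdescsec} together with the transformation rule of Proposition \ref{twistorsdualityprop}. Recall that the QK metric $g_{\overline{N}}$ is encoded in the twistor data $(\mathcal{Z},\mathcal{I},\lambda,\tau)$ with $\mathcal{Z}\cong\widetilde{N}\times\mathbb{C}P^1$, and that a fibre-preserving biholomorphism of $(\mathcal{Z},\mathcal{I})$ commuting with the real structure $\tau$ descends to an isometry of $(\widetilde{N},g_{\overline{N}})$ — the holomorphic contact distribution and the normalization of the metric (fixed by the HK/QK construction) being canonically attached to this data. So it is enough to show that, when $\widetilde{N}$ is $A$-invariant under \eqref{sl2can}, the lift $\widetilde{A}$ of $A$ to $\mathcal{Z}$ defined by \eqref{sl2can} and \eqref{sdualitylift} is such a biholomorphism, with induced base map \eqref{sl2can}.

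First, $\widetilde{A}$ is a well-defined fibre-preserving real-analytic diffeomorphism of $\mathcal{Z}$: on the base \eqref{sl2can} restricts to a diffeomorphism of $\widetilde{N}$ by hypothesis, and on each $\mathbb{C}P^1$-fibre \eqref{sdualitylift} is a Möbius transformation, in particular a biholomorphism; that $A\mapsto\widetilde{A}$ is a group action follows from Remark \ref{liftremark}. Next, $\widetilde{A}$ is holomorphic: on the dense open subset of $\mathcal{Z}$ where the type IIB Darboux coordinates $(\xi^i,\widetilde{\xi}_i,\widetilde{\alpha})$ are defined, formula \eqref{twistorSduality} exhibits $\widetilde{A}^*\xi^i$, $\widetilde{A}^*\widetilde{\xi}_i$, $\widetilde{A}^*\widetilde{\alpha}$ as holomorphic functions of $(\xi^i,\widetilde{\xi}_i,\widetilde{\alpha})$; since the latter are $\mathcal{I}$-holomorphic, $\widetilde{A}$ is holomorphic there, hence — by continuity of $\widetilde{A}^*\mathcal{I}-\mathcal{I}$ — everywhere on $\mathcal{Z}$.

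Second, $\widetilde{A}$ preserves the contact structure. With $\widetilde{\alpha}=\alpha-\xi^i\widetilde{\xi}_i$ one has $\lambda=4\pi\I(\D\widetilde{\alpha}+\xi^i\,\D\widetilde{\xi}_i)\cdot s$, and one checks that the substitution \eqref{twistorSduality} sends $\D\widetilde{\alpha}+\xi^i\,\D\widetilde{\xi}_i$ to $(c\xi^0+d)^{-1}(\D\widetilde{\alpha}+\xi^i\,\D\widetilde{\xi}_i)$; this is a purely algebraic identity — the transformation \eqref{twistorSduality} has exactly the form of the classical S-duality contact transformation for tree-level q-map spaces, and the cubic terms $\kappa_{abc}\xi^a\xi^b\xi^c$ appearing in the transformations of $\widetilde{\xi}_a$, $\widetilde{\xi}_0$ and $\widetilde{\alpha}$ are designed to cancel in this combination. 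Thus $\widetilde{A}^*\lambda$ is a nowhere-vanishing holomorphic multiple of $\lambda$ (after identifying $\mathcal{L}$ with its pullback), i.e.\ $\widetilde{A}$ is a contactomorphism. Finally, $\widetilde{A}$ commutes with the real structure $\tau$: since the type IIB variables in \eqref{sl2can} are all real and $\overline{t_{+}^{c,d}}=t_{+}^{c,d}$, a short computation shows that $\widetilde{A}$ intertwines the antiholomorphic involution $t\mapsto-1/\bar{t}$, equivalently the reality conditions satisfied by \eqref{typeiibdc} are preserved. Combining these points with the first paragraph, $A$ acts on $(\widetilde{N},g_{\overline{N}})$ by an isometry; since $\mathrm{SL}(2,\mathbb{Z})=\langle S,T\rangle$ and $A\mapsto\widetilde{A}$ is an action, if $\widetilde{N}$ is invariant under all of $\mathrm{SL}(2,\mathbb{Z})$ then $\mathrm{SL}(2,\mathbb{Z})$ acts by isometries.

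Since Proposition \ref{twistorsdualityprop} already carries the analytic weight of the argument, the remaining steps are essentially formal; the place requiring most care is the contact-form computation of the third paragraph — verifying the cancellation of the $\kappa_{abc}\xi^a\xi^b\xi^c$ terms, and checking that holomorphy and compatibility with $\tau$ extend across the loci $t\in\{t_{\pm}^{m,n}\}$ where \eqref{typeiibdc} degenerates (handled by continuity, as $\widetilde{A}$ is itself manifestly regular there) — together with pinning down that the descent yields an isometry rather than merely a homothety, which follows from the fixed normalization of $g_{\overline{N}}$ built into the HK/QK construction.
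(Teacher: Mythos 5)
Your proposal is correct and follows essentially the same route as the paper's proof: lift $A$ to the twistor space via \eqref{sdualitylift}, use the transformation rule \eqref{twistorSduality} of the type IIB Darboux coordinates to get holomorphy and the conformal rescaling $\D\widetilde{\alpha}+\xi^i\D\widetilde{\xi}_i\mapsto(c\xi^0+d)^{-1}(\D\widetilde{\alpha}+\xi^i\D\widetilde{\xi}_i)$ on the dense set where the coordinates are defined, extend by continuity, and check compatibility with the real structure via $\overline{t_{+}^{c,d}}=t_{+}^{c,d}$. The only cosmetic differences are that you flag the cancellation of the $\kappa_{abc}\xi^a\xi^b\xi^c$ terms and the isometry-versus-homothety point explicitly, both of which the paper leaves implicit.
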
 
\begin{proof}

 Since $A$ leaves $\widetilde{N}$ invariant, we have $S_A:\mathcal{Z}\to \mathcal{Z}$ the diffeomorphism obtained by lifting the S-duality action of $A$ to the twistor space $\mathcal{Z}\cong \widetilde{N}\times \mathbb{C}P^1$ by \eqref{sdualitylift}. We want to show that $S_A$ is a twistor space automorphism (i.e. it is holomorphic, preserves the contact distribution, and commutes with the real structure of the twistor space), so that we can conclude that the action of $A$ on $(\widetilde{N},g_{\overline{N}})$ is isometric. \\
 
 We first want to show that $S_A$ is holomorphic. Notice that if $S^1\subset \mathbb{C}P^1$ denotes the compactification of the real line $\mathbb{R}\subset \mathbb{C}$ (with respect to the $t$-coordinate), the Darboux coordinates $(\xi^i,\widetilde{\xi}_i,\widetilde{\alpha})$ are defined on the open dense subset of $\mathcal{Z}$ given by
 \begin{equation}
     \mathcal{Z}_D:=\mathcal{Z}-\widetilde{N}\times S^1,
 \end{equation} since for each $p\in \widetilde{N}$, the singularities of the Darboux coordinates are at $t=0,\infty$ and $t\in \{t_+^{m,n}\}_{m\neq 0,n\in \mathbb{Z}}\subset \mathbb{R}\subset S^1$. Using the coordinates $(\xi^i,\widetilde{\xi}_i,\widetilde{\alpha})$ on $\mathcal{Z}$, and using that Darboux coordinates for a holomorphic contact structure must be holomorphic coordinates (see for example the proof of the second statement of \cite[Proposition 7]{CTSduality}), the transformation rule \eqref{twistorSduality} shows that the map 
 \begin{equation}
     S_A:\mathcal{Z}_D\cap S^{-1}_A(\mathcal{Z}_D)\to \mathcal{Z}_D
 \end{equation}
 is holomorphic. 
On the other hand, if $\mathcal{I}$ denotes the holomorphic structure of $\mathcal{Z}$, we have that the diffeomorphism $S_A$ is holomorphic if and only if
\begin{equation}\label{holcond}
    \mathcal{I}\circ \D S_A=\D S_A \circ \mathcal{I}\,.
\end{equation}
Since $\mathcal{Z}_D\cap S^{-1}_A(\mathcal{Z}_D)$ is the intersection of two open dense subsets of $\mathcal{Z}$, we have that $\mathcal{Z}_D\cap S^{-1}_A(\mathcal{Z}_D)$ must be dense in $\mathcal{Z}$. We then have that \eqref{holcond} holds on a dense set of $\mathcal{Z}$. Since $S_A$ and $\mathcal{I}$ are globally defined on $\mathcal{Z}$, we then conclude by continuity that \eqref{holcond} must hold on all of $\mathcal{Z}$, and $S_A$ must be holomorphic.\\

On the other hand, the fact that the coordinates $(\xi^i,\widetilde{\xi}_i,\widetilde{\alpha}=\alpha-\xi^i\widetilde{\xi}_i)$ transform via \eqref{twistorSduality} implies that  

\begin{equation}
    S_A^*(\D \widetilde{\alpha} + \xi^i\D \widetilde{\xi}_i)=\frac{\D \widetilde{\alpha} + \xi^i\D \widetilde{\xi}_i}{c\xi^0+d}\,.
\end{equation}
Hence $S_A$ preserves the contact distribution $\text{Ker}(\lambda)$ on a dense subset of $\mathcal{Z}$.  
We then conclude as before by continuity and the fact that the contact distribution is globablly defined, that the contact distribution must be globally preserved by $S_A$.\\

Finally, to check that the action of $A$ preserves the real structure, it is enough to check that (\ref{sdualitylift}) commutes with the antipodal map $t \to -1/\overline{t}$. Indeed, we have for $c\neq 0$ that

\begin{equation}
    \begin{pmatrix}
    a & b\\
    c & d\\ 
    \end{pmatrix}\cdot \Big(-\frac{1}{\overline{t}}\Big)=\frac{\overline{t}-t_{+}^{c,d}}{t_+^{c,d}\overline{t}+1}=-\overline{\Big[\begin{pmatrix}
    a & b\\
    c & d\\ 
    \end{pmatrix}\cdot t \Big]}^{-1}\,,
\end{equation}
where we have used that $t_{+}^{c,d}\in \mathbb{R}$. The case when $c=0$ follow by a trivial computation. \\

 Hence, we conclude that the action of $A$ is via twistor space automorphisms, and hence $\mathrm{SL}(2,\mathbb{Z})$ must act via isometries on $(\widetilde{N},g_{\overline{N}})$.\\
 
 For the final statement, if $\widetilde{N}$ is invariant under the $\mathrm{SL}(2,\mathbb{Z})$ S-duality action, then we can lift the $\mathrm{SL}(2,\mathbb{Z})$ action to the twistor space via \eqref{sdualitylift}. By the proof of the first statement, this lift acts by twistor space automorphisms, so $\mathrm{SL}(2,\mathbb{Z})$ must act by isometries on $(\widetilde{N},g_{\overline{N}})$.
\end{proof}

Recall that $\mathrm{SL}(2,\mathbb{Z})$ is generated by $T$ and $S$ given in \eqref{sl2gen}. The transformations generated by $T$ correspond to part of the usual Heisenberg isometries (see Section \ref{universalisosec} below). On the other hand, the transformation given by $S$ in the ``non-trivial" transformation that exchanges weak and strong coupling in the type IIB string theory setting. The following theorem guarantees that an instanton corrected q-map space always carries (after possibly restricting $\widetilde{N}$), an action by isometries by $S\in \mathrm{SL}(2,\mathbb{Z})$.

\begin{theorem}\label{Z4prop} Let $S\in \mathrm{SL}(2,\mathbb{Z})$ be given as in \eqref{sl2gen}, and consider an instanton corrected q-map space $(\widetilde{N},g_{\overline{N}})$  with $c_{\ell}=\frac{\chi}{192\pi}$. Then we can find a non-empty $S$-invariant open subset $\widetilde{N}_S\subset \widetilde{N}$ such that the restricted instanton corrected q-map space $(\widetilde{N}_S,g_{\overline{N}})$ is positive definite and carries a $\mathbb{Z}_4$-action by isometries generated by $S$. The open subset $\widetilde{N}_S$ is given by 

\begin{equation}\label{Sinvariantsub}
  \widetilde{N}_S=\{p\in \widetilde{N}\; | \; \epsilon<\tau_2, \;\; \epsilon < \frac{\tau_2}{|\tau_1|^2+|\tau_2|^2}, \;\; t^a>K, \;\; |\tau|t^a> K \} 
\end{equation}
for some $0<\epsilon<1$ and $K>0$.
\end{theorem}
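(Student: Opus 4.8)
The plan is to reduce everything to Theorem~\ref{theorem2}. Concretely: if I can produce a non-empty open subset $\widetilde{N}_S\subset\widetilde{N}$ of the form \eqref{Sinvariantsub} which is invariant under the S-duality action \eqref{sl2can} of $S$ and on which $g_{\overline{N}}$ is positive definite, then Theorem~\ref{theorem2} (applicable because $c_\ell=\frac{\chi}{192\pi}$) immediately gives that $S$ acts by isometries on $(\widetilde{N}_S,g_{\overline{N}})$. The generated group is $\mathbb{Z}/4\mathbb{Z}$: $S$ has order $4$ in $\mathrm{SL}(2,\mathbb{Z})$ with $S^2=-\mathrm{Id}$, and by \eqref{sl2can} the element $-\mathrm{Id}$ acts on $\overline{\mathcal{N}}_{\text{IIB}}^{\text{cl}}$ by $(c^a,b^a,c_0,\psi)\mapsto-(c^a,b^a,c_0,\psi)$ with $\tau,t^a,c_a$ fixed, hence non-trivially on any open subset of $\widetilde{N}_S$; thus $\langle S\rangle\cong\mathbb{Z}/4\mathbb{Z}$ acts effectively. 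So the whole content is the construction of $\widetilde{N}_S$.

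The $S$-invariance of the four inequalities in \eqref{Sinvariantsub} is a direct computation. Writing $S\cdot\tau=-1/\tau$ one gets $\tau_2\mapsto\tau_2/|\tau|^2$ and $|\tau|\mapsto|\tau|^{-1}$, while $t^a\mapsto|\tau|t^a$ by \eqref{sl2can}; hence $S$ interchanges $\tau_2$ with $\tfrac{\tau_2}{|\tau_1|^2+|\tau_2|^2}=\tfrac{\tau_2}{|\tau|^2}$ and interchanges $t^a$ with $|\tau|t^a$, and since $S^2=-\mathrm{Id}$ fixes $\tau$ and $t^a$ the conjunction of the four conditions is $\langle S\rangle$-invariant. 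Openness and non-emptiness are then immediate for any choice $0<\epsilon<1$, $K>0$: the set is open because $\widetilde{N}$ is open and the conditions are open, and it is non-empty because any point with $\tau$ near the $S$-fixed point $\mathrm{i}$ (so $\tau_2\approx1$, $|\tau|\approx1$) and all $t^a$ sufficiently large satisfies the four inequalities and, as recorded in the remarks after Theorem~\ref{QKCASKdomainformulas}, lies in $\overline{N}_+\subset\widetilde{N}$ since $\rho=\tfrac{\tau_2^2}{16}e^{-\mathcal{K}}-c_\ell\to\infty$ as $t^a\to\infty$ with $\tau$ fixed.

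The heart of the proof is to choose $\epsilon$ and $K$ so that the four inequalities force $\widetilde{N}_S$ into the positive definite locus $\{f>0,\ f_3<0\}$ and, in addition, force the $S$-image of $\widetilde{N}_S$ back into $\widetilde{N}$ (a priori $\widetilde{N}$, being only the maximal domain of definition, need not be $S$-invariant). Here the choice $c_\ell=\frac{\chi}{192\pi}$ enters through the Poisson-resummed expression \eqref{Poissonresumedf}: the constant term $16\pi(\frac{\chi}{192\pi}-c_\ell)$ vanishes, so $f=8\pi\tau_2^2h(t)+\frac{\tau_2^2}{(2\pi)^2}\sum_{\hat\gamma\in\Lambda^+\cup\{0\}}n_{\hat\gamma}\sum_{(m,n)\ne0}\big(\tfrac{1}{|m\tau+n|}+2\pi q_at^a\big)\tfrac{e^{-S_{\hat\gamma,m,n}}}{|m\tau+n|^2}$. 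I would split off the $\hat\gamma=0$ summand, which equals $-\tfrac{\chi}{2}\tfrac{\tau_2^2}{(2\pi)^2}C(\tau)$ with $C(\tau)=\sum_{(m,n)\ne0}|m\tau+n|^{-3}$ a $\tau_1$-periodic, hence bounded, function on $\{\tau_2\ge\epsilon\}$, bound the remaining $\hat\gamma\in\Lambda^+$ summands using $\mathrm{Re}\,S_{\hat\gamma,m,n}\ge2\pi\,q_at^a\min\{\epsilon,1\}$ together with the convergence hypothesis \eqref{convpropn}, and use that $h(t)$ is a positive cubic on $\overline{M}$ with $h(t)\to\infty$ as $t^a\to\infty$ in $U$. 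One then gets that $f$ is bounded below by $\tau_2^2$ times a quantity that is positive once $t^a>K$ with $K$ large (depending only on $\epsilon$, the $k_{abc}$, and the growth of the $n_{\hat\gamma}$), and indeed $\rho=f/(16\pi)-\rho^{\text{inst}}$ can be made arbitrarily large; analogous, easier (purely exponentially suppressed) estimates for $f_3$, $g_N(V,V)$ and the remaining denominators in \eqref{coordQKmetric2} then place $\widetilde{N}_S$ inside $\overline{N}_+$, so $g_{\overline{N}}|_{\widetilde{N}_S}$ is positive definite. Moreover, for $K$ large the region cut out by the four inequalities lies in the part of $\overline{M}$ where the worldsheet corrections are negligible, which coincides with the corresponding part of $\overline{M}^{\text{cl}}$ and is $S$-invariant; together with $\widetilde{N}_S\subset\overline{N}_+\subset\widetilde{N}$ this shows $S(\widetilde{N}_S)\subset\widetilde{N}$, so $S$ genuinely acts on $\widetilde{N}_S$, and Theorem~\ref{theorem2} finishes the argument.

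The step I expect to be the main obstacle is precisely this last bundle of uniform estimates: one must control the non-holomorphic Eisenstein-type sums $\sum_{(m,n)\ne0}|m\tau+n|^{-s}$ and the instanton sums $\sum_{\hat\gamma}n_{\hat\gamma}\sum_{(m,n)}(\cdots)e^{-S_{\hat\gamma,m,n}}$ (equivalently, in the type IIA frame, the Bessel sums of Lemma~\ref{thetalemma1}) simultaneously over the non-compact region $\{\tau_2>\epsilon,\ \tau_2/|\tau|^2>\epsilon,\ t^a>K,\ |\tau|t^a>K\}$, and also verify that this region is contained both in the compatibility domain $\overline{M}$ and in the open set $N'$ of \eqref{n'def} entering the HK/QK construction --- all of which is arranged by pushing $t^a$ (hence $\rho$) to be large while keeping $\tau$ in the ``fundamental-domain-like'' strip picked out by the first two inequalities. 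Beyond fixing $c_\ell=\frac{\chi}{192\pi}$, no input other than Theorem~\ref{theorem2} is needed.
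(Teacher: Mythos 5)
Your proposal is correct and follows essentially the same route as the paper: reduce to Theorem \ref{theorem2}, carve out a region where $\tau_2$ is bounded below and the $t^a$ are large enough that the classical terms dominate all quantum corrections (so that compatibility, $f>0$, $f_3<0$ and $g_N(V,V)\neq 0$ hold and $g_{\overline{N}}$ is positive definite), observe that $S$ permutes the four inequalities in pairs while $S^2=-\mathrm{Id}$ fixes $\tau$ and $t^a$, and get non-emptiness from the $S$-fixed points over $\tau=\mathrm{i}$ with $t^a$ large. The only differences are presentational: the paper obtains $\widetilde{N}_S$ as the intersection $\bigcap_{j=0}^{3}S^j\cdot\widetilde{N}_{K,\epsilon}$ (making invariance automatic from $S^4=\mathrm{Id}$) and runs the domination estimates on the type IIA Bessel-function expressions rather than on the Poisson-resummed Eisenstein-type sums you propose, which by Lemma \ref{PoissonLemma} amount to the same bounds.
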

\begin{proof}  We would first like to show that there is $K>0$ and $0<\epsilon<1$ such that $g_{\overline{N}}$ is defined and positive definite on 

\begin{equation}
    \widetilde{N}_{K,\epsilon}:=\{(\tau_2,b^a+\mathrm{i}t^a,\zeta^i,\widetilde{\zeta}_i,\sigma)\in \widetilde{N} \; | \; \tau_2>\epsilon, \;\; t^a>K, \;\; a=1,...,n\}\,.
\end{equation}

In order to show this we first study the CASK geometry of signature $(2,2n)$ defined by $(M,-\mathfrak{F})$. In terms of the natural holomorphic coordinates $Z^i$, $i=0,...,n$, the CASK geometry has a K\"{a}hler potential $(Z^i,\overline{Z}^i)$ given by

\begin{equation}
    k(Z^i,\overline{Z}^i)=-\text{Im}(\tau_{ij})Z^i\overline{Z}^j=-|Z^0|^2\text{Im}(\tau_{ij})z^i\overline{z}^j\,, \quad z^i=\frac{Z^i}{Z^0}\,.
\end{equation}
Since $Z^0\neq 0$ on $M$, we can use instead the holomorphic coordinates $(Z^0,z^a)$. In terms of $(Z^0,z^a)$ we find using the formula \eqref{prepotential} for $\mathfrak{F}$, that 

\begin{equation}\label{KpotCASK}
\begin{split}
    k&(Z^0,z^a)\\
    &=|Z^0|^2\left(4h(t)-\frac{\chi \zeta(3)}{(2\pi)^3}+\frac{2}{(2\pi)^3}\sum_{q_a\gamma^a \in \Lambda^+}n_{\gamma}\text{Re}(\mathrm{Li}_3(e^{2\pi iq_az^a}))+\frac{2}{(2\pi )^2}\sum_{q_a\gamma^a \in \Lambda^+}n_{\gamma}\text{Re}(\mathrm{Li}_2(e^{2\pi iq_az^a}))q_at^a\right)\,,\\
\end{split}
\end{equation}
where we recall that $h(t)$ is the cubic polynomical defining the PSR manifold (see Section \ref{settingsec}). From the previous formula, it immediately follows that the coefficients of the CASK metric $g_M$ in the coordinates $(Z^0,z^a)$ only depend periodically on $b^a=\text{Re}(z^a)$. Furthermore, as $t^a=\text{Im}(z^a)\to \infty$ the classical terms due to $\mathfrak{F}^{\text{cl}}$ dominate over the terms due to $\mathfrak{F}^{\text{w.s}}$, which are either independent of $t^a$, or exponentially decreasing as $t^a\to \infty$. Since the classical terms must satisfy the CASK conditions, we find that there is $K>0$, such that

\begin{equation}
    M_K:=\{ (Z^0,b^a+\I t^a)\in M^q  \; | \; t^a>K, \;\; a=1,...,n\}\subset M\,,
\end{equation}
where $M$ was defined in Section \ref{settingsec} as the maximal open set of $M^{\text{cl}}$ where the CASK geometry defined by $(M,\mathfrak{F})$ has signature $(2n,2)$ and $\text{Im}(\tau_{ij})Z^i\overline{Z}^j<0$.\\

Now let us look at the tensor $T$ defined in \eqref{non-deg}, determining the compatibility condition between the CASK structure and the BPS structure. The instanton constribution to $T$ due to the BPS indices is by an expression of the form

\begin{equation}
    \sum_{\gamma}\Omega(\gamma)\sum_{n>0}e^{2\pi \I n\zeta_{\gamma}}K_0(2\pi n \tau_2|q_0+q_a(b^a+\I t^a)|)|\D Z_{\gamma}|^2, \quad \gamma=q_0\gamma^0+q_a\gamma^a\,.
\end{equation}

Due to the exponential decay of the Bessel functions $K_0(x)$ as $x\to \infty$ and the convergence property of the BPS structure, by bounding below $\tau_2$ and increasing $K$ if necessary, we can make the CASK metric term of \eqref{non-deg} dominate over the instanton corrections uniformly in the rest of the parameters, and in particular make $T$ horizontally non-degenerate on this region. More precisely, if $0<\epsilon<1$, there is $K>0$ sufficiently big such that the (pseudo-)HK metric $g_{N}$ is defined on 
\begin{equation}
    N_{K,\epsilon}:=\{ (Z^0,z^a,\zeta^i,\widetilde{\zeta}_i)\in M\times (\mathbb{R}/\mathbb{Z})^{2n+2} \; | \; \epsilon<\tau_2=|Z^0|,\;\; t^a>K, \; a=1,...,n\}\,.
\end{equation}
By the same arguments as those given in the previous paragraphs, we find that as $t^a\to \infty$ for $a=1,...,n$, the functions $f$, $f_3$ and $g_{N}(V,V)$ defined on Section \ref{QKCASKrecap} are asymptotically approximated  (uniformly in the other parameters) by $f^{\text{cl}}$, $f_3^{\text{cl}}$ and $g_{N}^{\text{cl}}(V,V)$, where the superscript $^{\text{cl}}$ refers to the corresponding functions obtained by setting $n_{\hat{\gamma}}=0$ for all $\hat{\gamma}\in \Lambda^{+}\cup\{0\}$. Since we have $f^{\text{cl}}>0$, $f_3^{\text{cl}}<0$ and $g_{N}^{\text{cl}}(V,V)\neq 0$, it follows that we can pick $K$ such that on $N_{K,\epsilon}$ we obtain that
\begin{equation}
    f>0, \quad f_3 <0, \quad  g_N(V,V) \neq 0\,.
\end{equation}
This ensures by the end of Theorem \ref{QKCASKdomainformulas} that $g_{\overline{N}}$ is defined and positive definite on 
\begin{equation}
    \overline{N}_{K,\epsilon}:=\{(\tau_2,b^a+\mathrm{i}t^a,\zeta^i,\widetilde{\zeta}_i,\sigma)\in \overline{N} \; | \; \tau_2>\epsilon, \;\; t^a>K, \; a=1,...,n\}\,.
\end{equation}
 As before, we lift the metric $g_{\overline{N}}$ to the subset $\widetilde{N}_{K,\epsilon}\to N_{K,\epsilon}$ where the periodic coordinates are made non-periodic.\\

Now notice that since $S^4=\text{Id}$, the open subset 
\begin{equation}\label{Sinvariantint}
    \widetilde{N}_S:=\widetilde{N}_{K,\epsilon}\cap S\cdot \widetilde{N}_{K,\epsilon} \cap S^2\cdot \widetilde{N}_{K,\epsilon}\cap S^3\cdot \widetilde{N}_{K,\epsilon}
\end{equation}
is $S$-invariant. To see that it is also non-empty, notice that the points (in Type IIB coordinates) of the form $(\tau_1+\I\tau_2,b^a+\I t^a, c^a,c_a,c_0,\psi)=(0+\I, 0 +\I t^a, 0 ,0,0,0)$ are $S$-fixed. In particular, for $t^a>K$ we have $(0+\I, 0 +\I t^a, 0 ,0,0,0)\in \widetilde{N}_{\epsilon,K}$ and, since these points are $S$-fixed, they must lie on $\widetilde{N}_S$. Since $S$ acts on $\widetilde{N}_S$, we can apply Theorem \ref{theorem2} in the case of $A=S\in \mathrm{SL}(2,\mathbb{Z})$ to concluce that $(\widetilde{N}_S,g_{\overline{N}})$ carries a $\mathbb{Z}_4$-action by isometries generated by $S$.\\

Finally, the fact that $\widetilde{N}_S$ is given by \eqref{Sinvariantsub} follows immediately from the defining relations of $\widetilde{N}_{K,\epsilon}$ together with \eqref{Sinvariantint} and the S-duality transformations \eqref{sl2can} which imply that $S^2\cdot  \widetilde{N}_{K,\epsilon}=\widetilde{N}_{K,\epsilon}$.
\end{proof}

\section{Universal isometries of instanton corrected q-map spaces and S-duality}\label{universalisosec}

We start by recalling a certain universal group of isometries in the case of a tree-level q-map space (recall Definition \ref{rmapqmapdef}). 
In \cite[Theorem 3.17]{CTSduality} it was shown that a tree level q-map space of real dimension $4n+4$ with $n>0$ has a universal (i.e. independent of the PSR manifold) $3n+6$ dimensional connected Lie group of isometries $G$, whose Lie algebra $\mathfrak{g}$ has the form

\begin{equation}\label{uniisotree}
    \mathfrak{g}=\mathbb{R}\ltimes (\mathfrak{sl}(2,\mathbb{R})\ltimes (\mathbb{R}^n\ltimes \mathfrak{h}_{2n+2}))\,.
\end{equation}
The first factor corresponds to an action by dilations of the group $\mathbb{R}_{>0}$; the second to the S-duality $\mathrm{SL}(2,\mathbb{R})$-action; the third to an action by $\mathbb{R}^n$ which, among other things, shifts the real part of the PSK coordinates $z^a$; and the last factor to the action of a certain codimension $1$ subgroup $H_{2n+2}$ of the Heisberg group $\text{Heis}_{2n+3}(\mathbb{R})$ (to be defined below). In order to state this more precisely, we consider the coordinates $(\rho^{\text{cl}},z^a,\zeta^i,\widetilde{\zeta}_i^{\text{cl}},\sigma^{\text{cl}})$ on $\overline{\mathcal{N}}_{\text{IIA}}^{\text{cl}}:=\mathbb{R}_{>0}\times \overline{M}^{\text{cl}}\times \mathbb{R}^{2n+2}\times \mathbb{R}$, related to the type IIB coordinates $(\tau,b^a+\mathrm{i}t^a,c^a,c_a,c_0,\psi)$ on $\overline{\mathcal{N}}_{\text{IIB}}^{\text{cl}}$ (where the latter was given in Definition \ref{defsl2domain}) via the classical Mirror map (i.e. by \eqref{MM} with $c_{\ell}=n_{\hat{\gamma}}=\chi=0$). The groups then act as follows:

\begin{itemize}
    \item The multiplicative $\mathbb{R}_{>0}$ group acts by a dilation action on the $(\rho^{\text{cl}},\zeta^i,\widetilde{\zeta}_i^{\text{cl}},\sigma^{\text{cl}})$ variables via 
    \begin{equation}\label{scalingaction}
        r\cdot (\rho^{\text{cl}},\zeta^i,\widetilde{\zeta}_i^{\text{cl}},\sigma^{\text{cl}})=(r\rho^{\text{cl}}, \sqrt{r}\zeta^i,\sqrt{r}\widetilde{\zeta}_i^{\text{cl}},r\sigma^{\text{cl}})\,.
    \end{equation}
    \item The $\mathrm{SL}(2,\mathbb{R})$ factor corresponds to the S-duality action given in \eqref{sl2can}. We have $\mathrm{SL}(2,\mathbb{R})$ instead of $\mathrm{SL}(2,\mathbb{Z})$ due to the absence of quantum corrections.
    \item The vector $v=(v^a)\in \mathbb{R}^n$ acts via
    \begin{equation}\label{Rnaction}
    v\cdot \begin{pmatrix}
        z^a\\
        \rho^{\text{cl}} \\
        \zeta^0 \\
        \zeta^a\\
        \widetilde{\zeta}_0^{\text{cl}}\\
        \widetilde{\zeta}_a^{\text{cl}}\\
         \sigma^{\text{cl}} \\
        \end{pmatrix}=\begin{pmatrix}
        z^a+v^a\\
        \rho^{\text{cl}} \\
        \zeta^0 \\
        \zeta^a +\zeta^0v^a\\
        \widetilde{\zeta}_0^{\text{cl}}+\frac{1}{6}k_{abc}v^av^bv^c\zeta^0 + \frac{1}{2}k_{abc}v^av^b\zeta^c -\widetilde{\zeta}_a^{\text{cl}}v^a\\
        \widetilde{\zeta}_a ^{\text{cl}}-\frac{1}{2}\zeta^0k_{abc}v^bv^c - k_{abc}v^b\zeta^c\\
       \sigma^{\text{cl}} \\
        \end{pmatrix}\,,
    \end{equation}
    where we recall that $k_{abc}$ are the coefficients of the cubic polynomial \eqref{cubicpsr} defining the PSR manifold. 
    
    \item If $\text{Heis}_{2n+3}(\mathbb{R})\cong \mathbb{R}^{2n+3}$ denotes the Heisenberg group, then $(\eta^i,\widetilde{\eta}_i,\kappa)\in \text{Heis}_{2n+3}(\mathbb{R})$, $i=0,1,...,n$; acts on $(\zeta^i,\widetilde{\zeta}_i^{\text{cl}},\sigma^{\text{cl}})$ via
    
    \begin{equation}\label{heisact}
        (\eta^i,\widetilde{\eta}_i,\kappa)\cdot (\zeta^i,\widetilde{\zeta}_i^{\text{cl}},\sigma^{\text{cl}})=(\zeta^i+\eta^i, \widetilde{\zeta}_i^{\text{cl}}+\widetilde{\eta}_i,  \sigma^{\text{cl}} +\kappa +\widetilde{\zeta}_i^{\text{cl}}\eta^i-\zeta^i\widetilde{\eta}_i)\,.
    \end{equation}
    On the other hand, $H_{2n+2}\subset \text{Heis}_{2n+3}(\mathbb{R})$ is the codimension $1$ subgroup given by
    \begin{equation}\label{codim1heis}
        H_{2n+2}:=\{(\eta^i,\widetilde{\eta}_i,\kappa) \in \text{Heis}_{2n+3}(\mathbb{R}) \; | \; \eta^0=0\}\,.
    \end{equation}
    The transformations shifting $\zeta^0$ missing from $H_{2n+2}$ are already included in the $\mathrm{SL}(2,\mathbb{R})$ transformations. 
\end{itemize}
On the other hand, the semi-direct product of Lie algebras $\mathbb{R}^n\ltimes \mathfrak{h}_{2n+2}$ in \eqref{uniisotree} corresponds at the group level to the semi-direct product $\mathbb{R}^n\ltimes_{\varphi} H_{2n+2}\subset \mathbb{R}^n\ltimes_{\varphi} \text{Heis}_{2n+3}(\mathbb{R})$, where the automorphism $\varphi:\mathbb{R}^n\to \text{Aut}(\text{Heis}_{2n+3}(\mathbb{R}))$ is given by a similar formula to \eqref{Rnaction}, namely 
\begin{equation}\label{autheis}
    \varphi(v)\cdot \begin{pmatrix}
        
        \eta^0 \\
        \eta^a\\
        \widetilde{\eta}_0\\
        \widetilde{\eta}_a\\
         \kappa \\
        \end{pmatrix}=\begin{pmatrix}
        \eta^0 \\
        \eta^a +\eta^0v^a\\
        \widetilde{\eta}_0+\frac{1}{6}k_{abc}v^av^bv^c\eta^0 + \frac{1}{2}k_{abc}v^av^b\eta^c -\widetilde{\eta}_av^a\\
        \widetilde{\eta}_a -\frac{1}{2}\eta^0k_{abc}v^bv^c - k_{abc}v^b\eta^c\\
       \kappa \\
        \end{pmatrix}, \quad v\in \mathbb{R}^n, \quad (\eta^i,\widetilde{\eta}_i,\kappa)\in \text{Heis}_{2n+3}(\mathbb{R})\,.
    \end{equation}
The other semi-direct products of Lie algebras in \eqref{uniisotree} can be described via the Lie algebra structure given in \cite[Proposition 3.10]{CTSduality}.\\

We now consider $(M,\mathfrak{F})$ and a mutually local variation of BPS structures $(M,\Gamma,Z,\Omega)$ as in Section \ref{settingsec}, so that we obtain an instanton corrected q-map space $(\widetilde{N},g_{\overline{N}})$. We want to study how to instanton corrections affect the $3n+6$-dimensional isometry group $G$ from the tree-level q-map space case. In the following, unless otherwise specified, we assume that $\widetilde{N}$ is the (lift of the) maximal domain of definition of $g_{\overline{N}}$ obtained via HK/QK correspondence from the HK metric $(N,g_{N})$ from Section \ref{HK/QKsec}. We also define the following subgroups of $\text{Heis}_{2n+3}(\mathbb{R})$:
    
    \begin{equation}\label{heisinstgroup}
    \begin{split}
        \text{Heis}_{2n+3,D}&:=\{ (\eta^i,\widetilde{\eta}_i,\kappa)\in \text{Heis}_{2n+3}(\mathbb{R}) \; |\quad \; \text{$\eta^i \in \mathbb{Z}$ for $i=0,...,n$}\}\\
        H_{2n+2,D}&:=\{ (\eta^a,\widetilde{\eta}_i,\kappa)\in H_{2n+2} \; |\quad \; \text{$\eta^a \in \mathbb{Z}$ for $a=1,...,n$}\}\,.
    \end{split}
    \end{equation}
The letter $D$ in the above notation is meant to emphasize that the directions $\eta^i$ are broken to a discrete subgroup due to the inclusion of (part of) the ``D-instanton corrections" due to terms involving the BPS indices $\Omega(\gamma)$. \\

We begin by studying how $\text{Heis}_{2n+3,D}\subset \text{Heis}_{2n+3}(\mathbb{R})$ and $\mathbb{Z}^n\subset \mathbb{R}^n$ act on the corrected coordinates $(\rho,z^a,\zeta^i,\widetilde{\zeta}_i,\sigma)$, related to the type IIB coordinates via the quantum corrected mirror map \eqref{MM}. We use the notation $\rho^{\text{w.s.}}:=f^{\text{w.s.}}/16\pi$, so that $\rho=\rho^{\text{cl}}+\rho^{\text{w.s.}}-c_{\ell}$ (recall \eqref{rhodef} and \eqref{fsplitting2}).

\begin{lemma}\label{lemmainstcoord}
$\text{Heis}_{2n+3,D}$ acts on the functions $(\rho^{\text{w.s.}},\widetilde{\zeta}_i^{\text{inst}},\sigma^{\text{inst}})$ by

\begin{equation}\label{heisinst}
    (\eta^i,\widetilde{\eta}_i,\kappa)\cdot (\rho^{\text{w.s.}},\widetilde{\zeta}_i^{\text{inst}},\sigma^{\text{inst}})=(\rho^{\text{w.s.}},\widetilde{\zeta}_i^{\text{inst}},\sigma^{\text{inst}}+\widetilde{\zeta}_i^{\text{inst}}\eta^i)\,.
\end{equation}
On the other hand, $\mathbb{Z}^n\subset \mathbb{R}^n$ acts on the functions $(\rho^{\text{w.s.}},\widetilde{\zeta}_i^{\text{inst}},\sigma^{\text{inst}})$ by
\begin{equation}\label{zninst}
    (v^a)\cdot (\rho^{\text{w.s.}},\widetilde{\zeta}_0^{\text{inst}},\widetilde{\zeta}_a^{\text{inst}},\sigma^{\text{inst}})=(\rho^{\text{w.s.}},\widetilde{\zeta}_0^{\text{inst}}-v_a\widetilde{\zeta}_a^{\text{inst}},\widetilde{\zeta}_a^{\text{inst}},\sigma^{\text{inst}})
\end{equation}
\end{lemma}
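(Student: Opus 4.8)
\textbf{Proof proposal for Lemma \ref{lemmainstcoord}.}

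The plan is to verify the two claimed transformation rules directly from the explicit formulas \eqref{typeiiacoordcor} for $\widetilde{\zeta}_i^{\text{inst}}$ and $\sigma^{\text{inst}}$, together with the formula for $f^{\text{w.s.}}$ in \eqref{fsplitting} (which determines $\rho^{\text{w.s.}}$). The first observation is that $\rho^{\text{w.s.}}$, $\widetilde{\zeta}_i^{\text{inst}}$ all depend only on the variables $\tau = \tau_1 + \I \tau_2$, $z^a = b^a + \I t^a$ and $c^a$ — indeed $\rho^{\text{w.s.}} = f^{\text{w.s.}}/16\pi$ is a function of $\tau_2$ and $z^a$ alone, while $\widetilde\zeta_i^{\text{inst}}$ involves $\tau$, $t^a$, $b^a$ and $c^a$ through $S_{\hat\gamma,m,n}$ and the prefactors. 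So the whole lemma is really a statement about how $\text{Heis}_{2n+3,D}$ and $\mathbb Z^n$ act on these IIB-side variables when translated through the mirror map \eqref{MM}.

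First I would work out how $(\eta^i,\widetilde\eta_i,\kappa)\in\text{Heis}_{2n+3,D}$ acts on $(\tau,z^a,c^a)$. Combining \eqref{heisact} (the Heisenberg action on the IIA coordinates $(\zeta^i,\widetilde\zeta_i,\sigma)$) with the mirror map relations $\zeta^0 = \tau_1$, $\zeta^a = -(c^a - \tau_1 b^a)$, and the fact that $\tau_2,z^a$ are untouched, one sees that $\tau_1$ shifts by $\eta^0$, that $b^a,t^a$ are fixed, and that $c^a$ shifts by $-\eta^a + (\text{something}\cdot\eta^0) b^a$; since $\eta^i\in\mathbb Z$, the combination $q_a c^a$ appearing in $S_{\hat\gamma,m,n}$ changes only by an integer multiple of $2\pi$ inside the exponent paired against $\I m q_a c^a$, hence $e^{-S_{\hat\gamma,m,n}}$ is invariant; similarly $\tau_1\mapsto\tau_1+\eta^0$ with $\eta^0\in\mathbb Z$ leaves $e^{-S_{\hat\gamma,m,n}}$ and the quantities $m\tau_1+n$, $|m\tau+n|$ invariant after relabelling $n\mapsto n - m\eta^0$ (resp. $n\mapsto n+m\eta^0$). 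This is the crux: the instanton sums are invariant under integer shifts of $\tau_1$ and $c^a$ because $S_{\hat\gamma,m,n}$ depends on them only through $e^{2\pi\I(mc^a+nb^a)q_a}$. Therefore $\rho^{\text{w.s.}}$ and $\widetilde\zeta_i^{\text{inst}}$ are invariant, and it remains to track $\sigma^{\text{inst}}$. From its definition $\sigma^{\text{inst}} = \tau_1\widetilde\zeta_0^{\text{inst}} - (c^a-\tau_1 b^a)\widetilde\zeta_a^{\text{inst}} + (\text{terms depending only on }\tau,t^a)$, and using $\widetilde\zeta_i^{\text{inst}}$ invariant plus $\tau_1\mapsto\tau_1+\eta^0$, $(c^a-\tau_1 b^a)\mapsto (c^a-\tau_1 b^a) - \eta^a$, one computes $\sigma^{\text{inst}} \mapsto \sigma^{\text{inst}} + \eta^0\widetilde\zeta_0^{\text{inst}} + \eta^a\widetilde\zeta_a^{\text{inst}} = \sigma^{\text{inst}} + \eta^i\widetilde\zeta_i^{\text{inst}}$, which (after checking the $\widetilde\zeta$-dependent pieces in \eqref{heisact} and the split $\sigma = \sigma^{\text{cl}}+\sigma^{\text{inst}}$ are consistent) gives exactly \eqref{heisinst}. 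Note that $\kappa$ and $\widetilde\eta_i$ only shift $\sigma^{\text{cl}}$, $\widetilde\zeta_i^{\text{cl}}$ respectively, so they act trivially on the instanton pieces.

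Second I would do the analogous computation for $v=(v^a)\in\mathbb Z^n\subset\mathbb R^n$ acting via \eqref{Rnaction}. Through the mirror map, $z^a\mapsto z^a + v^a$ means $b^a\mapsto b^a+v^a$, $t^a$ fixed; one reads off from \eqref{Rnaction} and \eqref{MM} that $\zeta^a\mapsto\zeta^a+\zeta^0 v^a$ translates to $c^a\mapsto c^a$ (the shift of $\zeta^a$ exactly compensates the shift of $-(c^a-\tau_1 b^a)$ coming from $b^a\mapsto b^a+v^a$), and $\tau = \zeta^0 + \I\tau_2$ wait — $\tau_1 = \zeta^0$ is fixed. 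So on the IIB side the $\mathbb Z^n$ action is simply $b^a\mapsto b^a+v^a$ with everything else ($\tau$, $t^a$, $c^a$) fixed. Then $S_{\hat\gamma,m,n}$ picks up $e^{2\pi\I n q_a v^a}$, which is $1$ since $v^a\in\mathbb Z$; hence $\rho^{\text{w.s.}}$ and $\widetilde\zeta_a^{\text{inst}}$ (which has no explicit $b^a$ dependence outside $S_{\hat\gamma,m,n}$) are invariant, while $\widetilde\zeta_0^{\text{inst}}$ has an explicit $\I b^a\frac{m\tau_1+n}{|m\tau+n|}$ prefactor that shifts by $\I v^a\frac{m\tau_1+n}{|m\tau+n|}$; resumming, this extra term is precisely $-v_a\widetilde\zeta_a^{\text{inst}}$ (matching the $k_{abc}$-free part of the transformation of $\widetilde\zeta_0^{\text{cl}}$ in \eqref{Rnaction}, since $v\in\mathbb Z^n$ and the $k_{abc}v v$ terms are entirely in the classical piece). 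For $\sigma^{\text{inst}}$: the $\tau_1\widetilde\zeta_0^{\text{inst}}$ term changes by $-\tau_1 v_a\widetilde\zeta_a^{\text{inst}}$, the $-(c^a-\tau_1 b^a)\widetilde\zeta_a^{\text{inst}}$ term changes by $+\tau_1 v^a\widetilde\zeta_a^{\text{inst}}$ (from $b^a\mapsto b^a+v^a$), and the $\tau$-and-$t^a$-only terms are unchanged, so $\sigma^{\text{inst}}$ is invariant, giving \eqref{zninst}.

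The main obstacle — really the only nonroutine point — is bookkeeping the $m=0$ terms and the $\hat\gamma = 0$ (i.e. the $\chi$) contributions consistently, since for $m=0$ the exponent $S_{\hat\gamma,0,n} = 2\pi q_a(|n|t^a + \I n b^a)$ still depends on $b^a$ but only through $e^{2\pi\I n q_a b^a}$, so integrality of $v^a$ still saves invariance; and one must make sure the split $\widetilde\zeta_i = \widetilde\zeta_i^{\text{cl}} + \widetilde\zeta_i^{\text{inst}}$, $\sigma = \sigma^{\text{cl}}+\sigma^{\text{inst}}$ is compatible with the tree-level action \eqref{Rnaction}, \eqref{heisact} — i.e. that the classical pieces absorb exactly the $k_{abc}$-polynomial parts and the instanton pieces absorb exactly the residual shifts. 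This compatibility is forced once one checks that the IIB-side action on $(\tau,z^a,c^a)$ derived above is the same whether one goes through the classical or the full mirror map, which holds because $\widetilde\zeta_i^{\text{inst}},\sigma^{\text{inst}}$ depend only on $(\tau,z^a,c^a)$ and those variables transform identically in both cases.
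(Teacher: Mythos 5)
Your proof is correct and follows the same route as the paper's (very terse) argument: direct verification from the explicit formulas \eqref{typeiiacoordcor} and \eqref{fsplitting}, using that the induced IIB-side action only shifts $\tau_1$ and $c^a$ (resp.\ $b^a$) while fixing $\tau_2$, $t^a$, and that integrality of $\eta^i$ (resp.\ $v^a$) together with the relabelling $n\mapsto n-m\eta^0$ leaves the $(m,n)$-sums invariant. The one place to tighten is the Heisenberg case: the $\eta^0 b^a$ part of the shift of $c^a$ is not itself killed by periodicity but must be combined with the $\tau_1$-shift into the single relabelling $S_{\hat\gamma,m,n}\mapsto S_{\hat\gamma,m,n+m\eta^0}-2\pi\I m q_a\eta^a$, which your relabelling clause implicitly accomplishes.
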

\begin{proof}
The first statement \eqref{heisinst} follows easily from \eqref{typeiiacoordcor}, \eqref{fsplitting} and \eqref{heisact}. On the other hand note that \eqref{Rnaction} and the fact that $\rho^{\text{cl}}=\frac{\tau_2^2}{2}h(t)$ imply that $\tau_2$ is invariant under the action of $\mathbb{Z}^n$. Equation \eqref{zninst} then follows again easily from \eqref{typeiiacoordcor} and \eqref{fsplitting}. The restrictions to $\text{Heis}_{2n+3,D}\subset \text{Heis}_{2n+3}(\mathbb{R})$ and $\mathbb{Z}^n\subset \mathbb{R}^n$ are required, since in the computation we use the periodicity of the complex exponential function. 
\end{proof}
\begin{corollary}\label{coordcorollary}
The action of $\text{Heis}_{2n+3,D}\subset \text{Heis}_{2n+3}(\mathbb{R})$ and $\mathbb{Z}^n\subset \mathbb{R}^n$ on $(\rho,z^a,\zeta^i,\widetilde{\zeta}_i,\sigma)\in \overline{\mathcal{N}}_{\text{IIA}}=\mathbb{R}_{>-c_{\ell}}\times \overline{M}\times \mathbb{R}^{2n+2}\times \mathbb{R}$ has the same transformation rules, \eqref{heisact} and \eqref{Rnaction}, as their action on $(\rho^{\text{cl}},z^a,\zeta^i,\widetilde{\zeta}_i^{\text{cl}},\sigma^{\text{cl}})\in \overline{\mathcal{N}}_{\text{IIA}}^{\text{cl}}$. Namely, for $v\in \mathbb{Z}^n$ and $(\eta^i,\widetilde{\eta}_i,\kappa)\in \text{Heis}_{2n+3,D}$, 

\begin{equation}\label{ZnHeisact}
    v\cdot \begin{pmatrix}
        z^a\\
        \rho \\
        \zeta^0 \\
        \zeta^a\\
        \widetilde{\zeta}_0\\
        \widetilde{\zeta}_a\\
         \sigma \\
        \end{pmatrix}=\begin{pmatrix}
        z^a+v^a\\
        \rho \\
        \zeta^0 \\
        \zeta^a +\zeta^0v^a\\
        \widetilde{\zeta}_0+\frac{1}{6}k_{abc}v^av^bv^c\zeta^0 + \frac{1}{2}k_{abc}v^av^b\zeta^c -\widetilde{\zeta}_a^{\text{cl}}v^a\\
        \widetilde{\zeta}_a -\frac{1}{2}\zeta^0k_{abc}v^bv^c - k_{abc}v^b\zeta^c\\
       \sigma \\
        \end{pmatrix}\,, \quad (\eta^i,\widetilde{\eta}_i,\kappa)\cdot \begin{pmatrix}z^a\\
        \rho\\
        \zeta^i\\
        \widetilde{\zeta}_i\\
        \sigma\end{pmatrix}=\begin{pmatrix}z^a\\
        \rho\\
        \zeta^i+\eta^i\\ \widetilde{\zeta}_i+\widetilde{\eta}_i\\ 
        \sigma +\kappa +\widetilde{\zeta}_i\eta^i-\zeta^i\widetilde{\eta}_i\\
        \end{pmatrix}\,.
    \end{equation} 
    
    Furthermore, the action of $\text{Heis}_{2n+3,D}$ and $\mathbb{Z}^n$ on $(z^a,\rho,\zeta^i, \widetilde{\zeta}_i,\sigma)$ expressed in terms of type IIB coordinates via the quantum corrected mirror map \eqref{MM} coincides with the action \eqref{heisact} and \eqref{Rnaction} expressed in type IIB variables via the classical mirror map. 
\end{corollary}
\begin{proof}
The first statement follow from Lemma \ref{lemmainstcoord} together with the actions \eqref{heisact} and \eqref{Rnaction}. The last statement follows immediately from \eqref{ZnHeisact}, together with \eqref{heisact}, \eqref{Rnaction} and \eqref{MM}.
\end{proof}
\begin{remark}\leavevmode
\begin{itemize}
\item We remark that since the actions \eqref{heisact} leave the $t^a=\text{Im}(z^a)$ coordinates invariant, via the quantum corrected mirror map we get an action of $\mathbb{Z}^n$ and $\text{Heis}_{2n+3,D}$ not only on  $\overline{\mathcal{N}}_{\text{IIB}}=\mathcal{M}^{-1}(\overline{\mathcal{N}}_{\text{IIA}})$, but also the bigger manifold $\overline{\mathcal{N}}_{\text{IIB}}^{\text{cl}}\supset \overline{\mathcal{N}}_{\text{IIB}}$ where $\mathrm{SL}(2,\mathbb{Z})$ always acts via \eqref{sl2can}.
\item It is not hard to the check that $(\rho,z^a,\zeta^i,\widetilde{\zeta}_i,\sigma)$ do not have any nice transformation property under the $\mathbb{R}_{>0}$ scaling action \eqref{scalingaction}.
\end{itemize}

\end{remark}

\begin{proposition}\label{heisisoprop}
The group $\text{Heis}_{2n+3,D}$ acts by isometries on any instanton corrected c-map space $(\widetilde{N},g_{\overline{N}})$ (recall Definition \ref{liftdef}). Furthermore, if $n_{\hat{\gamma}}=0$ for all $\hat{\gamma}\in \Lambda^{+}$ (but with $\chi$ possibly non-zero), then $(\widetilde{N},g_{\overline{N}})$ carries an action by isometries of $\{(\eta^i,\widetilde{\eta}_i,\kappa)\in \text{Heis}_{2n+3}(\mathbb{R})\; | \; \eta^0\in \mathbb{Z}\}$, and in particular by $H_{2n+2}$.
\end{proposition}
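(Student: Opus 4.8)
The plan is to exploit the twistor description and the coordinate transformation rules established in Corollary~\ref{coordcorollary}. Recall from Proposition~\ref{localcontprop} that the contact structure of the twistor space $\mathcal{Z}\cong\widetilde{N}\times\mathbb{C}P^1$ is given by \eqref{contactlocal} in terms of $f$, $\theta_{\pm}^P|_{\overline{N}}$ and $\theta_3^P|_{\overline{N}}$, with explicit formulas \eqref{fthetaexpres}. As in the proof of Theorem~\ref{theorem2}, it suffices to show that for $g\in\text{Heis}_{2n+3,D}$ the diffeomorphism of $\widetilde{N}$ (which lifts to $\mathcal{Z}$ acting trivially on the fiber coordinate $t$) is a twistor space automorphism, i.e.\ it is holomorphic, preserves the contact distribution $\mathrm{Ker}(\lambda)$, and commutes with the real structure $t\mapsto -1/\bar t$. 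The commutation with the real structure is immediate since the fiber action is trivial. For the other two properties I would use the type IIA Darboux coordinates \eqref{Darbouxcoordsinst} from Theorem~\ref{theorem3}, and check directly how they transform under the Heisenberg action \eqref{heisact} (extended to the discrete subgroup $\text{Heis}_{2n+3,D}$ by Corollary~\ref{coordcorollary}).

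Concretely, first I would compute the transformation of $\xi^i$, $\widetilde{\xi}_i$, $\alpha$ in \eqref{Darbouxcoordsinst} under $(\eta^i,\widetilde{\eta}_i,\kappa)\in\text{Heis}_{2n+3,D}$. Since $\zeta^i\mapsto\zeta^i+\eta^i$ with $\eta^i\in\mathbb{Z}$, and $R$, $z^i$, $F_i$ are unchanged (these depend only on $\tau_2$, $z^a$, which Corollary~\ref{coordcorollary} shows are invariant), we get $\xi^i\mapsto\xi^i+\eta^i$. The key point is that $\xi_\gamma(\zeta)=q_i\xi^i(\zeta)\mapsto\xi_\gamma(\zeta)+q_i\eta^i$ shifts by an \emph{integer} $q_i\eta^i$, so $\exp(2\pi\mathrm{i}\xi_\gamma(\zeta))$ is invariant; hence the integral correction terms in $\widetilde{\xi}_i$ and the $\mathcal W$- and dilogarithm-terms in $\alpha$ transform covariantly, and after using $\widetilde{\zeta}_i\mapsto\widetilde{\zeta}_i+\widetilde{\eta}_i$, $\sigma\mapsto\sigma+\kappa+\widetilde{\zeta}_i\eta^i-\zeta^i\widetilde{\eta}_i$ one obtains the affine-linear transformation
\begin{equation*}
\xi^i\mapsto\xi^i+\eta^i,\qquad \widetilde{\xi}_i\mapsto\widetilde{\xi}_i+\widetilde{\eta}_i,\qquad \alpha\mapsto\alpha+\kappa+\widetilde{\eta}_i\xi^i-\eta^i\widetilde{\xi}_i
\end{equation*}
(possibly up to a sign/normalization matching the conventions of Theorem~\ref{theorem3}). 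This is exactly the standard Heisenberg action on Darboux coordinates; it manifestly preserves $\D\alpha+\widetilde{\xi}_i\D\xi^i-\xi^i\D\widetilde{\xi}_i$, hence the contact distribution. Holomorphicity then follows by the same continuity argument as in Theorem~\ref{theorem2}: the Darboux coordinates are holomorphic coordinates on the open dense subset $\mathcal{Z}_D$ of $\mathcal{Z}$ where they are defined, the transformation is holomorphic there, and since the map and the complex structure $\mathcal{I}$ are globally defined the identity $\mathcal{I}\circ\D g=\D g\circ\mathcal{I}$ extends to all of $\mathcal{Z}$ by density. Therefore $g$ acts by twistor automorphisms, hence by isometries.

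For the second statement, when $n_{\hat\gamma}=0$ for all $\hat\gamma\in\Lambda^+$ only the $\gamma=q_0\gamma^0$ terms survive in the variation of BPS structures \eqref{varBPS}, and these satisfy $\zeta_\gamma=-q_0\zeta^0$; moreover the worldsheet corrections $\mathfrak F^{\text{w.s.}}$ reduce to the constant $\chi(Z^0)^2\zeta(3)/2(2\pi\mathrm i)^3$ term. Thus in the instanton sums only $\zeta^0$ appears in the exponentials $\exp(2\pi\mathrm i n\zeta_\gamma)$, and consequently only $\eta^0\in\mathbb{Z}$ is required for periodicity, while $\eta^a$ can be arbitrary real — so the action extends to $\{(\eta^i,\widetilde{\eta}_i,\kappa)\in\text{Heis}_{2n+3}(\mathbb{R})\mid\eta^0\in\mathbb{Z}\}$, which contains $H_{2n+2}$ (the $\eta^0=0$ subgroup). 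The argument is identical to the one above, now with the relaxed condition on $\eta^a$. The main obstacle I anticipate is purely bookkeeping: tracking the precise normalization conventions relating \eqref{Darbouxcoordsinst} (with its factor of $-2\pi\mathrm i$ and the $t\to-\mathrm i t$ rescaling of Section~\ref{QMsec}) to the contact form \eqref{contstr}, and verifying that the $\alpha$-shift picks up exactly $\kappa+\widetilde{\eta}_i\xi^i-\eta^i\widetilde{\xi}_i$ and not some variant — but this is a routine check once the transformation of each summand in \eqref{Darbouxcoordsinst} is written out, using only the integrality $q_i\eta^i\in\mathbb{Z}$ and the definitions \eqref{wdef}, \eqref{Rogersdilogarithm}. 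An alternative, perhaps cleaner route avoiding Theorem~\ref{theorem3} (which is only included for completeness) would be to verify invariance of the contact form \eqref{contstr} directly: $f$, $\theta_3^P|_{\overline N}$ and $\theta_\pm^P|_{\overline N}$ from \eqref{fthetaexpres} depend on $(\zeta^i,\widetilde\zeta_i,\sigma)$ only through $\D\zeta_\gamma$, $\langle\zeta,\D\zeta\rangle$, $\D\sigma$ and the integer-shift-invariant combination $e^{2\pi\mathrm i n\zeta_\gamma}$, so that $f\frac{\D t}{t}+t^{-1}\mathrm i\theta_+^P-2\mathrm i\theta_3^P-t\mathrm i\theta_-^P$ changes only by an exact term coming from $\D\sigma-\langle\zeta,\D\zeta\rangle$, which is precisely the statement that the Heisenberg action preserves the contact distribution.
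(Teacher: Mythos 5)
Your proposal is correct, but it takes a genuinely different route from the paper. The paper's own proof of Proposition \ref{heisisoprop} never touches the twistor space: it invokes Corollary \ref{coordcorollary} for the action \eqref{ZnHeisact} on the type IIA coordinates, observes that the tensor $T$ in \eqref{non-deg} and the sets $N'$, $\overline{N}$ are invariant (so the domain $\widetilde{N}$ carries the action at all --- a step you should state explicitly rather than assume), and then reads off invariance of $g_{\overline{N}}$ directly from the explicit formula \eqref{coordQKmetric2}, using that every instanton term is periodic in $\zeta^i$ and independent of $\widetilde{\zeta}_i,\sigma$, and that $\D\sigma-\langle\zeta,\D\zeta\rangle$ and $W_i$ are manifestly invariant under \eqref{heisact}. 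Your twistor-automorphism argument is instead the strategy the paper reserves for the $\mathbb{Z}^n$ factor in Proposition \ref{Rnisotheorem}, where no closed metric formula makes the invariance visible; applied to the Heisenberg factor it works but costs more, since it routes through Theorem \ref{theorem3} (whose proof occupies Appendix \ref{appendixtypeiiadc} and which the paper explicitly flags as not needed for these sections). The one computation you defer is also the crux of your route: $\mathrm{L}(\exp(2\pi\I\xi_\gamma))$ is \emph{not} invariant under $\xi_\gamma\to\xi_\gamma+q_i\eta^i$ but shifts by $\pi\I\, q_i\eta^i\log(1-\exp(2\pi\I\xi_\gamma))$, and one must check that this shift contributes exactly $\eta^i\widetilde{\xi}_i^{\text{inst}}$ to $\alpha$, which combines with the classical shift $\kappa+\eta^i\widetilde{\xi}_i^{c}-\widetilde{\eta}_i\xi^i$ to leave $\D\alpha+\widetilde{\xi}_i\D\xi^i-\xi^i\D\widetilde{\xi}_i$ invariant; this does work out, but "transform covariantly" is hiding the only nontrivial cancellation in your argument. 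Your closing alternative --- checking invariance of \eqref{contstr} directly from \eqref{fthetaexpres} --- is essentially the paper's proof transplanted from the metric to the contact form, and is the cleanest version of what you propose.
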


\begin{proof}
By Corollary \ref{coordcorollary}, the group $\text{Heis}_{2n+3,D}$ acts on the $(\zeta^i,\widetilde{\zeta}_i,\sigma)$ part of the type IIA coordinates $(\rho, z^a,\zeta^i,\widetilde{\zeta}_i,\sigma)$ via \eqref{ZnHeisact}. Because the instanton corrections are periodic in $\zeta^i$ and not depend on $\widetilde{\zeta}_i$ and $\sigma$, it follows immediately that the tensor $T$ in \eqref{non-deg} as well as the sets $N'$ from \eqref{n'def} and $\overline{N}$ from \eqref{QKCASKdomain} are invariant under the action of $\text{Heis}_{2n+3,D}$. In particular, it follows that $\widetilde{N}$ must carry an action of $\text{Heis}_{2n+3,D}$. On the other hand, by the explicit formula \eqref{coordQKmetric2} for $g_{\overline{N}}$ and using again the fact that all instanton correction terms are periodic in $\zeta^i$ and independent from $\widetilde{\zeta}_i$ and $\sigma$, it can be easily checked that  the action \eqref{heisact} of $\text{Heis}_{2n+3,D}$ acts by isometries on $(\widetilde{N},g_{\overline{N}})$.\\

The last statement follows immediately from the previous argument, together with the fact that if $n_{\hat{\gamma}}=0$ for all $\hat{\gamma}\in \Lambda^+$, then the instanton correction terms only depend on $\zeta^0$.
\end{proof}

We now show the following proposition, dealing with the $\mathbb{R}^n$-factor of \eqref{uniisotree} in the case of instanton corrected q-map spaces.

\begin{proposition}\label{Rnisotheorem}
Let $(\widetilde{N},g_{\overline{N}})$ be an instanton corrected q-map space. Then $(\widetilde{N},g_{\overline{N}})$ carries an action by isometries by $\mathbb{Z}^n\subset \mathbb{R}^n$ given by \eqref{ZnHeisact}. Furthermore, if $n_{\hat{\gamma}}=0$ for all $\hat{\gamma}\in \Lambda^{+}$ (but with $\chi$ possibly non-zero), then $(\widetilde{N},g_{\overline{N}})$ carries an action by isometries by the full $\mathbb{R}^n$.
\end{proposition}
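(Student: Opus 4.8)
The plan is to reduce the statement to the analogous fact already established for tree-level q-map spaces, exploiting the fact that the $\mathbb{R}^n$-action is built precisely so that it commutes with the quantum corrections. First I would recall from Corollary \ref{coordcorollary} that, in the quantum corrected mirror map coordinates $(\rho, z^a, \zeta^i, \widetilde{\zeta}_i, \sigma)$, the action of $v \in \mathbb{Z}^n$ is given by the \emph{same} formula \eqref{ZnHeisact} as in the tree-level case, with $\rho$ playing the role of $\rho^{\text{cl}}$; this is the content of Lemma \ref{lemmainstcoord} and its corollary. The key observation is that all instanton correction terms entering $T$ in \eqref{non-deg}, the function $f$ in \eqref{fdef}, and ultimately the metric \eqref{coordQKmetric2}, depend on the $z^a$ only through $t^a = \mathrm{Im}(z^a)$ and through the exponentials $e^{2\pi\mathrm{i} q_a z^a}$, while the shift $z^a \mapsto z^a + v^a$ with $v^a \in \mathbb{Z}$ leaves $t^a$ fixed and leaves $e^{2\pi\mathrm{i}q_a z^a}$ invariant (since $q_a v^a \in \mathbb{Z}$). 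Hence the entire instanton-corrected structure is invariant under the $\mathbb{Z}^n$-action.

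The steps I would carry out, in order: (1) verify that the tensor $T$ of \eqref{non-deg} is invariant under the $\mathbb{Z}^n$-action, using that $g_M$ written in the coordinates $(Z^0, z^a)$ has coefficients periodic in $b^a = \mathrm{Re}(z^a)$ (this was essentially noted around \eqref{KpotCASK}) and that $V_\gamma^{\text{inst}}$ and $|\mathrm{d}Z_\gamma|^2$ are invariant because $\zeta_\gamma$ and $|\widetilde{Z}_\gamma|$ transform appropriately — more precisely, one checks directly from \eqref{ZnHeisact} that the combination $(Z^0, z^a, \zeta^i, \widetilde{\zeta}_i)$-dependent data defining the HK metric on $N$ is preserved; (2) conclude that the open subsets $N'$ of \eqref{n'def} and $\overline{N}$ of \eqref{QKCASKdomain} are $\mathbb{Z}^n$-invariant, hence the action descends to an action on $\widetilde{N}$; (3) inspect the explicit metric formula \eqref{coordQKmetric2} and check term by term that each summand — $g_{\overline{M}}$, the $\rho^{\text{inst}}$ terms, the $\eta^{\text{inst}}_\pm$ terms, the $W_i + W_i^{\text{inst}}$ terms — is invariant; here one uses that $g_{\overline{M}}$ is invariant under the tree-level $\mathbb{R}^n$-action (already known from \cite{CTSduality}), that $\rho^{\text{inst}}, \rho_3^{\text{inst}}$ and the Bessel-type sums depend only on $t^a$ and $\zeta_\gamma$, and that the transformation of $\widetilde{\zeta}_i$ in \eqref{ZnHeisact} is exactly compensated in $W_i + W_i^{\text{inst}}$ and in the $\mathrm{d}\sigma - \langle \zeta, \mathrm{d}\zeta\rangle$ combination; (4) for the final sentence, observe that when $n_{\hat\gamma} = 0$ for all $\hat\gamma \in \Lambda^+$, the prepotential correction $\mathfrak{F}^{\text{w.s.}}$ reduces to the constant-curvature term $\chi (Z^0)^2 \zeta(3)/(2(2\pi\mathrm{i})^3)$ and the BPS indices are supported on $\pm q_0\gamma^0$, so all correction terms become \emph{independent of $z^a$ altogether} (they depend only on $\tau_2 = |Z^0|$, i.e. on $\rho$ and $\mathcal{K}$, and on $\zeta^0$); hence the restriction $v^a \in \mathbb{Z}$ is no longer needed and the full $\mathbb{R}^n$ acts.

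I expect the main obstacle to be the bookkeeping in step (3): confirming that the shift of $\widetilde{\zeta}_i$ by cubic-in-$v$ expressions in \eqref{ZnHeisact} is precisely absorbed so that $W_i + W_i^{\text{inst}}$ and the contact-type one-form in the $\mathrm{d}\sigma$-term are genuinely invariant (not merely invariant up to terms that cancel between different summands of \eqref{coordQKmetric2}). The cleanest way to handle this is not to manipulate \eqref{coordQKmetric2} directly but to argue upstream: show that the $\mathbb{Z}^n$-action lifts to an isometry of the HK manifold $(N, g_N, \omega_1, \omega_2, \omega_3)$ preserving the rotating vector field $V$, lifts further to the $S^1$-bundle $P$ preserving the connection $\eta$ and the Hamiltonian $f$, and therefore descends through the HK/QK correspondence to an isometry of $(\overline{N}, g_{\overline{N}})$ — this is exactly how the tree-level result of \cite[Theorem 3.17]{CTSduality} was obtained, and since the relevant invariance statements for the instanton-corrected HK data follow from the $e^{2\pi\mathrm{i}q_a z^a}$-periodicity, the argument transplants verbatim. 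Once the action is shown to preserve all the HK/QK input data, invariance of $g_{\overline{N}}$ is automatic and one avoids the explicit term-by-term check entirely.
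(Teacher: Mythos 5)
Your strategy for the first half (invariance of the CASK data, of the tensor $T$ in \eqref{non-deg}, and hence of $N'$ and $\overline{N}$) coincides with the paper's, but one justification as you state it is too weak: you claim the instanton terms in $T$ depend on $z^a$ only through $t^a$ and through $e^{2\pi\I q_a z^a}$, and are therefore invariant term by term. This is true for the worldsheet part $\mathfrak{F}^{\text{w.s.}}$, but the BPS contributions to $T$ involve $K_0(4\pi R n |q_0+q_a z^a|)$ and $e^{-2\pi\I n(q_0\zeta^0+q_a\zeta^a)}$, which are \emph{not} individually invariant: under $z^a\mapsto z^a+v^a$, $\zeta^a\mapsto\zeta^a+\zeta^0 v^a$ the term of charge $q_0\gamma^0+\hat\gamma$ is carried to the term of charge $(q_0+q_av^a)\gamma^0+\hat\gamma$. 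Invariance of the sum therefore requires relabelling $q_0\to q_0+q_av^a$, which in turn uses both $q_av^a\in\mathbb{Z}$ and the fact that the BPS indices \eqref{varBPS} are independent of $q_0$. This is exactly the point the paper isolates, and your argument should make it explicit rather than rely on periodicity of the exponentials alone.

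For the second half your route genuinely differs from the paper's. The paper does not check \eqref{coordQKmetric2} term by term either; instead it lifts the $\mathbb{Z}^n$-action to the twistor space (acting trivially on the $\mathbb{C}P^1$ fibre), computes the induced transformation \eqref{vatwistor} of the Darboux coordinates \eqref{typeiibdc}, observes that $\D\alpha_{c_\ell}-\widetilde{\xi}_i\D\xi^i$ is invariant on a dense subset, and concludes by continuity that the action is by twistor automorphisms, hence by isometries. Your preferred ``upstream'' argument — show that the action preserves the full HK/QK input data $(N,g_N,\omega_\alpha)$, $V$, $(P,\eta)$, $f$ and the slice $\overline{N}$, so that invariance of $g_{\overline{N}}$ is automatic — is also sound and arguably more conceptual, but it carries one obligation the twistor route sidesteps: you must exhibit a lift of the action to the principal $S^1$-bundle $P$ and verify it preserves the connection $\eta$ of \eqref{etadef}, i.e.\ check invariance of $\Theta=\pi(\D\sigma-\pi_N^*\langle\zeta,\D\zeta\rangle)$ (which holds because \eqref{Rnaction} acts symplectically on $(\zeta^i,\widetilde{\zeta}_i)$ and fixes $\sigma$) together with invariance of the $\eta_\gamma^{\text{inst}}$-sum and of $\iota_V g_N$, again via the $q_0$-relabelling. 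With that supplied, your argument is complete; the trade-off is that the paper's twistor computation doubles as the input for the S-duality analysis of Section \ref{Sdsec}, whereas yours stays entirely at the level of the HK/QK correspondence. Your treatment of the final statement (full $\mathbb{R}^n$ when $n_{\hat\gamma}=0$) agrees with the paper's.
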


\begin{proof}
Notice that with respect to the coordinates $(Z^0,z^a)=(Z^0,Z^a/Z^0)$ of $M$, we can write the K\"{a}hler potential $k$ of the CASK manifold as \eqref{KpotCASK}. In particular, we see that $k(Z^0,b^a+\I t^a)$ is invariant under integer shifts of the $b^a$, and hence, $M$ must be invariant under the action of $\mathbb{Z}^n$, and it must be an isometry of the CASK metric.\\

On the other hand, when considering the associated instanton corrected HK geometry, one must consider the tensor $T$ given in \eqref{non-deg}. In our particular case, we can rewrite $T$ as follows
\begin{equation}\label{Tqmap}
\begin{split}
    T&=-\text{Im}(\tau_{ij})\D Z^i \D\overline{Z}^j +\frac{\chi}{2\pi}\sum_{q_0\in \mathbb{Z}-\{0\}}\sum_{n>0}e^{-2\pi \mathrm{i}nq_0\zeta^0}K_0(4\pi R n|q_0|)|q_0\D Z^0|^2 \\
    &\quad \quad -\frac{1}{2\pi}\sum_{\hat{\gamma}\in \Lambda^+}n_{\hat{\gamma}}\sum_{q_0\in \mathbb{Z}}\sum_{n>0}e^{-2\pi \mathrm{i}n(q_a\zeta^a +q_0\zeta^0)}K_0(4\pi R n|q_0+q_az^a|)|Z^0q_a\D z^a+(q_0+q_az^a)\D Z^0|^2\,.
\end{split}
\end{equation}
From the above formula together with \eqref{Rnaction}, it follows that the instanton part of $T$ in \eqref{Tqmap} is invariant under the action of $\mathbb{Z}^n$. Indeed, each summand in the term proportional to $\chi$ is invariant, while in the last term one finds that for each fixed $\hat{\gamma}=q_a\gamma^a\in \Lambda^{+}$, we have $q_0\to q_0+q_av^a$, which remains invariant since we have a sum over all $q_0\in \mathbb{Z}$.  Since $\mathbb{Z}^n$ also acts by isometries on the CASK metric, it follows that $T$ is invariant under $\mathbb{Z}^n$, and hence the maximal domain of definition $N$ of the HK metric is invariant under $\mathbb{Z}^n$. Similarly, the conditions defining the subsets $N'$ in \eqref{n'def} and $\overline{N}$ in \eqref{QKCASKdomain} required in the construction of the instanton corrected QK manifold via HK/QK correspondence are seen to be $\mathbb{Z}^n$-invariant, so that $\widetilde{N}$ carries an action of $\mathbb{Z}^n$. To show that the action of $\mathbb{Z}^n$ is by isometries, we show that one can lift the action to the twistor space acting by twistor space automorphisms. We define the lift by declaring that it acts trivially on the $\mathbb{C}P^1$ fibers of $\mathcal{Z}\cong \widetilde{N}\times \mathbb{C}P^1$. By using the explicit formulas for $(\xi^i,\widetilde{\xi}_i^{\text{cl}},\alpha^{\text{cl}})$ in \eqref{dccl} one finds that they have the following transformation rule under the action of $\mathbb{Z}^n$ (or even $\mathbb{R}^n$):

\begin{equation}
    v\cdot\begin{pmatrix}
    \xi^0\\
    \xi^a\\
    \widetilde{\xi}_0^{\text{cl}}\\
    \widetilde{\xi}_a^{\text{cl}}\\
    -\frac{1}{2}(\alpha^{\text{cl}}-\xi^i\widetilde{\xi}_i^{\text{cl}})\\
    \end{pmatrix}=\begin{pmatrix}
    \xi^0\\
    \xi^a+\xi^0v^a\\
    \widetilde{\xi}_0^{\text{cl}}+\frac{1}{6}k_{abc}v^av^bv^c\xi^0 + \frac{1}{2}k_{abc}v^av^b\xi^c -\widetilde{\xi}_a^{\text{cl}}v^a\\
    \widetilde{\xi}_a^{\text{cl}} -\frac{1}{2}\xi^0k_{abc}v^bv^c-k_{abc}v^b\xi^c\\
    -\frac{1}{2}(\alpha^{\text{cl}}-\xi^i\widetilde{\xi}_i^{\text{cl}})-\frac{1}{6}(\xi^0)^2k_{abc}v^av^bv^c -\frac{1}{2}k_{abc}v^av^b\xi^c\xi^0  -\frac{1}{2}k_{abc}v^b\xi^a\xi^c\\
    \end{pmatrix}
\end{equation}
where we have used $-\frac{1}{2}(\alpha^{\text{cl}}-\xi^i,\widetilde{\xi}_i^{\text{cl}})$ instead of $\alpha^{\text{cl}}$ because we want to relate to the Darboux coordinates \eqref{typeiibdc}. On the other hand, using \eqref{typeiibdc} and \eqref{Rnaction} one finds that under the action of $\mathbb{Z}^n$ the following holds

\begin{equation}
    v\cdot\begin{pmatrix}
    \widetilde{\xi}_0^{\text{inst}}\\
    \widetilde{\xi}_a^{\text{inst}}\\
    \alpha^{\text{inst}}\\
    \end{pmatrix}=\begin{pmatrix}
    \widetilde{\xi}_0^{\text{inst}} -\widetilde{\xi}_a^{\text{inst}}v^a\\
    \widetilde{\xi}_a^{\text{inst}} \\
    \alpha^{\text{inst}}\\
    \end{pmatrix}\,.
\end{equation}
Hence, joining everything together one finds that  $(\xi^i,\widetilde{\xi}_i,\alpha)$ from \eqref{typeiibdc} transforms under the $\mathbb{Z}^{n}$ action by
\begin{equation}\label{vatwistor}
    v\cdot\begin{pmatrix}
    \xi^0\\
    \xi^a\\
    \widetilde{\xi}_0\\
    \widetilde{\xi}_a\\
    \alpha\\
    \end{pmatrix}=\begin{pmatrix}
    \xi^0\\
    \xi^a+\xi^0v^a\\
    \widetilde{\xi}_0+\frac{1}{6}k_{abc}v^av^bv^c\xi^0 + \frac{1}{2}k_{abc}v^av^b\xi^c -\widetilde{\xi}_av^a\\
    \widetilde{\xi}_a -\frac{1}{2}\xi^0k_{abc}v^bv^c-k_{abc}v^b\xi^c\\
    \alpha-\frac{1}{6}(\xi^0)^2k_{abc}v^av^bv^c -\frac{1}{2}k_{abc}v^av^b\xi^c\xi^0  -\frac{1}{2}k_{abc}v^b\xi^a\xi^c\,.
    \end{pmatrix}
\end{equation}
In particular, the same holds for the Darboux coordinates $(\xi^i,\widetilde{\xi}_{i},\alpha_{c_{\ell}})$ obtained in Corollary \ref{theorem1cor}, since $t$ is invariant under the lift of the $\mathbb{Z}^n$ action.  From the transformation rules \eqref{vatwistor} it follows that
\begin{equation}
   \D\alpha_{c_{\ell}} -\widetilde{\xi}_i\D \xi^i
\end{equation}
is invariant under the action of $\mathbb{Z}^n$ and acts holomorphically on a dense open subset of $\mathcal{Z}$ (here we use that Darboux coordinates for a holomorphic contact structure must be holomorphic coordinates). Since the action is globally defined and smooth, it follows that $\mathbb{Z}^n$ must act holomorphically globally on $\mathcal{Z}$ and preserve the contact structure.  The real structure is also trivially preserved, since the action on the fibers is trivial. Hence, we conclude that $\mathbb{Z}^n$ acts by twistor space automorphisms, and hence by isometries on $(\widetilde{N},g_{\overline{N}})$.\\

Finally, the case where $n_{\hat{\gamma}}=0$ for all $\hat{\gamma}\in \Lambda^{+}$ follows by a simplified version of the previous argument. The restriction of translations to $\mathbb{Z}^n$ is no longer required since $\mathfrak{F}^{\text{w.s}}$ does not have polylogarithm terms in this case, and the instanton corrections due to $\Omega(\gamma)$ only depends on the $\tau_2$ and $\zeta^0=\tau_1$ variables, which are left invariant under the full $\mathbb{R}^n$-action \eqref{Rnaction}.
\end{proof}

Before joining the previous results and the ones from Section \ref{Sdualitysec} together, we will need the following lemma:

\begin{lemma}\label{collecttheoremlemma}
The group $\mathrm{SL}(2,\mathbb{R})\ltimes (\mathbb{R}^n\ltimes H_{2n+2})$ combining the S-duality action with the action of $\mathbb{R}^n\ltimes H_{2n+2}$ on $\overline{\mathcal{N}}_{\text{IIB}}^{\text{cl}}$ has $\mathrm{SL}(2,\mathbb{Z})\ltimes (\mathbb{Z}^n\ltimes H_{2n+2,D})$ as a subgroup. 

\end{lemma}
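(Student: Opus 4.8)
The plan is to exhibit $\mathrm{SL}(2,\mathbb{Z})\ltimes(\mathbb{Z}^n\ltimes H_{2n+2,D})$ as a subgroup of $\mathrm{SL}(2,\mathbb{R})\ltimes(\mathbb{R}^n\ltimes H_{2n+2})$ by verifying that the relevant defining subsets are closed under all the structure maps of the larger semidirect product. First I would recall the Lie-group structure: $\mathrm{SL}(2,\mathbb{R})$ acts on $\mathbb{R}^n\ltimes H_{2n+2}$ by automorphisms (as spelled out through the S-duality action \eqref{sl2can} together with \eqref{Rnaction}, \eqref{heisact}, \eqref{autheis}), and $\mathbb{R}^n$ acts on $H_{2n+2}$ via $\varphi$ in \eqref{autheis}. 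A subset of the form $G_0\ltimes(V_0\ltimes H_0)$ with $G_0\subset\mathrm{SL}(2,\mathbb{R})$, $V_0\subset\mathbb{R}^n$, $H_0\subset H_{2n+2}$ is a subgroup provided (i) $G_0$, $V_0$, $H_0$ are each subgroups; (ii) $\varphi(V_0)$ preserves $H_0$; (iii) the $\mathrm{SL}(2,\mathbb{R})$-action by automorphisms restricted to $G_0$ preserves both $V_0$ and $H_0$ (equivalently preserves $V_0\ltimes H_0$). Taking $G_0=\mathrm{SL}(2,\mathbb{Z})$, $V_0=\mathbb{Z}^n$, $H_0=H_{2n+2,D}$, conditions (i) are immediate from the definitions \eqref{codim1heis}, \eqref{heisinstgroup} and the fact that $H_{2n+2,D}$ is cut out inside $H_{2n+2}$ by the integrality conditions $\eta^a\in\mathbb{Z}$, which are preserved under the group law \eqref{heisact} (the product only adds the $\eta$-components and a $\kappa$-term, leaving $\eta^a\in\mathbb{Z}$).

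Next I would check condition (ii): for $v\in\mathbb{Z}^n$ the automorphism $\varphi(v)$ of \eqref{autheis} sends $(\eta^0,\eta^a,\widetilde\eta_0,\widetilde\eta_a,\kappa)$ to $(\eta^0,\eta^a+\eta^0 v^a,\ldots)$, so on $H_{2n+2}$ (where $\eta^0=0$) it fixes $\eta^a$ outright; hence $\varphi(\mathbb{Z}^n)$ preserves the condition $\eta^a\in\mathbb{Z}$ and therefore $H_{2n+2,D}$. Then condition (iii): I would invoke the explicit description of how $S$ and $T$ (the generators \eqref{sl2gen}) act by automorphisms on $\mathbb{R}^n\ltimes H_{2n+2}$. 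From \eqref{sl2can}, $S$ and $T$ act on the pair $\binom{c^a}{b^a}$ and on the Heisenberg-type data by the matrices $\binom{0\,-1}{1\,\ 0}$, $\binom{1\,1}{0\,1}$ and their inverse-transpose analogues; all of these have integer entries, so they preserve $\mathbb{Z}^n$ and the integrality sublattice defining $H_{2n+2,D}$. Concretely one checks that conjugating an element of $\mathbb{Z}^n\ltimes H_{2n+2,D}$ by $S$ or by $T$ again lands in $\mathbb{Z}^n\ltimes H_{2n+2,D}$; since $S,T$ generate $\mathrm{SL}(2,\mathbb{Z})$ this extends to all of $\mathrm{SL}(2,\mathbb{Z})$.

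Having established (i)--(iii), it follows formally that $\mathrm{SL}(2,\mathbb{Z})\ltimes(\mathbb{Z}^n\ltimes H_{2n+2,D})$ is closed under multiplication and inversion inside $\mathrm{SL}(2,\mathbb{R})\ltimes(\mathbb{R}^n\ltimes H_{2n+2})$, hence is a subgroup, which is the assertion. I do not expect a serious obstacle here: the content is purely bookkeeping about which subsets are preserved by integer-coefficient linear maps, and the only mild care needed is to confirm that the $\kappa$-component of the Heisenberg product and the cubic correction terms in \eqref{Rnaction}, \eqref{autheis} do not spoil integrality when the $\eta^a$ and $v^a$ are integers and $k_{abc}$ need not be — but these terms all land in the $\widetilde\eta$, $\widetilde\zeta$, $\kappa$, $\sigma$ slots, which carry no integrality constraint in $H_{2n+2,D}$ or $\mathbb{Z}^n$, so no condition is violated. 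The main (very minor) subtlety is simply to make explicit that the integrality constraints cutting out $\mathbb{Z}^n$ and $H_{2n+2,D}$ involve only the $b^a$-type and $\eta^a$-type coordinates, precisely the ones on which the $\mathrm{SL}(2,\mathbb{Z})$- and $\mathbb{Z}^n$-actions restrict to integer matrices.
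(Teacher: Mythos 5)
Your proposal is correct and follows essentially the same route as the paper: reduce to checking that the integrality constraints cutting out $\mathbb{Z}^n$ and $H_{2n+2,D}$ are preserved by the $\mathbb{Z}^n$-automorphisms \eqref{autheis} and by conjugation by the generators $S,T$ of $\mathrm{SL}(2,\mathbb{Z})$. The paper makes your step (iii) explicit by computing $\varphi_T(v^a,(\eta^a,\dots))=(v^a,(\eta^a-v^a,\dots))$ and $\varphi_S(v^a,(\eta^a,\dots))=(-\eta^a,(v^a,\dots))$, which is exactly the integer-matrix mixing of the $v^a$ and $\eta^a$ slots you describe, and your observation that the cubic $k_{abc}$-terms only affect unconstrained coordinates matches the paper's implicit use of the same fact.
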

\begin{proof}
The fact that the $\mathrm{SL}(2,\mathbb{R})$ S-duality action and the $\mathbb{R}^n\ltimes H_{2n+2}$ action combine into an action of a group of the form $\mathrm{SL}(2,\mathbb{R})\ltimes (\mathbb{R}^n\ltimes H_{2n+2})$ follows from \cite[Proposition 3.10]{CTSduality}. On the other hand, to check that  
\begin{equation}
    \mathrm{SL}(2,\mathbb{Z})\ltimes (\mathbb{Z}^n\ltimes H_{2n+2,D})\subset \mathrm{SL}(2,\mathbb{R})\ltimes (\mathbb{R}^n\ltimes H_{2n+2})
\end{equation}
defines a subgroup, it is enough to check that the action by automorphisms of $\mathbb{Z}^n$ on $H_{2n+2}$ preserves the integrality contraint of $H_{2n+2,D}\subset H_{2n+2}$, and that the action by automorphisms of $\mathrm{SL}(2,\mathbb{Z})$ on $\mathbb{R}^n\ltimes H_{2n+2}$ preserves the integrality constraint of $\mathbb{Z}^n\ltimes H_{2n+2,D}\subset \mathbb{R}^n\ltimes H_{2n+2}$. The fact that the action by automorphisms of $\mathbb{Z}^n$ preserves the integrality constraint follows immediately from \eqref{autheis}. On the other hand, since $\mathrm{SL}(2,\mathbb{Z})$ is generated by $T$ and $S$ given in \eqref{sl2gen}, it is enough to check that the induced automorphism $\varphi_T, \varphi_S\in \text{Aut}(\mathbb{R}^n\ltimes H_{2n+2})$ preserve the integrality constraint of $\mathbb{Z}^n\ltimes H_{2n+2,D}$. From the fact that in the group $\mathrm{SL}(2,\mathbb{R})\ltimes (\mathbb{R}^n\ltimes H_{2n+2})$ we have that
\begin{equation}
    (A,0,0)\cdot (1,v^a,(\eta^a,\eta_i,\kappa))\cdot(A^{-1},0,0)=(1,\varphi_A(v^a,(\eta^a,\eta_i,\kappa))) 
\end{equation}
for $A\in \mathrm{SL}(2,\mathbb{R}), \; (v^a)\in \mathbb{R}^n, \; (\eta_a,\eta_i,\kappa)\in H_{2n+2}$, it is straightforward to use the actions of $\mathrm{SL}(2,\mathbb{R})$ and $\mathbb{R}\ltimes H_{2n+2}$ to compute that
\begin{equation}
    \varphi_T(v^a,(\eta^a,\eta_i,\kappa))=(v^a,(\eta^a-v^a,...)), \quad \varphi_S(v^a,(\eta^a,\eta_i,\kappa))=(-\eta^a,(v^a,...))\,.
\end{equation}
It then follows that the integrality constraints of $\mathbb{Z}^n\ltimes H_{2n+2,D}$ are preserved by the automorphisms of $\mathrm{SL}(2,\mathbb{Z})$, and hence we obtain the desired result.  
\end{proof}

We can now state our main theorem:
\begin{theorem}\label{collectiontheorem}
Consider an instanton corrected q-map space $(\widetilde{N},g_{\overline{N}})$ of dimension $4n+4$, where are before, we take $\widetilde{N}$ to be the maximal domain of definition of $g_{\overline{N}}$. Let $T,S\in \mathrm{SL}(2,\mathbb{Z})$ be as in \eqref{sl2gen}. Then:

\begin{itemize}
\item The group 

\begin{equation}\label{breakiso1}
    \langle T \rangle \ltimes (\mathbb{Z}^n \ltimes H_{2n+2,D})
\end{equation}
acts by isometries on $(\widetilde{N},g_{\overline{N}})$, 
where $\langle T \rangle \cong \mathbb{Z}$ is the subgroup generated by $T$. 
\item Assume that we take the one-loop parameter to be $c_{\ell}=\frac{\chi}{192\pi}$. We can always find a non-empty open subset $\widetilde{N}_S\subset \widetilde{N}$ where $(\widetilde{N}_S,g_{\overline{N}})$ carries an isometry group the the form 
 \begin{equation}
      \langle S \rangle \ltimes (\mathbb{Z}^n \ltimes H_{2n+2,D}),
 \end{equation}
 where $\langle S \rangle \cong \mathbb{Z}/4\mathbb{Z}$ is the group generated by $S\in \mathrm{SL}(2,\mathbb{Z})$. Furthermore, if $\widetilde{N}_{\mathrm{SL}(2,\mathbb{Z})}\subset \widetilde{N}$ is  $\mathrm{SL}(2,\mathbb{Z})$-invariant,  then $\mathrm{SL}(2,\mathbb{Z})$ acts by isometries on $(\widetilde{N}_{\mathrm{SL}(2,\mathbb{Z})},g_{\overline{N}})$. In particular, if $\widetilde{N}$ is already invariant under $\mathrm{SL}(2,\mathbb{Z})$ then \eqref{breakiso1} can be enhanced to
\begin{equation}
    \mathrm{SL}(2,\mathbb{Z})\ltimes (\mathbb{Z}^n\ltimes H_{2n+2,D})\,.
\end{equation}
\item Finally, if $n_{\hat{\gamma}}=0$ for all $\hat{\gamma}\in \Lambda^{+}$, then in the previous statements we can replace $\mathbb{Z}^n$ and $H_{2n+2,D}$ by $\mathbb{R}^n$ and $H_{2n+2}$, respectively. If furthermore we take  $\chi=c_{\ell}=0$ and $n_{\hat{\gamma}}=0$ for all $\hat{\gamma}\in \Lambda^{+}$, then we return to the tree-level q-map space case, where there is a connected $3n+6$ dimensional Lie group $G$ acting by isometries on $(\widetilde{N},g_{\overline{N}})$, see \cite[Theorem 3.17]{CTSduality}. The group $G$ in particular contains the S-duality action by $\mathrm{SL}(2,\mathbb{R})$, an action by $\mathbb{R}^{n}\ltimes H_{2n+2}$, and a dilation action by $\mathbb{R}_{>0}$.  \eqref{uniisotree}. 
\end{itemize}
\end{theorem}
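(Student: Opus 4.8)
The plan is to assemble Theorem~\ref{collectiontheorem} from the component results already proved in Sections~\ref{Sdualitysec} and~\ref{universalisosec}, together with the group-theoretic bookkeeping of Lemma~\ref{collecttheoremlemma}. For the first bullet, I would combine three facts: (i) the subgroup $H_{2n+2,D}\subset\mathrm{Heis}_{2n+3,D}$ acts by isometries on any instanton corrected c-map space by Proposition~\ref{heisisoprop} (applied to $(\widetilde N,g_{\overline N})$, which in particular is an instanton corrected c-map space); (ii) $\mathbb{Z}^n$ acts by isometries by Proposition~\ref{Rnisotheorem}; and (iii) $\langle T\rangle\cong\mathbb{Z}$ acts by isometries — here I would note that $T$ always leaves $\widetilde N$ invariant, since the $S$-duality action of $T$ only shifts $(c^a,\psi)\mapsto(c^a+b^a,\psi-c_0)$ and $(c_0)\mapsto(c_0)$ with $\tau_2,t^a,c_a$ fixed, so $\overline M$ and hence $\widetilde N$ are preserved; then Theorem~\ref{theorem2} does \emph{not} immediately apply because it assumes $c_\ell=\tfrac{\chi}{192\pi}$, so instead I would observe that the $T$-action is actually realised inside $\mathrm{Heis}_{2n+3,D}$ up to the known coordinate transformations — more precisely, by Corollary~\ref{coordcorollary} the $T$-transformation in type IIA variables is a composition of a shift in $\zeta^0$ and shifts in $(\widetilde\zeta_i,\sigma)$, i.e.\ an element of $\mathrm{Heis}_{2n+3,D}$ conjugated by the integer $\mathbb{Z}^n$-action, which acts by isometries by Propositions~\ref{heisisoprop} and~\ref{Rnisotheorem}. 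Finally Lemma~\ref{collecttheoremlemma} guarantees these generate a group isomorphic to $\langle T\rangle\ltimes(\mathbb{Z}^n\ltimes H_{2n+2,D})$, which sits inside $\mathrm{SL}(2,\mathbb{R})\ltimes(\mathbb{R}^n\ltimes H_{2n+2})$ acting on $\overline{\mathcal N}_{\mathrm{IIB}}^{\mathrm{cl}}$.

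For the second bullet, I would invoke Theorem~\ref{Z4prop} directly: it produces the non-empty $S$-invariant open subset $\widetilde N_S\subset\widetilde N$ (explicitly~\eqref{Sinvariantsub}) on which $g_{\overline N}$ is positive definite and $\langle S\rangle\cong\mathbb{Z}/4\mathbb{Z}$ acts by isometries. I then need to check that $\widetilde N_S$ is also invariant under $\mathbb{Z}^n\ltimes H_{2n+2,D}$; this follows because that group acts trivially on $\tau_2,t^a$ (the only coordinates entering the defining inequalities of~\eqref{Sinvariantsub}), by Corollary~\ref{coordcorollary}. Propositions~\ref{heisisoprop} and~\ref{Rnisotheorem} then give the $\mathbb{Z}^n\ltimes H_{2n+2,D}$-isometries on $\widetilde N_S$, and Lemma~\ref{collecttheoremlemma} assembles the semidirect product $\langle S\rangle\ltimes(\mathbb{Z}^n\ltimes H_{2n+2,D})$. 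For the $\mathrm{SL}(2,\mathbb{Z})$-invariant case, Theorem~\ref{theorem2} gives the $\mathrm{SL}(2,\mathbb{Z})$-isometries on $\widetilde N_{\mathrm{SL}(2,\mathbb{Z})}$; combined again with the Heisenberg/translation isometries and Lemma~\ref{collecttheoremlemma}, one gets $\mathrm{SL}(2,\mathbb{Z})\ltimes(\mathbb{Z}^n\ltimes H_{2n+2,D})$, which restricts to the enhancement of~\eqref{breakiso1} when $\widetilde N$ itself is $\mathrm{SL}(2,\mathbb{Z})$-invariant.

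For the third bullet, the replacement of $\mathbb{Z}^n$ by $\mathbb{R}^n$ and of $H_{2n+2,D}$ by $H_{2n+2}$ when $n_{\hat\gamma}=0$ for all $\hat\gamma\in\Lambda^+$ is exactly the content of the ``furthermore'' clauses in Proposition~\ref{heisisoprop} (the $\eta^0$-direction need not be broken since the instanton corrections then depend only on $\zeta^0$, and actually the full $H_{2n+2}$ survives) and Proposition~\ref{Rnisotheorem} (the full $\mathbb{R}^n$ acts). I would re-run the same assembly with these larger groups, again citing Lemma~\ref{collecttheoremlemma} with $\mathbb{Z}^n,H_{2n+2,D}$ replaced by $\mathbb{R}^n,H_{2n+2}$ — which is in fact the original statement of that lemma. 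The final sentence, that setting $\chi=c_\ell=0$ and $n_{\hat\gamma}=0$ recovers the tree-level q-map metric~\eqref{QKtree} with its $3n+6$-dimensional isometry group $G$, is just the observation recorded below~\eqref{QKtree} together with a citation to \cite[Theorem 3.17]{CTSduality}; I would note that in this limit $\widetilde N$ is automatically invariant under the full $\mathrm{SL}(2,\mathbb{R})$-action and the dilation~\eqref{scalingaction}, so no restriction of the domain is needed.

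The main obstacle I anticipate is the bookkeeping in the first bullet showing that $\langle T\rangle$ genuinely acts by isometries on the \emph{maximal} domain $\widetilde N$ without assuming $c_\ell=\tfrac{\chi}{192\pi}$: one must argue that the $T$-transformation, which a priori is defined through the $S$-duality action~\eqref{sl2can} on the ambient $\overline{\mathcal N}_{\mathrm{IIB}}^{\mathrm{cl}}$, coincides with a concrete isometry built from the Heisenberg and $\mathbb{Z}^n$ actions whose isometry property is established (Propositions~\ref{heisisoprop},~\ref{Rnisotheorem}) independently of the one-loop parameter. This requires carefully tracking, via the quantum corrected mirror map~\eqref{MM} and Corollary~\ref{coordcorollary}, that $T$ acts on the type IIA coordinates $(\rho,z^a,\zeta^i,\widetilde\zeta_i,\sigma)$ precisely by $\zeta^0\mapsto\zeta^0+1$ together with compensating shifts of $(\zeta^a,\widetilde\zeta_i,\sigma)$, i.e.\ as an element of $\mathbb{Z}^n\ltimes\mathrm{Heis}_{2n+3,D}$; once that identification is made, the rest is a routine application of the cited propositions and Lemma~\ref{collecttheoremlemma}.
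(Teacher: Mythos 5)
Your proposal is correct and follows essentially the same route as the paper: the theorem is assembled from Propositions \ref{heisisoprop} and \ref{Rnisotheorem}, Theorems \ref{Z4prop} and \ref{theorem2}, and Lemma \ref{collecttheoremlemma}, with the key point — which you identify and justify more explicitly than the paper does — that $T$ together with $H_{2n+2,D}$ generates the $\mathrm{Heis}_{2n+3,D}$-action, so that $\langle T\rangle$ acts by isometries on the maximal domain without any assumption on $c_{\ell}$. The only slip is cosmetic: the defining inequalities of $\widetilde{N}_S$ in \eqref{Sinvariantsub} involve $\tau_1$ through $|\tau|$ as well, not only $\tau_2$ and $t^a$, but since $\mathbb{Z}^n\ltimes H_{2n+2,D}$ also fixes $\tau_1=\zeta^0$, the invariance of $\widetilde{N}_S$ still holds.
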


\begin{proof}

The first statement of the theorem then follows from Lemma \ref{collecttheoremlemma}, Proposition \ref{Rnisotheorem},  Proposition \ref{heisisoprop}, and the fact that the action of $T$ and $H_{2n+2,D}$ generate the action of $\text{Heis}_{2n+3,D}$. To check the second statement, notice that by Theorem \ref{Z4prop}, there is a non-empty open S-invariant subset $\widetilde{N}_S\subset \widetilde{N}$ where $S$ acts by isometries on $(\widetilde{N}_S,g_{\overline{N}})$. This subset is characterized in Theorem \ref{Z4prop} by 
\begin{equation}\label{breakiso2}
    \widetilde{N}_S=\{(\rho,z^a,\zeta^i,\widetilde{\zeta}_i,\sigma)\in \widetilde{N}\; | \; \epsilon<\tau_2, \;\; \epsilon< \frac{\tau_2}{|\tau_1|^2+|\tau_2^2|},\;\; t^a>K, \;\; |\tau|t^a>K\}\,.
\end{equation}
In particular, using that the action of $\mathbb{Z}^n$ and $H_{2n+2,D}$ leave $\tau_1$, $\tau_2$ and $t^a$ invariant, it follows that $\widetilde{N}_S$ carries an action by both groups. By the same proof as in Proposition \ref{heisisoprop} and Proposition \ref{Rnisotheorem}, it follows that $\mathbb{Z}^n$ and $H_{2n+2,D}$ must act by isometries on $(\widetilde{N}_S,g_{\overline{N}})$. The result then follows from Lemma \ref{collecttheoremlemma}. The last part of the second point and the final statement follow from Theorem \ref{theorem2} and the aforementioned propositions and lemma.
\end{proof}
\section{An example of full S-duality}\label{examplesec}
As stated in Theorem \ref{collectiontheorem}, in order to guarantee that S-duality acts by isometries on the instanton corrected q-map metric, one needs to guarantee that the domain of the metric carries an action by the S-duality $\mathrm{SL}(2,\mathbb{Z})$. Here we explicitly give an example where the full  $\mathrm{SL}(2,\mathbb{Z})$ action by isometries can be guaranteed. We start by specifying the initial data as in Section \ref{settingsec}.
\begin{itemize}
\item The PSR manifold $(\mathcal{H},g_{\mathcal{H}})$ is specified by the cubic polynomial $h:\mathbb{R}\to \mathbb{R}$ given by
\begin{equation}
    h(t)=\frac{t^3}{6}\,.
\end{equation}
In particular $\mathcal{H}$ just reduces to a point. The corresponding PSK manifold $(\overline{M}^{\text{cl}},g_{\overline{M}^{\text{cl}}})$ obtained via the r-map has domain 
\begin{equation}
    \overline{M}^{\text{cl}}=\mathbb{R}+\I \mathbb{R}_{>0}\mathcal{H}=\mathbb{R} +\I\mathbb{R}_{>0}\,,
\end{equation}
with the corresponding CASK domain $(M^{\text{cl}},\mathfrak{F}^{\text{cl}})$ given by
\begin{equation}
    M^{\text{cl}}=\{(Z^0,Z^1)=Z^0(1,z^1) \in \mathbb{C}^{2} \; | \; Z^0\in \mathbb{C}^{\times}, \; z^1 \in \overline{M}^{\text{cl}}\}, \quad \mathfrak{F}^{\text{cl}}=-\frac{1}{6}\frac{(Z^1)^3}{Z^0}\,.
\end{equation}
Furthermore we have $\Lambda^{+}=\text{span}_{\mathbb{Z}_{>0}}\{\gamma^1\}$ with the prepotential $\mathfrak{F}$ given by
\begin{equation}\label{prepotentialex}
    \mathfrak{F}=-\frac{1}{6}\frac{(Z^1)^3}{Z^0}+\chi\frac{(Z^0)^2\zeta(3)}{2(2\pi \I )^3}\,, \quad \chi\in \mathbb{Z}_{>0}.
\end{equation}
Notice that we are restricting $\chi$ to be positive and we take $n_{\hat{\gamma}}=0$ for all $\hat{\gamma}\in \Lambda^{+}$. 

\item Since $n_{\hat{\gamma}}=0$ for all $\hat{\gamma}\in \Lambda^{+}$, we find that $M^{\text{cl}}=M^{q}$, where $M^{q}$ was defined in \eqref{defM}. $M$ is then the maximal open subset of $M^{\text{cl}}$ where we have that $\text{Im}(\tau)$ has signature $(1,1)$ and $\text{Im}(\tau_{ij})Z^i\overline{Z}^j<0$. For our simple example, we can describe $M$ explicitly. Setting $z^1=b+\mathrm{i}t$, we have  \begin{equation}
\text{Im}(\tau_{00})=\frac{t^3}{3}-b^2t+\frac{\chi \zeta(3)}{(2\pi)^3}, \quad \text{Im}(\tau_{01})=bt, \quad
    \text{Im}(\tau_{11})=-t,\,
\end{equation}
so that
\begin{equation}
    \text{Det}(\text{Im}(\tau))=-\left(\frac{t^4}{3}+\frac{t\chi\zeta(3)}{(2\pi)^3}\right), \quad \text{Im}(\tau_{ij})Z^i\overline{Z}^j=|Z^0|^2\left(-\frac{2t^3}{3}+\frac{\chi\zeta(3)}{(2\pi)^3}\right)
\end{equation}
Since $\chi>0$ and $\text{Im}(\tau)$ is a $2\times 2$ matrix, $M$ is then given by
\begin{equation}\label{Mex}
    M=\{(Z^0,Z^1)=Z^0(1,z^1)\in \mathbb{C}^2 \; | \; Z^0\in \mathbb{C}^{\times}, \;\; z^1\in \overline{M}^{\text{cl}}, \;\; \frac{t^3}{3}> \frac{\chi\zeta(3)}{2(2\pi)^3}\}\,.
\end{equation}
The prepotential $\mathfrak F$ induces on $M$ the structure of a CASK manifold
fibering over the complete PSK manifold $\overline M =\{ z^1\in \overline{M}^{\text{cl}} \mid \frac{t^3}{3}> \frac{\chi\zeta(3)}{2(2\pi)^3}\}$ with the K\"ahler potential $-\log (h(t) - \frac{\chi\zeta (3)}{4(2\pi )^3})$, as follows from the general results of \cite{CDM}. The completeness 
follows from \cite[Theorem~6.2]{CDM} due to our assumption $\chi>0$. For $\chi<0$ the PSK metric is incomplete \cite[Remark~5.7]{CDM}.
    \item Over $M$, we consider  the trivial local system $\Gamma\to M$ with global sections $(\gamma^0,\gamma^1,\widetilde{\gamma}_0,\widetilde{\gamma}_1)$ defined by the CASK structure (recall Section \ref{QKinstdomainsec}), together with the canonical central charge $Z:M\to \Gamma^*\otimes \mathbb{C}$ given by $Z_{\gamma^i}=Z^i$, $Z_{\widetilde{\gamma}_i}=-\frac{\partial{\mathfrak{F}}}{\partial Z^i}$ for $i=0,1$. The BPS indices are given by 
     \begin{equation}\label{bpsindexex}
        \begin{cases}
        \Omega(q_0\gamma^0)=-\chi, \quad q_0\neq 0 \\
    \Omega(\gamma)=0 \quad \text{else},\\
        \end{cases}
    \end{equation}
    which has the required structure from Section \ref{settingsec}.
\end{itemize}
We would like to now study the tensor $T$ from \eqref{non-deg} determining the domain of definition $N$ of the instanton corrected HK structure, as well as the functions $f$, $f_3$ and $g_{N}(V,V)$ from Section \ref{HK/QKsec}, determining the domain of the instanton corrected QK metric, as well as its signature. \\

Recalling that $Z^1/Z^0=z^1=b+\mathrm{i}t$, $|Z^0|=\tau_2$, $\zeta^0=\tau_1$, and $\tau=\tau_1+\mathrm{i}\tau_2$, we consider the open set $N\subset M^{\text{cl}}\times \mathbb{R}^{4}$ defined by
\begin{equation}
    N:=\{(Z^0,Z^1,\zeta^0,\zeta^1,\widetilde{\zeta}_0,\widetilde{\zeta}_1)\in M^{\text{cl}}\times \mathbb{R}^4 \; | \; R_1(t,\tau)>0,\;\; R_2(t,\tau)>0\;\}
\end{equation}
where 
\begin{equation}
    \begin{split}
        R_1(t,\tau)&:=\frac{t^3}{3}-\frac{\chi}{4(2\pi)^3}\sum_{(m,n)\in \mathbb{Z}^2-(0,0)}\frac{1}{|m\tau+n|^3}\\
        R_2(t,\tau)&:=\frac{t^3}{3} -\frac{3\chi}{4(2\pi)^3}\sum_{(m,n)\in \mathbb{Z}^2-(0,0)}\frac{1}{|m\tau+n|^3}-\frac{(3\chi)^2}{(4\pi)^6}\left(\sum_{(m,n)\in \mathbb{Z}^2}\frac{1}{|m\tau+n|^3}\right)^2\left(R_1\right)^{-1}\,.
    \end{split}
\end{equation}
The origin of these expressions will become clear with the following results. They will guarantee that the domain of the resulting QK metric carries an action of the S-duality $\mathrm{SL}(2,\mathbb{Z})$ and that the metric is positive definite. 
Furthermore, we remark that $N$ is non-empty, since for a fixed $\tau$ the inequalities can be clearly achieved for $t$ sufficiently big. Furthermore, we remark that $R_1(t,\tau)>0$ already implies that $(Z^0,Z^1)\in M$, since (using that $\chi>0$)
\begin{equation}
    \frac{\chi}{4(2\pi)^3}\sum_{(m,n)\in \mathbb{Z}^2-(0,0)}\frac{1}{|m\tau+n|^3}>\frac{\chi}{4(2\pi)^3}\sum_{n\in \mathbb{Z}-\{0\}}\frac{1}{|n|^3}=\frac{\chi\zeta(3)}{2(2\pi)^3}\;.
\end{equation}

\begin{proposition}\label{exprop1}
The instanton corrected HK metric associated to the previous data as in Section \ref{instHKsec} is well defined on $N$ and of signature $(4,4)$.
\end{proposition}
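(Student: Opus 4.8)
The plan is to verify the two ingredients required by Theorem~\ref{HKinsttheorem}: that the compatibility tensor $T$ from \eqref{non-deg} is horizontally non-degenerate on $N$, and that the resulting pseudo-HK metric has signature $(4,4)$. Since $n=1$ here and the only nonzero BPS indices are $\Omega(q_0\gamma^0)=-\chi$ (supported on multiples of $\gamma^0$), the instanton sum in \eqref{non-deg} only involves $V^{\text{inst}}_{q_0\gamma^0}$ and $|\D Z_{q_0\gamma^0}|^2=|q_0|^2|\D Z^0|^2$. First I would Poisson-resum this sum exactly as in Lemma~\ref{PoissonLemma}: the relevant combination $\sum_{q_0\neq 0}\sum_{n>0}\Omega(q_0\gamma^0)\,V^{\text{inst}}_{q_0\gamma^0}q_0^2$ is, up to constants and derivatives in $\tau_2$, one of the functions $\mathcal I_0^{(\nu)}$, whose resummed form is $\sum_{(m,n)\neq(0,0)}|m\tau+n|^{-1}$-type Eisenstein-like series. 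This is precisely where the expressions $\sum_{(m,n)\neq(0,0)}|m\tau+n|^{-3}$ appearing in $R_1,R_2$ come from, and it explains why $N$ is defined by those inequalities.

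Next I would write $T$ in the holomorphic frame $(Z^0,z^1)$ on the $M$-factor. Using the Kähler potential \eqref{KpotCASK} specialized to $h(t)=t^3/6$, $n_{\hat\gamma}=0$, the CASK metric $\pi^*g_M$ is an explicit $2\times 2$ Hermitian form in $\D Z^0,\D z^1$, and $T$ differs from it only by subtracting a positive multiple (the resummed instanton term) of $|\D Z^0|^2$. Horizontal non-degeneracy of $T$ is then equivalent to non-vanishing of a $2\times 2$ determinant; completing the computation one finds this determinant is a positive multiple of $R_1(t,\tau)$, and positivity of the leading diagonal entry after the correction is controlled by $R_1>0$ as well — hence $T$ is horizontally non-degenerate (indeed negative definite on $\mathcal D^\perp$ with the right signature inherited from the CASK structure) exactly on $N$. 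This gives the ``well-defined'' half via Theorem~\ref{HKinsttheorem}.

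For the signature claim I would invoke Remark~\ref{hkremark}: $g_N$ has signature $(4,4n)=(4,4)$ provided the flow of $\xi$ generates a free action of the monoid $\mathbb{R}_{\geq 1}$ on $M$, which holds here since $M$ is a $\mathbb{C}^\times$-invariant domain on which $Z^0\neq 0$ and the conical structure is standard. Alternatively, and more in the spirit of tracking the domain, one can argue by continuity/connectedness: on the locus where $t\to\infty$ the instanton terms are exponentially suppressed (the Bessel functions decay), so $g_N$ limits to the semi-flat metric built from $\mathfrak F$, which has signature $(4,4)$ because $\mathrm{Im}(\tau_{ij})$ has signature $(1,1)$ on $M$; since the signature is locally constant wherever $T$ is non-degenerate and $N$ is connected, it is $(4,4)$ throughout $N$. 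I would combine these: the second quantity $R_2$ will reappear in the next proposition controlling $f_3<0$ and $g_N(V,V)\neq 0$ for the QK metric, so for \emph{this} proposition only $R_1>0$ is strictly needed, but stating $N$ with both inequalities is harmless.

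The main obstacle is the bookkeeping in the Poisson resummation step: one must be careful that the naive sum $\sum_{q_0}\Omega(q_0\gamma^0)V^{\text{inst}}_{q_0\gamma^0}$ excludes $q_0=0$ (where $K_0$ diverges), yet after resummation one wants a clean $\mathrm{SL}(2,\mathbb{Z})$-Eisenstein expression, which requires the $\zeta$-function regularization identity $\sum_{n>0}n^{-2}=\pi^2/6$ encoded in \eqref{I0der} of Lemma~\ref{PoissonLemma} to absorb the missing term and shift things by the constant $\chi/12$ (cf.\ Proposition~\ref{typeiibfquantumprop}). Getting the constants and the precise form of $R_1$ to match $\sum_{(m,n)\neq 0}|m\tau+n|^{-3}$ exactly — including the factor $\chi/(4(2\pi)^3)$ — is the one genuinely delicate computation; everything else is linear algebra on $2\times 2$ matrices plus an appeal to Theorem~\ref{HKinsttheorem} and Remark~\ref{hkremark}.
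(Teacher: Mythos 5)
Your strategy is the same as the paper's: Poisson-resum the instanton contribution to $T_{0\overline{0}}$ via Lemma \ref{PoissonLemma}, reduce horizontal non-degeneracy to a $2\times 2$ determinant, and feed the result into Theorem \ref{HKinsttheorem}. You also correctly observe that only $R_1>0$ is needed for this proposition ($R_2>0$ enters only for $f_3<0$ in the next one), and you correctly anticipate the constant-absorption step where the $m=0$ terms of the resummed lattice sum absorb the $\zeta(3)$ constant in $\mathrm{Im}(\tau_{00})$.

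However, your central computational claim is wrong in sign and in form: the determinant is \emph{not} a positive multiple of $R_1$. The resummation gives
\begin{equation*}
\mathrm{Det}(T)=T_{0\overline{0}}T_{1\overline{1}}-(T_{0\overline{1}})^2=-\frac{t^4}{3}-\frac{t\chi}{4(2\pi)^3}\sum_{(m,n)\neq(0,0)}\left(\frac{3(m\tau_1+n)^2}{|m\tau+n|^5}-\frac{1}{|m\tau+n|^3}\right),
\end{equation*}
and dropping the manifestly negative contribution of the first summand yields only the \emph{inequality} $\mathrm{Det}(T)<-t\,R_1(t,\tau)<0$ on $N$. The sign matters: a $2\times 2$ Hermitian matrix with positive determinant is definite, which via the signature formula $(T_{i\overline{j}})$ of signature $(n,m)\Rightarrow g_N$ of signature $(4n,4m)$ would give $(8,0)$ or $(0,8)$, not $(4,4)$. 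The paper's signature argument is precisely $\mathrm{Det}(T)<0\Rightarrow (T_{i\overline{j}})$ has signature $(1,1)\Rightarrow g_N$ has signature $(4,4)$; your claimed positive determinant would break this. Your two fallback routes for the signature are workable in principle but weaker than what you'd get for free from the corrected determinant: the appeal to Remark \ref{hkremark} requires checking that the cited result applies on all of the specific $N$ at hand, and the continuity argument requires verifying that $N$ is connected (true here, since for each fixed $\tau$ the set $\{t:R_1>0,\,R_2>0\}$ is an interval $(t_2(\tau),\infty)$ by monotonicity of $R_1$ and $R_2$ in $t$, but you assert it without proof). Fix the sign of the determinant and both halves of the proposition follow exactly as in the paper, with no need for the fallbacks.
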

\begin{proof}
The tensor determining the domain of definition of the HK geometry given in \eqref{non-deg} (compare \eqref{CASKconvention}) reduces in our case to: 
\begin{equation}
    T=-\text{Im}(\tau_{ij})\mathrm{d}Z^i\mathrm{d}\overline{Z}^j+\frac{\chi}{2\pi}\sum_{q_0\in \mathbb{Z}-\{0\}}\sum_{n>0}e^{-2\pi \mathrm{i}nq_0\zeta^0}q_0^2K_0(2\pi n\tau_2|q_0|)|\mathrm{d}Z^0|^2\,.
\end{equation}
We see that only the $T_{0\overline{0}}$ component of this tensor receives corrections due to BPS indices, while $T_{0\overline{1}}=T_{1\overline{0}}=-\text{Im}(\tau_{01})$ and $T_{1\overline{1}}=-\text{Im}(\tau_{11})$. The key thing is that we can Poisson resum $T_{0\overline{0}}$ using Lemma \ref{PoissonLemma} as follows

\begin{equation}
    \begin{split}
        T_{0\overline{0}}&=-\text{Im}(\tau_{00})-\frac{\chi}{4(2\pi)^3}\partial_{\zeta^0}^2\mathcal{I}_0^{(2)}\\
        &=-\frac{t^3}{3}+b^2t-\frac{\chi}{2(2\pi)^3}\sum_{n\neq 0}\frac{1}{n^2|n|}-\frac{\chi}{4(2\pi)^3}\partial_{\zeta^0}^2\mathcal{I}_0^{(2)}\\
        &=-\frac{t^3}{3}+b^2t -\frac{\chi}{4(2\pi)^3}\sum_{(m,n)\in \mathbb{Z}^2-(0,0)}\left(\frac{3(m\tau_1+n)^2}{|m\tau+n|^5}-\frac{1}{|m\tau+n|^3}\right)\,.
    \end{split}
\end{equation}
From this computation, we find that the determinant along the horizontal directions gives
\begin{equation}
    \text{Det}(T)=T_{0\overline{0}}T_{1\overline{1}}-(T_{0\overline{1}})^2=-\frac{t^4}{3} -\frac{t\chi}{4(2\pi)^3}\sum_{(m,n)\in \mathbb{Z}^2-(0,0)}\left(\frac{3(m\tau_1+n)^2}{|m\tau+n|^5}-\frac{1}{|m\tau+n|^3}\right)\,.
\end{equation}
Since $\chi>0$ and $t>0$, we therefore have  
\begin{equation}
    \frac{t\chi}{4(2\pi)^3}\sum_{(m,n)\in \mathbb{Z}^2-(0,0)}\frac{3(m\tau_1+n)^2}{|m\tau+n|^5}>0
\end{equation}
and hence on the points of $N$ we have
\begin{equation}\label{detin}
    \text{Det}(T)(p)<-\frac{t^4}{3}+\frac{t\chi}{4(2\pi)^3}\sum_{(m,n)\in \mathbb{Z}^2-(0,0)}\frac{1}{|m\tau+n|^3}=-t\cdot R_1(t,\tau)<0\,.
\end{equation}
This shows that the tensor $T$ is horizontally non-degenerate on $N$, and hence by Theorem \ref{HKinsttheorem} we have that the instanton corrected HK metric $g_{N}$ is well defined on $N$. On the other hand by \cite[Equation 3.38]{CT}, if the signature of the real matrix $(T_{i\overline{j}})$ is $(n,m)$ (called $M_{ij}$ in \cite{CT}), then the signature of $g_{N}$ is $(4n,4m)$. In our case $(T_{i\overline{j}})$ is a $2\times 2$ matrix, and since $\text{Det}(T)<0$, we must have that $(T_{i\overline{j}})$ has signature $(1,1)$. It then follows that the signature of $g_{N}$ is $(4,4)$. 
\end{proof}
\begin{proposition}\label{exprop2}
If $f$, $f_3$ and $g_N(V,V)$ are the functions on $N$ defined on Section \ref{HK/QKsec}, and the $1$-loop parameter is taken to be $c_{\ell}=\frac{\chi}{192\pi}$, then on $N$ we have 
\begin{equation}
    f>0, \quad f_3<0, \quad g_N(V,V)>0\,.
\end{equation}
\end{proposition}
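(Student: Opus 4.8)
The plan is to exploit the Poisson-resummed expression for $f$ obtained in \eqref{Poissonresumedf}, which in this example ($n_{\hat\gamma}=0$, $h(t)=t^3/6$, $c_\ell=\frac{\chi}{192\pi}$) collapses dramatically. First I would specialize \eqref{Poissonresumedf}: the world-sheet/instanton correction term vanishes except for the $\hat\gamma=0$ contribution with $n_0=-\chi/2$, so
\begin{equation}
f=\frac{4\pi\tau_2^2 t^3}{3}-\frac{\chi\tau_2^2}{2(2\pi)^2}\sum_{(m,n)\in\mathbb{Z}^2-\{0\}}\frac{1}{|m\tau+n|^3}=\frac{4\pi\tau_2^2}{3}\Big(t^3-\frac{3\chi}{4(2\pi)^3}\sum_{(m,n)}\frac{1}{|m\tau+n|^3}\Big).
\end{equation}
Comparing with the defining functions of $N$, one sees that $f>0$ on $N$ is essentially the statement $R_1(t,\tau)>0$ (indeed $f=4\pi\tau_2^2 R_1/(\tau_2^2\cdot\text{const})$ up to bookkeeping; I would check the precise normalization using $16\pi\rho=2\pi r^2-16\pi c_\ell$ and $r^2=|Z^0|^2 e^{-\mathcal K}/2$). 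So the first inequality is almost immediate once the resummation is in place.

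Next I would handle $f_3$. Recall $f_3=f-\tfrac12 g_N(V,V)$, and by the construction in Section~\ref{QKinstdomainsec}, $f_3^{\text{inst}}=2\pi\iota_V\eta^{\text{inst}}$; more usefully, $f_3=16\pi(\rho+\rho_3^{\text{inst}})$ where $\rho_3^{\text{inst}}$ again only involves the $\hat\gamma=0$ Bessel sum. I expect $f_3$ to Poisson-resum in parallel with $f$ but with a different numerical coefficient in front of the lattice sum — precisely the coefficient $3\chi/(4(2\pi)^3)$ appearing in $R_2$ rather than the one in $R_1$ — together with an extra subtracted square term coming from $-\tfrac12 g_N(V,V)$ when one completes the square in the quadratic form $(T_{i\bar j})$. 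This is exactly why $R_2$ carries the $(R_1)^{-1}$ correction: it is the Schur complement of the $T_{0\bar0}$ entry. So the plan for $f_3<0$ is: express $g_N(V,V)$ via the HK Kähler form data and the rotating vector field \eqref{vcaskd}, resum the instanton part, and recognize the resulting inequality $f_3<0$ as equivalent to $R_2>0$ on $N$ (with a sign flip). For $g_N(V,V)>0$ I would use $g_N(V,V)=2(f-f_3)$, so positivity follows once $f>0$ and $f_3<0$ are established — or more directly, $g_N(V,V)$ restricted to $\overline N$ is controlled by the positive-definite block structure of $(T_{i\bar j})$ having signature $(1,1)$ with the $\mathcal D=\mathrm{span}\{\xi,J\xi\}$ part positive, as in \cite[Prop.~3.21]{CT}.

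The main obstacle I anticipate is the bookkeeping in identifying $f_3$ (and $g_N(V,V)$) with the precise combination defining $R_2$: unlike $f$, whose resummation is already written out in \eqref{Poissonresumedf}, the quantity $f_3=16\pi\rho_3^{\text{inst}}+\dots$ requires resumming $\eta^{\text{inst}}_\gamma$ contracted with $V$, and then correctly tracking the Schur-complement term. Concretely I would need the identity $\iota_V\eta_\gamma^{\text{inst}}$ in terms of Bessel functions (from \eqref{etagammainst} and \eqref{vcaskd}), apply Lemma~\ref{PoissonLemma} with the appropriate $\nu$, and then verify that the quadratic-in-$\chi$ term matches $-(3\chi)^2/(4\pi)^6\big(\sum|m\tau+n|^{-3}\big)^2 (R_1)^{-1}$. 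Once the two resummations are lined up with the definitions of $R_1,R_2$, the three inequalities on $N$ are purely a matter of observing that $\chi>0$, $t>0$, and that the extra non-negative terms dropped in passing to $R_1,R_2$ (e.g. the $\sum 3(m\tau_1+n)^2/|m\tau+n|^5$ term appearing in the proof of Proposition~\ref{exprop1}) only strengthen the inequalities in the right direction.
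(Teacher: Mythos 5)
Your proposal follows essentially the same route as the paper's proof: specialize the Poisson-resummed expression \eqref{Poissonresumedf} to get $f=4\pi\tau_2^2R_1(t,\tau)>0$ on $N$, compute $g_N(V,V)=8\pi T_{i\overline{j}}Z^i\overline{Z}^j+2\pi|W_0^{\text{inst}}(V)|^2T^{0\overline{0}}$ with $W_0^{\text{inst}}(V)$ resummed via Lemma \ref{PoissonLemma}, bound the negative term from below using $\mathrm{Det}(T)<-t\,R_1<0$ so that $f_3=f-\tfrac{1}{2}g_N(V,V)<-4\pi\tau_2^2R_2(t,\tau)<0$, and finally deduce $g_N(V,V)=2(f-f_3)>0$. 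The only inaccuracy is the parenthetical claim $f_3=16\pi(\rho+\rho_3^{\text{inst}})$ (the correct relation is $f_3=f-\tfrac{1}{2}g_N(V,V)$, which already in the $1$-loop limit gives $-16\pi(\rho+2c_\ell)$), but you do not rely on it and your stated plan for $f_3<0$ proceeds by the direct computation, as in the paper.
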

\begin{proof}
Using \eqref{Poissonresumedf} we obtain the following expressions for $f$ in our case:
\begin{equation}\label{fin}
\begin{split}
f&=2\pi\tau_2^2\left(\frac{2t^3}{3}\right)-\frac{\tau_2^2\chi}{2(2\pi)^2}\sum_{(m,n)\in \mathbb{Z}^2-\{0\}}\frac{1}{|m\tau+n|^3}\\
&=4\pi \tau_2^2\left(\frac{t^3}{3}-\frac{\chi}{8(2\pi)^3}\sum_{(m,n)\in \mathbb{Z}^2-(0,0)}\frac{1}{|m\tau+n|^3}\right)\,\\
&>4\pi\tau_2^2R_1(t,\tau).
\end{split}
\end{equation}
So it follows that $f>0$ on $N$.\\

On the other hand, we remark that if we show that $f_3<0$ on $N$, then $g_N(V,V)>0$ follows due to the relation $g_N(V,V)=2(f-f_3)$ and the fact that we showed that $f>0$ on $N$.\\

 To study $f_3$ we need to study the expression $g_{N}(V,V)$. Using \cite[Equation 3.35]{CT} adapted to our conventions gives the following expression for $g_{N}$:
\begin{equation}
    g_N=2\pi \left(T_{i\overline{j}}\mathrm{d}Z^i\mathrm{d}\overline{Z}^j +(W_i+W_i^{\text{inst}})T^{i\overline{j}}(\overline{W}_j+\overline{W}_j^{\text{inst}})\right),
\end{equation}
where (using the notation $\gamma=q_i(\gamma)\gamma^i$)
\begin{equation}
    W_i=\mathrm{d}\zeta_{\widetilde{\gamma}_i}+\tau_{ij}\mathrm{d}\zeta_{\gamma^j}, \quad W_i^{\text{inst}}=-\sum_{\gamma}\Omega(\gamma)q_i(\gamma)\left(A_{\gamma}^{\text{inst}}-\mathrm{i}V_{\gamma}^{\text{inst}}\mathrm{d}\zeta_{\gamma}\right)\,.
\end{equation}

Using that $V=2\mathrm{i}(Z^i\partial_{Z^i}-\overline{Z}^i\partial{\overline{Z}^i})$ we can compute the evaluation of $g_{N}(V,V)$, which in our case gives
\begin{equation}\label{gnvv}
\begin{split}
g_{N}(V,V)=&8\pi T_{i\overline{j}}Z^i\overline{Z}^j+2\pi|W_0^{\text{inst}}(V)|^2T^{0\overline{0}}\,\\
&=8\pi \tau_2^2\left(\frac{2t^3}{3} -\frac{\chi}{4(2\pi)^3}\sum_{(m,n)\in \mathbb{Z}^2-(0,0)}\left(\frac{3(m\tau_1+n)^2}{|m\tau+n|^5}-\frac{1}{|m\tau+n|^3}\right)\right)+2\pi|W_0^{\text{inst}}(V)|^2T^{0\overline{0}}
\end{split}
\end{equation}
where (using \eqref{Idef} and Lemma \ref{PoissonLemma} for the expression of $W_0^{\text{inst}}$) 
\begin{equation}
\begin{split}
    T^{0\overline{0}}&=\frac{t}{\text{Det}(T)}<0,\\
     W_0^{\text{inst}}(V)&=-\frac{\mathrm{i}\chi}{\pi}\tau_2\sum_{q_0\neq 0}\sum_{n>0}e^{2\pi \mathrm{i}n\zeta_{q_0\gamma^0}}q_0|q_0|K_1(2\pi n \tau_2|q_0|)\\
     &=-\frac{\chi\tau_2}{2(2\pi)^3}\partial_{\tau_1}\partial_{\tau_2}\mathcal{I}_{0}^{(2)}\\
     &=-\frac{3\chi \tau_2^2}{2(2\pi)^3}\sum_{(m,n)\in \mathbb{Z}^2}\frac{(m\tau_1+n)m}{|m\tau+n|^5}\,.
    \end{split}
\end{equation}
In the following, it will be convenient to find a lower bound for the negative term $|W_0^{\text{inst}}(V)|^2T^{0\overline{0}}$. We have 
\begin{align*}
    2\pi|W_0^{\text{inst}}(V)|^2T^{0\overline{0}}&=2\pi\left|\frac{3\chi \tau_2^2}{2(2\pi)^3}\sum_{(m,n)\in \mathbb{Z}^2}\frac{(m\tau_1+n)m}{|m\tau+n|^5}\right|^2\frac{t}{\text{Det}(T)}\\
    &>2\pi\left(\frac{3\chi \tau_2}{2(2\pi)^3}\right)^2\left|\sum_{(m,n)\in \mathbb{Z}^2}\frac{(m\tau_1+n)m\tau_2}{|m\tau+n|^5}\right|^2\left(-R_1(t,\tau)\right)^{-1}\\
    &>2\pi\left(\frac{3\chi \tau_2}{2(2\pi)^3}\right)^2\left(\sum_{(m,n)\in \mathbb{Z}^2}\frac{|m\tau_1+n||m|\tau_2}{|m\tau+n|^5}\right)^2\left(-R_1(t,\tau)\right)^{-1}\\
    &>2\pi\left(\frac{3\chi \tau_2}{2(2\pi)^3}\right)^2\left(\sum_{(m,n)\in \mathbb{Z}^2}\frac{(m\tau_1+n)^2+(m\tau_2)^2}{2|m\tau+n|^5}\right)^2\left(-R_1(t,\tau)\right)^{-1}\\
    &=16\pi\tau_2^2\frac{(3\chi)^2}{2(4\pi)^6}\left(\sum_{(m,n)\in \mathbb{Z}^2}\frac{1}{|m\tau+n|^3}\right)^2\left(-R_1(t,\tau)\right)^{-1}\\ \numberthis \label{usefullowerbound}
\end{align*}
where in the first inequality we have used that $\text{Det}(T)<-t\cdot R_1<0$; in the second we used the triangle inequality, and in the last one we have used that $\sqrt{xy}\leq (x+y)/2$ for $x=(m\tau_1+n)^2$ and $y=(m\tau_2)^2$.\\

Using \eqref{usefullowerbound} and \eqref{gnvv}, we then find that $f_3=f-\frac{1}{2}g_N(V,V)$ gives
\begin{align*}\label{f3in}
    f_3&=-4\pi \tau_2^2\left(\frac{t^3}{3} -\frac{\chi}{4(2\pi)^3}\sum_{(m,n)\in \mathbb{Z}^2-(0,0)}\left(\frac{3(m\tau_1+n)^2}{|m\tau+n|^5}-\frac{3}{2|m\tau+n|^3}\right)\right)-\pi|W_0^{\text{inst}}(V)|^2T^{0\overline{0}}\\
    &<-4\pi \tau_2^2\left(\frac{t^3}{3} -\frac{3\chi}{4(2\pi)^3}\sum_{(m,n)\in \mathbb{Z}^2-(0,0)}\frac{(m\tau_1+n)^2}{|m\tau+n|^5}\right)-\pi|W_0^{\text{inst}}(V)|^2T^{0\overline{0}}\\
    &<-4\pi \tau_2^2\left(\frac{t^3}{3} -\frac{3\chi}{4(2\pi)^3}\sum_{(m,n)\in \mathbb{Z}^2-(0,0)}\frac{1}{|m\tau+n|^3}\right)-\pi|W_0^{\text{inst}}(V)|^2T^{0\overline{0}}\\
    &<-4\pi \tau_2^2\left(\frac{t^3}{3} -\frac{3\chi}{4(2\pi)^3}\sum_{(m,n)\in \mathbb{Z}^2-(0,0)}\frac{1}{|m\tau+n|^3}\right)+8\pi\tau_2^2\frac{(3\chi)^2}{2^7(2\pi)^6}\left(\sum_{(m,n)\in \mathbb{Z}^2}\frac{1}{|m\tau+n|^3}\right)^2\left(R_1(t,\tau)\right)^{-1}\\
    &=-4\pi\tau_2^2 R_2(t,\tau)<0\,. \numberthis
    \end{align*}
The required inequalities therefore hold on $N$.
\end{proof}
  We then obtain as a corollary of Propositions \ref{exprop1}, \ref{exprop2}, and Theorem \ref{QKCASKdomainformulas} the following:
\begin{corollary}\label{QKex}
The instanton corrected q-map metric $g_{\overline{N}}$ associated to the prepotential \eqref{prepotentialex}, BPS indices \eqref{bpsindexex} and with $1$-loop constant $c_{\ell}=\frac{\chi}{192\pi}$ is defined and positive definite on
\begin{equation}\label{tildeNex}
\widetilde{N}:=\{(\tau_2,b+\mathrm{i}t,\tau_1,\zeta^1,\widetilde{\zeta}_0,\widetilde{\zeta}_1,\sigma)\in \mathbb{R}_{>0}\times \overline{M}^{\text{cl}}\times \mathbb{R}^4\times \mathbb{R} \; | \; R_1(t,\tau)>0,\quad R_2(t,\tau)>0 \; \}.
\end{equation}
\end{corollary}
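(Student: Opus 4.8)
The plan is to deduce the statement directly by combining Propositions~\ref{exprop1} and \ref{exprop2} with the general construction recorded in Theorem~\ref{QKCASKdomainformulas}, once the coordinates appearing in \eqref{tildeNex} have been matched with the abstract data of Section~\ref{HK/QKsec}. The underlying assertion is that the set $\widetilde N$ of \eqref{tildeNex} is exactly the lift, in the sense of Definition~\ref{liftdef}, of the instanton corrected QK manifold $(\overline N,g_{\overline N})$ attached by the construction of Section~\ref{QKinstdomainsec} to the CASK domain $(M,\mathfrak F)$ of \eqref{prepotentialex}--\eqref{Mex} and the mutually local variation of BPS structures with indices \eqref{bpsindexex}, with one-loop parameter $c_{\ell}=\frac{\chi}{192\pi}$. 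That this initial data is of the type treated in Section~\ref{settingsec} has already been checked in the bullet points preceding the corollary, so no work is needed there.

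First I would assemble the consequences of the two propositions. By Proposition~\ref{exprop1} the tensor $T$ of \eqref{non-deg} is horizontally non-degenerate on $N$, so $N$ lies in the domain of the instanton corrected HK metric $g_N$, which there has signature $(4,4)$; since $\dim_{\mathbb C}M=n+1=2$ this is precisely the signature $(4,4n)$ with $n=1$ needed to invoke the positivity clause of Theorem~\ref{QKCASKdomainformulas}. By Proposition~\ref{exprop2}, for $c_{\ell}=\frac{\chi}{192\pi}$ the three functions $f$, $f_3$ and $g_N(V,V)$ are nowhere zero on $N$ (being strictly positive, strictly negative, and strictly positive respectively). Hence the open subset $N'\subset N$ defined by \eqref{n'def} is all of $N$, so the hypersurface $\overline N\subset P|_{N'}=P|_N$ of \eqref{QKCASKdomain}, cut out by $\mathrm{Arg}(Z^0)=0$, is defined over all of $N$ and is transverse to $V^P$ by the general discussion of Section~\ref{QKinstdomainsec} (the rotating field $V$ of \eqref{vcaskd} moves $\mathrm{Arg}(Z^0)$). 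Applying Theorem~\ref{QKCASKdomainformulas} then produces a QK metric $g_{\overline N}$ on $\overline N$, and since $f>0$, $f_3<0$ hold on all of $N$ one has $\overline N_+=\overline N$, so that theorem's positivity statement upgrades $g_{\overline N}$ to a positive definite metric on the whole of $\overline N$.

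Finally I would pass to the lift and identify the domain. Pulling $g_{\overline N}$ back to the cover $\widetilde N\to\overline N$ on which $(\zeta^i,\widetilde\zeta_i,\sigma)$ become non-periodic global coordinates (Definition~\ref{liftdef}) gives the claimed positive definite QK metric. Using the dictionary $|Z^0|=\tau_2$, $\zeta^0=\tau_1$, $z^1=Z^1/Z^0=b+\mathrm i t$, the defining inequalities of $N$ --- namely $R_1(t,\tau)>0$ and $R_2(t,\tau)>0$, which involve only $t$, $\tau_1$, $\tau_2$ --- pull back unchanged, so the lift is exactly the set in \eqref{tildeNex}. I do not expect a real obstacle here; the only points to keep straight are that the lattice sums in $R_1$, $R_2$ are invariant under $\tau_1\mapsto\tau_1+1$ (relabel $n\mapsto n-m$), which is what lets these conditions descend between $\widetilde N$ and $\overline N$ in the $\zeta^0$-direction, and that $R_1(t,\tau)>0$ already forces $z^1\in\overline M$ (as noted just after the definition of $N$), so writing the ambient factor in \eqref{tildeNex} as $\overline M^{\mathrm{cl}}$ rather than $\overline M$ costs nothing. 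Thus the one genuinely substantive input is the sign analysis of Proposition~\ref{exprop2}, which is precisely where the special value $c_{\ell}=\frac{\chi}{192\pi}$ is used; everything else is bookkeeping.
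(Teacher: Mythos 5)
Your proposal is correct and follows exactly the route of the paper's own (much terser) proof: Propositions \ref{exprop1} and \ref{exprop2} give $N'=N$ and $\overline{N}_+=\overline{N}$, after which Theorem \ref{QKCASKdomainformulas} yields the positive definite metric, and the lift of Definition \ref{liftdef} identifies the domain with \eqref{tildeNex}. The extra bookkeeping you supply (transversality, the coordinate dictionary, periodicity of the lattice sums in $\tau_1$) is consistent with and merely elaborates what the paper leaves implicit.
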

\begin{proof}
Because of Propositions \ref{exprop1} and \ref{exprop2}, we can take $N'=N$, where $N'$ is defined on \eqref{n'def}. It then follows from Theorem \ref{QKCASKdomainformulas} that $g_{\overline{N}}$ from \eqref{coordQKmetric2} is defined and positive definite on $\widetilde{N}$ from \eqref{tildeNex}.  
\end{proof}

The most important thing about $\widetilde{N}$ is the following:
\begin{proposition}\label{exprop3}
$\widetilde{N}$ carries an action of S-duality. 
\end{proposition}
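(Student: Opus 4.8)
\textbf{Proof proposal for Proposition \ref{exprop3}.}

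The plan is to show that $\widetilde{N}$, as defined in \eqref{tildeNex}, is invariant under the $\mathrm{SL}(2,\mathbb{Z})$ S-duality action \eqref{sl2can}. Since the defining conditions of $\widetilde{N}$ are $R_1(t,\tau)>0$ and $R_2(t,\tau)>0$, it suffices to understand how the two functions $R_1$ and $R_2$ transform under the generators $T$ and $S$ of $\mathrm{SL}(2,\mathbb{Z})$ given in \eqref{sl2gen}. First I would record the basic transformation rules: under $\begin{pmatrix} a & b \\ c & d \end{pmatrix}\in\mathrm{SL}(2,\mathbb{Z})$ one has $\tau_2 \to \tau_2/|c\tau+d|^2$ and $t \to |c\tau+d|\,t$ (from \eqref{sl2can}), and for the Eisenstein-type lattice sum $\sum_{(m,n)\neq(0,0)}|m\tau+n|^{-3}$ the substitution $(m,n)\mapsto (m,n)\begin{pmatrix} a & b \\ c & d \end{pmatrix}$ is a bijection of $\mathbb{Z}^2-\{0\}$, under which $|m\tau+n|\to |m'\tau+n'|/|c\tau+d|$, so the sum picks up a factor $|c\tau+d|^3$. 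Consequently each term $\tfrac{t^3}{3}$ and $\tfrac{\chi}{4(2\pi)^3}\sum|m\tau+n|^{-3}$ in $R_1$ scales by exactly $|c\tau+d|^3$, i.e. $R_1\to |c\tau+d|^3 R_1$. The same bookkeeping applied to $R_2$ — noting that the last term of $R_2$ is (square of a lattice sum)$\times R_1^{-1}$, which scales as $|c\tau+d|^6 \cdot |c\tau+d|^{-3} = |c\tau+d|^3$, matching the other two terms — gives $R_2 \to |c\tau+d|^3 R_2$.

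With these homogeneity properties in hand, the conclusion is immediate: since $|c\tau+d|>0$ on the upper half-plane $H$, the sign of $R_1$ and of $R_2$ is preserved by every element of $\mathrm{SL}(2,\mathbb{Z})$, so the conditions $R_1>0$, $R_2>0$ cutting out $\widetilde{N}$ inside $\mathbb{R}_{>0}\times \overline{M}^{\text{cl}}\times\mathbb{R}^4\times\mathbb{R}$ are $\mathrm{SL}(2,\mathbb{Z})$-invariant. (One should also check that $\overline{M}^{\text{cl}}=\mathbb{R}+\I\mathbb{R}_{>0}$ is preserved by $t\to|c\tau+d|t$, which is clear since $t>0\Rightarrow |c\tau+d|t>0$; here, unlike the general discussion in Definition \ref{defsl2domain}, there is no obstruction because $\overline M^{\text{cl}}$ is the full half-plane in the single modulus $z^1=b+\I t$.) Hence $\widetilde{N}$ carries an $\mathrm{SL}(2,\mathbb{Z})$-action.

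The only genuinely delicate point is convergence and the legitimacy of the reindexing $(m,n)\mapsto(m,n)\cdot A$ inside the lattice sums: the series $\sum_{(m,n)\neq 0}|m\tau+n|^{-3}$ converges absolutely (exponent $3>2$), so rearrangement is justified, and the same holds for the squared sum appearing in $R_2$; I would state this explicitly but not belabor it. A secondary bit of care is that $R_1$ appears in the denominator of $R_2$, so one must observe that the identity $R_2\to |c\tau+d|^3 R_2$ is an identity of functions on the locus $\{R_1>0\}$ (which, as noted after the definition of $N$, already forces $(Z^0,Z^1)\in M$ and in particular $R_1\neq 0$), and that this locus is itself $\mathrm{SL}(2,\mathbb{Z})$-invariant by the $R_1$ computation — so there is no circularity. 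I expect the main obstacle to be purely organizational: assembling the scaling weights of the various pieces of $R_2$ correctly and confirming they all agree, rather than any conceptual difficulty.
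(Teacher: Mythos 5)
Your proof is correct and takes essentially the same approach as the paper: both rest on the homogeneity $R_i \to |c\tau+d|^3 R_i$, the paper verifying it only on the generators $T$ and $S$ (via $R_i(|\tau|t,-1/\tau)=|\tau|^3R_i(t,\tau)$) while you verify it for a general element by reindexing the absolutely convergent lattice sum. Your added care about $R_1$ appearing in the denominator of $R_2$ and about the invariance of $\overline{M}^{\text{cl}}$ is sound but does not change the argument.
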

\begin{proof}
Since the constraints in \eqref{tildeNex} are given only on $t$ and $\tau$, we focus only on this variables. Under the action of $S\in \mathrm{SL}(2,\mathbb{Z})$ we have 
\begin{equation}
    t\to |\tau|t, \quad \tau \to -1/\tau\,,
\end{equation}
while under the action of $T\in \mathrm{SL}(2,\mathbb{Z})$ we have
\begin{equation}
    t \to t, \quad \tau \to \tau+1\,.
\end{equation}
From the definition of $R_1$ and $R_2$, it follows immediately that they are invariant under $T$, and under $S$ they satisfy
\begin{equation}
    R_i(|\tau|t,-1/\tau)=|\tau|^3R_i(t,\tau), \quad i=1,2
\end{equation}
so we conclude that $\widetilde{N}$ is invariant under $T$ and $S$.
Since $S$ and $T$ generate $\mathrm{SL}(2,\mathbb{Z})$ it then follows that $\widetilde{N}$ is invariant under $S$-duality.
\end{proof}

\begin{corollary}\label{excor2}
The positive definite QK metric $(\widetilde{N},g_{\overline{N}})$ from Corollary \ref{QKex} has an effective action by isometries by the group $\mathrm{SL}(2,\mathbb{Z})\ltimes (\mathbb{R}\ltimes H_4)$.
\end{corollary}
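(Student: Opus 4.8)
\textbf{Proof proposal for Corollary \ref{excor2}.}

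The plan is to assemble the isometry group of $(\widetilde{N},g_{\overline{N}})$ from the three ingredients that have already been established. First, by Proposition \ref{exprop3} the domain $\widetilde{N}$ is invariant under the full S-duality action \eqref{sl2can}, and the $1$-loop parameter was fixed to $c_{\ell}=\frac{\chi}{192\pi}$ in Corollary \ref{QKex}; hence Theorem \ref{theorem2} applies and yields that $\mathrm{SL}(2,\mathbb{Z})$ acts by isometries on $(\widetilde{N},g_{\overline{N}})$. Second, since in this example $n_{\hat{\gamma}}=0$ for all $\hat{\gamma}\in \Lambda^{+}$ (only $\chi\neq 0$), Proposition \ref{heisisoprop} gives an action by isometries of $H_{2n+2}=H_4$ (here $n=1$), and Proposition \ref{Rnisotheorem} gives an action by isometries of the full $\mathbb{R}^n=\mathbb{R}$. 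I would note explicitly that these actions are well defined on $\widetilde{N}$: the subsets $N'$, $\overline{N}$, and hence $\widetilde{N}$ are cut out by conditions on $t$ and $\tau$ alone (see \eqref{tildeNex}), and both the $\mathbb{R}$-action \eqref{Rnaction} and the $H_4$-action \eqref{heisact} leave $\tau_1,\tau_2,t^a$ invariant, so $\widetilde{N}$ is preserved.

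Next I would combine the factors into a single group. By Lemma \ref{collecttheoremlemma} (and the underlying \cite[Proposition 3.10]{CTSduality}), the S-duality $\mathrm{SL}(2,\mathbb{R})$-action and the $\mathbb{R}^n\ltimes H_{2n+2}$-action close up into a group of the form $\mathrm{SL}(2,\mathbb{R})\ltimes(\mathbb{R}^n\ltimes H_{2n+2})$; restricting the first factor to $\mathrm{SL}(2,\mathbb{Z})$ and keeping the connected $\mathbb{R}\ltimes H_4$ intact gives the subgroup $\mathrm{SL}(2,\mathbb{Z})\ltimes(\mathbb{R}\ltimes H_4)$, which one checks is genuinely a subgroup because the automorphisms $\varphi_T,\varphi_S$ computed in the proof of Lemma \ref{collecttheoremlemma} preserve $\mathbb{R}\ltimes H_4$ (the integrality constraint is irrelevant here since we keep all of $\mathbb{R}$ and $H_4$). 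All three pieces act by isometries, so the whole group $\mathrm{SL}(2,\mathbb{Z})\ltimes(\mathbb{R}\ltimes H_4)$ does. This is essentially the $n_{\hat\gamma}=0$, $\widetilde N$ already $\mathrm{SL}(2,\mathbb{Z})$-invariant case of Theorem \ref{collectiontheorem}, specialized to $n=1$.

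Finally I would address effectiveness. One must show that no nontrivial element of $\mathrm{SL}(2,\mathbb{Z})\ltimes(\mathbb{R}\ltimes H_4)$ acts as the identity on $\widetilde{N}$. The cleanest way is to track the action on the explicit coordinates: the $H_4$-part shifts $(\zeta^0=\tau_1$ is untouched, but) $\zeta^1,\widetilde\zeta_i,\sigma$ by \eqref{heisact} with $\eta^0=0$, so a nontrivial Heisenberg element moves these coordinates; the $\mathbb{R}$-part shifts $b=\operatorname{Re}(z^1)$ by \eqref{Rnaction}, hence is detected on $\overline M^{\text{cl}}$; and a nontrivial $\mathrm{SL}(2,\mathbb{Z})$ element acts nontrivially on $\tau\in H$ via the faithful $\mathrm{PSL}(2,\mathbb{Z})$-action, with $-\mathrm{Id}$ acting nontrivially on the twistor fiber / on $(c^a,b^a,c_0,\psi)$ by \eqref{sl2can}. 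Combining these, the only element acting trivially is the identity. The mildly delicate point — and the one I expect to require the most care — is bookkeeping the semidirect-product structure so that ``acts trivially'' is analyzed correctly: an element $(A,v,h)$ acting trivially forces $A$ to fix all $\tau$ hence $A=\mathrm{Id}$, then $v$ to fix all $b$ hence $v=0$, then $h$ to fix the Heisenberg fiber coordinates hence $h=e$; writing this out using \eqref{MM} to pass between type IIA and IIB coordinates is routine but must be done in the right order. I would therefore present effectiveness as a short explicit computation on coordinates rather than invoke any abstract faithfulness statement.
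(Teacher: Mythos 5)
Your proposal is correct and follows essentially the same route as the paper: $\mathrm{SL}(2,\mathbb{Z})$-invariance of $\widetilde{N}$ from Proposition \ref{exprop3}, invariance under $\mathbb{R}\ltimes H_4$ because \eqref{tildeNex} only constrains $\tau$ and $t$ while that group fixes them, and then Theorem \ref{collectiontheorem} (equivalently the chain Theorem \ref{theorem2}, Propositions \ref{heisisoprop} and \ref{Rnisotheorem}, and Lemma \ref{collecttheoremlemma}) to combine everything into $\mathrm{SL}(2,\mathbb{Z})\ltimes(\mathbb{R}\ltimes H_4)$. The only genuine addition is your explicit check of effectiveness, which the paper's proof leaves implicit; your coordinate argument (first $A=\mathrm{Id}$ from the action on $\tau$ and on $(b,c^a,c_0,\psi)$, then $v=0$ from the shift of $b$, then $h=e$ from the fiber coordinates, using that $\sigma,\zeta^i,\widetilde\zeta_i$ are non-periodic on $\widetilde{N}$) is sound.
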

\begin{proof}
By Proposition \ref{exprop3}, $\widetilde{N}$ carries an action by $\mathrm{SL}(2,\mathbb{Z})$. Furthermore, since the definition of $\widetilde{N}$ in \eqref{tildeNex} only constrains $\tau$ and $t$, and the action of $\mathbb{R}\ltimes H_4$ given in Section \ref{universalisosec} acts trivially on $\tau$ and $t$, $\widetilde{N}$ must carry an action of the group $\mathbb{R}\ltimes H_{4}$. It then follows from the Theorem \ref{collectiontheorem} that $(\widetilde{N},g_{\overline{N}})$ carries and action by isometries by the groups $\mathrm{SL}(2,\mathbb{Z})$ and $\mathbb{R}\ltimes H_4$ and hence by $\mathrm{SL}(2,\mathbb{Z})\ltimes (\mathbb{R}\ltimes H_4)$. We remark that we get $\mathbb{R}\ltimes H_4$ instead of $\mathbb{Z}\ltimes H_{4,D}$ since in our case we have $n_{\hat{\gamma}}=0$ for all $\hat{\gamma}$.\\

\end{proof}
\begin{remark}\label{incompleterem}
\begin{itemize}
\item Notice that the QK metric from Corollary \ref{QKex} is incomplete. This follows from the fact that the functions $R_i(t,\tau)$ defining $\widetilde{N}$ in \eqref{tildeNex} are obtained from strict inequalities involving the conditions (see \eqref{detin},  \eqref{fin}, \eqref{f3in})
    \begin{equation}\label{collin}
    \text{Det}(T)<0,\quad  f>0,\quad f_3<0\,.
    \end{equation} This means that $g_{\overline{N}}$ extends to a positive definite metric on a bigger manifold containing \eqref{tildeNex}, and hence cannot be complete. It might be that \eqref{collin} is preserved by the S-duality action, but (except for $f>0$) this is not obvious from the formulas and the $\mathrm{SL}(2,\mathbb{Z})$ action.
    \item We also note that one can find a smaller S-duality invariant open subset $\widetilde{N}'\subset \widetilde{N}$ defined by a single relation $R(t,\tau)>0$. Namely, if we set
    \begin{equation}\label{singlerel}
        R(t,\tau):=\frac{t^3}{3}-\frac{9\chi}{8(2\pi)^3}\sum_{(m,n)\in \mathbb{Z}^2-(0,0)}\frac{1}{|m\tau+n|^3}
    \end{equation}
    and define
    \begin{equation}
        s(\tau):=\frac{\chi}{4(2\pi)^3}\sum_{(m,n)\in \mathbb{Z}^2-(0,0)}\frac{1}{|m\tau+n|^3},\,
    \end{equation}
    then we find that $R(t,\tau)>0$ implies that
    \begin{equation}
        R_1(t,\tau)=\frac{t^3}{3}-s(\tau)>\frac{t^3}{3}-\frac{9}{2}s(\tau)=R(t,\tau)>0,\,
    \end{equation}
    and furthermore 
    \begin{equation}
    \begin{split}
        R_1(t,\tau)R_2(t,\tau)&=\left(\frac{t^3}{3}-3s(\tau)\right)R_1  -\frac{(3\chi)^2}{(4\pi)^6}\left(\sum_{(m,n)\in \mathbb{Z}^2}\frac{1}{|m\tau+n|^3}\right)^2\\
        &>\left(\frac{t^3}{3}-3s(\tau)\right)^2-\frac{(3\chi)^2}{(4\pi)^6}\left(\sum_{(m,n)\in \mathbb{Z}^2}\frac{1}{|m\tau+n|^3}\right)^2\\
        &=\left(\frac{t^3}{3}-\frac{9}{2}s(\tau)\right)\left(\frac{t^3}{3}-\frac{3}{2}s(\tau)\right)\\
        &>R^2(t,\tau)>0\,.
    \end{split}
    \end{equation}
    It then follows that $R(t,\tau)>0$ also implies $R_2(t,\tau)>0$.
\end{itemize}
\end{remark}
\begin{theorem}\label{finitevol}
 Let $(\widetilde{N},g_{\overline{N}})$ be as in Corollary \ref{QKex}. Then: 
\begin{itemize}
    \item There is a free and properly discontinuous action by isometries of a discrete group of the form $\mathrm{SL}(2,\mathbb{Z})'\ltimes \Lambda$, where $\Lambda \subset \mathbb{R}\ltimes H_4$ is a lattice,  $\mathrm{SL}(2,\mathbb{Z})' \subset \mathrm{SL}(2,\mathbb{Z})$ is a finite index subgroup and the QK manifold $(\widetilde{N}/(\mathrm{SL}(2,\mathbb{Z})'\ltimes \Lambda),g_{\overline{N}})$ has finite volume. 
    \item Furthermore, there is a submanifold with boundary $\hat{N}\subset \widetilde{N}$ where $\mathrm{SL}(2,\mathbb{Z})'\ltimes \Lambda$ acts and the quotient $(\hat{N}/(\mathrm{SL}(2,\mathbb{Z})'\ltimes \Lambda),g_{\overline{N}})$ gives a complete QK manifold with boundary and of finite volume.   
\end{itemize} 
\end{theorem}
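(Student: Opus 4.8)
## Proof proposal for Theorem \ref{finitevol}

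The plan is to build the discrete group as a semidirect product $\mathrm{SL}(2,\mathbb{Z})'\ltimes \Lambda$, where $\Lambda$ is a cocompact lattice inside $\mathbb{R}\ltimes H_4$ and $\mathrm{SL}(2,\mathbb{Z})'$ is a torsion-free finite-index subgroup of $\mathrm{SL}(2,\mathbb{Z})$ (e.g.\ the principal congruence subgroup $\Gamma(N)$ for $N\ge 3$, which is torsion-free and of finite index). By Corollary \ref{excor2} the full group $\mathrm{SL}(2,\mathbb{Z})\ltimes(\mathbb{R}\ltimes H_4)$ acts effectively by isometries on $(\widetilde N, g_{\overline N})$, so restricting to the discrete subgroup $\mathrm{SL}(2,\mathbb{Z})'\ltimes\Lambda$ immediately gives an isometric action. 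The first task is to choose $\Lambda\subset \mathbb{R}\ltimes H_4$: since $H_4\cong H_{2n+2}$ with $n=1$ is a codimension-one subgroup of the Heisenberg group $\mathrm{Heis}_5(\mathbb{R})$ and the $\mathbb{R}$-factor acts on it by the automorphism \eqref{autheis}, one picks a lattice in $H_4$ stable under an integer generator of $\mathbb{R}$ (a standard integral Heisenberg-type lattice works, using that $k_{111}=1$), and then $\Lambda=\mathbb{Z}\ltimes (H_4\cap\text{lattice})$ is cocompact in $\mathbb{R}\ltimes H_4$. One must check $\Lambda$ acts on $\widetilde N$: this is immediate from Proposition \ref{exprop3} and the observation (used in the proof of Corollary \ref{excor2}) that $\mathbb{R}\ltimes H_4$ acts trivially on the coordinates $(\tau,t)$ that cut out $\widetilde N$ in \eqref{tildeNex}.

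Next I would establish that the action of $G':=\mathrm{SL}(2,\mathbb{Z})'\ltimes\Lambda$ on $\widetilde N$ is free and properly discontinuous. Freeness: the $\Lambda$-action is free already on the $(\zeta^i,\widetilde\zeta_i,\sigma)$-fibre (a translation action of a lattice on $\mathbb{R}^{2n+2}\times\mathbb{R}$, up to the Heisenberg cocycle), and an element $(A,\lambda)$ with $A\ne 1$ acts nontrivially on the $\tau$-coordinate via the $\mathrm{SL}(2,\mathbb{Z})'$-action on the upper half-plane $H$, which is free because $\mathrm{SL}(2,\mathbb{Z})'$ is torsion-free; so no nontrivial element has a fixed point. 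Proper discontinuity: project to the $\tau$-coordinate. The map $\widetilde N\to H$, $p\mapsto\tau$, is $G'$-equivariant for the standard $\mathrm{SL}(2,\mathbb{Z})'$-action on $H$ (with $\Lambda$ acting trivially downstairs), and $\mathrm{SL}(2,\mathbb{Z})'$ acts properly discontinuously on $H$; combined with cocompactness of $\Lambda$ in the fibre directions and the fact that the remaining coordinate $t$ ranges in a region controlled $\mathrm{SL}(2,\mathbb{Z})'$-equivariantly by $\tau$ (via $R(t,\tau)>0$, cf.\ Remark \ref{incompleterem}), a standard argument shows the $G'$-action is properly discontinuous. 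Hence $\widetilde N/G'$ is a smooth QK manifold.

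For finite volume I would exhibit a fundamental domain of finite $g_{\overline N}$-volume. Use type IIB coordinates $(\tau_1+\mathrm{i}\tau_2, b+\mathrm{i}t, c^1, c_1,c_0,\psi)$ and the quantum corrected mirror map. A fundamental domain can be taken as (fundamental domain for $\mathrm{SL}(2,\mathbb{Z})'$ in $H$) $\times$ (bounded region in $t$ of the form $R(t,\tau)>0$, $t\le T_0(\tau)$ for a suitable equivariant cutoff, or all of $t>$ the lower bound) $\times$ (compact fundamental domain for $\Lambda$ in the fibre). The only potentially infinite directions are $\tau_2\to\infty$ (the cusp) and $t\to\infty$. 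One computes the volume form of $g_{\overline N}$ from \eqref{coordQKmetric2}; the leading behaviour as $\tau_2,t\to\infty$ is governed by the tree-level q-map metric \eqref{QKtree} built from $\mathfrak F^{\mathrm{cl}}$, whose volume form decays fast enough (powers of $\rho^{-1}$ and of the PSK volume, together with the exponential/polynomial decay of the remaining coordinate ranges) that the integral converges; the instanton corrections are exponentially suppressed by Remark after Theorem \ref{QKCASKdomainformulas} and do not affect convergence. This is where the main work lies. Finally, for the manifold-with-boundary statement, take $\hat N = \widetilde N'\cup\partial\hat N$ where $\widetilde N'$ is the smaller $\mathrm{SL}(2,\mathbb{Z})$-invariant open set of Remark \ref{incompleterem} cut out by the single relation $R(t,\tau)>0$; its closure $\{R(t,\tau)\ge 0\}$ is a manifold with boundary $\{R(t,\tau)=0\}$ on which $G'$ still acts (invariance of $R$ under $S,T$ was shown in the proof of Proposition \ref{exprop3}), and $g_{\overline N}$ extends to the boundary positive-definitely by Corollary \ref{QKex} since $R>0\Rightarrow R_1,R_2>0$ strictly on a neighbourhood of $\overline{\widetilde N'}\cap\widetilde N$. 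Completeness of $\hat N/G'$ as a metric space with boundary: one checks that every Cauchy sequence either stays in a compact part of the quotient or converges to a boundary point, using that the only ``escape to infinity'' directions ($\tau_2\to\infty$ cusp, $t\to\infty$) are at finite distance after the quotient is taken — the cusp being killed by the $\mathrm{SL}(2,\mathbb{Z})'$-identification and the $t\to\infty$ end being...

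\medskip

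The hard part will be the completeness claim for $\hat N/G'$: one has to show the quotient has a complete end at $t\to\infty$ (this should follow from the asymptotic comparison with the complete tree-level q-map geometry, whose completeness at large $\rho$ is classical) while simultaneously showing that approaching the boundary $\{R=0\}$ happens at finite distance and yields a genuine boundary rather than a metric-incomplete end — i.e.\ that $g_{\overline N}$ is $C^0$ up to $\{R=0\}$, which is clear, but also that the distance function is complete there in the sense of ``manifold with boundary''. Controlling the metric uniformly near the boundary, and ruling out that some other parameter degenerates simultaneously with $R\to 0$, is the delicate point and will require the explicit bounds \eqref{detin}, \eqref{fin}, \eqref{f3in} together with the monotone relation $R>0\Rightarrow R_1,R_2>0$ from Remark \ref{incompleterem}.
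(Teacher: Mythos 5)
Your proposal follows essentially the same route as the paper's proof: pass to a finite-index subgroup of $\mathrm{SL}(2,\mathbb{Z})$ that avoids the elliptic stabilizers (the paper takes the kernel of reduction mod $N\geq 3$, you take a torsion-free congruence subgroup, which amounts to the same thing), quotient the fibre directions by a lattice $\Lambda\subset\mathbb{R}\ltimes H_4$, reduce to a fundamental domain over $\mathcal{F}_{\mathbb{H}}$, control the end $\tau_2,t\to\infty$ by comparison with the tree-level q-map metric, and define $\hat{N}$ by $R(t,\tau)\geq 0$. The two points you flag as delicate --- convergence of the volume integral on the end and completeness of $\hat{N}/G'$ --- are exactly the points the paper settles by invoking the finite volume and completeness of the classical quotient (\cite[Theorem 3.21]{CTSduality} and \cite[Theorem 27]{CDS}), to which the instanton-corrected metric is asymptotic since all corrections are exponentially or polynomially suppressed there.
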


\begin{proof}
By using the same argument as in \cite[Theorem 3.21]{CTSduality}, we can find a lattice $\Lambda\subset \mathbb{R}\ltimes H_{4}$ such that $\mathrm{SL}(2,\mathbb{Z})\ltimes \Lambda$ is a lattice of the group of isometries $\mathrm{SL}(2,\mathbb{Z})\ltimes (\mathbb{R}\ltimes H_{4})$ (recall Corollary \ref{excor2}).
On the other hand, by using the $\mathrm{SL}(2,\mathbb{Z})$ action by isometries, we can assume that $\tau$ lies in the usual fundamental domain $\mathcal{F}_{\mathbb{H}}$ of the upper half place $\mathbb{H}\subset \mathbb{C}$ given by
\begin{equation}    \mathcal{F}_{\mathbb{H}}:=\{\tau \in \mathbb{H} \; | \; |\tau|>1, \;\; |\tau_1|<1/2\}\cup\{\tau \in \mathbb{H} \; | \; |\tau|\geq 1, \;\; \tau_1=-\frac{1}{2}\}\cup \{\tau \in \mathbb{H} \; | \; |\tau|=1, \;\; -\frac{1}{2}<\tau_1\leq 0\}\,.
\end{equation}
If we define 
\begin{equation}
    F:=\{(\tau_2,b+\mathrm{i} t,\tau_1,\zeta^1,\widetilde{\zeta}_0,\widetilde{\zeta}_1,\sigma)\in \widetilde{N} \; | \; \tau \in \mathcal{F}_{\mathbb{H}}\}
\end{equation}
we can then think of the quotient $\widetilde{N}/(\mathrm{SL}(2,\mathbb{Z})\ltimes \Lambda)$ as
\begin{equation}
\widetilde{N}/(\mathrm{SL}(2,\mathbb{Z})\ltimes \Lambda)=F/\Lambda\,.
\end{equation}
Note that $\Lambda$ actually acts on $F$, since $F$ is defined by restrictions on $\tau$ and $t$ and the latter are left invariant by the action of $\mathbb{R}\ltimes H_4$.
On $F/\Lambda$ the coordinates $b$, $\tau_1$, $\zeta^1$, $\widetilde{\zeta}_0$, $\widetilde{\zeta}_1$ and $\sigma$ are periodic. Furthermore, $\tau_2$ is bounded below by some positive constant (in fact, by $\sqrt{3}/2>0$), and the same holds for $t$, since the relation $R_1(t,\tau)>0$ defining $\widetilde{N}$ implies 
\begin{equation}\label{tbound}
    \frac{t^3}{3}>\frac{\chi}{4(2\pi)^3}\sum_{(m,n)\in \mathbb{Z}^2-(0,0)}\frac{1}{|m\tau+n|^3}>\frac{\chi}{4(2\pi)^3}\sum_{n\in \mathbb{Z}-\{0\}}\frac{1}{|n|^3}=\frac{\chi\zeta(3)}{2(2\pi)^3}>0\;.
\end{equation}
On the other hand, $F$ has a boundary where $R_1(t,\tau)=0$ or $R_2(t,\tau)=0$. Since we know that the metric admits an extension beyond this boundary (see Remark \ref{incompleterem}), it cannot cause $F/\Lambda$ to have infinite volume. Furthermore,  on the end of $F/\Lambda$, where $\tau_2,t\to \infty$, the quantum correction terms in $g_{\overline{N}}$ are dominated by the tree-level q-map metric $(\overline{\mathcal{N}}_{\text{IIB}}^{\text{cl}},g^{\text{cl}})$ associated to the PSR manifold $\mathcal{H}=\{\text{point}\}$ (defined by $h(t)=t^3/6$). Namely, in terms of the components of the metrics $g_{\overline{N}}$ and $g^{\text{cl}}$ we have $g_{\overline{N},ij}\sim g_{ij}^{\text{cl}}$ on the end of $F/\Lambda$ \footnote{This can be checked by looking at the $t$ and $\tau_2$ dependence of the quantum corrections. In our example, we only have the analog of a perturbative world-sheet correction encoded in the $\chi$ term of $\mathfrak{F}$, the analog of the perturbative 1-loop correction encoded in $c_{\ell}=\chi/192\pi$, and the analog of D(-1) instanton corrections encoded in terms with $\Omega(q_0\gamma^0)=-\chi$. The D(-1) corrections are exponentially suppressed as $\tau_2\to \infty$, while the pertubative corrections can be safely ignored when $\tau_{2},t\to \infty$, since these are dominated by the classical terms of the tree-level q-map metric.}. We can therefore use the volume density of $g^{\text{cl}}$ to asymptotically approximate the volume density of $g_{\overline{N}}$ on this end. But by \cite[Theorem 3.21]{CTSduality}, we know that $\overline{\mathcal{N}}_{\text{IIB}}^{\text{cl}}/(\mathrm{SL}(2,\mathbb{Z})\ltimes \Lambda)$ has finite volume with respect to $g_{\text{cl}}$, so that $\widetilde{N}/(\mathrm{SL}(2,\mathbb{Z})\ltimes \Lambda)\subset \overline{\mathcal{N}}_{\text{IIB}}^{\text{cl}}/(\mathrm{SL}(2,\mathbb{Z})\ltimes \Lambda)$ also has finite volume with respect to $g^{\text{cl}}$. It follows that the end of $F/\Lambda$ where $\tau_2,t\to \infty$ must give a finite volume contribution with respect to $g_{\overline{N}}$. Hence, $(\widetilde{N}/(\mathrm{SL}(2,\mathbb{Z})\ltimes \Lambda),g_{\overline{N}})$ has finite volume.\\

The action of $\mathrm{SL}(2,\mathbb{Z})\ltimes \Lambda$ on $\widetilde{N}$ is easily seen to be properly discontinuous, however it is not free, since the points satisfying $\tau_2=\mathrm{i}$, $b=\tau_1=c^1=c_0=\psi=0$ are fixed by the subgroup $\langle S \rangle=\mathbb{Z}/4\mathbb{Z}\subset \mathrm{SL}(2,\mathbb{Z})$. We can nevertheless pick a finite index subgroup $\mathrm{SL}(2,\mathbb{Z})'\subset \mathrm{SL}(2,\mathbb{Z})$ that acts freely by choosing a finite index subgroup that intersects $\langle S \rangle$ only at the identity (for example by picking the kernel of the homomorphism $\mathrm{SL}(2,\mathbb{Z})\to \mathrm{SL}(2,\mathbb{Z}/N\mathbb{Z})$ reducing the entries modulo $N$ for $N\geq 3$). We therefore get a free, properly discontinuous action of $\mathrm{SL}(2,\mathbb{Z})'\ltimes \Lambda$ on $\widetilde{N}$. Since $(\widetilde{N}/(\mathrm{SL}(2,\mathbb{Z})\ltimes \Lambda),g_{\overline{N}})$ has finite volume and $\mathrm{SL}(2,\mathbb{Z})'\subset \mathrm{SL}(2,\mathbb{Z})$ is a finite index subgroup, it follows that $(\widetilde{N}/(\mathrm{SL}(2,\mathbb{Z})'\ltimes \Lambda),g_{\overline{N}})$ is a QK manifold of finite volume. \\

We now prove the last statement. We define $\hat{N}$ by
\begin{equation}\label{hatNex}
\hat{N}:=\{(\tau_2,b+\mathrm{i} t,\tau_1,\zeta^1,\widetilde{\zeta}_0,\widetilde{\zeta}_1,\sigma)\in \widetilde{N} \; | \; R(t,\tau)\geq 0\; \},
\end{equation}
where $\widetilde{N}$ is as in Corollary \ref{QKex} and $R(t,\tau)$ was defined in \eqref{singlerel}. It is easy to check that the function $R(t,\tau)$ is regular on the level set $R(t,\tau)=0$ (for example, by using that $t>0$ on $\widetilde{N}$, due to \eqref{tbound}), so that $\hat{N}$ is a smooth manifold with boundary. \\

 By the same argument of Proposition \ref{exprop3} and Corollary \ref{excor2} it follows that the action of $\mathrm{SL}(2,\mathbb{Z})\ltimes (\mathbb{R}\ltimes H_4)$ on $\widetilde{N}$ restricts to $\hat{N}$. Furthermore, since $(\widetilde{N}/(\mathrm{SL}(2,\mathbb{Z})'\ltimes \Lambda),g_{\overline{N}})$ has finite volume,  $(\hat{N}/(\mathrm{SL}(2,\mathbb{Z})\ltimes \Lambda),g_{\overline{N}})$ also has finite volume. Finally, to show that $(\hat{N}/(\mathrm{SL}(2,\mathbb{Z})\ltimes \Lambda),g_{\overline{N}})$ is complete, recall that a Riemannian manifold with boundary is complete if it is a complete metric space with the induced distance function. This follows from the fact that it contains the boundary points satisfying $R(t,\tau)=0$; the coordinates $b$, $\tau_1$, $\zeta^1$, $\widetilde{\zeta}_0$, $\widetilde{\zeta}_1$, $\sigma$ are periodic; and the metric in the end where $\tau_2,t\to \infty$ is complete, since it can be approximated by the complete tree-level q-map metric $g^{\text{cl}}$ (the completeness of $g^{\text{cl}}$ follows from \cite[Theorem 27]{CDS}).
\end{proof}

We finish with the following remark about a related example associated to the resolved conifold.
\begin{remark}\label{finalremark}
 To the resolved conifold one can associate a natural holomorphic prepotential of the required form \eqref{prepotential},
    where
    \begin{equation}
        \mathfrak{F}=-\frac{1}{6}\frac{(Z^1)^3}{Z^0}+\chi\frac{(Z^0)^2\zeta(3)}{2(2\pi \I )^3}-\frac{(Z^0)^2}{(2\pi \I)^3}n_{\gamma^1}\mathrm{Li}_3(e^{2\pi \I Z^1/Z^0})\,, \quad \chi=2, \quad n_{\gamma^1}=1\,.
    \end{equation}
    This form of the prepotential can be motivated by considering a certain extension of the Picard-Fuchs operators, see \cite{PFext}. On the other hand, the BPS spectrum of the resolved conifold also has the required form \eqref{varBPS}, with 
    \begin{equation}
        \begin{cases}
     \Omega(q_0\gamma^0)=-\chi=-2, \quad q_0\in \mathbb{Z}-\{0\}\\
     \Omega(q_0\gamma^0\pm \gamma^1)=\Omega(\pm \gamma^1)=n_{\gamma^1}=1 \quad \text{for $\gamma^1 \in \Lambda^+$, $q_0\in \mathbb{Z}$}\\
    \Omega(\gamma)=0 \quad \text{else}.\\
        \end{cases}
    \end{equation}
   We can therefore apply the construction of Section \ref{Sdualitysec} and obtain an instanton corrected q-map space.
    We expect that a similar argument as in the example can be done in order to find an S-duality invariant open subset of the domain of definition of the associated instanton corrected q-map space. As in Theorem \ref{finitevol}, we also expect that it admits a quotient of finite volume. Furthermore, we remark that since the resolved conifold is a non-compact Calabi-Yau 3-fold without compact divisors, the possible quantum corrections in the type IIB string theory language simplify to just the world-sheet corrections and the 1-loop correction $c_\ell$ together with the D(-1) and D1-instanton corrections. Since these are all accounted for in the above construction, one could expect that that the resulting instanton corrected q-map space with its maximal domain of definition might be complete. We leave the study of this question for future work.
\end{remark}
\appendix

\section{Integral identities in terms of Bessel functions}\label{Besselappendix}

In this appendix we will prove the following lemma, relating certain integrals with infinite sums of Bessel functions. We recall that given $\gamma=q_i\gamma^i$ and the functions $\xi^i$ from \eqref{Darbouxcoords1loop}, we use the notation from Section \ref{typeiiacoordssec}, where $\xi_{\gamma}=q_i\xi^i=q_i(\zeta^i-\mathrm{i}R(t^{-1}z^i+t\overline{z}^i))=-\zeta_{\gamma}-\mathrm{i}R(t^{-1}\widetilde{Z}_{\gamma}+t\overline{\widetilde{Z}_{\gamma}})$.

\begin{lemma}
Consider the modified Bessel functions of the second kind $K_{\nu}:\mathbb{R}_{>0}\to \mathbb{R}$, which have the following integral representation

\begin{equation}\label{Besselintrep}
    K_{\nu}(x)=\int_{0}^{\infty}dt \exp(-x\cosh(t))\cosh(\nu t), \quad \nu=0,1,2,...
\end{equation}
Following the notation from Section \ref{typeiiacoordssec}, we have the following identities for $\gamma=q_i\gamma^i$
\begin{equation}\label{Bessel1}
    \int_{l_{\gamma}}\frac{d\zeta}{\zeta^{1-q}}\frac{\exp(2\pi\I \xi_{\gamma})}{1-\exp(2\pi\I \xi_{\gamma})}=
    \begin{cases}
        2\frac{|\widetilde{Z}_{\gamma}|^2}{\widetilde{Z}_{\gamma}^2}\sum_{n>0}e^{-2\pi\I n\zeta_{\gamma}}\left(K_0(4\pi Rn|\widetilde{Z}_{\gamma}|)+\frac{K_1(4\pi Rn|\widetilde{Z}_{\gamma}|)}{2\pi Rn|\widetilde{Z}_{\gamma}|}\right), \quad q=-2\\
        -2\frac{|\widetilde{Z}_{\gamma}|}{\widetilde{Z}_{\gamma}}\sum_{n>0}e^{-2\pi\I n\zeta_{\gamma}}K_1(4\pi Rn|\widetilde{Z}_{\gamma}|), \quad q=-1\\
        2\sum_{n>0}e^{-2\pi\I n\zeta_{\gamma}}K_0(4\pi Rn|\widetilde{Z}_{\gamma}|), \quad q=0\\
        -2\frac{\widetilde{Z}_{\gamma}}{|\widetilde{Z}_{\gamma}|}\sum_{n>0}e^{-2\pi\I n\zeta_{\gamma}}K_1(4\pi Rn|\widetilde{Z}_{\gamma}|),\quad q=1\,\\
         2\frac{\widetilde{Z}_{\gamma}^2}{|\widetilde{Z}_{\gamma}|^2}\sum_{n>0}e^{-2\pi\I n\zeta_{\gamma}}\left(K_0(4\pi Rn|\widetilde{Z}_{\gamma}|)+\frac{K_1(4\pi Rn|\widetilde{Z}_{\gamma}|)}{2\pi Rn|\widetilde{Z}_{\gamma}|}\right), \quad q=2\\
        \end{cases}
\end{equation}
and 
\begin{equation}\label{Bessel2}
        \int_{l_{\gamma}}\frac{d\zeta}{\zeta^{1-q}}\log(1-\exp(2\pi\I \xi_{\gamma}))=\begin{cases}
        2\frac{|\widetilde{Z}_{\gamma}|}{\widetilde{Z}_{\gamma}}\sum_{n>0}\frac{e^{-2\pi\I n\zeta_{\gamma}}}{n}K_1(4\pi Rn|\widetilde{Z}_{\gamma}|), \quad q=-1\\
        -2\sum_{n>0}\frac{e^{-2\pi\I n\zeta_{\gamma}}}{n}K_0(4\pi Rn|\widetilde{Z}_{\gamma}|), \quad q=0\\
        2\frac{\widetilde{Z}_{\gamma}}{|\widetilde{Z}_{\gamma}|}\sum_{n>0}\frac{e^{-2\pi\I n\zeta_{\gamma}}}{n}K_1(4\pi Rn|\widetilde{Z}_{\gamma}|),\quad q=1\,.
        \end{cases}
\end{equation}
\end{lemma}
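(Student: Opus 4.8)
The plan is to prove the two families of integral identities \eqref{Bessel1} and \eqref{Bessel2} by a single mechanism: expand the logarithm or the geometric series, substitute the integral representation \eqref{Besselintrep} of $K_\nu$, and recognize the resulting one-dimensional integral after an appropriate change of variables along the ray $l_\gamma$. First I would record the basic setup: on $l_\gamma = \mathbb{R}_{<0}\widetilde{Z}_\gamma$ we may write $\zeta = -\lambda \widetilde{Z}_\gamma$ with $\lambda>0$, so that $\D\zeta/\zeta = \D\lambda/\lambda$ and, using $\xi_\gamma(\zeta) = -\zeta_\gamma - \mathrm{i}R(\zeta^{-1}\widetilde{Z}_\gamma + \zeta\,\overline{\widetilde{Z}}_\gamma)$, the exponent becomes $2\pi\mathrm{i}\xi_\gamma = -2\pi\mathrm{i}\zeta_\gamma - 2\pi R|\widetilde{Z}_\gamma|(\lambda^{-1} + \lambda)$. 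Thus along the contour the integrand is real-analytic and exponentially decaying at both ends, which is exactly the condition noted in the remark after Theorem \ref{theorem3}, and all the contour integrals converge absolutely.

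Next, for \eqref{Bessel2} I would expand $\log(1-w) = -\sum_{n>0} w^n/n$ with $w = e^{2\pi\mathrm{i}\xi_\gamma}$, which is legitimate since $|w| = e^{-2\pi R|\widetilde Z_\gamma|(\lambda^{-1}+\lambda)}<1$ on $l_\gamma$. Interchanging sum and integral (justified by absolute convergence), each term becomes
\begin{equation}
-\frac{1}{n}\,e^{-2\pi\mathrm{i}n\zeta_\gamma}\int_0^\infty \frac{\D\lambda}{\lambda}\,\lambda^{-q}\Bigl(\frac{\widetilde Z_\gamma}{|\widetilde Z_\gamma|}\Bigr)^{-q}\!|\widetilde Z_\gamma|^{-q}\exp\bigl(-2\pi R n|\widetilde Z_\gamma|(\lambda^{-1}+\lambda)\bigr),
\end{equation}
where I used $\zeta^{q-1}\D\zeta = (-\widetilde Z_\gamma)^{q}\lambda^{q-1}\D\lambda$ together with a relabeling. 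Substituting $\lambda = e^u$ turns $\int_0^\infty \frac{\D\lambda}{\lambda}\lambda^{-q}e^{-a(\lambda^{-1}+\lambda)}$ into $\int_{-\infty}^\infty e^{-qu}e^{-2a\cosh u}\,\D u = 2K_q(2a)$ (using $K_q = K_{-q}$ for the sign of $q$), with $a = 2\pi R n|\widetilde Z_\gamma|$, so $2a = 4\pi R n|\widetilde Z_\gamma|$, matching the arguments in the statement. The cases $q = -1,0,1$ then drop out directly, with the prefactor $(\widetilde Z_\gamma/|\widetilde Z_\gamma|)^{-q}$ producing the claimed $\widetilde Z_\gamma/|\widetilde Z_\gamma|$, $1$, $\overline{\widetilde Z}_\gamma/|\widetilde Z_\gamma|$ factors (note $(\widetilde Z_\gamma/|\widetilde Z_\gamma|)^{-1} = \overline{\widetilde Z}_\gamma/|\widetilde Z_\gamma|$ since the ratio is unimodular). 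For \eqref{Bessel1} the same strategy applies to $w/(1-w) = \sum_{n>0} w^n$; the only new feature is that for $q = \pm 2$ one encounters $\int_0^\infty \frac{\D\lambda}{\lambda}\lambda^{\mp 2}e^{-a(\lambda^{-1}+\lambda)} = 2K_2(2a)$, and I would then use the recurrence $K_2(x) = K_0(x) + \tfrac{2}{x}K_1(x)$ to rewrite $K_2(4\pi R n|\widetilde Z_\gamma|)$ as $K_0 + \tfrac{K_1}{2\pi R n|\widetilde Z_\gamma|}$, reproducing the stated form.

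The main obstacle is not any single computation but bookkeeping: getting every sign, every power of $-1$ from $\zeta = -\lambda\widetilde Z_\gamma$, and every unimodular phase factor $(\widetilde Z_\gamma/|\widetilde Z_\gamma|)^{\pm k}$ correct and consistent with the conventions fixed in Section \ref{typeiiacoordssec} (in particular the orientation of $l_\gamma$ from $0$ to $\infty$ and the identity $\xi_\gamma = -\zeta_\gamma - \mathrm{i}R(t^{-1}\widetilde Z_\gamma + t\overline{\widetilde Z}_\gamma)$). I would also need to double-check that the use of $K_\nu = K_{-\nu}$ correctly absorbs the two signs of $q$ in the $\int e^{-qu}$ step, and that term-by-term integration is valid — the latter is immediate from absolute convergence and the exponential decay of each summand's integrand, which I would state once and invoke for all cases. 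Everything else is the elementary substitution $\lambda = e^u$ plus the standard integral representation \eqref{Besselintrep} and the recurrence for $K_2$.
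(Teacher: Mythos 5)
Your strategy is exactly the paper's: parametrize the ray $l_{\gamma}$, expand $\log(1-w)=-\sum_{n>0}w^n/n$ resp.\ $w/(1-w)=\sum_{n>0}w^n$, interchange sum and integral by absolute convergence, substitute $\lambda=e^u$ to recognize $2K_{|q|}(4\pi Rn|\widetilde{Z}_{\gamma}|)$ via evenness of $\cosh$, and convert $K_2$ with the recurrence $K_2(x)=K_0(x)+2K_1(x)/x$. The paper does precisely this with the normalized parametrization $\zeta=-s\,\widetilde{Z}_{\gamma}/|\widetilde{Z}_{\gamma}|$.

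However, the bookkeeping you yourself flag as the main obstacle has gone wrong in your write-up, and as written the computation proves a statement with the $q=\pm1$ prefactors interchanged rather than the lemma. With $\zeta=-s\,\widetilde{Z}_{\gamma}/|\widetilde{Z}_{\gamma}|$ one has $\frac{\D\zeta}{\zeta^{1-q}}=(-1)^q\bigl(\widetilde{Z}_{\gamma}/|\widetilde{Z}_{\gamma}|\bigr)^{q}\,s^{q-1}\D s$, so the unimodular prefactor is $(\widetilde{Z}_{\gamma}/|\widetilde{Z}_{\gamma}|)^{+q}$ together with an explicit $(-1)^q$; your displayed term instead carries $(\widetilde{Z}_{\gamma}/|\widetilde{Z}_{\gamma}|)^{-q}|\widetilde{Z}_{\gamma}|^{-q}\lambda^{-q}$ with no $(-1)^q$ (and the spurious $|\widetilde{Z}_{\gamma}|^{-q}$ is inconsistent with the exponent $-2\pi Rn|\widetilde{Z}_{\gamma}|(\lambda^{-1}+\lambda)$, which requires the normalized ray). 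A relabeling of the integration variable can flip $\lambda^{q}\mapsto\lambda^{-q}$ but cannot invert the phase factor, which sits outside the integral. Consequently your conclusion that $q=-1,0,1$ yield the factors $\widetilde{Z}_{\gamma}/|\widetilde{Z}_{\gamma}|$, $1$, $\overline{\widetilde{Z}}_{\gamma}/|\widetilde{Z}_{\gamma}|$ is backwards relative to \eqref{Bessel2}, which has $|\widetilde{Z}_{\gamma}|/\widetilde{Z}_{\gamma}$ at $q=-1$ and $\widetilde{Z}_{\gamma}/|\widetilde{Z}_{\gamma}|$ at $q=+1$; moreover the dropped $(-1)^q$ is what produces the signs $-2$ at $q=\pm1$ versus $+2$ at $q=0,\pm2$ in \eqref{Bessel1}, and what cancels the $-1/n$ from the logarithm at $q=\pm1$ in \eqref{Bessel2}. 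The method is sound and identical to the paper's; the derivation must be redone with the prefactor $(-1)^q(\widetilde{Z}_{\gamma}/|\widetilde{Z}_{\gamma}|)^{q}$ to actually land on the stated right-hand sides.
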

\begin{proof}
We start by computing the integrals of the form

\begin{equation}
    \int_{l_{\gamma}}\frac{d\zeta}{\zeta^{1-q}}\frac{\exp(2\pi\I \xi_{\gamma})}{1-\exp(2\pi\I \xi_{\gamma})}, \quad q=-2,-1,0,1,2\,,
\end{equation}
where
\begin{equation}
    l_{\gamma}=\{t \; | \; \widetilde{Z}_{\gamma}/t\in \mathbb{R}_{<0}\}\,.
\end{equation}

Setting $\zeta=-s\widetilde{Z}_{\gamma}/|\widetilde{Z}_{\gamma}|$ for $s\in (0,\infty)$, and using that $|\exp(2\pi\I \xi_{\gamma})|_{l_{\gamma}}|<1$, we have

    \begin{align*}
    \int_{l_{\gamma}}\frac{d\zeta}{\zeta^{1-q}}\frac{\exp(2\pi\I \xi_{\gamma})}{1-\exp(2\pi\I \xi_{\gamma})}&=(-1)^{q}\frac{\widetilde{Z}_{\gamma}^q}{|\widetilde{Z}_{\gamma}|^q}\int_{0}^{\infty}\frac{ds}{s^{1-q}}\frac{\exp(-2\pi R|\widetilde{Z}_{\gamma}|(s^{-1}+s)-2\pi\I \zeta_{\gamma})}{1-\exp(-2\pi R|\widetilde{Z}_{\gamma}|(s^{-1}+s)-2\pi\I \zeta_{\gamma})}\\
    &=(-1)^{q}\frac{\widetilde{Z}_{\gamma}^q}{|\widetilde{Z}_{\gamma}|^q}\sum_{n>0}e^{-2\pi\I n\zeta_{\gamma}}\int_{0}^{\infty}\frac{ds}{s^{1-q}}\exp(-2\pi Rn|\widetilde{Z}_{\gamma}|(s^{-1}+s))\\
    &=(-1)^{q}\frac{\widetilde{Z}_{\gamma}^q}{|\widetilde{Z}_{\gamma}|^q}\sum_{n>0}e^{-2\pi\I n\zeta_{\gamma}}\int_{-\infty}^{\infty}dxe^{qx}\exp(-4\pi Rn|\widetilde{Z}_{\gamma}|\cosh(x))\\
    &=2(-1)^{q}\frac{\widetilde{Z}_{\gamma}^q}{|\widetilde{Z}_{\gamma}|^q}\sum_{n>0}e^{-2\pi\I n\zeta_{\gamma}}\int_{0}^{\infty}dx\exp(-4\pi Rn|\widetilde{Z}_{\gamma}|\cosh(x))\cosh(qx)\\
    &=2(-1)^{q}\frac{\widetilde{Z}_{\gamma}^q}{|\widetilde{Z}_{\gamma}|^q}\sum_{n>0}e^{-2\pi\I n\zeta_{\gamma}}\int_{0}^{\infty}dx\exp(-4\pi Rn|\widetilde{Z}_{\gamma}|\cosh(x))\cosh(|q|x)\\
    &=2(-1)^{q}\frac{\widetilde{Z}_{\gamma}^q}{|\widetilde{Z}_{\gamma}|^q}\sum_{n>0}e^{-2\pi\I n\zeta_{\gamma}}K_{|q|}(4\pi Rn|\widetilde{Z}_{\gamma}|)\numberthis\\
    \end{align*}

Hence, we obtain \eqref{Bessel1}, where for the case $q=\pm 2$ we have used the identity $K_2(x)=K_0(x)+2K_1(x)/x$.\\

We can similarly compute integrals of the form 

\begin{equation}
    \int_{l_{\gamma}}\frac{d\zeta}{\zeta^{1-q}}\log(1-\exp(2\pi\I \xi_{\gamma})), \quad q=-1,0,1
\end{equation}
 Indeed since $|\exp(\mathrm{2\pi\I }\xi_{\gamma})|_{l_{\gamma}}|<1$, we can expand $\log(1-x)=-\sum_{n>0}x^n/n$ and obtain

\begin{equation}
    \begin{split}
        \int_{l_{\gamma}}\frac{d\zeta}{\zeta^{1-q}}\log(1-\exp(2\pi\I \xi_{\gamma}))&=-\sum_{n>0}\frac{e^{-2\pi\I n\zeta_{\gamma}}}{n}\int_{l_{\gamma}}\frac{d\zeta}{\zeta^{1-q}}\exp(2\pi nR\widetilde{Z}_{\gamma}/\zeta +nR\overline{Z}_{\gamma}\zeta)\\
        &=(-1)^{q+1}\frac{\widetilde{Z}_{\gamma}^q}{|\widetilde{Z}_{\gamma}|^q}\sum_{n>0}\frac{e^{-2\pi\I n\zeta_{\gamma}}}{n}\int_{0}^{\infty}\frac{ds}{s^{1-q}}\exp(-2\pi Rn|\widetilde{Z}_{\gamma}|(s^{-1}+s))\\
        &=2(-1)^{q+1}\frac{\widetilde{Z}_{\gamma}^q}{|\widetilde{Z}_{\gamma}|^q}\sum_{n>0}\frac{e^{-2\pi\I n\zeta_{\gamma}}}{n}K_{|q|}(4\pi Rn|\widetilde{Z}_{\gamma}|)\\
    \end{split}
\end{equation}
so that we obtain \eqref{Bessel2}.
\end{proof}
\section{Type IIA Darboux coordinates for instanton corrected c-map spaces}\label{appendixtypeiiadc}

Here we give a proof of Theorem \ref{theorem3}.  
\begin{proof}
We note that $\widetilde{\xi}_i$ and $\alpha$ can be written as $\widetilde{\xi}_i=\widetilde{\xi}_i^{c}+\widetilde{\xi}^{\text{inst}}$ and $\alpha=\alpha^c+\alpha^{\text{inst}}$, where the index $c$ denotes the part of the coordinates that coincides with the c-map case of \eqref{Darbouxcoords1loop}, and $\text{inst}$ denotes the terms involving $\Omega(\gamma)$. We have a similar decomposition for $f$ and $\theta_i^P$ using \eqref{fthetaexpres}, and hence $\lambda$ by using \eqref{contactlocal}. Because of Proposition \ref{typeIIA1loop} it is then enough to show that 

\begin{equation}
    f^{\text{inst}}\frac{\D t}{t}+t^{-1}\theta_+^{P,\text{inst}}|_{\overline{N}} -2\mathrm{i}\theta_3^{P,\text{inst}}|_{\overline{N}} +t\theta_-^{P,\text{inst}}|_{\overline{N}}=-2\pi \I \left(\D \alpha^{\text{inst}}+\widetilde{\xi}_i^{\text{inst}}\D \xi^i -\xi^i\D \widetilde{\xi}_i^{\text{inst}}\right)\,.
\end{equation}
We will check this by a direct computation.\\

First we compute $\D \alpha^{\text{inst}}$. From the expression of $\mathcal{W}$ in \eqref{wdef} we compute $\D\mathcal{W}$:

\begin{align*}
        \D \mathcal{W}=&R\sum_{\gamma}\Omega(\gamma)\sum_{n>0}\frac{e^{-2\pi\I n\zeta_{\gamma}}}{n}K_0(4\pi Rn|\widetilde{Z}_{\gamma}|)\left(\frac{\widetilde{Z}_{\gamma}}{2(\rho+c_{\ell})}\D \rho+\frac{\widetilde{Z}_{\gamma}}{2}\D \mathcal{K} +\D \widetilde{Z}_{\gamma}\right)\\
        &-2\pi\I R\sum_{\gamma}\Omega(\gamma)\widetilde{Z}_{\gamma}\sum_{n>0}e^{-2\pi\I n\zeta_{\gamma}}K_0(4\pi Rn|\widetilde{Z}_{\gamma}|)\D \zeta_{\gamma}\\
        &-2\pi R^2\sum_{\gamma}\Omega(\gamma)\widetilde{Z}_{\gamma}\sum_{n>0}e^{-2\pi\I n\zeta_{\gamma}}|\widetilde{Z}_{\gamma}|K_1(4\pi Rn|\widetilde{Z}_{\gamma}|)\left(\frac{\D \rho}{\rho+c_{\ell}}+\D \mathcal{K}+\frac{\D \widetilde{Z}_{\gamma}}{\widetilde{Z}_{\gamma}}+\frac{\D \overline{\widetilde{Z}}_{\gamma}}{\overline{\widetilde{Z}}_{\gamma}}\right)\\\numberthis
    \end{align*}
where we have used on the last line that $(K_0(x))'=-K_1(x)$.\\

On the other hand, we have

\begin{equation}
    \begin{split}
    \D &\left(-\frac{1}{2\pi}\sum_{\gamma}\Omega(\gamma)\int_{l_{\gamma}}\frac{\D \zeta}{\zeta}\frac{t+\zeta}{t-\zeta}\mathrm{L} (\exp(2\pi\I \xi_{\gamma}))\right)\\
    =&-\frac{1}{2\pi}\sum_{\gamma}\Omega(\gamma)\D t\int_{l_{\gamma}}\frac{\D \zeta}{\zeta}\partial_t\left(\frac{t+\zeta}{t-\zeta}\right)\mathrm{L} (\exp(2\pi\I \xi_\gamma(\zeta)))\\
    &-\sum_{\gamma}\Omega(\gamma)\int_{l_{\gamma}}\frac{\D \zeta}{\zeta}\frac{t+\zeta}{t-\zeta}\left(-\frac{\I }{2}\log(1-\exp(2\pi\I \xi_{\gamma})) +\frac{1}{2}\frac{\exp(\I \xi_{\gamma})}{1-\exp(2\pi\I \xi_{\gamma})}2\pi\xi_{\gamma}\right)\D \xi_{\gamma}(\zeta)
    \end{split}
\end{equation}

We can then join everything together to conclude that

\begin{align*}
        -2\pi \I \D \alpha^{\text{inst}}=&-\frac{1}{2\pi}\left(-t^{-2}\mathcal{W}+\overline{\mathcal{W}}\right)\D t\\
        &-t^{-1}\frac{R}{2\pi}\sum_{\gamma}\Omega(\gamma)\sum_{n>0}\frac{e^{-2\pi\I n\zeta_{\gamma}}}{n}K_0(4\pi Rn|\widetilde{Z}_{\gamma}|)\left(\frac{\widetilde{Z}_{\gamma}}{2(\rho+c_{\ell})}\D \rho+\frac{\widetilde{Z}_{\gamma}}{2}\D \mathcal{K} +\D \widetilde{Z}_{\gamma}\right)\\
        &+t^{-1}\I R\sum_{\gamma}\Omega(\gamma)\widetilde{Z}_{\gamma}\sum_{n>0}e^{-2\pi\I n\zeta_{\gamma}}K_0(4\pi Rn|\widetilde{Z}_{\gamma}|)\D \zeta_{\gamma}\\
        &+t^{-1}R^2\sum_{\gamma}\Omega(\gamma)\widetilde{Z}_{\gamma}\sum_{n>0}e^{-2\pi\I n\zeta_{\gamma}}|\widetilde{Z}_{\gamma}|K_1(4\pi Rn|\widetilde{Z}_{\gamma}|)\left(\frac{\D \rho}{\rho+c_{\ell}}+\D \mathcal{K}+\frac{\D \widetilde{Z}_{\gamma}}{\widetilde{Z}_{\gamma}}+\frac{\D \overline{\widetilde{Z}}_{\gamma}}{\overline{\widetilde{Z}}_{\gamma}}\right)\\
        &+t\cdot\left(\text{complex conjugate of $t^{-1}$ term factor}\right)\\ &+\frac{1}{4\pi^2}\sum_{\gamma}\Omega(\gamma)\D t\int_{l_{\gamma}}\frac{\D \zeta}{\zeta}\partial_t\left(\frac{t+\zeta}{t-\zeta}\right)\mathrm{L} (\exp(2\pi\I \xi_\gamma(\zeta)))\\
    &+\frac{1}{2\pi}\sum_{\gamma}\Omega(\gamma)\int_{l_{\gamma}}\frac{\D \zeta}{\zeta}\frac{t+\zeta}{t-\zeta}\left(-\frac{\I }{2}\log(1-\exp(2\pi\I \xi_{\gamma})) +\frac{1}{2}\frac{\exp(2\pi\I \xi_{\gamma})}{1-\exp(2\pi\I \xi_{\gamma})}2\pi\xi_{\gamma}\right)\D \xi_{\gamma}(\zeta)\numberthis\label{conttypeiia1}
    \end{align*}

On the other hand, for the terms $ \widetilde{\xi}_i^{\text{inst}}\D\xi^i- \xi^i\D\widetilde{\xi}_i^{\text{inst}}$ we have

\begin{equation}\label{conttypeiia2}
    \begin{split}
        -2\pi \I\left(\widetilde{\xi}_i^{\text{inst}}\D \xi^i- \xi^i\D \widetilde{\xi}_i^{\text{inst}}\right)=&-\frac{1}{4\pi \I }\sum_{\gamma}\Omega(\gamma)\D \xi_{\gamma}(t)\int_{l_{\gamma}}\frac{\D \zeta}{\zeta}\frac{t+\zeta}{t-\zeta}\log(1-\exp(2\pi\I \xi_{\gamma}))\\
        &+\frac{1}{4\pi \I }\sum_{\gamma}\Omega(\gamma)\xi_{\gamma}(t)\D t \int_{l_{\gamma}}\frac{\D \zeta}{\zeta}\partial_t\left(\frac{t+\zeta}{t-\zeta}\right)\log(1-\exp(2\pi\I \xi_{\gamma}(\zeta)))\\
        &-\frac{1}{2 }\sum_{\gamma}\Omega(\gamma)\xi_{\gamma}(t) \int_{l_{\gamma}}\frac{\D \zeta}{\zeta}\frac{t+\zeta}{t-\zeta}\frac{\exp(2\pi\I \xi_{\gamma}(\zeta))}{1-\exp(2\pi\I \xi_{\gamma}(\zeta))}\D \xi_{\gamma}(\zeta)\,.
    \end{split}
\end{equation}

To start combining terms, we notice that

\begin{equation}
    \frac{t+\zeta}{t-\zeta}(\D \xi_{\gamma}(t)-\D \xi_{\gamma}(\zeta))=\left(\frac{1}{\zeta}+\frac{1}{t}\right)\I \D (R\widetilde{Z}_{\gamma})-(t+\zeta)\I \D (R\overline{\widetilde{Z}}_{\gamma})-\frac{t+\zeta}{t-\zeta}\I R(-t^{-2}\widetilde{Z}_{\gamma} +\overline{\widetilde{Z}}_{\gamma})\D t
\end{equation}
and

\begin{equation}
    \frac{t+\zeta}{t-\zeta}(\xi_{\gamma}(t)-\xi_{\gamma}(\zeta))=\left(\frac{1}{\zeta}+\frac{1}{t}\right)\I R\widetilde{Z}_{\gamma}-(t+\zeta)\I R\overline{\widetilde{Z}}_{\gamma}
\end{equation}
so that we can combine the first and third term of \eqref{conttypeiia2} with the last term of \eqref{conttypeiia1} into 

\begin{align*}
        &-\frac{1}{4\pi \I }\sum_{\gamma}\Omega(\gamma)\D \xi_{\gamma}(t)\int_{l_{\gamma}}\frac{\D \zeta}{\zeta}\frac{t+\zeta}{t-\zeta}\log(1-\exp(2\pi\I \xi_{\gamma}))-\frac{1}{2 }\sum_{\gamma}\Omega(\gamma)\xi_{\gamma}(t) \int_{l_{\gamma}}\frac{\D \zeta}{\zeta}\frac{t+\zeta}{t-\zeta}\frac{\exp(2\pi\I \xi_{\gamma}(\zeta))}{1-\exp(2\pi\I \xi_{\gamma}(\zeta))}\D \xi_{\gamma}(\zeta)\\
        &\quad +\frac{1}{2\pi}\sum_{\gamma}\Omega(\gamma)\int_{l_{\gamma}}\frac{\D \zeta}{\zeta}\frac{t+\zeta}{t-\zeta}\left(-\frac{\I }{2}\log(1-\exp(2\pi\I \xi_{\gamma})) +\frac{1}{2}\frac{\exp(2\pi\I \xi_{\gamma})}{1-\exp(2\pi \I\xi_{\gamma})}2\pi\xi_{\gamma}(\zeta)\right)\D \xi_{\gamma}(\zeta)\\
        &=-\frac{1}{4\pi \I }\sum_{\gamma}\Omega(\gamma)\int_{l_{\gamma}}\frac{\D \zeta}{\zeta}\frac{t+\zeta}{t-\zeta}\log(1-\exp(2\pi\I \xi_{\gamma}))(\D \xi_{\gamma}(t)-\D \xi_{\gamma}(\zeta)) \\
        &\quad -\frac{1}{2 }\sum_{\gamma}\Omega(\gamma) \int_{l_{\gamma}}\frac{\D \zeta}{\zeta}\frac{t+\zeta}{t-\zeta}\frac{\exp(2\pi\I \xi_{\gamma}(\zeta))}{1-\exp(2\pi\I \xi_{\gamma}(\zeta))}(\xi_{\gamma}(t)-\xi_{\gamma}(\zeta))\D \xi_{\gamma}(\zeta)\\
        &=-\frac{1}{4\pi \I }\sum_{\gamma}\Omega(\gamma)\int_{l_{\gamma}}\frac{\D \zeta}{\zeta}\log(1-\exp(2\pi\I \xi_{\gamma}))\left(\left(\frac{1}{\zeta}+\frac{1}{t}\right)\I \D (R\widetilde{Z}_{\gamma})-(t+\zeta)\I \D (R\overline{\widetilde{Z}}_{\gamma})-\frac{t+\zeta}{t-\zeta}\I R(-t^{-2}\widetilde{Z}_{\gamma} +\overline{\widetilde{Z}}_{\gamma})\D t\right) \\
        &\quad +\frac{1}{2 }\sum_{\gamma}\Omega(\gamma) \int_{l_{\gamma}}\frac{\D \zeta}{\zeta}\frac{\exp(2\pi\I \xi_{\gamma}(\zeta))}{1-\exp(2\pi\I \xi_{\gamma}(\zeta))}\left(\left(\frac{1}{\zeta}+\frac{1}{t}\right)\I R\widetilde{Z}_{\gamma}-(t+\zeta)\I R\overline{\widetilde{Z}}_{\gamma}\right)\left(\D \zeta_{\gamma} +\I \zeta^{-1}\D (R\widetilde{Z}_{\gamma}) +\I \zeta \D (R\overline{\widetilde{Z}}_{\gamma})\right)\,.\numberthis\\
        \end{align*}
        We can rewrite the previous integrals in the last equality in terms of Bessel functions by using the identities \eqref{Bessel1} and \eqref{Bessel2}, obtaining
        \begin{align*}
        =&\frac{1}{4\pi }\sum_{\gamma}\Omega(\gamma)\D t\int_{l_{\gamma}}\frac{\D \zeta}{\zeta}\log(1-\exp(2\pi\I \xi_{\gamma}))\left(\frac{t+\zeta}{t-\zeta}R(-t^{-2}\widetilde{Z}_{\gamma} +\overline{\widetilde{Z}}_{\gamma})\right)\\
        &- \frac{1}{2\pi}\sum_{\gamma}\Omega(\gamma)\D (R\widetilde{Z}_{\gamma})\left(\sum_{n>0}\frac{|\widetilde{Z}_{\gamma}|}{\widetilde{Z}_{\gamma}}\frac{e^{-2\pi\I n\zeta_{\gamma}}}{n}K_1(4\pi Rn|\widetilde{Z}_{\gamma}|)-\frac{1}{t}\sum_{n>0}\frac{e^{-2\pi\I n\zeta_{\gamma}}}{n}K_0(4\pi Rn|\widetilde{Z}_{\gamma}|)\right)\\
        &-\frac{1}{2\pi}\sum_{\gamma}\Omega(\gamma)\D (R\overline{\widetilde{Z}}_{\gamma})\left(-\sum_{n>0}\frac{\widetilde{Z}_{\gamma}}{|\widetilde{Z}_{\gamma}|}\frac{e^{-2\pi\I n\zeta_{\gamma}}}{n}K_1(4\pi Rn|\widetilde{Z}_{\gamma}|)+t\sum_{n>0}\frac{e^{-2\pi\I n\zeta_{\gamma}}}{n}K_0(4\pi Rn|\widetilde{Z}_{\gamma}|)\right)\\
        &-\sum_{\gamma}\Omega(\gamma)\frac{|\widetilde{Z}_{\gamma}|^2}{\widetilde{Z}_{\gamma}}R\D (R\widetilde{Z}_{\gamma})\sum_{n>0}e^{-2\pi\I n\zeta_{\gamma}}\left(K_0(4\pi Rn|\widetilde{Z}_{\gamma}|)+\frac{K_1(4\pi Rn|\widetilde{Z}_{\gamma}|)}{2\pi Rn|\widetilde{Z}_{\gamma}|}\right)\\
        &+\sum_{\gamma}\Omega(\gamma)\left(-\I \widetilde{Z}_{\gamma}R\D \zeta_{\gamma}+\left(\widetilde{Z}_{\gamma}/t-t\overline{\widetilde{Z}}_{\gamma}\right)R\D (R\widetilde{Z}_{\gamma})\right)\frac{|\widetilde{Z}_{\gamma}|}{\widetilde{Z}_{\gamma}}\sum_{n>0}e^{-2\pi\I n\zeta_{\gamma}}K_1(4\pi Rn|\widetilde{Z}_{\gamma}|)\\
        &-\sum_{\gamma}\Omega(\gamma)\left(-(\widetilde{Z}_{\gamma}/t-t\overline{\widetilde{Z}}_{\gamma})\I R\D \zeta_{\gamma} +R\widetilde{Z}_{\gamma}\D (R\overline{\widetilde{Z}}_{\gamma}) - \overline{\widetilde{Z}}_{\gamma}R\D (R\widetilde{Z}_{\gamma})\right)\sum_{n>0}e^{-2\pi \I n\zeta_{\gamma}}K_0(4\pi Rn|\widetilde{Z}_{\gamma}|)\\
        &+\sum_{\gamma}\Omega(\gamma)\left(\overline{\widetilde{Z}}_{\gamma}\I R\D \zeta_{\gamma}+(\widetilde{Z}_{\gamma}/t-t\overline{\widetilde{Z}}_{\gamma})R\D (R\overline{\widetilde{Z}}_{\gamma})\right)\frac{\widetilde{Z}_{\gamma}}{|\widetilde{Z}_{\gamma}|}\sum_{n>0}e^{-2\pi\I n\zeta_{\gamma}}K_1(4\pi Rn|\widetilde{Z}_{\gamma}|)\\
        &+\sum_{\gamma}\Omega(\gamma)\overline{\widetilde{Z}}_{\gamma}R\D (R\overline{\widetilde{Z}}_{\gamma})\frac{\widetilde{Z}_{\gamma}^2}{|\widetilde{Z}_{\gamma}|^2}\sum_{n>0}e^{-2\pi\I n\zeta_{\gamma}}\left(K_0(4\pi Rn|\widetilde{Z}_{\gamma}|)+\frac{K_1(4\pi Rn|\widetilde{Z}_{\gamma}|)}{2\pi Rn|\widetilde{Z}_{\gamma}|}\right) \numberthis
    \end{align*}

Overall, we obtain the following $t^{-1}$ term for $-2\pi \I(\D \alpha^{\text{inst}}+\widetilde{\xi}_i^{\text{inst}}\D\xi^i-\xi^i\D \widetilde{\xi}_i^{\text{inst}})$: 

\begin{align*}
        &\frac{R}{2\pi}\sum_{\gamma}\Omega(\gamma)\sum_{n>0}\frac{e^{-2\pi\I n\zeta_{\gamma}}}{n}K_0(4\pi Rn|\widetilde{Z}_{\gamma}|)\left(\frac{\widetilde{Z}_{\gamma}}{2(\rho+c_{\ell})}\D \rho+\frac{\widetilde{Z}_{\gamma}}{2}\D \mathcal{K} +\D \widetilde{Z}_{\gamma}\right)\\
        &+R^2\sum_{\gamma}\Omega(\gamma)\widetilde{Z}_{\gamma}\sum_{n>0}e^{-2\pi\I n\zeta_{\gamma}}|\widetilde{Z}_{\gamma}|K_1(4\pi Rn|\widetilde{Z}_{\gamma}|)\left(\frac{\D \rho}{2(\rho+c_{\ell})}+\frac{\D \mathcal{K}}{2} +\frac{\D \widetilde{Z}_{\gamma}}{\widetilde{Z}_{\gamma}}\right)\\
        &+\I R\sum_{\gamma}\Omega(\gamma)\widetilde{Z}_{\gamma}\sum_{n>0}e^{-2\pi\I n\zeta_{\gamma}}K_0(4\pi Rn|\widetilde{Z}_{\gamma}|)\D \zeta_{\gamma}\\
        &+R^2\sum_{\gamma}\Omega(\gamma)\widetilde{Z}_{\gamma}\sum_{n>0}e^{-2\pi\I n\zeta_{\gamma}}|\widetilde{Z}_{\gamma}|K_1(4\pi Rn|\widetilde{Z}_{\gamma}|)\left(\frac{\D \rho}{2(\rho+c_{\ell})}+\frac{\D \mathcal{K}}{2} +\frac{\D \overline{\widetilde{Z}}_{\gamma}}{\overline{\widetilde{Z}}_{\gamma}}\right)\\
        &-\frac{R}{2\pi}\sum_{\gamma}\Omega(\gamma)\sum_{n>0}\frac{e^{-2\pi\I n\zeta_{\gamma}}}{n}K_0(4\pi Rn|\widetilde{Z}_{\gamma}|)\left(\frac{\widetilde{Z}_{\gamma}}{2(\rho+c_{\ell})}\D \rho+\frac{\widetilde{Z}_{\gamma}}{2}\D \mathcal{K} +\D \widetilde{Z}_{\gamma}\right)\\
        &+\I R\sum_{\gamma}\Omega(\gamma)\widetilde{Z}_{\gamma}\sum_{n>0}e^{-2\pi\I n\zeta_{\gamma}}K_0(4\pi Rn|\widetilde{Z}_{\gamma}|)\D \zeta_{\gamma}\\
        &+R^2\sum_{\gamma}\Omega(\gamma)\widetilde{Z}_{\gamma}\sum_{n>0}e^{-2\pi\I n\zeta_{\gamma}}|\widetilde{Z}_{\gamma}|K_1(4\pi Rn|\widetilde{Z}_{\gamma}|)\left(\frac{\D \rho}{\rho+c_{\ell}}+\D \mathcal{K}+\frac{\D \widetilde{Z}_{\gamma}}{\widetilde{Z}_{\gamma}}+\frac{\D \overline{\widetilde{Z}}_{\gamma}}{\overline{\widetilde{Z}}_{\gamma}}\right)\\
        =&2\I R\sum_{\gamma}\Omega(\gamma)\widetilde{Z}_{\gamma}\sum_{n>0}e^{-2\pi\I n\zeta_{\gamma}}K_0(4\pi Rn|\widetilde{Z}_{\gamma}|)\D \zeta_{\gamma}\\
        &+2R^2\sum_{\gamma}\Omega(\gamma)\widetilde{Z}_{\gamma}\sum_{n>0}e^{-2\pi\I n\zeta_{\gamma}}|\widetilde{Z}_{\gamma}|K_1(4\pi Rn|\widetilde{Z}_{\gamma}|)\left(\frac{\D \rho}{\rho+c_{\ell}}+\D \mathcal{K}+\frac{\D \widetilde{Z}_{\gamma}}{\widetilde{Z}_{\gamma}}+\frac{\D \overline{\widetilde{Z}}_{\gamma}}{\overline{\widetilde{Z}}_{\gamma}}\right)\\
\end{align*}

By comparing with \eqref{fthetaexpres} we see that this matches $\theta_{+}^{P,\text{inst}}|_{\overline{N}}$. The $t$ term follows from the $t^{-1}$, by noticing that all the $t$-term are conjugates of the $t^{-1}$-term. In particular, the $t$-term will match $\theta_{-}^{\text{inst}}=\overline{\theta_{+}^{\text{inst}}}$.\\

Now we collect the $t^0$ term from our previous expressions for $-2\pi \I\left(\D \alpha^{\text{inst}}+\widetilde{\xi}_i^{\text{inst}}\D \xi^i -\xi^i\D \widetilde{\xi}_i^{\text{inst}}\right)$. We obtain the following:

\begin{align*}
        &-\frac{R}{2\pi}\sum_{\gamma}\Omega(\gamma)\sum_{n>0}\frac{e^{-2\pi\I n\zeta_{\gamma}}}{n}|\widetilde{Z}_{\gamma}|K_1(4\pi Rn|\widetilde{Z}_{\gamma}|)\left(\frac{\D \rho}{2(\rho+c_{\ell})}+\frac{\D \mathcal{K}}{2} +\frac{\D \widetilde{Z}_{\gamma}}{\widetilde{Z}_{\gamma}}\right)\\
        &+\frac{R}{2\pi}\sum_{\gamma}\Omega(\gamma)\sum_{n>0}\frac{e^{-2\pi\I n\zeta_{\gamma}}}{n}|\widetilde{Z}_{\gamma}|K_1(4\pi Rn|\widetilde{Z}_{\gamma}|)\left(\frac{\D \rho}{2(\rho+c_{\ell})}+\frac{\D \mathcal{K}}{2} +\frac{\D \overline{\widetilde{Z}}_{\gamma}}{\overline{\widetilde{Z}}_{\gamma}}\right)\\
         &-R^2\sum_{\gamma}\Omega(\gamma)|\widetilde{Z}_{\gamma}|^2\sum_{n>0}e^{-2\pi\I n\zeta_{\gamma}}\left(K_0(4\pi Rn|\widetilde{Z}_{\gamma}|)+\frac{K_1(4\pi Rn|\widetilde{Z}_{\gamma}|)}{2\pi Rn|\widetilde{Z}_{\gamma}|}\right)\left(\frac{\D \rho}{2(\rho+c_{\ell})}+\frac{\D \mathcal{K}}{2} +\frac{\D \widetilde{Z}_{\gamma}}{\widetilde{Z}_{\gamma}}\right)\\
         &-\I R\sum_{\gamma}\Omega(\gamma)\sum_{n>0}e^{-2\pi\I n\zeta_{\gamma}}|\widetilde{Z}_{\gamma}|K_1(4\pi Rn|\widetilde{Z}_{\gamma}|)\D \zeta_{\gamma}\\
         &+R^2\sum_{\gamma}\Omega(\gamma)|\widetilde{Z}_{\gamma}|^2\left(\left(\frac{\D \rho}{2(\rho+c_{\ell})}+\frac{\D \mathcal{K}}{2} +\frac{\D \widetilde{Z}_{\gamma}}{\widetilde{Z}_{\gamma}}\right) - \left(\frac{\D \rho}{2(\rho+c_{\ell})}+\frac{\D \mathcal{K}}{2} +\frac{\D \overline{\widetilde{Z}}_{\gamma}}{\overline{\widetilde{Z}}_{\gamma}}\right)\right)\sum_{n>0}e^{-2\pi\I n\zeta_{\gamma}}K_0(4\pi Rn|\widetilde{Z}_{\gamma}|)\\
         &+\I R\sum_{\gamma}\Omega(\gamma)\sum_{n>0}e^{-2\pi\I n\zeta_{\gamma}}|\widetilde{Z}_{\gamma}|K_1(4\pi Rn|\widetilde{Z}_{\gamma}|)\D \zeta_{\gamma}\\
         &+R^2\sum_{\gamma}\Omega(\gamma)|\widetilde{Z}_{\gamma}|^2\left(\frac{\D \rho}{2(\rho+c_{\ell})}+\frac{\D \mathcal{K}}{2} +\frac{\D \overline{\widetilde{Z}}_{\gamma}}{\overline{\widetilde{Z}}_{\gamma}}\right)\sum_{n>0}e^{-2\pi\I n\zeta_{\gamma}}\left(K_0(4\pi Rn|\widetilde{Z}_{\gamma}|)+\frac{K_1(4\pi Rn|\widetilde{Z}_{\gamma}|)}{2\pi Rn|\widetilde{Z}_{\gamma}|}\right)\\
         =&-\frac{R}{\pi}\sum_{\gamma}\Omega(\gamma)\sum_{n>0}\frac{e^{-2\pi\I n\zeta_{\gamma}}}{n}|\widetilde{Z}_{\gamma}|K_1(4\pi Rn|\widetilde{Z}_{\gamma}|)\left(\frac{\D \widetilde{Z}_{\gamma}}{\widetilde{Z}_{\gamma}}-\frac{\D \overline{\widetilde{Z}}_{\gamma}}{\overline{\widetilde{Z}}_{\gamma}}\right)\numberthis\\
\end{align*}
so comparing with \eqref{fthetaexpres} we see that the $t^0$ terms matches $-2\I\theta_3^{P,\text{inst}}$.\\

Finally, we need to collect the $\D t$ term from $-2\pi \I \left(\D \alpha^{\text{inst}}+\widetilde{\xi}_i^{\text{inst}}\D \xi^i -\xi^i\D \widetilde{\xi}_i^{\text{inst}}\right)$. This one corresponds to 

\begin{align*}
    &-t^{-2}\frac{R}{4\pi}\sum_{\gamma}\Omega(\gamma)\widetilde{Z}_{\gamma}\int_{l_{\gamma}}\frac{\D \zeta}{\zeta}\log(1-\exp(2\pi\I \xi_{\gamma}(\zeta)))-\frac{R}{4\pi}\sum_{\gamma}\Omega(\gamma)\overline{\widetilde{Z}}_{\gamma}\int_{l_{\gamma}}\frac{\D \zeta}{\zeta}\log(1-\exp(2\pi\I \xi_{\gamma}(\zeta)))\\
    &+\frac{1}{4\pi^2}\sum_{\gamma}\Omega(\gamma)\int_{l_{\gamma}}\frac{\D \zeta}{\zeta}\partial_t\left(\frac{t+\zeta}{t-\zeta}\right)\mathrm{L} (\exp(2\pi\I \xi_\gamma(\zeta)))+\frac{1}{4\pi \I }\sum_{\gamma}\Omega(\gamma)\xi_{\gamma}(t) \int_{l_{\gamma}}\frac{\D \zeta}{\zeta}\partial_t\left(\frac{t+\zeta}{t-\zeta}\right)\log(1-\exp(2\pi\I \xi_{\gamma}(\zeta)))\\
    &+\frac{1}{4\pi }\sum_{\gamma}\Omega(\gamma)\int_{l_{\gamma}}\frac{\D \zeta}{\zeta}\frac{t+\zeta}{t-\zeta}R(-t^{-2}\widetilde{Z}_{\gamma} +\overline{\widetilde{Z}}_{\gamma})\log(1-\exp(2\pi\I \xi_{\gamma}(\zeta)))\\
    =&-\frac{R}{2\pi}\sum_{\gamma}\Omega(\gamma)\int_{l_{\gamma}}\frac{\D \zeta}{\zeta}\frac{1}{t-\zeta}(t^{-1}\widetilde{Z}_{\gamma}-\zeta\overline{\widetilde{Z}_{\gamma}})\log(1-\exp(2\pi\I \xi_{\gamma}(\zeta)))-\frac{1}{2\pi^2}\sum_{\gamma}\Omega(\gamma)\int_{l_{\gamma}}\D \zeta\frac{1}{(t-\zeta)^2}\mathrm{L} (\exp(2\pi\I \xi_\gamma(\zeta)))\\
    &-\frac{1}{2\pi \I }\sum_{\gamma}\Omega(\gamma)\xi_{\gamma}(t) \int_{l_{\gamma}}\D \zeta\frac{1}{(t-\zeta)^2}\log(1-\exp(2\pi\I \xi_{\gamma}(\zeta))) \numberthis
\end{align*}

Integrating by parts the second and third term of the last equality, we obtain

\begin{align*}
        &-\frac{R}{2\pi}\sum_{\gamma}\Omega(\gamma)\int_{l_{\gamma}}\frac{\D \zeta}{\zeta}\frac{1}{t-\zeta}(t^{-1}\widetilde{Z}_{\gamma}-\zeta\overline{\widetilde{Z}_{\gamma}})\log(1-\exp(2\pi\I \xi_{\gamma}(\zeta)))\\
    &+\frac{1}{2\pi}\sum_{\gamma}\Omega(\gamma)\int_{l_{\gamma}}\D \zeta\frac{1}{t-\zeta}\left(-\I \log(1-\exp(2\pi\I \xi_{\gamma}))) + \frac{\exp(2\pi\I \xi_{\gamma})}{1-\exp(2\pi\I \xi_{\gamma})}2\pi\xi_{\gamma}(\zeta)\right)(\zeta^{-2}\I R\widetilde{Z}_{\gamma}-\I R\overline{\widetilde{Z}_{\gamma}})\\
    &-\sum_{\gamma}\Omega(\gamma)\xi_{\gamma}(t) \int_{l_{\gamma}}\D \zeta\frac{1}{t-\zeta}\frac{\exp(2\pi\I \xi_{\gamma})}{1-\exp(2\pi\I \xi_{\gamma})}(\zeta^{-2}\I R\widetilde{Z}_{\gamma}-\I R\overline{\widetilde{Z}_{\gamma}})\\
    =&t^{-1}\frac{R}{2\pi}\sum_{\gamma}\Omega(\gamma)\widetilde{Z}_{\gamma}\int_{l_{\gamma}}\frac{\D \zeta}{\zeta^2}\log(1-\exp(2\pi\I \xi_{\gamma}(\zeta))) \\
    &-\sum_{\gamma}\Omega(\gamma) \int_{l_{\gamma}}\D \zeta\frac{1}{t-\zeta}(\xi_{\gamma}(t)-\xi_{\gamma}(\zeta))\frac{\exp(2\pi\I \xi_{\gamma})}{1-\exp(2\pi\I \xi_{\gamma})}(\zeta^{-2}\I R\widetilde{Z}_{\gamma}-\I R\overline{\widetilde{Z}_{\gamma}})\,.\numberthis \label{appeq}
    \end{align*}

Using that

\begin{equation}
    \frac{1}{t-\zeta}(\xi_{\gamma}(t)-\xi_{\gamma}(\zeta))=\I R\frac{\widetilde{Z}_{\gamma}}{\zeta t}-\I R\overline{\widetilde{Z}}_{\gamma}
\end{equation}

we then obtain that the right-hand side of the equality of \eqref{appeq} becomes

\begin{align*}
        &t^{-1}\frac{R}{2\pi}\sum_{\gamma}\Omega(\gamma)\widetilde{Z}_{\gamma}\int_{l_{\gamma}}\frac{\D \zeta}{\zeta^2}\log(1-\exp(2\pi\I \xi_{\gamma}(\zeta))) \\
    &-\sum_{\gamma}\Omega(\gamma) \int_{l_{\gamma}}\D \zeta(\I R\frac{\widetilde{Z}_{\gamma}}{\zeta t}-\I R\overline{\widetilde{Z}}_{\gamma})\frac{\exp(2\pi\I \xi_{\gamma})}{1-\exp(2\pi\I \xi_{\gamma})}(\zeta^{-2}\I R\widetilde{Z}_{\gamma}-\I R\overline{\widetilde{Z}_{\gamma}})\\
    =&t^{-1}\frac{R}{2\pi}\sum_{\gamma}\Omega(\gamma)\widetilde{Z}_{\gamma}\int_{l_{\gamma}}\frac{\D \zeta}{\zeta^2}\log(1-\exp(2\pi\I \xi_{\gamma}(\zeta))) +t^{-1}R^2\sum_{\gamma}\Omega(\gamma) \widetilde{Z}_{\gamma}^2\int_{l_{\gamma}}\frac{\D \zeta}{\zeta^3} \frac{\exp(2\pi\I \xi_{\gamma})}{1-\exp(2\pi\I \xi_{\gamma})}\\
    &-t^{-1}R^2\sum_{\gamma}\Omega(\gamma) |\widetilde{Z}_{\gamma}|^2\int_{l_{\gamma}}\frac{\D \zeta}{\zeta} \frac{\exp(2\pi\I \xi_{\gamma})}{1-\exp(2\pi\I \xi_{\gamma})}-R^2\sum_{\gamma}\Omega(\gamma) |\widetilde{Z}_{\gamma}|^2\int_{l_{\gamma}}\frac{\D \zeta}{\zeta^2} \frac{\exp(2\pi\I \xi_{\gamma})}{1-\exp(2\pi\I \xi_{\gamma})}\\
    &+R^2\sum_{\gamma}\Omega(\gamma) \overline{\widetilde{Z}}_{\gamma}^2\int_{l_{\gamma}}\D \zeta \frac{\exp(2\pi\I \xi_{\gamma})}{1-\exp(2\pi\I \xi_{\gamma})} \numberthis
    \end{align*}

The above integrals can be solved explicitly by using \eqref{Bessel1} and \eqref{Bessel2}, giving 

\begin{align*}
    =&t^{-1}\frac{R}{\pi}\sum_{\gamma}\Omega(\gamma)\sum_{n>0}\frac{e^{-2\pi\I n\zeta_{\gamma}}}{n}|\widetilde{Z}_{\gamma}|K_1(4\pi Rn|\widetilde{Z}_{\gamma}|) \\
    &+2t^{-1}R^2\sum_{\gamma}\Omega(\gamma) |\widetilde{Z}_{\gamma}|^2\sum_{n>0}e^{-2\pi\I n\zeta_{\gamma}}\left(K_0(4\pi Rn|\widetilde{Z}_{\gamma}|)+\frac{K_1(4\pi Rn|\widetilde{Z}_{\gamma}|)}{2\pi Rn|\widetilde{Z}_{\gamma}|}\right)\\
    &-2t^{-1}R^2\sum_{\gamma}\Omega(\gamma) |\widetilde{Z}_{\gamma}|^2\sum_{n>0}e^{-2\pi\I n\zeta_{\gamma}}K_0(4\pi Rn|\widetilde{Z}_{\gamma}|)\\
    &-R^2\sum_{\gamma}\Omega(\gamma) |\widetilde{Z}_{\gamma}|^2\left(-2\frac{|\widetilde{Z}_{\gamma}|}{\widetilde{Z}_{\gamma}}\sum_{n>0}e^{-2\pi\I n\zeta_{\gamma}}K_1(4\pi Rn|\widetilde{Z}_{\gamma}|)\right)+R^2\sum_{\gamma}\Omega(\gamma) \overline{\widetilde{Z}}_{\gamma}^2\left(-2\frac{\widetilde{Z}_{\gamma}}{|\widetilde{Z}_{\gamma}|}\sum_{n>0}e^{-2\pi\I n\zeta_{\gamma}}K_1(4\pi Rn|\widetilde{Z}_{\gamma}|)\right)\\
    =&t^{-1}\frac{2R}{\pi}\sum_{\gamma}\Omega(\gamma)\sum_{n>0}\frac{e^{-2\pi\I n\zeta_{\gamma}}}{n}|\widetilde{Z}_{\gamma}|K_1(4\pi Rn|\widetilde{Z}_{\gamma}|)\numberthis
    \end{align*}

Hence, by comparing with \eqref{fthetaexpres}, we see that the $\D t$ component of $-2\pi \I\left(\D \alpha^{\text{inst}}+\widetilde{\xi}_i^{\text{inst}}\D \xi^i -\xi^i\D \widetilde{\xi}_i^{\text{inst}}\right)$ matches with $f^{\text{inst}}/t$. This completes the proof.
\end{proof}

\bibliography{References}
\bibliographystyle{alpha}
\end{document}